\newcommand{\RR}{\mathbb {R}}
\newcommand{\ZZ}{\mathbb {Z}}
\newcommand{\NN}{\mathbb {N}}
\newcommand{\CC}{\mathbb {C}}
\newcommand{\sphere}{\mathbb{S}}
\newcommand{\T}{\mathsf{T}}
\newcommand{\crcl}{\mathsf S^1}
\newcommand{\G}{\mathsf G}
\newcommand{\on}[1]{\operatorname{#1}}
\newcommand{\curv}{\operatorname{curv}}
\newcommand{\Iso}{\operatorname{Iso}}
\newcommand{\Ric}{\operatorname{Ric}}
\newcommand{\Fix}{\operatorname{Fix}}
\renewcommand{\epsilon}{\varepsilon}
\newtheorem{theorem}{Theorem}[chapter]
\newtheorem{proposition}[theorem]{Proposition}
\newtheorem{lemma}[theorem]{Lemma}
\newtheorem{corollary}[theorem]{Corollary}
\newtheorem*{corollary*}{Corollary}
\newtheorem*{theorem*}{Theorem}
\theoremstyle{definition}
\newtheorem{definition}[theorem]{Definition}
\theoremstyle{remark}
\newtheorem*{remark}{Remark}
\newtheorem*{texttheorem*}{Theorem}
\begin{document}
\pagenumbering{roman}
\addtokomafont{disposition}{\rmfamily}
\title{$\crcl$-actions on $4$-manifolds and fixed point homogeneous manifolds of nonnegative curvature}

%$\crcl$-actions on $4$-manifolds and fixed point homogeneous actions in nonnegative curvature}
\author{Wolfgang Lorenz Spindeler}
\date{2014}
\pagenumbering{Alph}
\maketitle
\pagenumbering{arabic}
\newpage

\pagenumbering{roman}
\begin{abstract}
\noindent This is a slightly altered version of the authors thesis \cite{spindeler14}. Changes are mostly due to typographical errors, different layout and updated references. Mathematical changes are scarce and noted when they occur.
\\ \\
\begin{center}\textbf{Abstract}\end{center}
Let $M$ be a closed, nonnegatively curved and simlpy connected Riemannian $4$-manifold equipped with an isometric action of the circle $\crcl$ with only isolated fixed points. The first main theorem of this thesis shows that there exists a sequence of smooth, positively curved metrics on the quotient space $M/\crcl$ that converges in Gromov-Hausdorff topology to the singular quotient metric of $M/\crcl$, and the same holds for the two fold branched cover of $M/\crcl$ along a singular closed curve. This result leads to a solely geometric proof (not appealing to the Poincaré conjecture) of the equivariant classification theorem for nonnegatively curved, simply connected $4$-manifolds with circle symmetry obtained by Grove and Wilking in \textit{``A knot characterization and 1-connected nonnegatively curved $4$-manifolds with circle symmetry''}, Geometry \& Topology 18 (2014) 3091–3110.

The second main theorem deals with fixed point homogeneous manifolds. These are by definition Riemannian manifolds admitting an effective, isometric action by a Lie group $\G$ with nonempty fixed point set such that a fixed point component has codimension $1$ in the orbit space $M/\G$. It is shown that a closed, nonnegatively curved, fixed point homogeneous $\G$-manifold admits a double disk bundle decomposition over a fixed point component of maximal dimension and another smooth $\G$-invariant submanifold of $M$. As an application of this result it is shown that a closed, simply connected, nonnegatively curved torus manifold is rationally elliptic.
\end{abstract}

\tableofcontents
%\listoffigures

%----------------------------------------------------------------------------------------------

\addchap{Introduction}
A classical subject in Riemannian geometry is the study of complete Riemannian manifolds of positive or nonnegative (sectional) curvature, in particular their topology. In dimensions $\leq 3$, complete, nonnegatively and positively curved manifolds are classified. In the compact case a classification in dimension $2$ follows from the Gau\ss-Bonnet theorem and in dimension $3$ from the work of Hamilton \cite{hamilton82}, \cite{hamilton86}. In dimensions $\geq 4$ a classification of compact, nonnegatively or positively curved manifolds is still open. For example, a famous unsolved conjecture by Hopf states that the product of two spheres $\sphere^2 \times \sphere^2$ does not admit a metric of positive curvature (whereas the product of the standard metrics has nonnegative curvature). Motivated by this conjecture it was proven by Hsiang and Kleiner in \cite{hsiang-kleiner89}  that a compact, simply connected, positively curved $4$-manifold that additionally admits a nontrivial, isometric action of the circle $\crcl$ is homeomorphic to $\sphere^4$ or the complex projective space $\CC P^2$.

There are two questions that arise naturally. First, one may ask whether there is an analogous result in nonnegative curvature and secondly, whether one can classify such manifolds up to (equivariant) diffeomorphism.

Addressing the first question, it was shown independently by Kleiner \cite{kleiner90} and Searle and Yang \cite{searle-yang94} that a compact, simply connected, nonnegatively curved $4$-manifold admitting a nontrivial, isometric $\crcl$-action is homeomorphic to $\sphere^4,\ \CC P^2,\ \sphere^2 \times \sphere^2$ or $\CC P^2 \# \pm \CC P^2$.

The second question was answered eventually by combined work of Grove and Wilking \cite{grove-wilking13} and Galaz-Garcia and Kerin \cite{galaz-garcia-kerin14}. In particular in \cite{grove-wilking13} it was shown that a compact, simply connected, nonnegatively curved $4$-manifold $M$ admitting an isometric $\crcl$-action is diffeomorphic to $\sphere^4,\ \CC P^2,\ \sphere^2 \times \sphere^2$ or $\CC P^2 \# \pm \CC P^2$ and the action extends to a smooth (effective) action of the $2$-torus $\mathsf T^2$. All such actions have been classified and they all admit invariant metrics of nonnegative curvature (compare the disucssion in \cite{grove-wilking13}). Therefore, it follows an equivariant classification of closed, nonnegatively curved $4$-manifolds admitting an effective, isometric $\crcl$-action. 

Important ingredients of the proof of this theorem are the study of the geometry of the orbit space $M/\crcl$ (as it was for the theorems quoted before) and the solution to the Poincar\'e conjecture \cite{perelman02}, \cite{perelman03} and its equivariant version \cite{dinkelbach-leeb09}. The latter is needed to deduce that $M/\crcl$ is homeomorphic to $\sphere^3$, in the case that $\crcl$ has only isolated fixed points, and to obtain a crucial characterization of knots in $M/\crcl$ (for the details see \cite{grove-wilking13}). It is an interesting question whether there is a geometric proof which does not make use of the Poincar\'e conjecture. The first main theorem of this thesis gives an affirmative answer (in the case $M$ has Euler characteristic $4$, see also section 4 of \cite{grove-wilking13}).
\newpage
\begin{theorem}\label{resolution thm_intro}
  Let $\mathsf S^1$ act isometrically and with only isolated fixed points on a simply connected, compact, nonnegatively curved $4$-manifold $M$ with quotient space $(M/\crcl,d)$. Then the following is true:
\begin{itemize}
 \item[(a)] There exists a sequence of smooth positively curved Riemannian metrics $(g_n)_{n \in \NN}$ on $M/\crcl$ such that  $(M/\crcl, g_n)$ has Gromov-Hausdorff limit $(M/\crcl,d)$.
 \item[(b)]  Assume additionally that the nonregular part of $M/\crcl$ contains a closed curve $c$. Then there exists a sequence of smooth, positively curved orbifold metrics $(h_n)_{n \in \NN}$ on $M/\crcl$ such that $(M/\crcl,h_n)$ has Gromov-Hausdorff limit $(M/\crcl,d)$ and the only singularities of $h_n$ are given by $\mathbb Z_2$-singularities along $c$.
\end{itemize}
\end{theorem}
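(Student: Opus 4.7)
The plan is to treat $M^\ast := M/\crcl$ as a $3$-dimensional nonnegatively curved Alexandrov space, identify its singular strata via the slice theorem, and construct $g_n$ by replacing small neighborhoods of the singular set with smooth positively curved local models, glued to (a suitable modification of) the submersion quotient metric on the regular part. Around each $\crcl$-fixed point $p \in M$, with slice representation of coprime weights $(m,n)$, $M^\ast$ is locally isometric to the metric cone on $\sphere^3/\crcl_{(m,n)}$, a ``football'' $2$-sphere with cone angles $2\pi/m$ and $2\pi/n$ at two antipodal points. Along a $1$-stratum of exceptional orbits with $\ZZ_k$-isotropy, $M^\ast$ is locally isometric to a segment times a flat $2$-cone of angle $2\pi/k$. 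Consequently the singular locus of $M^\ast$ is a finite $1$-dimensional graph whose vertices are the images of the fixed points and whose edges are arcs of exceptional orbits, and the underlying topological space is a $3$-manifold.

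The local smoothed model at a vertex is a smooth $\crcl$-invariant metric of positive sectional curvature on a $3$-ball, realized as a warped product $dr^2 + f_\varepsilon(r)^2\, h_\varepsilon$, where $h_\varepsilon$ is a smooth rotationally symmetric metric on $\sphere^2$ with $\Sec \geq 1$ obtained by removing small disks about the two cone points of the football and capping off by spherical caps of matched boundary geometry, and $f_\varepsilon$ is a smooth concave profile with $f_\varepsilon(0) = 0$, $f_\varepsilon'(0) = 1$, $(f_\varepsilon')^2 < 1$ on the interior, and matched to the cone profile outside radius $\varepsilon$. A direct computation of the warped-product sectional curvatures then yields strict positivity on all radial and angular $2$-planes. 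An analogous warped-disk construction handles each edge stratum transversally. Both families of local models converge in Gromov--Hausdorff topology, as $\varepsilon \to 0$, to the corresponding singular cone.

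To assemble $g_n$, I would pick a scale $\varepsilon_n \to 0$, insert the smoothed models in the $\varepsilon_n$-tubular neighborhood of each singular stratum, retain the submersion quotient metric on the complement of a slightly larger neighborhood, and interpolate between the two in a narrow transition annulus within a common $\crcl$-invariant warped-product ansatz. The submersion quotient metric on the regular part is nonnegatively curved by the Gray--O'Neill formula $\Sec^{M^\ast} = \Sec^M + 3|A|^2$ and is strictly positively curved wherever the $A$-tensor is non-zero, but it need not be globally positively curved. The technical heart of the argument, and the step I expect to be the main obstacle, is to design the interpolation so as to preserve strict positive curvature globally: a direct convex combination of metrics generally destroys positivity, so one must work within the feasible region of the pointwise warped-product curvature inequalities and possibly first perturb the ambient metric on $M$ (e.g.\ via a Cheeger-type deformation along the $\crcl$-action) to enhance the $A$-tensor contribution on the regular part. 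Gromov--Hausdorff convergence $(M^\ast, g_n) \to (M^\ast, d)$ then follows from $\varepsilon_n \to 0$ together with the uniform diameter bound on the replacement regions.

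For part (b), the same construction is carried out, except that the $\ZZ_2$-stratum along the distinguished closed curve $c$ is left untouched: the smoothing proceeds around every vertex and every other edge, but the transverse cone angle $\pi$ along $c$ is preserved. The resulting metric $h_n$ has a genuine $\ZZ_2$ orbifold singularity along $c$, no other singularities, is strictly positively curved in the orbifold sense, and Gromov--Hausdorff converges to $(M^\ast, d)$.
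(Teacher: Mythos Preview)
Your outline captures the singular stratification correctly and the local warped-product models are reasonable, but there is a genuine gap at exactly the point you yourself flag as the ``main obstacle'': you never explain how to obtain \emph{positive} curvature on the glued metric. The quotient metric on the regular part is only nonnegatively curved, and there is no mechanism (Cheeger deformation included) that will make it strictly positive if, say, $M$ contains an $\crcl$-invariant flat totally geodesic $2$-torus projecting to a flat region of $M^\ast$. So a direct gluing to the original quotient metric cannot yield $\sec_{g_n}>0$, and the suggestion to ``enhance the $A$-tensor'' does not rescue this.

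The paper resolves this by a two-step strategy you are missing. First, it does \emph{not} aim for positive curvature in the resolution step: after a preparatory deformation making the metric polar near the singular set and spherical-suspension-like near fixed points, it constructs smooth metrics $g_n$ with $\sec_{g_n}\geq -1/n$ (almost nonnegative), using an explicit perturbation of the Killing-field length on a section of the polar action and a Greene--Wu smoothing, with curvature controlled via the formula $\sec=-\varphi^{-1}\nabla^2\varphi$. The isolated fixed-point singularities are then resolved by isometrically embedding their neighborhoods into $\sphere^4$, transferring to $\RR^4$ via a Beltrami map, and convolving the resulting convex function. Second---and this is the key missing ingredient---the passage from $\sec\geq -1/n$ to $\sec>0$ is achieved by Simon's Ricci flow for singular initial metrics in dimension~3, which produces a limit flow $g(t)$ with $\sec_{g(t)}\geq 0$ for $t>0$, followed by Hamilton's theorem that Ricci flow on a closed simply connected $3$-manifold immediately improves nonnegative to positive sectional curvature.

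For part~(b), your idea of simply omitting the smoothing along $c$ is not how the paper proceeds either: it passes to the two-fold branched cover $M^\ast_2(c)$, runs the same construction there $\iota$-equivariantly, and again invokes Simon's and Hamilton's theorems (after checking $M^\ast_2(c)$ is simply connected). The orbifold metrics $h_n$ are then the $\iota$-quotients of the resulting smooth positively curved metrics on the cover.
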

This result is proven in chapter $2$ after some preliminary discussions in chapter $1$. The proof is elementary, but rather technical, and based on a careful deformation of the geometry of the space $M/\crcl$. For an overview of the arguments we refer to the introduction of chapter $2$.

From the classification of positively curved $3$-manifolds by Hamilton \cite{hamilton82} it follows (ignorant of the Poincar\'e conjecture):
\begin{corollary*}
 With the conditions as in the above theorem $M/\crcl$ is homeomorphic to $\sphere^3$.
\end{corollary*}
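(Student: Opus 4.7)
The plan is to combine the existence of smooth positively curved metrics $g_n$ on $M/\crcl$ (given by part (a) of the theorem) with Hamilton's classification of closed $3$-manifolds carrying a metric of positive Ricci curvature. Since positive sectional curvature implies positive Ricci curvature in dimension $3$, each $(M/\crcl,g_n)$ would then be diffeomorphic to a spherical space form, and it would remain only to rule out nontrivial fundamental group.

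First I would verify that $M/\crcl$ is a topological (hence smooth) $3$-manifold, so that the phrase ``smooth Riemannian metric on $M/\crcl$'' from the theorem makes unambiguous sense. By the slice theorem, the local model of the $\crcl$-action is determined by the isotropy representation: near a free orbit the quotient is clearly $\RR^3$; near an orbit with finite isotropy $\ZZ_k$ the slice is $\CC$ with a rotation action, giving local quotient $\RR^2 \times (\CC/\ZZ_k) \cong \RR^3$; and near an isolated fixed point the action on the tangent space is a standard $\crcl$-action on $\CC^2$ with coprime weights $(p,q)$, whose quotient is the open cone on $\sphere^3/\crcl \cong \sphere^2$, again homeomorphic to $\RR^3$. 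Hence $M/\crcl$ is a topological $3$-manifold, and by Moise's theorem it carries a unique smooth structure, compatible with the smooth metrics $g_n$ produced by the theorem.

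Next I would argue that $M/\crcl$ is simply connected. This uses the standard fact that if a connected compact Lie group acts on a compact simply connected manifold, then the orbit space is simply connected; since $M$ is simply connected and $\crcl$ is connected, this applies directly. (The existence of a fixed point gives a particularly transparent argument: any loop in $M/\crcl$ based at the image of a fixed point lifts to a loop in $M$, which bounds a disk by simple connectedness, and the image of that disk bounds the original loop.)

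Finally, fixing any $n$, the smooth closed $3$-manifold $(M/\crcl, g_n)$ has positive sectional, hence positive Ricci, curvature; by Hamilton's theorem in \cite{hamilton82} it is diffeomorphic to a spherical space form $\sphere^3/\Gamma$. Combining with the simple connectedness of $M/\crcl$ forces $\Gamma$ to be trivial, so $M/\crcl$ is diffeomorphic, and in particular homeomorphic, to $\sphere^3$. The main obstacle in the plan is conceptual rather than computational: one must be careful that the $3$-manifold structure on $M/\crcl$ is genuinely smooth (not merely topological) before Hamilton's theorem can be invoked, but this is guaranteed by the local slice analysis together with the existence of the metrics $g_n$ supplied by the theorem.
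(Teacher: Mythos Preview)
Your proposal is correct and follows essentially the same route as the paper: apply Hamilton's classification to one of the positively curved metrics $g_n$ supplied by the theorem, and use simple connectedness of $M/\crcl$ to rule out nontrivial space forms. The only difference is bookkeeping: the paper obtains the facts that $M/\crcl$ is a simply connected $3$-manifold by citing Fintushel's structural result (Lemma~\ref{fintushel}) rather than via your direct slice-theorem analysis, but the substance of the argument is the same.
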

The same conclusion also holds for the two fold branched cover over a singular closed curve and it follows that such a curve is unknotted (see section \ref{final proof}). It is thus possible to replace the references to the Poincar\'e conjecture in \cite{grove-wilking13} by referring to Theorem \ref{resolution thm_intro} and Hamilton's classification of positively curved $3$-manifolds instead.

One might further hope that this result helps to find resolutions of other quotient spaces, which in the optimal case leads to new examples of positively curved manifolds. However, we do not follow this idea here any further, but we note that it was shown in \cite{dyatlov} that such resolutions exist for a particular class of $\crcl$- or $\mathsf S^3$-actions on compact, positively curved manifolds.

After this work was done we learned from Petrunin that in \cite{petrunin14} a similar technique was used to smooth $3$-dimensional polyhedral spaces.
\\ \\
The second main result of this thesis deals with so called \textit{fixed point homogeneous} ma\-ni\-folds. As it is illustrated by the theorems quoted before, adding assumptions on the degree of symmetry of a, say, nonnegatively or positively curved manifold may lead to strong conclusions about the topology or even diffeomorphism type of the manifold. Following this approach, many results for nonnegatively and positively curved manifolds that admit a large amount of symmetry have been obtained (for an overview see \cite{wilking07}).

An isometric action of a compact Lie group $\G$ on $M$ is called \textit{fixed point homogeneous} if it has nonempty fixed point set $\operatorname{Fix}(M)$ and
$$\dim M/\G = \dim \operatorname{Fix}(M) + 1.$$
It is clear that $\dim M/\G \geq \dim \operatorname{Fix}(M) + 1$ for any nontrivial $\G$-action on $M$. Hence, regarding the above approach, fixed point homogeneous actions can be thought of as the largest possible actions given their fixed point components.

In \cite{grove-searle97} Grove and Searle gave a diffeomorphism classification of compact, positively curved, fixed point homogeneous manifolds (in the simply connected case the list consists of the compact rank one symmetric spaces).

Many of the techniques used in their arguments are applicable to the case of nonnegative curvature and led to classification results in dimensions $\leq 5$, see \cite{galaz-garcia12}, \cite{galaz-garcia-spindeler12}. However, note that a classification of nonnegatively curved, fixed point homogeneous manifolds in arbitrary dimensions is equivalent to a classification of nonnegatively curved manifolds (given a nonnegatively curved manifold $N$, the product manifold $N \times \sphere^2$ admits a fixed point homogeneous $\crcl$-action that leaves the product metric invariant). To obtain the mentioned classification results, a decomposition of the manifold as a double disk bundle was instrumental. The second main result of this thesis is that such a decomposition exists for nonnegatively curved, fixed point homogeneous manifolds in arbitrary dimensions.
\begin{theorem}\label{double disk bundle}
 Let $M$ be a closed, fixed point homogeneous $\G$-manifold of nonnegative curvature and $F$ be a fixed point component of maximal dimension. Then there exists a smooth, $\G$-invariant submanifold $N$ (without boundary) of $M$ such that $M$ is equivariantly diffeomorphic to the normal disk bundles $D(F)$ and $D(N)$ of a $F$ and $N$, respectively, glued together along their boundaries;
 $$M \cong D(F) \cup_\partial D(N).$$
 Further $\G$ acts freely on $M \setminus (F \cup N)$.
\end{theorem}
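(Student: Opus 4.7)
The plan is to take $N$ to be the set of points realizing the maximum distance from $F$, use critical point theory for $d_F := d(\cdot, F)$ to see that $F$ and $N$ are the only loci of degeneracy, and then upgrade $N$ to a smooth $\G$-invariant submanifold giving the double disk bundle decomposition.

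First I would set $r_{\max} := \max_{x \in M} d_F(x)$ and $N := d_F^{-1}(r_{\max})$, both well-defined by compactness of $M$. Since $d_F$ is $\G$-invariant, so is $N$. Fixed point homogeneity $\dim M/\G = \dim F + 1$ combined with the slice theorem at $p \in F$ forces the slice representation of $\G = \G_p$ on $\nu_p F$ to have one-dimensional orbit space; hence $\G$ acts transitively on the unit normal sphere at $p$, and an equivariant tubular neighborhood of $F$ is diffeomorphic to the normal disk bundle $D(F)$.

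The central step is to show that $d_F$ has no Grove--Shiohama critical points in $M \setminus (F \cup N)$. For such an $x$ I would pick a minimal geodesic $\sigma$ from $x$ to a nearest $p \in F$, extend the underlying normal geodesic from $p$ through $x$ to length $r_{\max}$, and let $y \in N$ be the endpoint, so that $d(x, y) = r_{\max} - d_F(x)$. For any minimal geodesic $\tau$ from $x$ to a nearest $q \in F$, set $t := d_F(x)$ and apply Toponogov's hinge comparison to the triangle $(q, x, y)$:
\[
r_{\max}^2 \;\leq\; d(q,y)^2 \;\leq\; t^2 + (r_{\max}-t)^2 - 2t(r_{\max}-t)\cos\angle(\tau'(0), -\sigma'(0)).
\]
Rearranging this chain of inequalities forces $\cos\angle(\tau'(0), -\sigma'(0)) = -1$, i.e.\ $\tau'(0) = \sigma'(0)$. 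Thus the direction to a nearest point of $F$ from $x$ is unique and $-\sigma'(0)$ is a regular direction for $-d_F$ at $x$. An equivariant version of the Grove--Shiohama isotopy lemma then yields a $\G$-invariant gradient-like flow deformation retracting $M \setminus \operatorname{int} D_\epsilon(F)$ onto $N$.

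The main obstacle is to promote $N$ from a closed subset to a smooth, closed, $\G$-invariant submanifold carrying an equivariant normal disk bundle $D(N)$. At each $q \in N$ the set $\Lambda_q \subset S(T_qM)$ of initial directions of minimal geodesics from $q$ to $F$ is $\G_q$-invariant; the rigidity in the Toponogov comparison at $q$ (where $d_F$ attains its maximum) together with the transitivity of $\G$ on normal spheres to $F$ forces $\Lambda_q$ to be a single $\G_q$-orbit forming a round sphere in $S(T_qM)$, and its orthogonal complement furnishes the candidate $T_qN$. Smoothness of $N$ and the local disk-bundle structure then follow by combining the slice theorem applied along $N$ with a soul-type convexity argument for $d_F$ near its maximum in the spirit of Cheeger--Gromoll. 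Gluing $D(F)$ and $D(N)$ along their common boundary via the gradient-like flow produces the equivariant decomposition $M \cong D(F) \cup_\partial D(N)$; the absence of $\G$-fixed points outside $F \cup N$ (the intended sense of ``$\G$ acts freely'') follows because the slice representations on $\nu F$ and $\nu N$ each have the origin as their only $\G$-fixed point.
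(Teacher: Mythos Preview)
Your proposal has two genuine gaps, and the second is precisely where the paper's real work lies.

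\textbf{Gap 1: the extension step in the critical-point argument.} You assert that the geodesic from $p\in F$ through $x$, extended to total length $r_{\max}$, has endpoint $y\in N$. This is not justified. Even if the geodesic is horizontal (so that $d_F\circ\gamma$ is concave via the quotient), concavity with $d_F(\gamma(s))=s$ on $[0,t]$ does \emph{not} force $d_F(\gamma(s))=s$ for $s>t$; the derivative can drop below $1$ immediately past $x$. Without $y\in N$ you lose $d(q,y)\ge r_{\max}$ and the Toponogov chain collapses. The paper avoids this by working in $M^*$: there $F$ is a boundary component, Perelman's concavity of the distance to the boundary gives that $d_F$ is concave on all of $M^*$, and then for any $x^*\notin F\cup C$ one simply takes a minimal geodesic from $x^*$ to $C$ and reads off strict increase of $d_F$ along it (Lemma~\ref{regular points}). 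No extension is needed.

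\textbf{Gap 2: smoothness (and boundarylessness) of $N$.} Your claim that rigidity forces $\Lambda_q$ to be a single $\G_q$-orbit forming a round sphere is the positive-curvature picture of Grove--Searle, where the max-distance set is a single orbit. In nonnegative curvature the max-distance set $\hat C=\pi^{-1}(C)$ is typically higher-dimensional and, crucially, can be a submanifold \emph{with} (nonsmooth) boundary; your candidate $N=d_F^{-1}(r_{\max})$ is then simply not a closed manifold. The one-line appeal to ``a soul-type convexity argument in the spirit of Cheeger--Gromoll'' hides exactly the hard part. The paper's content here is: (i) for every $p\in C$ that is not a \emph{regular} boundary point, the tangent cone $T_pC$ lifts to a \emph{linear} subspace of the slice (Proposition~\ref{linear tangent}) --- this is nontrivial because one only knows $T_pC$ is convex, and the proof requires approximating $T_pC$ by a monotone family of horizontally convex sets coming from superlevel sets of $d_F$ (Lemmas~\ref{convex family} and~\ref{all or normal}); (ii) consequently $\hat C$ is a smooth submanifold whose boundary consists entirely of principal orbits (Corollaries~\ref{boundary points},~\ref{smooth lift}); (iii) only then does an honest soul-type iteration, performed \emph{inside $C$} using the distance to its regular boundary components, produce a boundaryless $N$ (proof of Theorem~\ref{ddb}). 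Your sketch supplies none of (i)--(iii).

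A minor point: ``$\G$ acts freely on $M\setminus(F\cup N)$'' in the statement means all orbits there are principal; your interpretation as ``no $\G$-fixed points'' is weaker than what is claimed and proved.
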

In the case of positive curvature $N$ is a point and the theorem was proven in \cite{grove-searle97}. The case of nonnegative curvature is considerably more difficult. For an overview of the arguments see the introduction to chapter $3$, where the theorem is proven. As an application we show the following corollary:
\begin{corollary*}
 Let $M$ be a closed, nonnegatively curved, simply connected torus manifold. Then $M$ is rationally elliptic.
\end{corollary*}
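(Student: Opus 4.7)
The plan is induction on $n = \tfrac{1}{2}\dim M$. Base cases $n \le 2$ follow from existing classification of low-dimensional nonnegatively curved simply connected manifolds with torus symmetry ($\sphere^2$ in dimension $2$, and $\sphere^4,\ \CC P^2,\ \sphere^2\times\sphere^2,\ \CC P^2\#\pm\CC P^2$ in dimension $4$), each rationally elliptic.

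For the inductive step, I first produce a fixed point homogeneous circle. Choosing $p\in\Fix(\T^n)$, the isotropy representation of $\T^n$ on $T_pM\cong\CC^n$ decomposes into complex weight lines with weights $\alpha_1,\dots,\alpha_n\in\mathfrak{t}^*$ forming a basis (as the action is effective with full isotropy at $p$). Fixing an index $i$, the circle $\crcl := \bigcap_{j\neq i}\ker\alpha_j\subset\T^n$ fixes a codimension-$2$ subspace of $T_pM$, so the component $F\subset\Fix(\crcl)$ through $p$ has codimension $2$ in $M$ and $\crcl$ acts fixed point homogeneously. Theorem~\ref{double disk bundle} yields $M\cong D(F)\cup_\partial D(N)$. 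The submanifold $F$ is totally geodesic (hence nonnegatively curved), $\T^n$-invariant (as $\T^n$ is abelian), and the residual effective $\T^{n-1}=\T^n/\crcl$-action on $F$ fixes $p$, making $F$ a closed nonnegatively curved torus manifold of dimension $2(n-1)$. Its simple connectedness follows from a Wilking-type connectedness lemma for codimension-$2$ totally geodesic submanifolds in simply connected nonnegatively curved manifolds, and by the inductive hypothesis $F$ is rationally elliptic.

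For $N$: the common boundary $E=\partial D(F)=\partial D(N)$ is, on one hand, a circle bundle over the simply connected rationally elliptic base $F$; the long exact sequence of $\sphere^1\to E\to F$ then makes $E$ rationally elliptic. On the other hand, $E\to N$ is a sphere bundle whose fiber is rationally elliptic, and the long exact sequence in rational homotopy shows $\pi_*(N)\otimes\mathbb Q$ is finite-dimensional; combined with $H^*(N;\mathbb Q)$ being finite-dimensional (as $N$ is closed), $N$ is rationally elliptic.

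Finally, $M$ has the homotopy type of the double mapping cylinder of the sphere bundle projections $F\leftarrow E\to N$ between rationally elliptic bases; a Grove--Halperin-type argument in rational homotopy (via the Mayer--Vietoris sequence in rational cohomology together with a minimal model calculation) then yields rational ellipticity of $M$. The main obstacle is the treatment of $N$, which need not be simply connected (e.g.\ $N=\sphere^1$ in $\sphere^4$) nor admit a natural torus-manifold structure of its own, so its rational ellipticity must be derived indirectly via the circle and sphere bundles over $E$ rather than by iterating the induction directly; verifying the simple connectedness of $F$ and the final rational-homotopy gluing step also require some care.
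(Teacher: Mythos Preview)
Your inductive scaffolding matches the paper's: find a circle in $\T^n$ acting fixed point homogeneously, apply Theorem~\ref{double disk bundle} to get $M\cong D(F)\cup_\partial D(N)$, verify that $F$ is again a simply connected nonnegatively curved torus manifold of dimension $2(n-1)$, and conclude by induction together with the Grove--Halperin double-disk-bundle criterion. The substantive differences are in two places.

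\textbf{Simple connectedness of $F$.} You invoke a ``Wilking-type connectedness lemma'' for codimension-$2$ totally geodesic submanifolds in nonnegative curvature. Wilking's connectedness theorem as usually stated requires \emph{positive} sectional curvature; the nonnegative version does not hold without further hypotheses, and while the extra $\crcl$-symmetry fixing $F$ is exactly the sort of hypothesis that should repair it, you would need to state and justify that variant precisely. The paper sidesteps this entirely with a direct $\pi_1$ argument: if $\operatorname{codim} N\ge 3$ then transversality gives $\pi_1(F)=\pi_1(M\setminus N)=\pi_1(M)=0$; if $\operatorname{codim} N=2$ the paper constructs a second circle $\T^1_2$ rotating the normal fibers of $N$, extends it over $M$, shows via van Kampen that $\pi_1(E)$ is generated by the two circle orbits, and then uses that both circles fix the point $p_0\in F$ to kill $\pi_1(F)$. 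This is more hands-on but self-contained.

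\textbf{The endgame.} Your treatment of $N$ is an unnecessary detour and is where your argument is least secure: $N$ need not be simply connected (or even nilpotent), so rational-homotopy statements about $N$ require care, and your ``Mayer--Vietoris plus minimal model'' gluing is vague. The paper never touches $N$: by Grove--Halperin (their Corollary~6.1), a simply connected double disk bundle is rationally $\Omega$-elliptic iff the common boundary $E$ is, and $E$ is an $\sphere^1$-bundle over $F$, so $E$ is rationally $\Omega$-elliptic iff $F$ is. That single sentence replaces your last two paragraphs.
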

$M$ is called a torus manifold if it has dimension $2n$, is orientable, and admits an effective action by the $n$-dimensional torus $\mathsf T^n$ with nonempty fixed point set. A stronger result than this corollary is obtained by Wiemeler in \cite{wiemeler14}, where he gives a classification of nonnegatively curved torus manifolds up to equivariant diffeomorphism (note that he uses also our results to obtain this classification).
\\ \\
Finally, we note that the chapters $2$ and $3$, dealing with the proof of theorem \ref{resolution thm_intro} and \ref{double disk bundle} respectively, can be read independently from each other.
\subsection*{Acknowledgements}
It is a great pleasure to thank my advisor Burkhard Wilking for his support and many helpful discussions. I thank  Fernando Galaz-Garcia for pointing out the reference \cite{dyatlov} to me. I thank Michael Wiemeler for noticing an error in the original version of of this work \cite{spindeler14}.
%==============================================================================================
%==============================================
\chapter{Background from Riemannian geometry, group actions and Alexandrov spaces.}\label{bckgrnd}
\pagenumbering{arabic}
We assume that the reader is familiar with the basic concepts and results of Riemannian geometry.  Also, we assume a background on the theory of smooth transformation groups at about the level of the slice theorem, see for example theorem 5.4 in \cite{bredon72}. 

This, together with the material discussed in this chapter, is the necessary background for chapter \ref{chapter resolving}, dealing with the proof of theorem \ref{resolution thm_intro}. It is suggested to the reader of chapter \ref{chapter resolving} that (the details of) the results presented here are skipped in a first reading, and checked when they are referred to. 

Chapter \ref{fph} can be read independently of chapter \ref{chapter resolving}. The reader only interested in the results on nonnegatively curved, fixed point homogeneous manifolds may skip chapters \ref{bckgrnd} and \ref{chapter resolving} entirely, after possibly checking our notations presented in the first paragraphs of sections \ref{group actions} and \ref{orbs}. But note that in chapter $\ref{fph}$ we additionally assume a more detailed background on Alexandrov spaces with curvature bounded below than it is presented in this section (a nice discussion can be found in \cite{burago-burago-ivanov01}).

%=============================================
\section{Riemannian geometry}\label{Riemannian geometry}
\subsection{Basic definitions, formulas and convex functions}\label{Dfacf}
Since some conventions, for example on the sign of the curvature tensor, differ in the literature we start recalling some basic definitions.
Let $(M,g)$ be a Riemannian $n$-manifold with Levi-Civita connection $\nabla$. The curvature tensor is defined as
\begin{align*}
 R_{XY}Z = \nabla_X\nabla_YZ - \nabla_Y\nabla_XZ - \nabla_{\left[X,Y\right]}Z\nonumber
\end{align*}
for smooth vector fields $X,Y$ and $Z : M \to TM$. An inner product on $\Lambda^2T_pM$, the space of  bivectors, is defined via
$$\langle v_1 \wedge v_2, w_1 \wedge w_2\rangle = \det (g(v_i,w_j)_{i,j \in \{1,2\}})$$
and extending bilinearly.
%\begin{align*}
% \{ b_i \wedge b_j | 1 \leq i < j \leq n\}.
%\end{align*}
%We often denote $b_i \wedge b_j$ simply by $b_{ij}$. 
The curvature operator $\mathcal R_p : \Lambda^2T_pM \to \Lambda^2T_pM$ is  the linear operator satisfying 
\begin{align*}
 \langle \mathcal R_p(b_i \wedge b_j),b_k \wedge b_l \rangle = g(R_{b_ib_j}b_l,b_k)
\end{align*}
for an orthonormal basis $\{b_1, \dots, b_n\}$ of $T_pM$ and for all $1 \leq i < j \leq n$. %$\mathcal R_p$ is a selfadjoint endomorphism of $\Lambda^2T_pM$, so there exists an orthonormal basis of $\Lambda^2T_pM$ consisting of eigenvectors of $\mathcal R$. We say that $\mathcal R \geq 0$ ($\mathcal R  > 0$) if $\lambda \geq 0$ ($\lambda > 0)$ for all eigenvalues $\lambda$ of $\mathcal R_p$ for all $p \in M$.
%A special property of dimension $3$ is the following:
%\begin{lemma}\label{curvature and operator}
 %A three dimensional Riemannian manifold $(M,g)$ has nonnegative (positive) curvature operator if and only if it has nonnegative (positive) sectional curvature.
%\end{lemma}
%\begin{proof} This follows from exercise 2.60 in \cite{chow-peng-li}.
%\end{proof}

In chapter $2$ we do several calculations on surfaces $(M^2,g)$. On such a surface we regard the sectional curvature as a smooth function $\sec : M^2 \to \RR$. The following formula is then frequently used:

\begin{lemma}\label{curvature}
 Let $(r,\varphi)$ be coordinates on $U \subset \RR^2$ and $f : U \to \RR$ be a positive smooth function. Consider the Riemannian metric
 $$g = dr^2 + f^2d\varphi^2$$
 on $U$. Then the sectional curvature of $g$ at $(r,\varphi)$ is given by
 $$\sec(r,\varphi) = -\frac{\partial^2_{r}f(r,\varphi)}{f(r,\varphi)}.$$
\end{lemma}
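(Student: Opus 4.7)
The plan is to verify the formula by a direct coordinate computation of the Riemann tensor in the chart $(r,\varphi)$. The metric has matrix $(g_{ij})=\mathrm{diag}(1,f^2)$ and inverse $(g^{ij})=\mathrm{diag}(1,1/f^2)$, so the only potentially nonzero Christoffel symbols come from derivatives of $f^2$. A short calculation with the standard formula $\Gamma^k_{ij}=\tfrac12 g^{kl}(\partial_i g_{jl}+\partial_j g_{il}-\partial_l g_{ij})$ gives
\begin{equation*}
\Gamma^r_{\varphi\varphi}=-f\,\partial_r f,\qquad \Gamma^\varphi_{r\varphi}=\frac{\partial_r f}{f},\qquad \Gamma^\varphi_{\varphi\varphi}=\frac{\partial_\varphi f}{f},
\end{equation*}
with all remaining symbols vanishing. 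These are the only ingredients I will need.

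Next I would compute the single sectional curvature. Since $[\partial_r,\partial_\varphi]=0$, the paper's convention yields
\begin{equation*}
R_{\partial_r\partial_\varphi}\partial_\varphi \;=\; \nabla_{\partial_r}\nabla_{\partial_\varphi}\partial_\varphi \;-\; \nabla_{\partial_\varphi}\nabla_{\partial_r}\partial_\varphi.
\end{equation*}
Plugging in the Christoffel symbols above, one finds $\nabla_{\partial_\varphi}\partial_\varphi=-f(\partial_r f)\partial_r+(\partial_\varphi f/f)\partial_\varphi$ and $\nabla_{\partial_r}\partial_\varphi=(\partial_r f/f)\partial_\varphi$, and then a direct expansion shows the $\partial_\varphi$-components of the two iterated covariant derivatives cancel (the mixed partial $\partial_r\partial_\varphi f$ appears with opposite signs in both pieces), while the $\partial_r$-components combine to give $R_{\partial_r\partial_\varphi}\partial_\varphi=-f\,(\partial_r^2 f)\,\partial_r$.

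Finally, using the definition of sectional curvature together with $\det(g_{ij})=f^2$,
\begin{equation*}
\sec(r,\varphi)\;=\;\frac{g(R_{\partial_r\partial_\varphi}\partial_\varphi,\partial_r)}{g(\partial_r,\partial_r)\,g(\partial_\varphi,\partial_\varphi)-g(\partial_r,\partial_\varphi)^2}\;=\;\frac{-f\,\partial_r^2 f}{f^2}\;=\;-\frac{\partial_r^2 f(r,\varphi)}{f(r,\varphi)},
\end{equation*}
which is the claim. There is no real obstacle here; the only mildly delicate point is to check that the $\varphi$-derivatives of $f$ contribute nothing to the end result, and this is exactly the cancellation of the $\partial_r\partial_\varphi f/f$ terms noted above. (Alternatively one could invoke Brioschi's formula for a diagonal surface metric $E\,du^2+G\,dv^2$ with $E=1$, $G=f^2$, which instantly gives $K=-(1/\sqrt G)\,\partial_r^2\sqrt G=-\partial_r^2 f/f$; but a direct verification seems more in keeping with the paper's conventions.)
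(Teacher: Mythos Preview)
Your proof is correct and is essentially the same direct coordinate computation as in the paper. The only organizational difference is that the paper computes the symmetric component $g(R_{\partial_\varphi\partial_r}\partial_r,\partial_\varphi)$ instead of your $g(R_{\partial_r\partial_\varphi}\partial_\varphi,\partial_r)$: since $\nabla_{\partial_r}\partial_r=0$ (the $r$-lines are geodesics), this choice reduces the curvature to $-f^{-2}g(\nabla_{\partial_r}\nabla_{\partial_r}\partial_\varphi,\partial_\varphi)$ and the $\partial_\varphi f$ terms never enter at all, whereas in your version you must (and do) verify that they cancel.
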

\begin{proof}
The coordinate vector fields of the coordinates $(r,\varphi)$ are denoted by $\partial_r$ and $\partial_\varphi$. First observe that for fixed $\varphi$ the curve $r \mapsto (r,\varphi)$ is a geodesic, so $\nabla_{\partial_r}\partial_r = 0$ (one may also directly calculate this from the Christoffel symbols). It follows that 
 $$g(\nabla_{\partial_r}\partial_{\varphi},\partial_r) = \partial_rg(\partial_\varphi,\partial_r) - g(\partial_\varphi,\nabla_{\partial_r}\partial_r) = 0.$$
 Further
 $$g(\nabla_{\partial_r}\partial_{\varphi},\partial_\varphi) = \frac 1 2 \partial_r g(\partial_\varphi,\partial_\varphi) = f\partial_rf.$$
Since $\partial_r$ and $f^{-1}\partial_\varphi$ are orthonormal, it follows 
$$\nabla_{\partial_r}\partial_\varphi = f^{-1}\partial_rf\partial_\varphi.$$
Consequently, since $\left[\partial_r,\partial_{\varphi}\right] = 0$,
\begin{align*}
\sec(r,\varphi) &= f^{-2}(r,\varphi)g(R_{\partial_\varphi \partial_r}\partial_r,\partial_\varphi)(r,\varphi)\\
&= -f^{-2}(r,\varphi)g(\nabla_{\partial_r} \nabla_{\partial_r} \partial_\varphi,\partial_\varphi)(r,\varphi)\\
&= -f^{-2}(r,\varphi)(\partial_rg(\nabla_{\partial_r} \partial_\varphi,\partial_\varphi) - g(\nabla_{\partial_r} \partial_\varphi,\nabla_{\partial_r} \partial_\varphi))(r,\varphi)\\
&= -f^{-2}(r,\varphi)(\partial_r(f\partial_rf) - f^{-2}(\partial_rf)^2g(\partial_\varphi,\partial_\varphi))(r,\varphi)\\
&= -f^{-2}(r,\varphi)(f\partial^2_{r}f + (\partial_rf)^2 - (\partial_rf)^2)(r,\varphi)\\
&= -\partial^2_{r}f(r,\varphi)/f(r,\varphi). \qedhere
\end{align*}
\end{proof}

 %First we show that $\mathcal{R} \geq c$ implies $sec \geq C$: Let $\{b_1, \dots , b_N\}$ be an orthonormal basis of $\Lambda^2T_pM$ consisting of eigenvectors of $\mathcal{R}$. Then for any orthonormal $v,w \in T_pM$ we have
%\begin{align}
% g(R_{vw}w,v) &= \langle \mathcal{R}(v \wedge w), v \wedge w\rangle = \Sigma_{i<j,k<l}\langle \beta^{ij}\mathcal{R}(b_{ij}),\beta^{kl}b_{kl}\rangle \nonumber\\
%&= \Sigma_{i<j,k<l}\lambda_{ij}\langle \beta^{ij}b_{ij},\beta^{kl}b_{kl}\rangle = \Sigma_{i<j}\lambda_{ij}\beta^{ij2} \geq C\Sigma_{i<j}\beta^{ij2} = C\nonumber
%\end{align}
%Now assume the sectional curvatures of $M$ are bounded below by $c$. Note that $\dim \Lambda^2T_pM = 3$ and let $B_1,B_2,B_3$ be orthonormal eigenvectors of $\mathcal{R}$. Since any $\alpha \in \Lambda^2T_pM^3$ is simple (reference) we find pairwise orthonormal vectors $v_i,w_i$, $1 \leq i \leq 3$, such that $B_i = v_i \wedge w_i$. Consequently for all $i$
%\begin{align}
% \lambda_i = \langle \lambda_iB_i,B_i\rangle = \langle \mathcal{R}(B_i),B_i\rangle = g(R_{v_iw_i}w_i,v_i) \geq C.\nonumber
%\end{align}
%\textbf{Remark.} Notice from the proof that the if part is correct in any dimension. In dimensions bigger or equal to $4$ the only if part is not true in general. For example in even dimensions consider $\CC P^n$ which has positive curvature, but it is well known that one can find eigenvalues $0$ for the curvature operator if $n \geq 2$ (reference).

For a twice differentiable function $f : M \to \RR$ with gradient $\nabla f$ and $v,w \in T_pM$ recall that the Hessian of $f$ is defined via
\begin{align}
 \nabla^2f(v,w) = g(v,\nabla_w\nabla f).\nonumber
\end{align}
%In a coordinate system $\{x_1, \dots, x_n\}$ we have (using the Einstein summation convention)
%\begin{align}
% \nabla^2f(\partial_i,\partial_j) = \partial^2_{ij}f - \Gamma^\alpha_{ij}\partial_\alpha f.\nonumber
%\end{align}
%and for a geodesic $c : [a,b] \to M$ it follows that $\nabla^2f(\dot c(t),\dot c(t)) = d^2/ds^2_{s = t} (f \circ c)$.

A function $f : M \to \RR$ is \textit{convex} (\textit{concave}) if $f \circ \gamma$ is convex (concave) in the usual sense for every geodesic $\gamma$. If $f$ is twice differentiable, and $\gamma$ is a geodesic, it follows that 
$$\nabla^2f(\dot \gamma(t),\dot \gamma(t)) = (f \circ \gamma)''(t).$$
Thus $f$ is convex if and only if $\nabla^2 f \geq 0$. A smooth function $f : M \to \RR$ is \textit{strictly convex} (\textit{strictly concave}) if $\nabla^2 f > 0$ ($\nabla^2f  < 0$). A generalization of this definition to nonsmooth functions was given in \cite{greene-wu76}.
\begin{definition}
 A function $f : M \to \RR$ on a Riemannian manifold is \textit{strictly convex} if for every $p \in M$, for every strictly convex, smooth function $\varphi$ defined in a neighborhood of $p$ there exists an $\epsilon > 0$ such that $f - \epsilon \varphi$ is convex in a neighborhood of $p$. A function $f : M \to \RR$ is \textit{strictly concave} if $-f$ is strictly convex.
\end{definition}
The proof of the following lemma is straight forward and we omit it here.
\begin{lemma}
 Given strictly convex (strictly concave) functions $f$ and $g$, their maximum $\max (f,g)$ (minimum $\min(f,g)$) is strictly convex (strictly concave).
\end{lemma}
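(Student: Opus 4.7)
The plan is to reduce the statement to two elementary facts about convex functions: (i) the pointwise maximum of two convex functions on a Riemannian manifold is again convex, and (ii) a sum of a convex function and a convex function is convex. Both facts follow directly from composing with a geodesic $\gamma$ and invoking the analogous one-dimensional statements.

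To prove the convex case, fix a point $p \in M$ and let $\varphi$ be any smooth strictly convex function defined in a neighborhood of $p$. By the hypothesis that $f$ and $g$ are strictly convex in the sense of Greene–Wu, there exist $\epsilon_1, \epsilon_2 > 0$ and a common neighborhood $U$ of $p$ on which both $f - \epsilon_1 \varphi$ and $g - \epsilon_2 \varphi$ are convex. Setting $\epsilon = \min(\epsilon_1,\epsilon_2)$ and writing
\[
f - \epsilon \varphi = (f - \epsilon_1\varphi) + (\epsilon_1 - \epsilon)\varphi, \qquad g - \epsilon\varphi = (g - \epsilon_2\varphi) + (\epsilon_2 - \epsilon)\varphi,
\]
each summand is convex on $U$, so both $f - \epsilon\varphi$ and $g - \epsilon\varphi$ are convex on $U$. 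Since $\max(f,g) - \epsilon\varphi = \max(f - \epsilon\varphi, g - \epsilon\varphi)$, fact (i) yields that $\max(f,g) - \epsilon\varphi$ is convex on $U$, which is precisely the Greene–Wu strict convexity of $\max(f,g)$ at $p$.

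The strictly concave statement follows by duality: if $f$ and $g$ are strictly concave, then $-f$ and $-g$ are strictly convex by definition, so the previous paragraph applies to $\max(-f,-g) = -\min(f,g)$, showing $-\min(f,g)$ is strictly convex and hence $\min(f,g)$ is strictly concave.

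The only point that requires any care is confirming that one may take a single $\epsilon$ that works for both $f$ and $g$ simultaneously; this is the reason we observe that $(f - \epsilon_1\varphi) + (\epsilon_1 - \epsilon)\varphi$ is convex, using that any nonnegative multiple of the strictly convex $\varphi$ is convex and convexity is preserved under addition. Beyond this bookkeeping, there is no serious obstacle — the lemma is essentially the transcription of standard convex-analytic facts to the manifold setting via the Greene–Wu definition.
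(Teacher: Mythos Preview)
Your proof is correct. The paper omits the proof entirely, stating only that it is ``straight forward,'' and your argument supplies exactly the standard details one would expect: reduce to a common $\epsilon$ via $\epsilon = \min(\epsilon_1,\epsilon_2)$, use that nonnegative multiples of $\varphi$ are convex and that sums of convex functions are convex to see $f-\epsilon\varphi$ and $g-\epsilon\varphi$ are convex, then invoke $\max(f,g)-\epsilon\varphi = \max(f-\epsilon\varphi,\,g-\epsilon\varphi)$ together with the fact that the pointwise maximum of convex functions is convex.
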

%=========================================================
\subsection{The gluing lemma}\label{section gluing}
In this section we derive a simple generalization of a result from Dyatlov \cite{dyatlov}, which is the basis for the arguments in section \ref{section symmetry}.

Let $N \subset M$ be a closed submanifold. We address the question, whether it is possible to glue two metrics defined in a neighborhood of $N$ without changing a common lower curvature bound too much. In case of positively curved metrics this question is answered by the following lemma, see \cite[Lemma 4.3]{dyatlov}.

\begin{lemma}\label{positive gluing}
Let $(M,g)$ be a positively curved manifold and $N \subset M$ be a closed submanifold. Let $\tilde g$ be a positively curved metric defined in an open neighborhood $U$ of $N$. Assume that $\tilde g$ coincides with $g$ up to first order at all points of $N$. Then there exists a smooth, positively curved metric $h$ on $M$ such that $h$ coincides with $g$ on $M \setminus U$ and $h$ coincides with $\tilde g$ on an open neighborhood of $N$.
\end{lemma}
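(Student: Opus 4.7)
The plan is to construct $h$ as a smooth interpolation between $g$ and $\tilde g$ supported in a thin tubular neighborhood of $N$. Fix $\delta_0 > 0$ small enough that the normal exponential map is a diffeomorphism onto $T_{\delta_0}(N) := \{p \in M : d_N(p) < \delta_0\} \subset U$ and that $r := d_N$ is smooth there. Choose $\chi \in C^\infty(\RR, [0,1])$ with $\chi \equiv 1$ on $(-\infty, 1/2]$ and $\chi \equiv 0$ on $[1, \infty)$. For $\epsilon \in (0, \delta_0)$ put $\varphi_\epsilon := \chi(r/\epsilon)$ on $T_{\delta_0}(N)$ and $\varphi_\epsilon := 0$ off it, and define
$$h_\epsilon := g + \varphi_\epsilon (\tilde g - g).$$
Then $h_\epsilon \equiv \tilde g$ on $T_{\epsilon/2}(N)$, $h_\epsilon \equiv g$ off $T_\epsilon(N)$, and the lemma reduces to showing that $h := h_\epsilon$ is positively curved whenever $\epsilon$ is sufficiently small.

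The crucial input is the first-order agreement of $\tilde g$ and $g$ along $N$: in Fermi coordinates $(y, v)$ with $r = |v|$, this gives $\tilde g - g = r^2 B(y, v) + O(r^3)$ and $\nabla(\tilde g - g) = O(r)$ for a smooth symmetric tensor field $B$. Consequently, on $T_\epsilon(N)$ one has $\varphi_\epsilon(\tilde g - g) = O(\epsilon^2)$ and $\nabla[\varphi_\epsilon(\tilde g - g)] = O(\epsilon)$, so $h_\epsilon \to g$ in $C^1$ as $\epsilon \to 0$. This, however, is not sufficient for the curvature: the dangerous term $\nabla^2\varphi_\epsilon \cdot (\tilde g - g)$ has size $\epsilon^{-2} \cdot \epsilon^2 = O(1)$, and similarly for the mixed term $\nabla \varphi_\epsilon \cdot \nabla(\tilde g - g)$.

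The main step is to expand the Riemann tensor of $h_\epsilon$ in Fermi coordinates and show that, for every $2$-plane $\sigma$ at a point $q \in T_\epsilon(N)$,
$$\Sec_{h_\epsilon}(q, \sigma) = (1 - \psi(r(q)/\epsilon))\, \Sec_g(q, \sigma) + \psi(r(q)/\epsilon)\, \Sec_{\tilde g}(q, \sigma) + E_\epsilon(q, \sigma),$$
for some $\psi : [0,1] \to [0,1]$ depending only on $\chi$, with $E_\epsilon$ tending to $0$ uniformly on unit $2$-planes as $\epsilon \to 0$. The underlying cancellation is that each factor $\varphi_\epsilon^{(k)}$, of size $\epsilon^{-k}$, is paired with a factor of $r^{\geq k}$ coming from $\tilde g - g$ or its derivatives, so that the bounded leading contributions reassemble, up to $o(1)$, into the convex combination of the Riemann tensors of $g$ and $\tilde g$. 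Granting this expansion, the conclusion is immediate: by compactness of $\overline{T_{\delta_0}(N)}$, the sectional curvatures of $g$ and $\tilde g$ are uniformly bounded below there by a positive constant; the convex combination retains this lower bound and dominates $E_\epsilon$ for all sufficiently small $\epsilon$. Outside $T_\epsilon(N)$, $h_\epsilon = g$ is positively curved by hypothesis.

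The main obstacle is the bookkeeping that establishes the $o(1)$ control on $E_\epsilon$. One must write out the explicit coordinate formula for the Riemann tensor of $h_\epsilon$, identify which combinations of derivatives of $\varphi_\epsilon$ and of $\tilde g - g$ reassemble into $R(g)$ and $R(\tilde g)$, and then estimate the residual using the second-order vanishing of $\tilde g - g$ on $N$ together with the explicit form $\varphi_\epsilon = \chi(r/\epsilon)$ of the cutoff. The computation is tedious but elementary, and it is the content of \cite[Lemma 4.3]{dyatlov}.
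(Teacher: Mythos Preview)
Your overall strategy---interpolate $g$ and $\tilde g$ by a radial cutoff near $N$, then show the curvature of the interpolant is close to a convex combination of the two curvatures---is exactly the right one and matches the paper's outline. However, the specific cutoff you choose does not work, and this is precisely the point where the argument has content.

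With $\varphi_\epsilon = \chi(r/\epsilon)$ for a \emph{fixed} $\chi$, you correctly observe that the terms $(\partial^2\varphi_\epsilon)(\tilde g - g)$ and $(\partial\varphi_\epsilon)\,\partial(\tilde g - g)$ are each $O(1)$ on the transition annulus. You then assert that these bounded contributions ``reassemble, up to $o(1)$, into the convex combination'' of the curvatures. This is not true. Writing $D=\tilde g - g = r^2 B + O(r^3)$, the extra second-derivative contribution to $R^{h_\epsilon}_{ijkl}$ contains, for instance,
\[
\tfrac{1}{\epsilon^2}\chi''(r/\epsilon)\,(\partial_i r)(\partial_k r)\,D_{jl}
\;=\;\chi''(r/\epsilon)\,\tfrac{r^2}{\epsilon^2}\,(\partial_i r)(\partial_k r)\,B_{jl}+O(\epsilon),
\]
which on the support $r/\epsilon\in[1/2,1]$ is an $O(1)$ tensor whose structure involves $\partial r\otimes\partial r$ and the Hessian $B$ of $D$. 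This is \emph{not} proportional to $R^{\tilde g}-R^g$ (whose leading part is $\partial^2 D$), so it cannot be absorbed into any convex combination $(1-\psi)R^g+\psi R^{\tilde g}$, regardless of how $\psi$ is chosen. Hence $E_\epsilon$ does not tend to zero.

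What actually makes the argument work, and what the paper (following Dyatlov) does, is to choose the cutoff more carefully: one constructs $\varphi_\epsilon:[0,\infty)\to[0,1]$ supported in $[0,\epsilon]$, equal to $1$ on $[0,\delta]$ for some $\delta=\delta(\epsilon)$, with the crucial bounds
\[
|x\,\varphi_\epsilon'(x)|\le\epsilon,\qquad |x^2\,\varphi_\epsilon''(x)|\le\epsilon.
\]
These force $\delta$ to be much smaller than $\epsilon$ (roughly $\delta\le\epsilon e^{-1/\epsilon}$); in particular $\varphi_\epsilon$ is \emph{not} of the form $\chi(r/\epsilon)$ for any fixed $\chi$. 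With this choice, each factor $\varphi_\epsilon^{(k)}$ is bounded by $\epsilon/r^k$ rather than $C/\epsilon^k$, so the pairing with $r^k$ gives $O(\epsilon)$ rather than $O(1)$, and one obtains directly
\[
R^{h_\epsilon}_{ijkl}=\varphi_\epsilon\,R^{\tilde g}_{ijkl}+(1-\varphi_\epsilon)\,R^g_{ijkl}+r^\epsilon_{ijkl},\qquad |r^\epsilon_{ijkl}|\le C\epsilon,
\]
from which the lemma follows immediately. So the missing ingredient in your write-up is not the bookkeeping but the construction of this special cutoff; your reference to \cite[Lemma~4.3]{dyatlov} is correct, but the computation there is carried out with that cutoff, not with $\chi(r/\epsilon)$.
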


We give an overview of the proof and show that one can in fact deduce a more general statement from it, for the details compare \cite{dyatlov}.

First, it is shown that for all $\epsilon > 0$ there exists $0 < \delta < \epsilon$ and a smooth cut off function $\varphi_\epsilon : [0,\infty[ \to \RR$, supported in $[0,\epsilon]$, such that $\varphi_\epsilon = 1$ on $[0,\delta]$, and for all $x \in [0,\infty[$ one has
\begin{align}\label{cutoff}
 \vert x \varphi_\epsilon'(x)\vert \leq \epsilon \text{ and } 
 \vert x^2 \varphi_\epsilon''(x)\vert \leq \epsilon.
\end{align}
Now let $N \subset M$ be a closed submanifold and $g, \tilde g$ be two metrics defined on a neighborhood of $N$ such that $g$ and $\tilde g$, together with its first order derivatives, coincide at all points of $N$. For $q \in M$ let $\psi_\epsilon(q) = \varphi_\epsilon(d^g(N,q))$ and set 
\begin{align*}
h_\epsilon(q) = \psi_\epsilon(q)\tilde g(q) + (1 - \psi_\epsilon(q))g(q).
\end{align*}
Fix $p \in N$. A direct calculation in a coordinate system at $p$, using \eqref{cutoff}, shows that there exist a neighborhood $U$ of $p$ and a constant $m$ (where $U$ and $m$ are independent of $\epsilon$) such that
\begin{align}\label{banane}
R^{h_\epsilon}_{ijkl}(q) = \psi_\epsilon(q) R^{\tilde g}_{ijkl}(q) + (1 - \psi_\epsilon(q))R^g_{ijkl}(q) + r^\epsilon_{ijkl} (q)
\end{align}
for all $q \in U$ and $\vert r^\epsilon_{ijkl}(q) \vert < m\epsilon$ for all $\epsilon$ sufficiently small. For a given common lower curvature bound $c$ of $g$ and $\tilde g$ it follows that for all sufficiently small $\epsilon > 0$ the lower curvature bound of $h_\epsilon$ is arbitrary close to $c$ on some open neighborhood of $p$. Since $N$ is compact, it follows the first part of the following lemma.

\begin{lemma}[Gluing lemma]\label{gluing}
 Let $N \subset (M,g)$ be a closed submanifold and $\tilde g$ be a smooth metric defined on an open neighborhood $U$ of $N$ which coincides with $g$ up to first order at all points of $N$. Assume that $g$ and $\tilde g$ satisfy a common lower curvature bound $c$. Then for all $\epsilon > 0$ there exists a smooth metric $h = h(\epsilon)$ on $M$ such that $h = g$ on $M\setminus U$, $h = \tilde g$ on some open neighborhood of $N$, and $h_\epsilon$ has lower curvature bound $c - \epsilon$. Further, $h$ may be chosen in a way that $\operatorname{Iso}_N(g_{|U}) \cap \operatorname{Iso}_N(\tilde g) \subset \operatorname{Iso}(h_{|U})$. 
\end{lemma}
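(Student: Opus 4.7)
The plan is to reuse the interpolation
$$h_\epsilon = \psi_\epsilon \tilde g + (1 - \psi_\epsilon) g,$$
with $\psi_\epsilon(q) = \varphi_\epsilon(d^g(N, q))$, that already appears in the discussion preceding the lemma, and to verify that beyond what is explicitly carried out for Lemma \ref{positive gluing} two further facts hold: the curvature argument passes from positivity to an arbitrary common lower bound $c$, and the construction inherits any common isometry that preserves $N$. For the first, the pointwise identity \eqref{banane} yields, in a normal coordinate system at any $p \in N$,
$$R^{h_\epsilon}_{ijkl}(q) = \psi_\epsilon(q) R^{\tilde g}_{ijkl}(q) + (1 - \psi_\epsilon(q)) R^g_{ijkl}(q) + r^\epsilon_{ijkl}(q), \qquad |r^\epsilon_{ijkl}(q)| < m\epsilon,$$
uniformly for small $\epsilon$. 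Since $g$ and $\tilde g$ coincide to first order on $N$, they agree at $p$ and differ from one another, and from $h_\epsilon$, by $O(d^g(\cdot, N)^2)$ near $N$; on $\operatorname{supp}\psi_\epsilon$ this is $O(\epsilon^2)$. This closeness lets one pass from the componentwise identity above to an estimate on sectional curvatures: for any $h_\epsilon$-orthonormal pair $\{X,Y\}$ at a point of $U_p$, the vectors are $g$- and $\tilde g$-orthonormal up to $O(\epsilon^2)$ and the sectional curvatures $\sec_g$, $\sec_{\tilde g}$ are bounded on the compact set $\overline{U_p}$, so $\sec_{h_\epsilon}(X,Y) \geq c - C \epsilon$ for some $C = C(p)$. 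Compactness of $N$ makes $C$ uniform, and replacing $\epsilon$ by $\epsilon/C$ gives the stated bound $c - \epsilon$; outside the $\epsilon$-neighborhood of $N$ one has $h_\epsilon \equiv g$, so nothing new is introduced there.

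For the symmetry claim, let $\phi \in \operatorname{Iso}_N(g_{|U}) \cap \operatorname{Iso}_N(\tilde g)$, so that $\phi \colon U \to U$ is an isometry of both $g$ and $\tilde g$ satisfying $\phi(N) = N$. Because $\phi$ is a $g$-isometry mapping $N$ to itself, it preserves the distance function $d^g(\cdot, N)$, and hence the cutoff $\psi_\epsilon = \varphi_\epsilon \circ d^g(\cdot, N)$. Using additionally $\phi^* g = g$ and $\phi^* \tilde g = \tilde g$, one computes
$$\phi^* h_\epsilon = \psi_\epsilon \cdot \phi^*\tilde g + (1 - \psi_\epsilon) \cdot \phi^* g = \psi_\epsilon \tilde g + (1 - \psi_\epsilon) g = h_\epsilon,$$
so $\phi$ is an isometry of $(U, h_\epsilon)$. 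This yields $\operatorname{Iso}_N(g_{|U}) \cap \operatorname{Iso}_N(\tilde g) \subset \operatorname{Iso}(h_{|U})$.

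The main technical step is the estimate \eqref{banane} itself, which is already done in Dyatlov's paper, so the adaptation required here is minor. The only point needing care is the transfer from the componentwise curvature bound to a sectional curvature bound when $c$ is allowed to be non\-positive: one has to control the error arising from comparing $h_\epsilon$-orthonormal frames with $g$- and $\tilde g$-orthonormal frames, and here the first-order agreement of $g$ and $\tilde g$ on $N$ is exactly what makes this error of order $\epsilon$ rather than of uniform size. Everything else is a verbatim repetition of Dyatlov's argument.
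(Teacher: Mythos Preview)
Your proposal is correct and follows essentially the same approach as the paper: the curvature bound is deduced from the componentwise identity \eqref{banane} together with compactness of $N$, and the isometry claim from the observation that any $\phi \in \operatorname{Iso}_N(g_{|U}) \cap \operatorname{Iso}_N(\tilde g)$ preserves $\psi_\epsilon$ since it preserves $d^g(\cdot,N)$. You supply a little more detail than the paper on passing from the componentwise estimate to a sectional-curvature bound, but the argument is the same.
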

Here $\operatorname{Iso}_N(g_{|U}) \subseteq \operatorname{Iso}(g_{|U})$ and $\operatorname{Iso}_N(\tilde g) \subseteq \operatorname{Iso}(\tilde g)$ denote the corresponding subgroups of the isometry groups $\operatorname{Iso}(g_{|U})$ and $\operatorname{Iso}(\tilde g)$ that leave $N$ invariant. Since also $\operatorname{Iso}(g_{|U}) \subset \operatorname{Diffeo}(U)$ and $\operatorname{Iso}(\tilde g) \subset \operatorname{Diffeo}(U)$, their intersection is well defined. 

It remains to prove that $\operatorname{Iso}_N(g_{|U}) \cap \operatorname{Iso}_N(\tilde g) \subset \operatorname{Iso}(h_{|U})$: Let $f \in \Iso_N(g_{|U}) \cap \Iso_N(\tilde g)$. Since $\psi$ is radial with respect to $g$ and $N$ and further $f$ is an isometry leaving $N$ invariant, we conclude that $\psi \circ f = \psi$. Consequently, since also $f \in \Iso_N(\tilde g)$,
\begin{align}
 h_{f(p)}(dfX,dfY) &= \psi(f(p)) \tilde g_{f(p)}(dfX,dfY) + (1 - \psi(f(p)))g_{f(p)}(dfX,dfY)\nonumber\\
 &= \psi(p)\tilde g_p(X,Y) + (1 - \psi(p))g_p(X,Y)\nonumber\\
 &= h_p(X,Y),\nonumber
\end{align}
so $f \in \operatorname{Iso}(h_{|U})$.

\begin{corollary}\label{constant curvature at fixed point}
Let $(M,g)$ be a Riemannian manifold with $\sec \geq c$ and $p \in M$. Then for a given open neighborhood $U$ of $p$ and all $\epsilon > 0$ and $\kappa \geq c$ there exists a Riemannian metric $h = h(U, \epsilon, \kappa)$ on $M$ such that $h = g$ on $M \setminus U$, $\sec_h \geq c - \epsilon$ and $h$ has constant curvature $\kappa$ on an open neighborhood $V$ of $p$. Further, $\Iso_{p}g \subseteq \Iso_ph$.
\end{corollary}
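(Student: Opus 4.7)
The plan is to reduce the statement to the gluing lemma (Lemma \ref{gluing}) applied to the closed submanifold $N = \{p\}$. To do so I need to produce a local model metric $\tilde g$ defined on a neighborhood of $p$ which has constant sectional curvature $\kappa$, coincides with $g$ up to first order at $p$, and shares the common lower curvature bound $c$ with $g$.

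For the construction of $\tilde g$, I would fix a linear isometry $L : (T_pM, g_p) \to (T_qM_\kappa^n, (g_\kappa)_q)$, where $M_\kappa^n$ is the simply connected $n$-dimensional space form of constant curvature $\kappa$ and $q \in M_\kappa^n$ is a basepoint. On a small $g$-normal ball $B$ around $p$ on which both $\exp_p^g$ and $\exp_q^{g_\kappa} \circ L$ are diffeomorphisms, set
$$\tilde g := \bigl( \exp_q^{g_\kappa} \circ L \circ (\exp_p^g)^{-1} \bigr)^* g_\kappa.$$
By construction $\sec_{\tilde g} \equiv \kappa$ on $B$. Writing both metrics in $g$-normal coordinates at $p$, both satisfy $g_{ij}(p) = \tilde g_{ij}(p) = \delta_{ij}$ with vanishing first derivatives at $p$ (the Christoffel symbols of any Riemannian metric vanish at the origin of its normal coordinates); hence $g$ and $\tilde g$ agree to first order at $p$. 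Since $\kappa \geq c$, the common lower bound $c$ on sectional curvature holds on $B$.

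Now I would apply Lemma \ref{gluing} with $N = \{p\}$ to splice $\tilde g$ into $g$. To secure the isometry statement, I would first shrink $U$ to an $\operatorname{Iso}_p(g)$-invariant open neighborhood $U'$ of $p$ with $U' \subseteq U \cap B$ (for example, the intersection of $U$ and $B$ with a small $g$-metric ball centered at $p$, which is automatically invariant under any $g$-isometry fixing $p$) and then apply the lemma on $U'$. This yields a smooth metric $h$ on $M$ with $h = g$ outside $U' \subseteq U$, $h = \tilde g$ on some open neighborhood $V$ of $p$ (so $\sec_h \equiv \kappa$ on $V$), and $\sec_h \geq c - \epsilon$.

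The one point I still need to verify is that every $f \in \operatorname{Iso}_p(g)$ lies in $\operatorname{Iso}_p(\tilde g_{|U'}) \cap \operatorname{Iso}_p(g_{|U'})$, so that the isometry clause of Lemma \ref{gluing} applies and gives $f \in \operatorname{Iso}(h_{|U'})$, hence $f \in \operatorname{Iso}_p(h)$. The membership in $\operatorname{Iso}_p(g_{|U'})$ is automatic from the $\operatorname{Iso}_p(g)$-invariance of $U'$. For $\tilde g$, I would use the standard identity $f \circ \exp_p^g = \exp_p^g \circ df_p$ for Riemannian isometries fixing $p$, together with the fact that $df_p$ is a linear isometry of $(T_pM, g_p)$ and that $(M_\kappa^n, g_\kappa)$ is invariant under the action of $O(T_qM_\kappa^n)$ via $\exp_q^{g_\kappa}$; conjugating by $L$ then shows that $f$ preserves $\tilde g$ on $B$. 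I do not anticipate a genuine obstacle here; the only subtlety is arranging the model $\tilde g$ and the neighborhood $U'$ so that first-order agreement at $p$, the common curvature bound, and the $\operatorname{Iso}_p(g)$-symmetry of the construction all hold at once before invoking Lemma \ref{gluing}.
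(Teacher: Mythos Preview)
Your proposal is correct and follows essentially the same approach as the paper. The paper phrases it more tersely by passing directly to $g$-normal coordinates (so $M=\RR^n$, $p=0$, $g_{ij}(0)=\delta_{ij}$, $\partial_k g_{ij}(0)=0$, and $\Iso_p(g)\subset \mathsf O(n)$ via $f\mapsto df_p$), then takes $\tilde h$ to be the standard constant-curvature-$\kappa$ metric in those coordinates and invokes the gluing lemma together with $\Iso_p(\tilde h)=\mathsf O(n)$; your explicit transplant via $\exp_q^{g_\kappa}\circ L\circ(\exp_p^g)^{-1}$ is exactly this construction written out, and your verification of the isometry clause is the same argument.
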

As above, $\Iso_{p}g$ denotes the group of isometries of $g$ that fix $p$, and analogously for $h$.
\begin{proof}
Since $\exp_p$ is a local diffeomorphism, we may assume that $M = \RR^n$, $p = 0$, $g_{ij}(0) = \delta_{ij}$ and $\partial_kg_{ij}(0) = 0$ with respect to the standard coordinate system of $\RR^n$. Since for every $f \in \Iso_p(M)$ we have $f \circ \exp_p = \exp_p \circ df_p$ by equivariance of $\exp_p$, we may further assume that $\Iso_{0}(g) \subset \mathsf O(n)$.
%Let $\{x_i\}$ be normal coordinates of $g$ centered at $p$ defined on $U$. So $g_{ij}(p) = \delta_{ij}$ and $\partial_ig_{jk} = 0$ for all $i,j,k$. 
Now let $\tilde h_{ij}$ be the standard metric of constant curvature $\kappa$ defined on some open neighborhood $\tilde V$ of $0$. Then $\tilde h_{ij}(0) = \delta_{ij}$ and $\partial_i \tilde h_{jk}(0) = 0$. So $g$ and $\tilde h$ coincide up to first order at $0$ and we can apply the gluing lemma to obtain the metric $h$. Since $\Iso_p(\tilde h) = \mathsf O(n)$, the claim follows.
\end{proof}
%====================================================
\section{Isometric group actions}\label{group actions}
Let $\mathsf G$ be a compact Lie group acting smoothly and from the left on a compact manifold $M$. For $g \in \G$ we often identify $g$ with the induced diffeomorphism of $M$ sending $p$ to $g.p$. We denote the orbit of a point $p \in M$ by $\G(p)$ and the isotropy group by $\G_p$. Assuming that $\G$ acts isometrically with respect to some Riemannian metric $g$, the tangent space at $p$ decomposes orthogonally as $T_pM = T_p\G(p) \oplus N_p\G(p)$, and this decomposition is invariant under the action of $\G_p$ on $T_pM$ via $g.v = dg_pv$ for $v \in T_pM$. Hence $\G_p$ also acts on $N_p\G(p)$. We refer to these actions on $T_pM$ and $N_p\mathsf G(p)$ as the \textit{isotropy representation} and \textit{slice representation}, respectively. By the slice theorem a tubular neighborhood of $\G(p)$ is equivariantly diffeomorphic to $\G \times_{\mathsf G_p} N_p \G(p)$. Two orbits are said to be of the \textit{same type} if their isotropy groups are conjugate, and $\G(q)$ has \textit{bigger type} than $\G(p)$ if a conjugate of $\G_q$ is properly contained in $\G_p$. Since $M$ is compact, it follows from the slice theorem that there exist only finitely many orbit types and a unique maximal type (meaning that it is bigger than any other type). An orbit of maximal type is called \textit{principal}. \textit{Exceptional} orbits are orbits that are not principal but have the same dimension as a principal orbit. A \textit{fixed point} $p$ of the action satisfies $\G_p = \G$.
\subsection{\texorpdfstring{Polar $\crcl$-actions}{Polar circle-actions}}
\begin{definition}
An isometric action by a compact Lie group $\mathsf G$ on a Riemannian manifold $M$ is called \textit{polar} if there exists a connected, immersed submanifold $\Sigma$ which intersects each orbit of the action and does so orthogonally. $\Sigma$ is called a \textit{section} of the action.
\end{definition}
\begin{remark}
 Note that our definition of a polar action differs from the common definition. Usually $M$ as well as $\Sigma$ are assumed to be complete (see for example \cite{grove-ziller12} for an introduction to polar actions). We will also need to deal with noncomplete manifolds, but we stick with the common terminology for convenience.
\end{remark}
\begin{lemma}
A section of a polar action is totally geodesic.
\end{lemma}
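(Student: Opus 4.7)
The plan is to show that the second fundamental form $II$ of $\Sigma$ in $M$ vanishes, which is equivalent to $\Sigma$ being totally geodesic. First I would reduce to verifying this at principal points $p \in \Sigma$: principal orbits are dense in $M$ for compact $\G$, so principal points form a dense subset of $\Sigma$, and $II$ depends continuously on the base point. At such a principal $p$, the key observation is $T_p\Sigma = N_p\G(p)$: the definition of a section gives $T_p\Sigma \subseteq N_p\G(p)$, while the requirement that $\Sigma$ meet every (nearby) principal orbit forces $\dim \Sigma \geq \dim N_p\G(p)$.

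The core computation uses the Killing fields induced by $\mathfrak g$. Given $Z \in \mathfrak{g}$, let $K$ denote the induced Killing field on $M$; then $K_q \in T_q\G(q)$ for every $q \in M$, so for any smooth vector fields $X, Y$ tangent to $\Sigma$ one has $g(X, K) \equiv g(Y, K) \equiv 0$ on $\Sigma$. Differentiating these identities along $X$ and $Y$ respectively, subtracting, and using the skew-symmetry $g(\nabla_X K, Y) + g(\nabla_Y K, X) = 0$ from the Killing equation, a short manipulation yields
\begin{align*}
 2\, g(\nabla_X Y, K) \;=\; g([X, Y], K)
\end{align*}
at every point of $\Sigma$. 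Since $X$ and $Y$ are tangent to the submanifold $\Sigma$, so is $[X, Y]$, and therefore at $p \in \Sigma$ we have $[X, Y]_p \perp T_p\G(p) \ni K_p$, making the right-hand side vanish. Hence $g(\nabla_X Y, K)_p = 0$ for every Killing field $K$ coming from $\mathfrak g$. As $K_p$ ranges over all of $T_p\G(p)$ when $Z$ ranges over $\mathfrak g$, the $T_p\G(p)$-component of $\nabla_X Y$ at $p$ is zero; combined with the orthogonal splitting $T_pM = T_p\Sigma \oplus T_p\G(p)$ at a principal point, this gives $II_p(X, Y) = 0$. Density of principal points together with continuity of $II$ then yields $II \equiv 0$ on $\Sigma$.

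The main obstacle I expect is not the Killing-field manipulation, which is routine, but the dimension and density step: one must carefully invoke the principal orbit theorem, combined with the orthogonal intersection property, to ensure both that principal points are dense in $\Sigma$ and that $T_p\Sigma$ coincides with $N_p\G(p)$ at such points, so that "normal to the orbit" and "tangent to $\Sigma$" genuinely partition $T_pM$. Once that structural point is settled, the proof reduces to the short Killing-field calculation above.
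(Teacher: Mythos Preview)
Your proof is correct and is in fact the standard Killing-field argument for this result. The paper itself does not give a proof but merely cites \cite{grove-ziller12}, noting that the argument there applies without completeness assumptions; the proof you have written is essentially the one found in that reference (and in Palais--Terng), so there is nothing to compare beyond observing that you have supplied what the paper omits.
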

\begin{proof}
A proof can be found in \cite{grove-ziller12} (the proof applies without assuming completeness of $M$ or $\Sigma$).
\end{proof}
\begin{lemma}\label{idontknow}
Let $M$ be a polar $\crcl$-manifold, $p \in M$ and $\Sigma$ a section with $p \in \Sigma$. Then for all $x,y,z \in T_p\Sigma$ and $u,v,w \in N_p\Sigma$ 
$$\langle R_{xy}z, v \rangle = \langle R_{xu}v, w \rangle = 0.$$
\end{lemma}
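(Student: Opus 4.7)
The plan is to exploit the fact, established immediately before, that a section $\Sigma$ of a polar action is totally geodesic. I would split the argument into the two identities, which require different ingredients.

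For the first identity $\langle R_{xy}z, v\rangle = 0$ with $x, y, z \in T_p\Sigma$ and $v \in N_p\Sigma$, totally geodesic is enough. Since $\Sigma$ is totally geodesic, the Levi-Civita connection of $(M,g)$ and that of $(\Sigma, g_{|\Sigma})$ agree on vector fields tangent to $\Sigma$. Consequently the ambient curvature tensor applied to three vectors in $T_p\Sigma$ coincides with the intrinsic curvature tensor of $\Sigma$, and in particular $R_{xy}z \in T_p\Sigma$. Pairing with $v \in N_p\Sigma$ gives $0$. (Equivalently, the Codazzi equation with vanishing second fundamental form gives $(R_{xy}z)^\perp = 0$.)

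For the second identity I would use the dimension of a section of a polar $\crcl$-action. If the action is trivial, $N_p\Sigma = 0$ and there is nothing to prove. Otherwise, principal orbits are one-dimensional and so the cohomogeneity is $\dim M - 1$; by the general characterization of polar sections (at a regular point, $T_p\Sigma \subseteq N_p\G(p)$ by orthogonality while $\Sigma$ must meet every nearby orbit, forcing equality), $\dim\Sigma = \dim M - 1$. Hence $\dim N_p\Sigma \leq 1$ at every $p \in \Sigma$. Let $X$ span $N_p\Sigma$ and write $u = aX$, $v = bX$, $w = cX$. Then
$$\langle R_{xu}v, w\rangle = abc\,\langle R_{xX}X, X\rangle,$$
which vanishes by the standard antisymmetry $\langle R(A,B)C, D\rangle = -\langle R(A,B)D, C\rangle$ applied with $C = D = X$.

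There is no serious obstacle here: once the codimension-one fact is in hand, both identities are essentially tautological. The only subtlety is justifying $\dim N_p\Sigma \le 1$ at non-principal points of $\Sigma$ (e.g., fixed points or points on exceptional orbits), but since $\dim \Sigma$ is a property of the submanifold, not of the individual point, the same bound holds everywhere on $\Sigma$, and the curvature identities follow uniformly.
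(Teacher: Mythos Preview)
Your proof is correct and follows essentially the same route as the paper: both identities rest on $\Sigma$ being totally geodesic (so $R_{xy}z\in T_p\Sigma$) and on $N_p\Sigma$ being one-dimensional for an $\crcl$-action, after which curvature symmetries finish the job. The paper phrases the second identity via $R_{vw}u=0$ and the pair symmetry $\langle R_{xu}v,w\rangle=\langle R_{vw}x,u\rangle$, whereas you use antisymmetry in the last two slots; these are cosmetic variants of the same argument.
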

\begin{proof} 
%For continouity reasons we may assume that $\crcl \ast p$ is principal. Let $U$ and $\iota$ be as in the proof of \ref{isometric reflection}.
%Clearly $d\iota_{p \vert T_p\Sigma} = \operatorname{Id}$ and $d\iota_{p \vert T_p\Sigma^\perp} = -\operatorname{Id}$. Since $\iota$ is an isometry of $U$ we obtain 
%$$\langle R_{xy}z,v \rangle = \langle d\iota_pR_{xy}z,d\iota_pv \rangle  = -\langle R_{xy}z,v \rangle,$$
%so $\langle R_{xy}z,v \rangle = 0$. Similarly $\langle R_{xu}v,w\rangle = 0$.
Since $\Sigma$ is totally geodesic, it follows that $R_{xy}z$ is tangent to $\Sigma$. Hence $\langle R_{xy}z, v \rangle = 0.$ Since $N_p\Sigma$ is one dimensional, it follows that $R_{vw}u = 0$, so $\langle R_{xu}v, w \rangle = 0$ by the symmetries of $R$.
\end{proof}
Given an isometric action of $\crcl = \{e^{i\theta} \vert \theta \in \RR\}$ on $M$ the \textit{associated Killing vector field of the action} is the vector field $X(p) = \frac{d}{d\theta}_{|\theta = 0} e^{i\theta}p$.
\begin{lemma}\label{Hessian}
Let $(M,g)$ be a Riemannian manifold admitting a polar $\crcl$-action with associated Killing vector field $X$. Let $\Sigma$ be a section and $\varphi(p) := ||X(p)||$. Then for all $p \in \Sigma$ with $X(p) \neq 0$ and $v,w \in T_p\Sigma$ the following identity holds:
\begin{align}\label{hes = R}
-\varphi(p)\nabla^2\varphi(v,w) = \langle R_{X(p)v}w,X(p)\rangle.
\end{align}
Hence, for $||v|| = 1$, the sectional curvature of the plane spanned by $X(p)$ and $v$ is given by 
$$-\varphi(p)^{-1}\nabla^2\varphi(v,v).$$
In particular $\varphi : (U,g) \to \RR$ defines a concave  (strictly concave) function if $(M,g)$ is nonnegatively (positively) curved, where $U$ denotes the set of points $p \in \Sigma$ with $X(p) \neq 0$.
\end{lemma}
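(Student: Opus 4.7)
The plan is to differentiate $\varphi^{2} = g(X,X)$ twice along a geodesic in $\Sigma$, and to use two inputs: the Jacobi equation satisfied by Killing fields along geodesics, and the geometric consequences of the polar assumption (namely $X\perp\Sigma$ and $\Sigma$ totally geodesic). The curvature term will come from the Jacobi equation, and the polar assumption will eliminate the auxiliary first-order term $|\nabla_{v}X|^{2}$ in a very clean way.

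First, I would record the standard Killing identity in our sign convention. Since $X$ is Killing, $g(\nabla_{v}X,w) = -g(v,\nabla_{w}X)$ for all $v,w$, and along any geodesic $\gamma$ the vector field $t\mapsto X(\gamma(t))$ is a Jacobi field, i.e.\ $\nabla_{\dot\gamma}\nabla_{\dot\gamma}X = R_{\dot\gamma X}\dot\gamma$. Fix $p\in\Sigma$ with $X(p)\neq 0$, pick $v\in T_{p}\Sigma$, and let $\gamma$ be the geodesic in $\Sigma$ with $\dot\gamma(0)=v$; since $\Sigma$ is totally geodesic (by the preceding lemma), $\gamma$ is also a geodesic of $M$. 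Differentiating $\varphi^{2}\circ\gamma = g(X\circ\gamma, X\circ\gamma)$ twice gives
\begin{align*}
(\varphi^{2}\circ\gamma)''(0) &= 2\,g(\nabla_{v}\nabla_{v}X, X) + 2\,|\nabla_{v}X|^{2}\\
&= 2\,g(R_{vX}v, X) + 2\,|\nabla_{v}X|^{2}.
\end{align*}

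Next I would use the polar hypothesis to compute $|\nabla_{v}X|^{2}$ and to see that $\nabla_{v}X$ is a multiple of $X(p)$. For any $u\in T_{p}\Sigma$, extended to a local vector field tangent to $\Sigma$, the function $g(X,u)$ vanishes identically on $\Sigma$ because $X\perp\Sigma$. Differentiating in direction $v\in T_{p}\Sigma$ and using that $\nabla_{v}u\in T_{p}\Sigma$ (because $\Sigma$ is totally geodesic, so $g(X,\nabla_{v}u)=0$), we get $g(\nabla_{v}X, u)=0$ for all $u\in T_{p}\Sigma$. Thus $\nabla_{v}X$ is normal to $\Sigma$; as $p$ is regular and the action is $\crcl$, the normal space $N_{p}\Sigma$ is one-dimensional and spanned by $X(p)$. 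The scalar is determined by $g(\nabla_{v}X,X) = \tfrac{1}{2}v(\varphi^{2}) = \varphi(p)\,v(\varphi)$, hence $\nabla_{v}X = \frac{v(\varphi)}{\varphi(p)}X(p)$ and therefore $|\nabla_{v}X|^{2} = (v\varphi)^{2}$.

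Finally I would assemble everything. Using $g(R_{vX}v,X) = -\langle R_{Xv}v,X\rangle$ and the chain rule $(\varphi^{2})'' = 2\varphi\,\varphi'' + 2(\varphi')^{2}$, we rewrite the computation as
\begin{align*}
2\varphi(p)\,\nabla^{2}\varphi(v,v) + 2(v\varphi)^{2} = -2\langle R_{Xv}v, X\rangle + 2(v\varphi)^{2},
\end{align*}
so $-\varphi(p)\,\nabla^{2}\varphi(v,v) = \langle R_{Xv}v, X\rangle$. Polarization gives the bilinear identity \eqref{hes = R}. The sectional curvature formula then follows immediately, since $X(p)/\varphi(p)$ is a unit vector orthogonal to the unit $v\in T_{p}\Sigma$. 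For concavity: the intrinsic Hessian of $\varphi|_{\Sigma}$ agrees with the restriction of the ambient Hessian to $T\Sigma$ because $\Sigma$ is totally geodesic, so a lower bound $\sec\geq 0$ (resp.\ $>0$) forces $\nabla^{2}\varphi\leq 0$ (resp.\ $<0$) on $T_{p}\Sigma$. In the positively curved case the smooth strict inequality supplies the Greene--Wu strict concavity by using $\varphi$ itself as the local test function after a routine rescaling.

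The only delicate step is the polar simplification $\nabla_{v}X = \frac{v\varphi}{\varphi}X$; once it is in place, the rest is the classical Killing/Jacobi calculation and a short polarization.
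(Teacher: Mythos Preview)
Your proof is correct and follows essentially the same route as the paper. Both arguments hinge on the observation that for $v\in T_p\Sigma$ the covariant derivative $\nabla_v X$ is a scalar multiple of $X(p)$ (the paper phrases this as ``$X/\|X\|$ is parallel along $\gamma$'', using that $\Sigma$ is totally geodesic and $N\Sigma$ is one-dimensional; you argue directly that $\nabla_v X\perp\Sigma$ and then use $\dim N_p\Sigma=1$). The only cosmetic difference is that the paper differentiates $\varphi$ directly and uses parallelity of $X/\|X\|$ to kill the cross term, whereas you differentiate $\varphi^2$, pick up the extra $2(v\varphi)^2$, and cancel it against $2|\nabla_v X|^2$; these are two sides of the same computation. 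Your remark about Greene--Wu strict concavity is unnecessary here, since $\varphi$ is smooth on $U$ and the paper's definition for smooth functions is simply $\nabla^2\varphi<0$.
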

\begin{proof}
By linearity it is enough to prove
\begin{align*}
\nabla^2\varphi(v,v) = -\varphi(p)^{-1} \langle R_{X(p)v}v,X(p)\rangle
\end{align*}
for all $p \in \Sigma$ with $\varphi(p) > 0$ and $v \in T_p\Sigma$. Let such $p$ and $v$ be given and $\gamma_v$ be a geodesic with $\gamma(0) = p$ and $\dot \gamma (0) = v$. First note that 
\begin{align}\label{prrl}
\nabla_t(||X||^{-1}X) = 0,
\end{align}
i.e. $X/||X||$ is parallel along $\gamma$: In fact, $\Sigma$ is invariant under parallel translation since it is totally geodesic and consequently so is $X / ||X||$ because of $\dim M - \dim \Sigma = 1$. 

Now we calculate the Hessian of $\varphi$ as follows:
\begin{align*}
\nabla^2\varphi(v,v) &= \partial^2_{t}(\varphi \circ \gamma)\\
 &= \partial_t(||X||^{-1}\langle \nabla_tX,X\rangle)\\
 &= ||X||^{-1}\langle \nabla_t\nabla_t X,X\rangle\\
 &= -\varphi^{-1}\langle R_{X v}v,X\rangle.
\end{align*}
Here we used that $\gamma$ is a geodesic, \eqref{prrl} and that $X$ is a Jacobi field along $\gamma$.
\end{proof}
\begin{corollary}\label{curvature and Hessian}
 With the conditions as in Lemma \ref{Hessian} let $\{b_1 \dots b_n\}$ be an orthonormal basis of $T_pM$ such that $\{b_2, \dots b_n\}$ defines an orthonormal basis of $T_p\Sigma$. Then the curvature operator of $M$ at $p$ is given by
 \begin{align} 
&(<\mathcal R(b_i\wedge b_j),b_k\wedge b_l>)_{(1 \leq i<j \leq n,\ 1 \leq k<l \leq n)}\nonumber \\
= &\begin{pmatrix}
   (-\varphi^{-1}(p)\nabla^2\varphi(b_j,b_l))_{(j,l \in \{2, \dots, n\})} & 0 \\
   0  & (<\mathcal R^{\Sigma}(b_i\wedge b_j),b_k\wedge b_l>)_{(2 \leq i<j \leq n,\ 2 \leq k<l \leq n)}\nonumber
   \end{pmatrix},
\end{align}
where $\mathcal R^\Sigma$ denotes the curvature operator of $\Sigma$.
\end{corollary}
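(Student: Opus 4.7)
The plan is to verify the block structure entry by entry by splitting the bivectors into those of the form $b_1 \wedge b_j$ (with $j \geq 2$) and those lying in $\Lambda^2 T_p\Sigma$. First I would observe that $\Sigma$ has codimension one in $M$ (a section of a polar action has dimension equal to the cohomogeneity, and $\crcl$-orbits are one-dimensional), so $b_1$ must span the one-dimensional normal space $N_p\Sigma$. Since $\crcl$-orbits meet the section orthogonally we have $X(p) \in N_p\Sigma$, so after possibly replacing $b_1$ with $-b_1$ we may assume $b_1 = X(p)/\varphi(p)$.

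For the top-left block I would apply Lemma \ref{Hessian} directly: the entry $\langle \mathcal R(b_1\wedge b_j), b_1\wedge b_l\rangle = g(R_{b_1 b_j}b_l,b_1)$ rescales as $\varphi(p)^{-2} g(R_{Xb_j}b_l,X)$, which by identity (\ref{hes = R}) equals $-\varphi(p)^{-1}\nabla^2\varphi(b_j,b_l)$. For the bottom-right block, the section $\Sigma$ is totally geodesic, so by the Gauss equation $R_{XY}Z = R^\Sigma_{XY}Z$ whenever $X,Y,Z \in T_p\Sigma$; this identifies the entries $\langle \mathcal R(b_i\wedge b_j), b_k\wedge b_l\rangle$ for $i,j,k,l \geq 2$ with the corresponding entries of $\mathcal R^\Sigma$ straight from the definition of the curvature operator.

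The main point — and the only step that is not purely bookkeeping — is the vanishing of the off-diagonal blocks, i.e.\ the claim that $g(R_{b_1 b_j}b_l,b_k) = 0$ whenever $j \geq 2$ and $2 \leq k < l$. Here exactly one of the four slots of the Riemann 4-tensor is filled by a vector normal to $\Sigma$ (namely $b_1$) while the other three lie in $T_p\Sigma$. Using the pair-exchange symmetry $g(R_{XY}Z,W) = g(R_{ZW}X,Y)$ followed by the antisymmetry in the last two arguments, I would rewrite the entry as
\[
g(R_{b_1 b_j}b_l,b_k) = g(R_{b_l b_k}b_1,b_j) = -g(R_{b_l b_k}b_j,b_1),
\]
and then invoke the first assertion of Lemma \ref{idontknow} (three tangent arguments and one normal argument) to conclude that this quantity vanishes. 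Once one notices that the curvature-tensor symmetries let the single normal slot be moved to any position, this symmetry manipulation is completely mechanical, and the rest of the proof amounts to collecting the three block computations into the stated matrix.
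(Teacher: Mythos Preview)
Your proof is correct and follows essentially the same approach as the paper: identify $b_1$ with $\pm X/\varphi$, use Lemma~\ref{Hessian} for the top-left block, total geodesicity of $\Sigma$ for the bottom-right block, and Lemma~\ref{idontknow} for the vanishing of the off-diagonal entries. The only cosmetic difference is that you spell out the curvature-tensor symmetry that moves the normal slot into the position required by Lemma~\ref{idontknow}, whereas the paper simply cites that lemma directly.
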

\begin{proof}
 The upper left block is a consequence of  lemma \ref{Hessian}, since up to sign we have $b_1 = ||X(p)||^{-1}X(p)$. The lower right block follows from $\mathcal R_{|\Sigma} = \mathcal R^\Sigma$, since the section is totally geodesic. From lemma \ref{idontknow} we see that $\langle \mathcal R(b_i\wedge b_j),b_k\wedge b_l\rangle = g(R_{b_ib_j}b_l,b_k) = 0$ if precisely one of the vectors $b_i, b_j, b_k$ and $b_l$ equals $\pm||X||^{-1}X$.
\end{proof}
%=================================================
%-
%========================================
\subsection{Quotient spaces and orbifolds}\label{orbs}
Let $M$ be a Riemannian manifold equipped with an isometric action by a compact Lie Group $\G$. The quotient space $M/\G$ is denoted by $M^*$ with projection map $\pi : M \to M^*$. Throughout our analysis of group actions we will mostly focus on the geometry of $M^*$ rather than investigating the action of $\G$ on $M$ itself. Therefore we introduce the following terminology: For a point $p \in M^*$ by its \textit{isotropy group} we mean (the conjugacy class of) $G_{\hat p}$ for any point $\hat p \in M$ with $\pi(\hat p) = p$. A \textit{fixed point} $p \in M^*$ is a point with $\G_{\hat p} = \G$. A \textit{regular point} of $M^*$ is a point with principal isotropy group, and a \textit{nonregular point} is one that is not regular. We also use the word \textit{singular} synonymously for nonregular.

The space $M^*$ carries an intrinsic metric $d$, which is defined via the distance of orbits. By the slice theorem, the set $M^*_{reg}$ of regular points is convex in $M^*$. There exists a unique smooth Riemannian metric $g$ on $M_{reg}^*$ which induces $d$ and with respect to this metric, $\pi : \pi^{-1}(M^*_{reg}) \to M_{reg}^*$ is a Riemannian submersion.
\begin{definition}
 A \textit{smooth Riemannian orbifold}, or just \textit{orbifold}, is a metric space $X$ such that for all points $p \in X$ there exists an open neighborhood $U$ of $p$ that is isometric to $\hat U/\Gamma$, where $\hat U$ is a smooth Riemannian manifold and $\Gamma$ is a finite group acting isometrically on $\hat U$. An \textit{orbifold point} of a metric space is a point that has an open neighborhood which is an orbifold. A \textit{good orbifold} is a metric space $X$ which is isometric to $M/\Gamma$ where $M$ is a Riemannian manifold and $\Gamma$ is a finite group of isometries.
\end{definition}
Obvious examples of (good) orbifolds are quotient spaces of Riemannian manifolds by finite groups of isometries. In general quotient spaces of Riemannian manifolds by isometric group actions may fail to be orbifolds. A characterization is obtained via the following result from Lytchak-Thorbergsson \cite{lytchak-thorbergson}:
\begin{theorem}\label{lytchak-thorbergson}
 Let $\G$ be a compact Lie group acting isometrically on a Riemannian manifold $M$. Then $p \in M/\G$ is an orbifold point if and only if the isotropy representation of $\G_p$ is polar.
\end{theorem}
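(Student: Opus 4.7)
The first step is to reduce the statement to a question about linear representations. By the slice theorem, a tubular neighborhood of $\G\cdot\hat p\subset M$ is $\G$-equivariantly isometric to $\G\times_{\G_{\hat p}}N_{\hat p}\G(\hat p)$, so a neighborhood of $p\in M^*$ is isometric to $V/H$, where $V:=N_{\hat p}\G(\hat p)$ and $H:=\G_{\hat p}$ acts by the slice representation. It therefore suffices to prove the linear statement: for an orthogonal action of a compact Lie group $H$ on a Euclidean space $V$, the quotient $V/H$ is an orbifold at the origin if and only if the action is polar.

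For the implication $(\Leftarrow)$, if $\Sigma\subset V$ is a section then $\Sigma$ is a linear subspace and the generalized Weyl group $W:=N_H(\Sigma)/Z_H(\Sigma)$ is finite; a standard argument (cf.\ Palais--Terng) shows that the inclusion $\Sigma\hookrightarrow V$ descends to an isometry $\Sigma/W\cong V/H$, which realizes $V/H$ as a good Euclidean orbifold.

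For the implication $(\Rightarrow)$, assume $V/H$ is an orbifold near $\bar 0$. An orbifold chart, combined with the exponential map at a preimage of $\bar 0$, identifies a germ of a neighborhood of $\bar 0$ with $E/\Gamma$, where $\Gamma$ is a finite group acting orthogonally on a Euclidean space $E$. I would then exploit the scale-invariance of the linear quotient: the dilations $v\mapsto\lambda v$ on $V$ descend to similarities of $V/H$, so the tangent cone $T_{\bar 0}(V/H)$ is canonically isometric to $V/H$ itself, while the orbifold-chart description shows it is also isometric to $E/\Gamma$. This yields a \emph{global} isometry $V/H\cong E/\Gamma$, and in particular the regular stratum $(V/H)^{\mathrm{reg}}$ is locally flat. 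Applying O'Neill's curvature formula to the Riemannian submersion $\pi:V^{\mathrm{reg}}\to(V/H)^{\mathrm{reg}}$,
\[
K_{(V/H)^{\mathrm{reg}}}(X,Y)\;=\;K_V(\tilde X,\tilde Y)+3\,\|A_{\tilde X}\tilde Y\|^2,
\]
and using $K_V\equiv 0$ together with the flatness of the base, the integrability tensor $A$ must vanish identically. Hence the horizontal distribution on $V^{\mathrm{reg}}$ is integrable with totally geodesic leaves; a leaf through a regular point $v$ is therefore an affine subspace $\Sigma\subset V$ meeting $H\cdot v$ perpendicularly. A standard saturation and dimension argument then shows $H\cdot\Sigma=V$, and perpendicularity extends to singular orbits by continuity, so $\Sigma$ is a section and the $H$-action is polar.

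The main obstacle is passing from the purely local orbifold hypothesis to a \emph{global} isometry $V/H\cong E/\Gamma$ with $E$ Euclidean; this is the heart of the argument, and is achieved by the tangent-cone step above, which uses crucially the linear structure of $V$. Once global flatness of $(V/H)^{\mathrm{reg}}$ is established, O'Neill's formula essentially finishes the proof.
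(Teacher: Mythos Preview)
The paper does not prove this theorem; it is quoted as a result of Lytchak--Thorbergsson and cited without proof. Your outline is essentially the strategy of that reference, and your argument for $(\Rightarrow)$ is correct: passing to tangent cones yields a global isometry $V/H\cong E/\Gamma$, so the regular stratum of $V/H$ is flat, O'Neill's formula forces $A\equiv 0$, and a horizontal leaf is then an affine subspace which one checks is a section.

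There is, however, a genuine gap in the reduction step that affects the $(\Leftarrow)$ direction. The slice theorem yields only a $\G$-equivariant \emph{diffeomorphism} from a tubular neighborhood of $\G\cdot\hat p$ onto $\G\times_{H}V$, not an isometry; equivalently, a metric neighborhood of $p$ in $M^*$ is isometric to $(V,g)/H$ for some $H$-invariant Riemannian metric $g$ on $V$ that agrees with the Euclidean metric only at the origin. Hence showing that the \emph{flat} quotient $V/H\cong\Sigma/W$ is a good Euclidean orbifold does not by itself show that the actual neighborhood of $p$ in $M^*$ is isometric to $\hat U/\Gamma$ for a Riemannian manifold $\hat U$ and finite $\Gamma$. (Your $(\Rightarrow)$ argument is unaffected, since there you only use the tangent cone $T_pM^*=V/H$, which \emph{is} the flat quotient.) To complete $(\Leftarrow)$ you still have to produce an honest orbifold chart for the curved quotient---for instance by proving that $\exp_{\hat p}(\Sigma\cap B_\epsilon)$ with the induced metric from $M$, modulo the Weyl group $W$, is isometric to a neighborhood of $p$ in $M^*$. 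This amounts to showing that distances between nearby $\G$-orbits are realized inside $\exp_{\hat p}(\Sigma)$, which does not follow formally from linear polarity of the slice representation and is in fact the substantive part of the Lytchak--Thorbergsson argument.
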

\begin{remark}
 It is not hard to see that for an orthogonal action on an Euclidean vector space our notion of a polar action is equivalent to the complete version. So there is no ambiguity with this statement.
\end{remark}
In particular all points $p \in M^*$ with finite isotropy group are orbifold points.
%===========================================
\section{Alexandrov spaces}\label{Alexandrov}
We just saw that quotient spaces of particular nice actions are orbifolds. In general quotient spaces of isometric actions by compact Lie groups on compact Riemannian manifolds are Alexandrov spaces with curvature bounded below. Recall that a \textit{length space} is a metric space $(X,d)$ such that the distance of any two points of it is realized as the infimum of the lengths of all continuous curves connecting them. A \textit{geodesic} in a length space is a continuous curve that is locally a shortest curve between points on it. A triangle $\Delta(p,q,r)$ in a length space $X$ is a collection of three points $p,q$ and $r \in X$, together with a minimal (meaning of minimal length) geodesic for each pair of points connecting them. A comparison triangle $\tilde \Delta_k(p,q,r)$ is a triangle in the $2$-dimensional, simply connected Riemannian manifold of constant curvature $k$, with side lengths equal to the corresponding sides of $\Delta(p,q,r)$. For fixed $k$ such a comparison triangle exists for all $\Delta(p,q,r)$ of sufficiently small perimeter. Denote by $\tilde \measuredangle_k (p,q,r)$ the angle of $\tilde \Delta_k(p,q,r)$ opposite to a side of length $d(p,r)$.
\begin{definition}
Let $k \in \RR$. A complete length space $(A,d)$ is called an \textit{Alexandrov space with curvature bounded below  by $k$}, denoted $\on{curv}(A) \geq k$, if for all points in $A$ there exists an open neighborhood $U$ such that for all $p,q,r,s \in U$ we have
$$\tilde \measuredangle_k (q,p,r) + \tilde \measuredangle_k (s,p,r) + \tilde \measuredangle_k (q,p,s) \leq 2\pi.$$
\end{definition}
There are various possible reformulations of this definition, compare \cite{burago-gromov-perelman92}. Most of the time we call a space as in this definition simply an Alexandrov space. Given a set $X$, a metric $d$ on $X$ is an \textit{Alexandrov metric}, if the metric space $(X,d)$ is an Alexandrov space.

The properties of the cone and spherical suspension over a metric space will be of particular importance, so we recall the definition here together with a basic result. For a metric space $X$ we denote its diameter by $\operatorname{diam}(X)$.

\begin{definition}
 Let $(X,d_X)$ be a metric space with $\operatorname{diam} (X) \leq \pi$. The \textit{cone over $X$} is the metric space $C(X) = (X \times [0,\infty[)/\sim$, where $(x,t) \sim (y,s)$ if $s = t = 0$, equipped with the metric
$$d((x,t),(y,s)) = \sqrt{t^2 + s^2 - 2ts\cos(d_X(x,y))}.$$ 
The \textit{spherical suspension of $X$} is the metric space $S(X) = (X \times [0,\pi])/\sim$, with $(x,t) \sim (y,s)$ if $s = t = 0$ or $s = t = \pi$, equipped with the metric defined via
$$\cos(d((x,t),(y,s))) = \cos(t)\cos(s) + \sin(t)\sin(s)\cos(d_X(x,y)).$$ We refer to the points $[x,0]$ and $[x,\pi]$ as the \textit{tips} of $S(X)$.
\end{definition}
These definitions are motivated by fact that the spherical suspension of an $n$-sphere $\sphere^n$ is isometric to $\sphere^{n + 1}$, and the cone over $\sphere^n$ is isometric to $\RR^{n + 1}$. 

By the dimension $\dim A$ of an Alexandrov space $A$ we mean its Hausdorff dimension. $\dim A$ is an integer and behaves well (for example, open subsets always have the same dimension as the whole space). For more details see \cite{burago-gromov-perelman92} or \cite{burago-burago-ivanov01}. In \cite{burago-gromov-perelman92} the following Lemma is proven:
\begin{lemma}\label{cone and suspension}
 Let $X$ be an Alexandrov space with curvature bounded below satisfying $\operatorname{diam}(X) \leq \pi$.  Then the following are equivalent:
 \begin{itemize}
  \item[(i)] $\curv X \geq 1$.
  \item[(ii)] $\curv C(X) \geq 0$ and $\dim X \geq 2$.
	\item[(iii)] $\curv S(X) \geq 1$ and $\dim X \geq 2$.
 \end{itemize}
\end{lemma}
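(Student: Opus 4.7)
The strategy is to exploit that the defining formulas for the cone and suspension metrics \emph{are} the Euclidean and spherical laws of cosines, with $d_X$ playing the role of an angular coordinate. This suggests first proving $(i) \Leftrightarrow (ii)$ by checking the curvature bound at the apex of $C(X)$, and then deducing $(ii) \Leftrightarrow (iii)$ from an isometric identification $C(S(X)) \cong C(X) \times \RR$.

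For $(i) \Leftrightarrow (ii)$, let $o = [\cdot,0] \in C(X)$ denote the apex. For any $x, y \in X$ and $t, s > 0$, the formula $d([x,t],[y,s])^2 = t^2 + s^2 - 2ts \cos d_X(x,y)$ is exactly the Euclidean law of cosines applied to a triangle with legs of length $t, s$ and opening angle $d_X(x,y)$, so the comparison angle at $o$ is literally $\tilde \measuredangle_0([x,t], o, [y,s]) = d_X(x,y)$. Consequently the defining $2\pi$-inequality at base point $o$ applied to three other points $[x_i, t_i]$ reduces to
\begin{equation*}
d_X(x_1,x_2) + d_X(x_2,x_3) + d_X(x_3,x_1) \leq 2\pi,
\end{equation*}
which is precisely the curvature condition $\curv X \geq 1$ on the triple $\{x_1, x_2, x_3\}$. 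This handles quadruples based at $o$. To promote this to the $2\pi$-inequality at a general point $p \in C(X)$, I would exploit the scaling invariance of $C(X)$ under $(x, t) \mapsto (x, \lambda t)$, which preserves the condition $\curv \geq 0$ and carries neighborhoods of $p$ isometrically onto neighborhoods of its image; combined with the fact that tangent cones at points of $C(X)\setminus\{o\}$ split off an $\RR$-factor, this reduces the verification at any $p$ to the apex case.

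For $(ii) \Leftrightarrow (iii)$, the map $((x,t), r) \mapsto ((x, r\sin t), r \cos t)$ is an isometry $C(S(X)) \cong C(X) \times \RR$, as a direct computation using the spherical cosine formula for $d_{S(X)}$ confirms. Since nonnegative Alexandrov curvature is both preserved and reflected by taking products with $\RR$, one obtains $\curv C(X) \geq 0 \Leftrightarrow \curv C(S(X)) \geq 0$. Applying the already established equivalence $(i) \Leftrightarrow (ii)$ with $S(X)$ in place of $X$ identifies the right-hand side with $\curv S(X) \geq 1$, closing the chain.

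The main obstacle is the passage from the apex, where the cone metric linearizes the problem via the cosine law, to a general point of $C(X)$. At $o$ the $2\pi$-inequality \emph{is} the curvature condition on $X$, but at a non-apex point one must verify a genuine four-point condition on what is essentially a warped product over $X$. The scale invariance of $C(X)$ and the $\RR$-splitting of tangent cones at non-apex points are the crucial tools; checking that the hypothesis $\operatorname{diam}(X) \leq \pi$ is strong enough to ensure that the cosine formulas define an honest length-space metric — so that comparison triangles exist and minimizing geodesics behave as expected — is the technical point that demands care.
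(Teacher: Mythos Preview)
The paper does not prove this lemma; it is quoted from \cite{burago-gromov-perelman92}. Your identity $C(S(X))\cong C(X)\times\RR$ and the ensuing product argument for $(ii)\Leftrightarrow(iii)$ are correct and standard. The argument for $(i)\Leftrightarrow(ii)$, however, has two real gaps.

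First, the perimeter bound $d_X(x_1,x_2)+d_X(x_2,x_3)+d_X(x_3,x_1)\le 2\pi$ is \emph{not} ``precisely the curvature condition $\curv X\ge 1$''; it is only a consequence of it (a sufficiently small flat disk satisfies the perimeter bound while having curvature $0$). Thus even if the apex computation were the whole story, you would have shown only that $(i)$ implies the apex case of $(ii)$, and that the apex case of $(ii)$ implies the perimeter bound --- neither of which yields $(ii)\Rightarrow(i)$. The genuine $(1{+}3)$-comparison for $k=1$ in $X$ must come from quadruples in $C(X)$ based at a \emph{non}-apex point.

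Second, the mechanism you propose for non-apex basepoints does not work. The dilation $[x,t]\mapsto[x,\lambda t]$ takes non-apex points to non-apex points, so rescaling cannot reduce to the apex case. Invoking the $\RR$-splitting of tangent cones at $p\ne o$ is circular in the direction $(i)\Rightarrow(ii)$: that splitting is a structural fact about Alexandrov spaces and presupposes that $C(X)$ already is one, which is precisely what is to be shown; and even granted, a tangent-cone statement is infinitesimal and does not deliver the finite $(1{+}3)$-inequality in a neighborhood of $p$. The actual proof (see \cite{burago-gromov-perelman92} or \cite{burago-burago-ivanov01}) describes minimizing geodesics in $C(X)$ explicitly --- for $d_X(x,y)<\pi$ the minimizer between $[x,t]$ and $[y,s]$ lies over an $X$-geodesic $\overline{xy}$ and develops isometrically onto a flat planar sector of opening angle $d_X(x,y)$ --- and uses this unfolding together with Alexandrov's lemma to translate Euclidean comparison for a quadruple in $C(X)$ into spherical comparison for the underlying quadruple in $X$. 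That step is where the full strength of $\curv X\ge 1$, and not merely the perimeter bound, is used.
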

The geometric properties near a point $p$ of an Alexandrov space $A$ are reflected by its \textit{tangent cone} $T_pA$ at $p$ and the \textit{space of directions} $\Sigma_pA$ at $p$, which resemble the tangent space and its unit sphere of a Riemannian manifold. As the name already indicates $\Sigma_pA$ is the (metric completion of the) space of directions of geodesics emanating from $p$, with metric being the angle (for a precise definition see again \cite{burago-gromov-perelman92} or \cite{burago-burago-ivanov01}). The tangent cone is then defined as the cone over $\Sigma_pA$. We conclude this chapter with the following basic and well known result:

\begin{lemma}
Let $\G$ be a compact Lie group acting isometrically on a complete Riemannian manifold $(M,g)$ satisfying $\sec \geq c$. Then $(M^*,d)$ is an Alexandrov space with $\on{curv} \geq c$. For $p \in M^*$ with $p = \pi(\hat p)$, the tangent cone $T_{p}M^*$ and the space of directions $\Sigma_{p}M^*$ at $p$ are respectively isometric to $N_{\hat p}\G(\hat p)/\G_{\hat p}$ and $N^1_{\hat p}\G({\hat p})/\G_{\hat p}$.
\end{lemma}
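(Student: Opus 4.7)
My plan is to deduce the Alexandrov curvature bound by lifting small quadruples from $M^*$ back to $M$ along horizontal minimal geodesics, and then to identify the infinitesimal structure at $p$ via the slice representation.

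First I would record the basic lifting principle. Because orbits are compact, $d(p,q) = \min\{d_M(\hat p, \hat q) : \hat p \in \pi^{-1}(p),\ \hat q \in \pi^{-1}(q)\}$, and fixing $\hat p$ one can choose $\hat q \in \pi^{-1}(q)$ realizing this minimum. The first variation formula, applied to the endpoint varying on $\G(\hat q)$ and (by symmetry) on $\G(\hat p)$, forces a realizing minimal geodesic in $M$ to leave $\hat p$ orthogonally to $T_{\hat p}\G(\hat p)$—a \emph{horizontal} geodesic. Conversely, any horizontal geodesic projects to a curve of the same length in $M^*$. This already makes $(M^*, d)$ a length space in which horizontal minimal geodesics project isometrically.

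Next I would check the Alexandrov 4-point inequality on a sufficiently small neighborhood $U$ of each $p \in M^*$. Given $p, q_1, q_2, q_3 \in U$, pick a lift $\hat p$ and horizontal lifts of the minimal segments $p\to q_i$ ending at points $\hat q_i \in \pi^{-1}(q_i)$ with $d_M(\hat p, \hat q_i) = d(p, q_i)$. The quotient distances satisfy $d(q_i, q_j) \leq d_M(\hat q_i, \hat q_j)$, and since the $k$-comparison angle at the apex increases with the length of the opposite side, $\tilde\measuredangle_c(q_i, p, q_j) \leq \tilde\measuredangle_c(\hat q_i, \hat p, \hat q_j)$. Toponogov's theorem in $M$, which has $\sec \geq c$, then gives $\sum_{\text{cyclic}} \tilde\measuredangle_c(\hat q_i, \hat p, \hat q_j) \leq 2\pi$, and the inequality transfers to $M^*$. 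Hence $\curv(M^*) \geq c$.

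For the tangent cone and space of directions I would invoke the slice theorem, which identifies a tube around $\G(\hat p)$ equivariantly with $\G \times_{\G_{\hat p}} N_{\hat p}\G(\hat p)$, with $\exp^\perp_{\hat p}$ realizing a ball in $N_{\hat p}\G(\hat p)$ as a transverse slice. Every geodesic in $M^*$ emanating from $p$ lifts to a horizontal geodesic at $\hat p$; its initial vector lies in $N_{\hat p}\G(\hat p)$, and two such initial vectors project to the same direction in $M^*$ iff they are $\G_{\hat p}$-related. Hence directions at $p$ are in bijection with $N^1_{\hat p}\G(\hat p)/\G_{\hat p}$. A short calculation of the Alexandrov angle as a limit of comparison angles on short horizontal triangles yields the quotient-sphere metric $\inf_{g \in \G_{\hat p}} \measuredangle\bigl(v,\, dg\cdot w\bigr)$, establishing $\Sigma_p M^* \cong N^1_{\hat p}\G(\hat p)/\G_{\hat p}$ isometrically; taking cones then gives $T_p M^* \cong N_{\hat p}\G(\hat p)/\G_{\hat p}$.

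The main obstacle I anticipate is the horizontal-lifting step in the first paragraph: one must confirm that for $q$ near $p$ the orbit-distance minimizer $\hat q$ really produces a segment departing $\hat p$ horizontally and that the projected curve realizes $d(p,q)$. Once that bridge between the two metric structures is built, the Alexandrov inequality is an application of Toponogov and the identification of $T_p M^*$ is essentially bookkeeping using the slice theorem.
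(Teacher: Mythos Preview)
The paper does not actually prove this lemma; it is stated there as a ``basic and well known result'' and left without argument. Your proposal supplies the standard proof: horizontal lifting together with Toponogov for the curvature bound, and the slice theorem for the infinitesimal identification. The argument is correct, including the key monotonicity step (comparison angle at the apex increases with the opposite side, so passing from $d_M(\hat q_i,\hat q_j)$ down to $d(q_i,q_j)$ only helps). The obstacle you flag is not a real difficulty: a minimal geodesic between two orbits is orthogonal to both by first variation, hence horizontal, and a horizontal geodesic projects to a curve of the same length realizing the orbit distance; this is exactly the content of the quotient metric being induced by a Riemannian submersion on the principal stratum. So there is nothing to compare against in the paper---your write-up simply fills in what the author chose to take for granted.
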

 Here $N^1_{\hat p}\G({\hat p})$ denotes the unit sphere of $N_{\hat p}\G({\hat p})$. Finally note that a \textit{regular point} of an Alexandrov space is a point whose tangent cone is isometric to $\RR^n$ equipped with the standard metric. From this lemma we see that the regular points of a quotient space $M^*$ are precisely the regular points as defined in the previous section.
\\ \\
Some more results on Alexandrov spaces are presented in chapter $3$.
%----------------------------------------------------------------------------------------------
\chapter{\texorpdfstring{Resolving the singularities of $M^4/\crcl$}{Resolving the singularities of M/circle}}\label{chapter resolving}
%\chapter{Resolving the singularities of $M^4/\crcl$}\label{chapter resolving}
In this chapter we proof our first main theorem.
\begin{theorem}\label{resolution thm}
  Let $\mathsf S^1$ act isometrically and with only isolated fixed points on a simply connected, compact, nonnegatively curved $4$-manifold $M$ with quotient space $(M/\crcl,d)$. Then the following is true:
\begin{itemize}
 \item[(a)] There exists a sequence of smooth, positively curved Riemannian metrics $(g_n)_{n \in \NN}$ on $M/\crcl$ such that  $(M/\crcl, g_n)$ has Gromov-Hausdorff limit $(M/\crcl,d)$.
 \item[(b)]  Assume additionally that the nonregular part of $M/\crcl$ contains a closed curve $c$. Then there exists a sequence of smooth, positively curved orbifold metrics $(h_n)_{n \in \NN}$ on $M/\crcl$ such that $(M/\crcl,h_n)$ has Gromov-Hausdorff limit $(M/\crcl,d)$ and the only singularities of $h_n$ are given by $\mathbb Z_2$-singularities along $c$.
\end{itemize}
\end{theorem}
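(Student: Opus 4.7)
The plan is to deform the singular quotient metric in a small neighborhood of its singular set, using Lemma \ref{Hessian} for precise curvature control and the Gluing Lemma \ref{gluing} for the interpolation with the unperturbed metric. Since the $\crcl$-action has only isolated fixed points, the nonregular part $\Sigma^*$ of $M^* := M/\crcl$ is a finite $1$-complex whose ``vertices'' are the images of the fixed points of $\crcl$ and whose ``edges'' are open arcs of exceptional orbits with some finite nontrivial isotropy $\mathbb Z_k$; the regular part $M^*_{\text{reg}}$ carries a smooth Riemannian metric with $\sec \geq 0$ induced from the Riemannian submersion $\pi : \pi^{-1}(M^*_{\text{reg}}) \to M^*_{\text{reg}}$.

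First I would construct smoothings along edges. Around an interior point of a $\mathbb Z_k$-edge, I would introduce a local tubular coordinate system $(r,\theta,s)$ in which the quotient metric takes the warped-product form of Lemma \ref{curvature}, with warping factor $f(r,s)$ collapsing at the rate prescribed by the cone angle $2\pi/k$ as $r \to 0$. Replacing $f$ inside a $\delta$-tube by a smooth function $\tilde f$ which behaves like $r$ as $r \to 0$ (Euclidean) and equals $f$ outside the tube lets one enforce strict positivity of the curvature of the replacement via the formula $\sec = -f^{-1}\partial_r^2 f$, while Lemma \ref{Hessian} is used to simultaneously control the curvature of mixed planes involving the Killing-field direction. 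For part (b) I would instead take $\tilde f(r,s) = r$ in the $\delta$-tube, which realizes a $\mathbb Z_2$-cone angle $\pi$ along $c$; equivalently, the construction is carried out on the two-fold branched cover and then pushed back down.

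Next I would handle isolated singular vertices. Near the image of a fixed point $p \in M$ the tangent cone of $M^*$ is a cone over $\mathbb S^3 / \crcl$ for the slice representation (a ``weighted'' $2$-sphere link when the slice representation has weights $(a,b)$). Here Corollary \ref{constant curvature at fixed point} produces a smooth patch of constant positive curvature; the task is to match the singular quotient metric to first order on the boundary of the patch while respecting any edges of $\Sigma^*$ emanating from the vertex. Applying Lemma \ref{gluing} to interpolate yields a curvature loss $\epsilon$ that can be made arbitrarily small by shrinking the neighborhood, and the lemma further preserves isometries fixing the gluing submanifold, which in (b) guarantees the $\mathbb Z_2$-structure along $c$ persists. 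Iterating the construction over a shrinking exhaustion of neighborhoods of $\Sigma^*$ produces metrics $g_n$ (resp.\ orbifold metrics $h_n$) that agree with $d$ outside an ever-smaller neighborhood of $\Sigma^*$, giving the desired Gromov-Hausdorff convergence.

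The main obstacle, I expect, is ensuring strict positivity of the curvature of the deformed quotient metric. The original quotient has $\sec \geq 0$ from O'Neill's formula but not generically $\sec > 0$, and Lemma \ref{Hessian} only gives concavity, rather than strict concavity, of $\varphi = \|X\|$ on a local section. Consequently the warping functions $\tilde f$ have to be chosen not merely to interpolate smoothly, but also to produce strictly negative $\partial_r^2 \tilde f$ in the smoothing region while matching the zeroth and first order data of $f$ at its endpoints; doing this consistently at edges and at vertices with arbitrary weighted slice representations, and handling junctions of edges at vertices, is the technically delicate content that will occupy the bulk of the chapter.
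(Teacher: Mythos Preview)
Your outline captures the paper's basic strategy for the smoothing part (use the polar structure near $E^*$, add a correction to the Killing-field length so that the corresponding metric extends smoothly across the edge, then treat the fixed points separately), but it misses the step that actually produces \emph{positive} curvature. In the paper the resolution of the singularities (sections \ref{resolving brho}--\ref{smooth fp}) only yields smooth metrics with $\sec \geq -1/n$, not $\sec > 0$; this is Theorem~\ref{umformulierung}. The upgrade to positive curvature is accomplished in section~\ref{ricci} by an external mechanism: Simon's existence theorem for Ricci flow with a singular initial metric (Theorem~\ref{simon}) gives a flow $g(t)$ with $\sec_{g(t)}\geq 0$ converging to the quotient metric as $t\to 0$, and then Hamilton's strong maximum principle (Theorem~\ref{hamilton}) forces $\sec_{g(t)}>0$ for $t>0$ since $M^*$ is simply connected. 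Your suggestion that one can simply choose the warping functions so that $\partial_r^2\tilde f<0$ everywhere does not work, because the original metric is only nonnegatively curved and the gluing lemma always costs an $\epsilon$ of curvature; there is no room to absorb that loss without Ricci flow.

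A second, smaller gap concerns the fixed points. Corollary~\ref{constant curvature at fixed point} is indeed used, but only upstairs on $M$ in Proposition~\ref{symmetry}, to arrange that $B_\rho(p)\subset M^*$ is an exact spherical suspension of $\Sigma_pM^*=\sphere^3/\crcl$. This does \emph{not} resolve the singularity at $p$; the tip of the suspension is still singular. The actual resolution (section~\ref{smooth fp}) proceeds by first smoothing $\Sigma_pM^*$ (so it becomes a smooth $\sphere^2$ with $\sec\geq 1$), then isometrically embedding the suspended neighborhood into $\sphere^4$ as part of a convex hypersurface (Lemma~\ref{embedding}), pushing forward to $\RR^4$ via a Beltrami map, and convolving the resulting convex cone to smooth the apex. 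This convex-embedding idea is not visible in your outline and is what makes the vertex step work.
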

For that we first prove the following
%By a \textit{resolution of the singularities} of an orbit space $(M^*,d)$ (assuming it is topologically a  manifold) with $\curv \geq c$ we mean a sequence of smooth metrics $g_n$ defined on $M^*$ satisfying $\curv g_n \geq c - 1/n$ and $(M^*,g_n)$ converges in Gromov Hausdorff sense to $(M^*,d)$. To proof part (a) of theorem \ref{resolution thm} we first proof the following weaker result:
\begin{theorem}\label{umformulierung}
 Let $M$ be a closed, simply connected, $4$-dimensional Riemannian manifold admitting an isometric $\crcl$-action with isolated fixed points only. Assume that $\curv M^* \geq 0$ $(\curv M^* > 0)$. Then there exists a sequence of smooth Riemannian metrics $g_n$ on $M^*$ with $\sec_{g_n} \geq - 1/n$ $(\sec_{g_n} > 0)$ such that $(M^*,g_n)$ converges in Gromov-Hausdorff sense to $(M^*,d)$.
\end{theorem}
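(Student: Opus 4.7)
The strategy is to modify the quotient metric $d$ on $M^*$ in shrinking neighborhoods of the singular strata, while controlling the sectional curvature by the gluing lemma (Lemma \ref{gluing}). The singular locus of $M^*$ decomposes into two pieces: finitely many isolated \emph{cone points} (the images of the $\crcl$-fixed points of $M$), whose tangent cones are cones over weighted $2$-spheres $S^2(a,b)$, together with a $1$-dimensional network of \emph{orbifold curves} of cyclic isotropy $\ZZ_k$, whose endpoints (if any) are cone points. On the complement of this locus the quotient metric is already smooth, with $\sec \geq 0$ by O'Neill.

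First I would smooth the orbifold curves. Near such a curve the $\crcl$-action is locally polar, and by Lemma \ref{Hessian} the function $\varphi = \|X\|$ is concave along a section $\Sigma$. Working in cylindrical coordinates on the image of $\Sigma$ in $M^*$, the quotient metric takes the local form $dr^2 + f(r,s)^2\, d\theta^2 + (\text{axial terms})$, with $f(r,s)/r \to 1/k$ as $r \to 0$. I would replace $f$ on a collar $\{r \le \delta_n\}$ by a smooth, strictly concave function $\tilde f$ with $\tilde f(0,s) > 0$. By Lemma \ref{curvature} the transverse $2$-dimensional curvature is then $-\tilde f''/\tilde f > 0$, and by Corollary \ref{curvature and Hessian} the remaining sectional curvatures can be arranged to stay nonnegative. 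Lemma \ref{gluing} merges $\tilde f$ with the original $f$ at $\{r = \delta_n\}$, losing only an arbitrarily small amount of curvature, and the matching of first-order data is automatic because $\tilde f$ agrees with $f$ on the outer boundary of the collar.

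Next I would round off each remaining cone point $p^* \in M^*$. After the previous step the complement of the (finitely many) cone points carries a smooth metric with $\sec \geq -\epsilon$, and $p^*$ has a ``drop''-like neighborhood whose outer boundary is a smooth $2$-sphere of nonnegative curvature but whose geometry near $p^*$ is still cone-like. Using Corollary \ref{constant curvature at fixed point}, after an additional radial deformation to match first-order data, I would replace this neighborhood by one of constant positive curvature $\kappa_n$ near $p^*$ and glue. The resulting $g_n$ is smooth on all of $M^*$ and satisfies $\sec_{g_n} \geq -1/n$ (respectively $\sec_{g_n} > 0$ in the strictly positive case). Because every modification is supported in a $1/n$-neighborhood of the singular set, $(M^*, g_n)$ converges to $(M^*, d)$ in the Gromov--Hausdorff topology.

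The main obstacle will be the second step. At a single cone point several orbifold curves of possibly distinct orders $\ZZ_{k_i}$ can meet, and after Step one the residual cap around $p^*$ is no longer a round spherical cone; its link is a smoothed, possibly asymmetric, nonnegatively curved $2$-sphere. Producing a positively curved smoothing of this cap that matches to first order on its boundary (so that Lemma \ref{gluing} applies) requires a careful geometric analysis exploiting the concavity of $\varphi$ near $p^*$ from Lemma \ref{Hessian} and the block form of the curvature operator in Corollary \ref{curvature and Hessian}; this is where the bulk of the technical work lies.
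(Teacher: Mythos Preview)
Your overall two-step picture (smooth the orbifold arcs, then cap the cone points) matches the paper's, but two of the key tools you invoke do not apply as stated, and the paper's actual arguments at those places are genuinely different.

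First, you assume without comment that near an orbifold curve in $M^*$ the metric admits a polar $\crcl$-action with section $\Sigma$. It does not, a priori: the original $\crcl$-action is on $M$, not on the $3$-dimensional quotient $M^*$, and there is no reason the quotient metric should be rotationally symmetric about the singular arc. The paper spends all of Section~\ref{section symmetry} (Lemmas~\ref{polar}, \ref{symmetry along geodesic} and Proposition~\ref{symmetry}) \emph{constructing} such an auxiliary polar $\crcl$-action on a neighborhood of the singular set of $M^*$, at the cost of an $\epsilon$-loss in the lower curvature bound via the gluing lemma. Without this normalization step, the formulas you rely on from Lemma~\ref{Hessian} and Corollary~\ref{curvature and Hessian} are not available.

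Second, your proposed resolution of a cone point $p^*$ via Corollary~\ref{constant curvature at fixed point} does not work: that corollary requires $(M,g)$ to be a smooth Riemannian manifold at $p$, and $p^*$ is precisely where the quotient metric is singular. The paper's route is quite different. Corollary~\ref{constant curvature at fixed point} is applied \emph{upstairs on $M$} at the $\crcl$-fixed points, so that after passing to the quotient each $B_\rho(p^*)$ is isometric to a ball at a tip of the spherical suspension $S(\Sigma_{p^*}M^*)$ (property~\textit{4} of Proposition~\ref{symmetry}). The arc resolution is then carried out so as to preserve this suspension structure near $p^*$, leaving a cone whose link is a \emph{smooth} $2$-sphere with $\sec\geq 1$ admitting an isometric $\crcl$-action. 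That link is isometrically embedded in the round $\sphere^3$ (Lemma~\ref{embedding}), hence its suspension sits as the boundary of a convex body in $\sphere^4$; a Beltrami map transfers this to a convex cone in $\RR^4$, where an ordinary convolution smooths the vertex while preserving convexity, and the Beltrami map pulls the bound $\sec\geq 1$ back. This embedding/Beltrami/convolution step is the missing idea in your Step~2, and it is also what forces the specific order of operations (normalize to suspensions first, resolve arcs compatibly with that structure, resolve cone points last).
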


Note that part (a) of Theorem \ref{resolution thm} follows if $M$ has positive curvature. If $M$ only admits an invariant metric of nonnegative curvature, we use the Ricci flow to improve the curvature bounds of the approximating sequence. More precisely, in section \ref{ricci}, we use a result of Simon \cite{simon09} which tells us that there exists a Ricci flow $g_t$ on $M^*$ with singular initial metric $g_0 = g$ and $\sec_{g_t} \geq 0$ for $t > 0$. Since $M^*$ is simply connected, it follows by Hamilton \cite{hamilton 82}, \cite{hamilton86} that $g_t$ has in fact positive curvature if $t$ is positive. This implies Theorem \ref{resolution thm}, part (a).

In section \ref{final proof} we sketch how part (b) of Theorem \ref{resolution thm} is proven. The arguments are essentially the same as for part (a), modulo considering the two fold branched cover along the curve c and using some ideas from \cite{grove-wilking13}.

The proof of Theorem \ref{umformulierung} is given in sections \ref{quotient} to \ref{resolution}. In section \ref{quotient} we first discuss the basic geometric and topological structure of $M^*$; in short $M^*$ is a simply connected $3$-manifold whose nonregular part consists of a finite collection of isolated fixed points, which are possibly joined by singular geodesic arcs, each of these arcs having constant isotropy group $\ZZ_k$ (compare Figure \ref{triangle}). 
\begin{figure}
\centering
\def\svgwidth{0.2\textwidth}
%\subimport{Figures/}{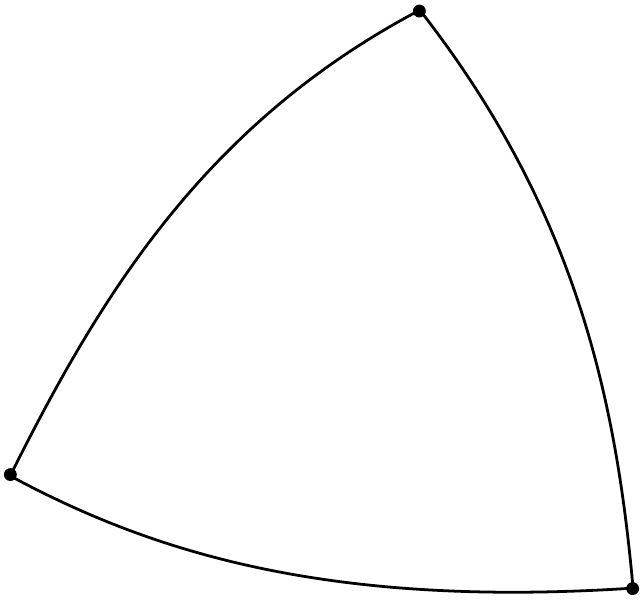_tex}
\subimport{}{triangle.pdf_tex}
\caption{\small{The fixed points $p,q$ and $r$ in $M^*$ are connected by nonregular arcs of constant types.}}
\label{triangle}
\end{figure}
Also in section \ref{quotient} we give a detailed description of the possible spaces of directions at nonregular points of $M^*$. 

In section $2.2$ we show that we may assume that $g$ admits a polar $\crcl$-action in a neighborhood of the nonregular part and moreover, that there exists some $\rho > 0$ such that for every fixed point $p \in M^*$ the open ball $B_\rho(p)$ is isometric to the corresponding ball at a tip of the spherical suspension of $\Sigma_pM^*$. This is done via approximating the metric $g$ by a sequence of metrics with the desired properties and controlled lower curvature bounds.

For simplicity we loosely refer to a sequence of smooth metrics approximating some given singular metric on a space $X$ in Gromov-Hausdorff sense as a \textit{resolution of the singularities of $X$}. The resolution of the singularities of $M^*$ is then carried out in section \ref{resolution} in several major steps. Let $E^*$ denote the projection of $E$, the set of exceptional orbits, and $F$ denote the set of fixed points (which may be viewed as a subset of $M$ as well as $M^*$). We first resolve the singularities of $E^*$ in sections \ref{resolving brho} to \ref{resolving E}. For this we consider a singular arc in $E^*$ parametrized by a geodesic $\gamma$ connecting two fixed points $p$ and $q$. We cover this arc by coordinate neighborhoods $B_{\rho}(p)$, $B_{\rho}(q)$ and $U$, where $U$ is an appropriate neighborhood of the interior of $\gamma$ which is a good Riemannian orbifold. Then resolutions of the points of $\gamma$ in each of these neighborhoods are constructed in a consistent way to be able to glue these resolutions together to obtain a resolution of $\gamma$ and eventually glue this resolution back to $M^*$. This construction can be performed independently along every arc in $E^*$ yielding a resolution of the singularities of $E^*$. A more detailed overview of this construction is given in the introduction to section \ref{resolution}.

We resolve the remaining isolated singularities of $F$ in section \ref{smooth fp}. For that we isometrically  embed neighborhoods of the fixed points into $\sphere^4$ and then transfer the problem to $\RR^4$. In $\RR^4$ we can construct a smooth approximation via a convolution.
%Throughout, if not explicitly mentioned otherwise, we fix a compact, simply connected, $4$-dimensional Riemannian manifold $M$ equipped with an effective isometric $\crcl$-action with fixed point set $F$. We assume that there exists no $2$-dimensional component of $F$, or equivalently that every fixed point is isolated. The quotient space $M/\crcl$ is denoted $M^*$. Clearly an element $p \in F$ may be considered as an element in $M^*$. Denoting by $F^*$ the projection of $F$ we thus refer to points lying in $F^*$ simply as fixed points if no confusion may arise.  Also $E^*$ denotes the projection of the set of exceptional orbits $E$. $M^*$ is equipped with the induced quotient metric $d$. On the regular part of $M^*$, which is an open and dense subset of $M^*$, $d$ is induced by a smooth riemannian metric $g$. In fact $g$ is the unique metric such that the projection $\pi : M_{reg} \to M^*_{reg}$ is a Riemannian submersion.

\section{\texorpdfstring{Geometry and topology of $M^4/\crcl$ and $\sphere^3/\crcl$}{Geometry and topology of M/circle and sphere/circle}}\label{quotient}
%\section{Geometry and topology of $M^4/\crcl$ and $\sphere^3/\crcl$}\label{quotient}

\subsection{\texorpdfstring{Geometry and topology of $M^4/\crcl$}{Geometry and topology of M/circle}}
%\subsection{Geometry and topology of $M^4/\crcl$}

Our arguments are based on the following result of Fintushel from \cite{fintushel}. 
\begin{lemma}\label{fintushel}
Let $M$ be a closed, simply connected, $4$-dimensional manifold with a smooth $\crcl$-action with isolated fixed points only. Then $M^*$ is a simply connected $3$-manifold, without boundary, and $F$ is nonempty and finite. The closure of $E^*$ is a finite collection of polyhedral arcs and simple closed curves. Components of $E^*$ are open arcs on which orbit types are constant and whose closures have distinct endpoints in $F$.
\end{lemma}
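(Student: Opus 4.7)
The plan is to combine the equivariant slice theorem with global topological constraints from the simple connectivity of $M$. First I would carry out a complete local analysis of the slice representations at every orbit type, then assemble the information to describe the global topology of $M^*$, and finally analyze the structure of $E^*$.

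\textbf{Local analysis.} The slice theorem provides an equivariant tubular neighborhood $\crcl \times_{\crcl_p} N_p\crcl(p)$ of every orbit, and there are three cases. Passing to the effective action, at a principal orbit the slice is $\RR^3$ with trivial isotropy, giving local quotient $\RR^3$. At a fixed point $T_p M \cong \CC^2$ carries a $\crcl$-action with nonzero weights $(a,b)$, because $p$ is isolated; the unit sphere quotient $S^3/\crcl$ is a weighted projective line, topologically $S^2$, so $T_pM/\crcl \cong C(S^2) \cong \RR^3$. At a point on an exceptional orbit with stabilizer $\ZZ_k$, $k>1$, the $\ZZ_k$-action on the slice $\RR^3$ decomposes into a trivial part $V_0$ and planar rotation summands; I would argue $\dim V_0 = 1$. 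The case $\dim V_0 = 3$ would force an open set of $\ZZ_k$-orbits, contradicting the maximality of the principal type, and $\dim V_0 = 2$ is only possible for $k=2$ and is excluded below. The local quotient is again $\RR^3$, and the image of $V_0$ traces out a one-dimensional arc of constant isotropy type through $\pi(p)$.

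\textbf{Global topology.} The uniform local model $\RR^3$ makes $M^*$ a topological $3$-manifold without boundary. The set $F$ is finite because its points are isolated in a compact manifold, and nonempty by the classical identity $\chi(M) = |F|$ for $\crcl$-actions with isolated fixed set, combined with $\chi(M) = 2 + b_2(M) \geq 2$ implied by simple connectivity. For simple connectivity of $M^*$ I would cover it by $M^*_{\text{reg}}$ together with a contractible neighborhood of each nonregular point and apply Van Kampen. The restriction $\pi\colon \pi^{-1}(M^*_{\text{reg}}) \to M^*_{\text{reg}}$ is a principal $\crcl$-bundle, whose long exact homotopy sequence yields a surjection $\pi_1(\pi^{-1}(M^*_{\text{reg}})) \twoheadrightarrow \pi_1(M^*_{\text{reg}})$; since $\pi^{-1}(M^*_{\text{reg}})$ is the complement in $M$ of a subset of codimension $\geq 2$, its $\pi_1$ equals $\pi_1(M) = 0$, killing every loop in the regular part. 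Local contractibility near each nonregular point then closes the Van Kampen argument.

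\textbf{Structure of $E^*$ and the hardest step.} The local picture shows that $E^*$ is, near each of its points, a constant-isotropy arc, so globally it is a disjoint union of finitely many embedded arcs whose closures can only pick up endpoints where the isotropy jumps from $\ZZ_k$ to something strictly larger; by the slice theorem this means the full $\crcl$, i.e.\ a fixed point. Thus each component of $\overline{E^*}$ is either a polyhedral arc with endpoints in $F$ or a simple closed curve disjoint from $F$. The hardest step, and where simple connectivity enters most crucially, is ruling out the $k=2$ case with $\dim V_0 = 2$: such a slice would yield a $3$-dimensional component of the fixed set of the involution $-1\in\crcl$, which would be Poincar\'e dual to a nontrivial class in $H^1(M;\ZZ_2)$, contradicting $H^1(M;\ZZ_2)=0$. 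The refinement that each arc has two distinct endpoints in $F$ (and that no closed-loop components occur in the simply connected case) is then a lifting argument: a closed loop of exceptional orbits in $M^*$ would correspond to a $2$-dimensional invariant submanifold in $M$ whose homology class and $\crcl$-monodromy would be incompatible with $\pi_1(M)=0$.
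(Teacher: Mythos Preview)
The paper does not prove this lemma; it is stated as a result of Fintushel and cited to \cite{fintushel}, so there is no in--paper proof to compare against.

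Your local models and the $\pi_1$ computation are correct, but the last paragraph misidentifies where the work lies. Ruling out $\dim V_0=2$ needs neither simple connectivity nor Poincar\'e duality (and you never say why the dual class would be nonzero): since $\crcl$ is connected it acts orientation--preservingly on the orientable $M$, so at any $\ZZ_k$--fixed point the differential lies in $SO(4)$ and its $(+1)$--eigenspace is even--dimensional; codimension--one fixed sets are thus impossible. The ``distinct endpoints'' claim is likewise local rather than a lifting argument: passing to the effective action, the weights $(a,b)$ at a fixed point are coprime, so the two germs of $E^*$ there carry isotropy $\ZZ_a$ and $\ZZ_b$, and a constant--isotropy arc with both ends at the same fixed point would force $a=b=k\ge 2$, contradicting $\gcd(a,b)=1$.

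The genuine gap is the exclusion of a circle component $C\subset E^*$ disjoint from $F$; your sentence about ``homology class and $\crcl$--monodromy'' is not a proof. One clean route uses what you already established: since $\pi_1(M^*_{\text{reg}})=0$, a small meridian $\mu$ of $C$ bounds a singular disk in $M^*_{\text{reg}}\subset M^*\setminus C$; together with the obvious transverse disk meeting $C$ once this yields a $2$--cycle in the simply connected $3$--manifold $M^*$ with intersection number $\pm 1$ against the null--homologous $1$--cycle $C$, a contradiction. Alternatively, for a prime $p\mid k$ the torus $\pi^{-1}(C)\subset \Fix(\ZZ_p)$ is excluded by the Smith inequality combined with $\chi(\Fix(\ZZ_p))=\chi(M)$. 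Either way this step requires a real argument that your sketch does not supply.
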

\begin{remark}
 If we assume that the action is isometric with respect to a Riemannian metric on $M$, it follows from the slice representation at a fixed point $p$ that the angle of two arcs in $E^*$ meeting at $p \in F$ is $\pi/2$ (note also that at most two such arcs meet at $p$). Moreover, if $M$ admits an invariant metric of nonnegative or positive curvature, it follows that there are at most 4 or 3 fixed points, respectively (cf. \cite{wilking07}, section 2).
\end{remark}
The infinitesimal geometry of $M^*$ at a singular point $p \in M^*$ is determined by the space of directions $\Sigma_pM^*$ at $p$. In the following we give a precise geometric description of the spaces of directions that possibly occur.

First let $p = \pi(\hat p)$ be not a fixed point. Thus $\hat p$ has isotropy group $\ZZ_k$, for $k \geq 1$, and the space $\Sigma_pM^*$ is isometric to the quotient space of a normal sphere  to $\crcl(\hat p) \cong \sphere^1$ by the action of $\crcl_{\hat p} = \ZZ_k$: 
\begin{align}\label{directions at gamma}
\Sigma_pM^* = \sphere^2/\ZZ_k.
\end{align}
Let $(r,\theta)$ denote polar coordinates on $D_{\pi} \subset \RR^2$, the closed disk of radius $\pi$ in $\RR^2$. Also let $D_\pi/\partial D_\pi := D_{\pi}/\sim$, where $x \sim y$ if $x$ and $y$ belong to the boundary of $D_\pi$. Observe that $\ZZ_k$ is acting via rotation on the round $\sphere^2$. Then the following proposition follows easily.
\begin{proposition}\label{directions orbi}
 Let $p \in M^*$ have isotropy group $\ZZ_k$ for $k \geq 1$. Then $\Sigma_pM^*$ is given by the spherical suspension of a circle of perimeter $2\pi/k$. Equivalently, $\Sigma_pM^*$ is isometric to $D_\pi/\partial D_\pi \cong \sphere^2$ equipped with the metric 
$$dr^2 + k^{-2}\sin^2(r)d\theta^2.$$
\end{proposition}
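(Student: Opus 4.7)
The plan is to compute $\sphere^2/\ZZ_k$ directly from \eqref{directions at gamma}, which already identifies $\Sigma_pM^*$ with the quotient of the unit normal sphere to $\crcl(\hat p)$ by the slice representation of $\ZZ_k = \crcl_{\hat p}$. The first step is to make the action explicit. Since $\ZZ_k$ arises as the stabilizer inside the connected group $\crcl$ acting isometrically, it acts on $T_{\hat p}M$ by orientation preserving orthogonal maps, hence on the $3$-dimensional normal space $N_{\hat p}\crcl(\hat p)$ through $\operatorname{SO}(3)$. Every nontrivial element of finite order in $\operatorname{SO}(3)$ is a rotation, and since the group is cyclic all its elements share a common fixed axis, so the induced action on $\sphere^2$ is rotation by $2\pi/k$ about two antipodal poles.

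Next I would choose spherical coordinates $(r,\varphi)$ adapted to this axis, with $r \in [0,\pi]$ the polar angle from a pole and $\varphi \in [0, 2\pi)$ the azimuth, so that the round metric reads $dr^2 + \sin^2(r)\, d\varphi^2$ and $\ZZ_k$ acts via $\varphi \mapsto \varphi + 2\pi/k$. On the quotient $\varphi$ descends to a coordinate of period $2\pi/k$, and rescaling to $\theta = k\varphi$ of period $2\pi$ turns the metric into the claimed form
$$dr^2 + k^{-2}\sin^2(r)\, d\theta^2.$$
The underlying topological space is $D_\pi/\partial D_\pi$: the pair $(r,\theta)$ are polar coordinates on the disk $D_\pi$, and the factor $\sin(\pi) = 0$ forces the boundary $r = \pi$ to collapse to a point, while $r = 0$ collapses by the polar convention, yielding a topological $\sphere^2$.

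The identification with the spherical suspension of a circle of perimeter $2\pi/k$ is then immediate: reintroducing $\phi = \theta/k$, which ranges over an interval of length $2\pi/k$, the metric becomes $dr^2 + \sin^2(r)\, d\phi^2$, which is precisely the warped product defining $S(\sphere^1_{2\pi/k})$, with the two tips corresponding to $r = 0$ and $r = \pi$. The only real step requiring attention is recognizing the $\ZZ_k$-action on the normal sphere as a rotation of the round metric; once that is established, the proposition is a change of coordinates.
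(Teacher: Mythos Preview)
Your proof is correct and follows the same approach as the paper: the paper simply observes just before the proposition that ``$\ZZ_k$ is acting via rotation on the round $\sphere^2$'' and declares that the proposition then ``follows easily.'' You supply the justification for that observation (via the $\operatorname{SO}(3)$ argument for cyclic subgroups) and carry out the coordinate computation explicitly, which the paper omits.
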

Note that this metric is singular at $r = 0$ and $r = \pi$ if and only if $k > 1$. The geometry of the space of directions $\Sigma_pM^*$ at a fixed point $p$ is more complicated. We give a detailed description in the following section.
%====================================section==================
\subsection{\texorpdfstring{Geometry and topology of $\sphere^3/\crcl$}{Geometry and topology of sphere/circle}}\label{sectionS^3/crcl}
%\subsection{Geometry and topology of $\sphere^3/\crcl$}\label{sectionS^3/crcl}
Let $\crcl$ act isometrically on $M^4$ and $p$ be an isolated fixed point of $M^* = M^4/\crcl$. The space of directions at $p$ is isometric to $\sphere^3/\crcl$, where $\sphere^3$ is equipped with the standard round metric and $\crcl$ is acting orthogonally and, since $p$ is isolated,  without fixed points on $\sphere^3 \subset \CC^2$. Writing $\crcl = \{z \in \CC \mid \vert z \vert = 1\}$, with respect to some orthonormal basis the action of $\crcl$ on $\sphere^3$ is induced by the action of $\crcl$ on $\CC^2$ given by $z.(u,v) = (z^{m_-}u,z^{m_+}v)$ for $m_-,m_+ \in \NN$. Assuming the action to be effective, it follows that $m_-$ and $m_+$ are coprime. In the case $m_- = m_+ = 1$ we obtain the well-known Hopf action and $\sphere^3/\crcl$ is isometric to $\sphere^2(1/2)$, the sphere of radius $1/2$, and has constant curvature $4$. In general the action is almost free with isotropy group $\ZZ_{m_-}$ corresponding to the orbit of $(1,0)$, isotropy group $\ZZ_{m_+}$ corresponding to the orbit of $(0,1)$, and all other orbits have trivial isotropy group. It follows that the quotient space $\sphere^3/\crcl$ is homeomorphic to $\sphere^2$, and there are at most two singular points, given by the orbits $\crcl(1,0)$ and $\crcl(0,1)$. It is easy to see that $\measuredangle(\crcl(0,1),\crcl(1,0)) = \pi /2 = \operatorname{diam}(\sphere^3/\crcl)$.

Let $D_{\pi/2}$ denote the closed disk of radius $\pi/2$ in $\RR^2$ equipped with polar coordinates $(\theta,\alpha)$, where $\theta$ corresponds to radial, and $\alpha$ to angular direction.
%We denote by $(\theta,\alpha)$ exponential coordinates of $\sphere^3/\crcl$ centered at $v_-$. That is we identify $(\theta,\alpha)$ with $\exp_{v_-}(\theta \varphi(\alpha))$, where $\varphi(\alpha) = \alpha \mod 2\pi/m_-$ is a parametrization of the space of directions of $\sphere^3/\crcl$ at $v_-$, which is isometric to $\RR /(2\pi/m_-)$. Observe that this coordinates are well defined and smooth for $(\theta,\alpha) \in ]0,\pi/2[ \times ]0,2\pi/m_-[$.
\begin{proposition}\label{directions}
The quotient space $\sphere^3/ \crcl$ is isometric to $D_{\pi/2}/\partial D_{\pi/2} \cong \sphere^2$ equipped with the metric
\begin{align}\label{this}
d \theta^2 + R^2(\theta)d \alpha^2
\end{align}
for a smooth function $R : [0,\pi / 2] \rightarrow \RR$ satisfying 
$$R(0) = R(\pi / 2) = 0,$$
$$R'(0) = m_-^{-1},$$
$$R'(\pi /2) = m_+^{-1}$$
and
$$R^{(2k)}(0) = R^{(2k)}(\pi/2) = 0$$
for all $k \in \NN$. The metric is singular at $\theta = 0$ ($\theta = \pi/2$) if and only if $m_- > 1$ ($m_+ > 1$). Further, there exists a constant $a = a(m_-,m_+) > 0$ such that 
$$\sec(\theta,\alpha) > 1 + a,$$
whenever $0 < \theta < \pi/2$. In fact, the following identities hold:
\begin{align}
R(\theta) &= \frac{\sin(\theta)\cos(\theta)}{(m_+^2\sin^2(\theta) + m_-	^2\cos^2(\theta))^{\frac1 2}}\label{Rrrrr}\\
\sec(\theta,\alpha) = -R''(\theta)/R(\theta) &= 1 + \frac{3m^2_-m^2_+}{(m^2_-\cos^2(\theta) + m^2_+\sin^2(\theta))^2}\label{K,jo}
\end{align}
\end{proposition}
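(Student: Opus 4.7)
My plan is to introduce generalized polar coordinates on $\sphere^3$, identify the quotient metric through the standard Riemannian submersion picture, and then verify the boundary behavior and the curvature formula by direct computation. First, I parametrize $\sphere^3 \subset \CC^2$ away from the singular circles $\{u=0\}$ and $\{v=0\}$ by $(u,v) = (\cos\theta\, e^{i\phi_-}, \sin\theta\, e^{i\phi_+})$ with $\theta \in (0,\pi/2)$ and $\phi_\pm \in \RR/2\pi\ZZ$. The round metric becomes
$$g_{\sphere^3} = d\theta^2 + \cos^2\theta\, d\phi_-^2 + \sin^2\theta\, d\phi_+^2,$$
and the $\crcl$-action acts only on the torus factor by $(\phi_-,\phi_+) \mapsto (\phi_- + m_- t, \phi_+ + m_+ t)$. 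Its associated Killing vector field $X = m_-\partial_{\phi_-} + m_+\partial_{\phi_+}$ has norm $|X|_\theta = (m_-^2\cos^2\theta + m_+^2\sin^2\theta)^{1/2}$ and is nowhere vanishing in the interior.

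Next I set up coordinates on the quotient. Since $\partial_\theta$ is a unit vector orthogonal to the torus fibres (and hence to every $\crcl$-orbit), it is horizontal with unit projection, so $\theta$ descends to a distance coordinate on $(\sphere^3/\crcl)_{\mathrm{reg}}$. The orthogonal horizontal direction within each torus fibre projects onto a circle, so the quotient metric has the form $d\theta^2 + R^2(\theta)d\alpha^2$ in polar coordinates $(\theta,\alpha)$ based at $\theta = 0$, where $\alpha$ parametrises this quotient circle at constant speed with total length $2\pi$. To compute $R(\theta)$, I use that the map from the flat torus $(T^2_\theta, \cos^2\theta\, d\phi_-^2 + \sin^2\theta\, d\phi_+^2)$ to its $\crcl$-quotient $\sphere^1$ is a Riemannian submersion whose fibres have length $2\pi|X|_\theta$. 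Since fibre length times base length equals the total area $(2\pi)^2\sin\theta\cos\theta$ of the torus, the base circle has length $2\pi\sin\theta\cos\theta/|X|_\theta$, which after the normalization $\alpha \in [0,2\pi)$ yields precisely formula \eqref{Rrrrr}.

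From the explicit expression for $R$, the boundary conditions then follow essentially by inspection. The denominator $(m_-^2\cos^2\theta + m_+^2\sin^2\theta)^{1/2}$ is a smooth nonvanishing even function around both $\theta = 0$ and $\theta = \pi/2$ (for the latter one substitutes $\beta = \pi/2 - \theta$), while the numerator $\sin\theta\cos\theta = \tfrac12\sin(2\theta)$ is odd around each endpoint. Hence $R$ extends to a smooth odd function across each boundary point, giving $R^{(2k)}(0) = R^{(2k)}(\pi/2) = 0$ for all $k \in \NN$, and the first-order Taylor expansions $R(\theta) = \theta/m_- + O(\theta^3)$ near $0$ and $R(\pi/2 - \beta) = \beta/m_+ + O(\beta^3)$ near $\pi/2$ produce the stated boundary derivatives (in the inward convention). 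Smoothness of the metric at an endpoint, which by the standard warped-product criterion requires the first derivative to equal $1$, then holds exactly when the corresponding multiplicity $m_\pm$ equals $1$.

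Finally, the sectional curvature is given by Lemma \ref{curvature} as $\sec(\theta,\alpha) = -R''(\theta)/R(\theta)$; the identity \eqref{K,jo} is then a direct but somewhat delicate differentiation of \eqref{Rrrrr}. I would carry it out by writing $R^2 = f^2/h$ with $f = \sin\theta\cos\theta$ and $h = m_-^2\cos^2\theta + m_+^2\sin^2\theta$, computing $2RR' = (f^2/h)'$ and $2(R')^2 + 2RR'' = (f^2/h)''$, and simplifying via $f^2 = (1-\cos(4\theta))/8$ and $h = \tfrac12(m_-^2+m_+^2) + \tfrac12(m_-^2-m_+^2)\cos(2\theta)$. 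I expect this algebraic simplification to be the main technical step of the proof, since the clean closed form $\sec = 1 + 3m_-^2m_+^2/h^2$ only emerges after a nontrivial cancellation between the terms generated by $(R')^2$ and by $RR''$. Once this identity is in hand, the positive lower bound follows with $a = 3m_-^2m_+^2/\max(m_-,m_+)^4 > 0$, completing the proposition.
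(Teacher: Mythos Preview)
Your proposal is correct, but it proceeds in essentially the reverse order from the paper's argument. You compute $R(\theta)$ first, via the elegant area identity on the flat torus fibres (area $=$ orbit length $\times$ base length), and then recover the curvature by differentiating the explicit formula for $R$. The paper instead computes the curvature first, using the O'Neill formula for the Riemannian submersion $\sphere^3 \to \sphere^3/\crcl$ along a horizontal geodesic, obtaining \eqref{K,jo} directly; it then reads off $R$ as the unique solution of the second-order ODE $R'' = -\sec(\theta)\,R$ with initial data $R(0)=0$, $R'(0)=1/m_-$ coming from the slice representation at $[1,0]$.

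Each route has its advantages. Your torus-area computation of $R$ is more elementary and transparent than solving an ODE, and the parity argument for the vanishing of even derivatives is the same in both proofs. On the other hand, the paper's O'Neill computation explains conceptually why the curvature exceeds $1$ (the A-tensor contribution $\tfrac34\|[\overline v,\overline w]^v\|^2$ is manifestly positive) and yields the closed form \eqref{K,jo} without any delicate cancellation, whereas in your approach the identity $-R''/R = 1 + 3m_-^2m_+^2/h^2$ is buried in the algebra and you correctly flag it as the main technical step. Both arguments are complete; the choice is a matter of taste.
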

\begin{proof}
Consider the isometric action of $\crcl_2$ on $\sphere^3 \subset \CC^2$ given by $z.(u,w) = (u,zw)$. This action induces via left multiplication an effective isometric $\crcl_2$-action on $\sphere^3/\crcl$ with fixed point set $\{[1,0],[0,1]\}$. Let $c : [0,\pi/2] \to \sphere^3/\crcl$ be an arc length geodesic from $[1,0]$ to $[0,1]$. Introduce coordinates $(\theta,\alpha) \mapsto \alpha.c(\theta)$ with $(\theta,\alpha) \in [0,\pi/2] \times \crcl$. Due to the above action we see that the metric of $\sphere^3/\crcl$ is then given by 
$$d\theta^2 + R^2(\theta)d\alpha^2$$
for some smooth function $R :[0,\pi/2] \to \RR$. It is possible to derive formula \eqref{Rrrrr} directly from the geometry of the action of $\crcl$ on $\sphere^3$. But instead we first proof \eqref{K,jo}, using the Riemannian submersion $\sphere^3 \to \sphere^3/\crcl$ defined on the regular part of $M$, and from \eqref{K,jo} we deduce formula \eqref{Rrrrr}:

Let $x \in \sphere^3/\crcl$ be a regular point and $v,w \in T_x(\sphere^3/\crcl)$ be orthonormal. From the O'Neill formula
$$\sec(v \wedge w) = \sec(\overline v \wedge \overline w) + \frac 3 4 \vert \vert [\overline v, \overline w]^v \vert \vert ^2,$$
where $\overline v$ and $\overline w$ are horizontal lifts of $v$ and $w$ respectively and $\ ^v$ denotes the vertical part of a vector. Let 
$$\gamma(\theta) = (\cos(\theta),\sin(\theta)) \in \sphere^3 \subset \CC^2$$
for $\theta \in [0,\pi/2]$. $\gamma$ defines a horizontal arc length geodesic from $(1,0)$ to $(0,1)$. Let $T$ be the vector field on $\sphere^3/\crcl$ dual to $d\theta$. Then $T(c(\theta)) = \dot c(\theta)$. We may choose $\gamma$ such that $\pi \circ \gamma = c$. Hence
\begin{align}\label{TtT}
d \pi \dot \gamma (\theta) = \dot c(\theta) = T(c(\theta)).
\end{align}
The vertical space at $(z,w) \in \sphere^3$ is given by the $\RR$-span of the vector $$\frac{d}{dt}_{|t = 0} (e^{it}.(z,w)) = \frac{d}{dt}_{|t = 0} (e^{itm_-}z,e^{itm_+}w))  = i(m_-z,m_+w) =: V(z,w).$$
Along $\gamma$ let 
$$W(\theta) =  i(-m_+\sin(\theta),m_-\cos(\theta)).$$
By construction, $W(\theta)/||W(\theta)||$, $\gamma(\theta), \dot\gamma(\theta)$ and $V(\gamma(\theta))/||V(\gamma(\theta))||$ are pairwise orthonormal and
\begin{align}\label{WwW}
 S(\theta) := d\pi W (\theta)/||W(\theta)||
\end{align}
is a vector field along $c$ orthonormal to $T$ since $\pi$ is a Riemannian submersion. Set $\eta(\theta) := (m_+^2\sin^2(\theta) + m_-	^2\cos^2(\theta))^{\frac1 2} = ||W(\theta)|| = ||V(\gamma(\theta))||$. Using \eqref{TtT} we calculate
%Let $\overline T$ and $\overline S$ be horizontal lifts of $T$ and $S$ respectively. Along $\gamma$ we have $\overline T = \dot \gamma$ and $\overline S = W/||W||$. Thus
\begin{align}
\sec(c(\theta)) &= K(\overline T \wedge \overline S)(\gamma(\theta)) + \frac{3}{4}||[\overline T, \overline S]^v||^2(\gamma(\theta))\nonumber\\
 &= 1 + \frac{3}{4\eta^2(\theta)}\langle [\overline T, \overline S], V \rangle^2(\gamma(\theta))\nonumber\\
 &= 1 + \frac{3}{4\eta^2(\theta)}\langle \nabla_{\overline T}\overline S - \nabla_{\overline S}\overline T, V\rangle^2 (\gamma(\theta)) \nonumber\\
 &= 1 + \frac{3}{4\eta^2(\theta)}(-\langle \overline S,\nabla_{\overline T}V\rangle + \langle \overline T,\nabla_{\overline S}V\rangle)^2 (\gamma(\theta)) \nonumber\\
&= 1 + \frac{3}{4\eta^4(\theta)}(-\langle W,\nabla_{\dot \gamma}V\rangle + \langle \dot \gamma,\nabla_{W}V\rangle)^2(\theta). \nonumber
\end{align}
Here $\nabla$ denotes the Levi-Civita connection of $\sphere^3$. From the definitions we conclude
$$\nabla_{\dot \gamma(\theta)}V = i(-m_-\sin(\theta),m_+\cos(\theta))$$
and
$$\nabla_{W(\theta)}V = (m_-m_+\sin(\theta),-m_-m_+\cos(\theta)).$$
Hence
\begin{align*}
-\langle W,\nabla_{\dot \gamma}V\rangle(\theta) = \langle \dot \gamma,\nabla_{W}V\rangle(\theta) = -m_-m_+.
\end{align*}
Altogether it follows \eqref{K,jo}, i.e.
$$\sec(c(\theta)) = 1 + \frac{3m^2_-m^2_+}{(m^2_-\cos^2(\theta) + m^2_+\sin^2(\theta))^2}.$$
On the other hand, we see from Proposition \ref{curvature} that the curvature is also given by the formula
$$\sec(c(\theta)) = -R''(\theta)/R(\theta).$$
It is clear that $R(0) = 0$. Moreover, it follows from the slice theorem that the tangent cone of $\sphere^3/\crcl$ at $[1,0]$ is given by $\RR^2/\ZZ_{m_-}$ where $\ZZ_{m_-}$ acts via rotation. Consequently,  $R'(0) = 1/m_-$.
Thus we have the second order ODE
$$R''(\theta)/R(\theta) = \frac{-3m^2_-m^2_+}{(m^2_-\cos^2(\theta) + m^2_+\sin^2(\theta))^2} - 1$$
with initial conditions $R(0) = 0$ and $R'(0) = 1/m_-.$
Standard calculations show that the unique solution on the interval $[0,\pi/2]$ is given by
$$R = \frac{\sin(\theta)\cos(\theta)}{(m_+^2\sin^2(\theta) + m_-	^2\cos^2(\theta))^{\frac1 2}}.$$
Since $R$ is an odd function, we eventually see that all even derivatives at $0$ vanish.
\end{proof}
\vspace{20pt}
\begin{figure}[h!]
\centering
\def\svgwidth{0.4\textwidth}
%\subimport{Figures/}{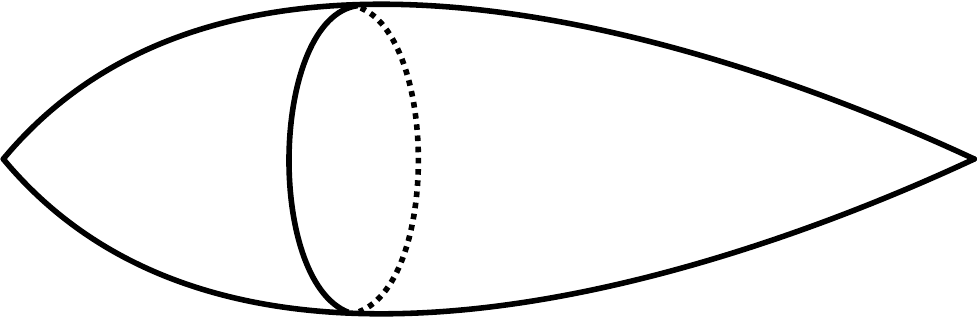_tex}
\subimport{}{directions.pdf_tex}
\caption{\small{A sketch of $\sphere^3/\crcl$ with $m_- = 2$, $m_+ = 3$.}}
\label{figure:directions}
\end{figure}
%----------------------------------------------------------------------------------------------
%-------------------------------------------------------------------------------------------------------------------------------------------------------------------------
\section{\texorpdfstring{$\crcl$-symmetry near the singular set}{circle-symmetry near the singular set}}\label{section symmetry}
 
Let $M$ be a simply connected Riemannian $3$-manifold which contains a closed geodesic $\gamma$ whose image is a smooth submanifold. In this section we show that it is possible to deform the metric in a neighborhood of $\gamma$ with an arbitrary small decrease of lower curvature bounds, such that the new metric admits a polar $\crcl$-action in a neighborhood of $\gamma$ whose fixed point set is the image of $\gamma$. Using this result we approximate the quotient space of a simply connected Riemannian $4$-manifold by an isometric $\crcl$-action via metrics which itself admit polar $\crcl$-actions in a neighborhood of the nonregular part. The ideas in this section are in parts similar to the ones of \cite{dyatlov}. Indeed we were able to significantly simplify the proofs in this section using the gluing lemma \ref{gluing} which, as discussed in section \ref{section gluing}, relies on Lemma 4.3 from \cite{dyatlov}.
\\ \\
We use the following construction for the upcoming proofs: Let $\gamma : \RR \to M$ be a geodesic in a Riemannian manifold $M$, and denote by $P_t : T_{\gamma(0)}M \to T_{\gamma(t)}M$ parallel translation along $\gamma_{[0,t]}$. Given standard coordinates $\{x_i\}$ of $\RR^k = (N_{\gamma(0)}\gamma,g)$, the normal space to $\gamma$ at $\gamma(0)$, we define the map
\begin{align*}
f : \RR \times \RR^k &\to M \\
(t,x_1, \dots, x_k) &\mapsto \exp_{\gamma(t)}(P_t(x_1, \dots, x_k)).\nonumber
\end{align*}
Assuming that $\gamma_{[0,l]}$ is injective for some $l \in \RR$ there exists $\epsilon > 0$ such that $f$ is a diffeomorphism onto its image when restricted to the set 
$$\{(t,x_1 \dots, x_k) \mid 0 \leq t \leq l,\ \Sigma_{i = 1}^k x^2_i < \epsilon\}.$$
Then $f$ induces local coordinates, which are denoted by
\begin{align}\label{fermi}
 (t,x_1, \dots, x_k).
\end{align}

\begin{definition}
The coordinates $(t,x_1 \dots, x_k)$ are called \textit{Fermi coordinates adopted to $\gamma_{[0,l]}$.}
\end{definition}
We show that the metric is constant along $\gamma$ in these coordinates and its first derivatives vanish:

Denote by $\partial_t$ and $\partial_{x_i}$ the induced coordinate fields. First, it is clear that \begin{align}\label{une fille}
 g(\partial_{x_i}, \partial_{x_j}) = \delta_{ij},\ g(\partial_t,\partial_t) = 1,\ g(\partial_t,\partial_{x_i}) = 0
\end{align}
at points of $\gamma$. To calculate the derivatives let $g_{ij} = g(\partial_{x_i},\partial_{x_j})$, $g_{it} = g(\partial_{x_i},\partial_t)$, $g_{tt} = g(\partial_t,\partial_t)$ and $p = \gamma(t)$ for some $t \in [0,l]$. From \eqref{une fille} it is clear that
\begin{align*}
 \partial_tg_{it}(p) = 0 = \partial_tg_{tt}(p)
\end{align*}
for all $i \in \{1, \dots, k\}$.
 We claim that 
 $$\nabla_{\partial_{x_i}}\partial_{x_j}(p) = 0$$
 for all $i,j$: In a neighborhood of $p$ consider normal coordinates $(\tilde t, \tilde x_1, \dots, \tilde x_k)$  with respect to the frame $\{\partial_t(p), \partial_{x_1}(p), \dots, \partial_{x_k}(p)\}$. Then $\partial_{\tilde x_i} = \partial_{x_i}$ along the submanifold $N := \{\tilde t = 0\}$. Therefore $0 = \nabla_{\partial_{\tilde x_i}}\partial_{\tilde x_j}(p) = \nabla_{\partial_{x_i}}\partial_{x_j}(p)$ since the value of $\nabla_{\partial_{x_i}}\partial_{x_j}$ depends only on $\partial_{x_i}(p) = \partial_{\tilde x_i}(p)$ and the values of $\partial_{x_j}$ along $N$ because $\partial_{x_i}$ is tangent to $N$.
 
 Now
\begin{align*}
 \partial_{x_i}g_{jt}(p) = g_p(\nabla_{\partial_{x_i}}\partial_{x_j},\partial_t) + g_p(\partial_{x_j},\nabla_{\partial_{x_i}} \partial_t) = 0,
\end{align*}
since $\nabla_{\partial_{x_i}}\partial_t(p) = \nabla_{\partial_t}\partial_{x_i}(p) = 0$ by parallelity of $\partial_{x_i}$ along $\gamma$. So all its first derivatives vanish.
\\ \\
Now we state and prove the mentioned results. We divided the first step into the two separate Lemmas \ref{polar} and \ref{symmetry along geodesic} to make the proof more accessible. 

\begin{lemma}\label{polar}
 Let $(M,g)$ be a 3-dimensional Riemannian manifold admitting an isometric $\crcl$-action with a compact, $1$-dimensional fixed point component $\Gamma$. Let $\operatorname{sec}_M \geq c$. Then for all $\epsilon > 0$ there exists an $\crcl$-invariant Riemannian metric $\tilde g$ on $M$ with $\operatorname{sec}_{\tilde g} \geq c - \epsilon$ and an open neighborhood $U$ of $\Gamma$ such that $g$ and $\tilde g$ coincide on $M \setminus B^g_\epsilon (\Gamma)$, and the action restricted to $U$ is polar.
 \end{lemma}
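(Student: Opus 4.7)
The strategy is to construct an $\crcl$-invariant polar model metric $g'$ on a tubular neighborhood of $\Gamma$ that agrees with $g$ to first order on $\Gamma$ and has essentially the same sectional curvature there, then apply the gluing lemma \ref{gluing} to blend $g'$ into $g$ away from $\Gamma$. Cover $\Gamma$ by Fermi coordinate charts $(t,x_1,x_2)$ adapted to geodesic arcs of $\Gamma$. By equivariance of $\exp$, in each such chart the $\crcl$-action is the standard rotation by $m\theta$ in $(x_1,x_2)$ for some weight $m \geq 1$. Passing to polar coordinates $(t,r,\phi)$ on the normal disks, $\crcl$-invariance forces
\begin{equation*}
g = a\,dt^2 + 2b\,dt\,dr + c\,dr^2 + h^2\,d\phi^2 + 2\alpha\,dt\,d\phi + 2\beta\,dr\,d\phi
\end{equation*}
with all coefficients depending only on $(t,r)$. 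Define the polar model by removing the cross terms with $d\phi$:
\begin{equation*}
g' := a\,dt^2 + 2b\,dt\,dr + c\,dr^2 + h^2\,d\phi^2.
\end{equation*}
The construction is intrinsic---it amounts to keeping the restriction of $g$ to $X^\perp$ together with the Killing-length contribution $g(X,\cdot)\otimes g(X,\cdot)/g(X,X)$ for the Killing field $X$ of the action---so the local definitions patch to an $\crcl$-invariant metric on a tubular neighborhood of $\Gamma$. The $g'$-orthogonal complement of $X$ is the integrable distribution $\langle\partial_t,\partial_r\rangle$, whose integral leaves $\{\phi=\mathrm{const}\}$ are totally geodesic sections; hence the action is polar with respect to $g'$ in this neighborhood.

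To apply the gluing lemma, I verify that $g$ and $g'$ agree up to first order on $\Gamma$ and have essentially equal sectional curvatures there. Since $\alpha$ extends to a smooth, rotation-invariant (hence $r^2$-smooth) function on each Cartesian normal disk, the Fermi vanishings $g_{ij}(t,0)=\delta_{ij}$, $\partial_k g_{ij}(t,0)=0$ force $\alpha = O(r^4)$, and the analogous smoothness of $r\beta$ gives $\beta = O(r^3)$. The Cartesian components of $g-g'$ are then degree-two homogeneous polynomials in $(x_1,x_2)$ plus higher-order remainders, and their first derivatives vanish on $\Gamma$. A direct computation using
\begin{equation*}
R_{ijkl} = \tfrac{1}{2}\bigl(\partial_j\partial_k g_{il} + \partial_i\partial_l g_{jk} - \partial_i\partial_k g_{jl} - \partial_j\partial_l g_{ik}\bigr),
\end{equation*}
valid at points where the first derivatives of the metric vanish, shows that every second partial of $(g-g')_{ij}$ entering $R_{ijkl}|_\Gamma$ cancels, so the Riemann tensors of $g$ and $g'$ coincide pointwise on $\Gamma$. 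By continuity, on a sufficiently small tubular neighborhood $V\subset B^g_\epsilon(\Gamma)$ I obtain $\sec_{g'}\geq c-\epsilon/2$. The gluing lemma \ref{gluing} applied to $(g,g')$ on $V$ then produces the desired metric $\tilde g$: equal to $g$ off $V$, equal to $g'$ on some smaller neighborhood $U$ of $\Gamma$, with $\sec_{\tilde g}\geq c-\epsilon$. The isometry clause of the gluing lemma, combined with $\crcl\subset\Iso_\Gamma(g|_V)\cap\Iso_\Gamma(g')$, yields $\crcl\subset\Iso(\tilde g|_V)$, so $\tilde g$ is $\crcl$-invariant globally and polar on $U$, as required.

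The main obstacle is the somewhat surprising cancellation of second-order terms in the Riemann tensor along $\Gamma$: although $(g-g')_{ij}$ is only $O(r^2)$ in Cartesian and its second derivatives do not vanish pointwise at $\Gamma$, the very specific tensor structure of the cross-term perturbation forces a complete cancellation in the combinations appearing in $R_{ijkl}$. Conceptually, both Riemann tensors are $\crcl$-invariant tensors on $T_pM$ at $p\in\Gamma$; for $m\geq 1$ only certain $\crcl$-scalar components survive the symmetry, and these turn out to be the same for $g$ and $g'$ by the above computation. Without this cancellation the curvature of $g'$ would generically jump along $\Gamma$ by an $O(1)$ amount and no curvature bound could be recovered.
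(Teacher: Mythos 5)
Your proposal follows the same structural blueprint as the paper's proof: pass to Fermi/polar coordinates adapted to $\Gamma$, define the polar model $g'$ by deleting the $\crcl$-cross terms, verify that $g$ and $g'$ agree along $\Gamma$ to the order needed for the curvature tensors to coincide there, and then invoke Lemma \ref{gluing} together with its isometry clause to obtain the $\crcl$-invariant polar metric $\tilde g$. The differences are in the details of the curvature-matching step, and there is one gap worth flagging.

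In Fermi coordinates the Gauss lemma for the normal exponential map to $\Gamma$ already gives $g_{rr}=1$, $g_{tr}=0$, $g_{r\phi}=0$, so in your normal form you may take $b\equiv 0$, $c\equiv 1$, $\beta\equiv 0$ from the start; this is exactly why \eqref{themetric} in the paper has only a $dt\,d\theta$ cross term. Because you keep $\beta$, you are forced to assert, without exhibiting the computation, that the degree-two Cartesian contribution of $2\beta\,dr\,d\phi$ cancels inside $R_{ijkl}|_\Gamma$. That cancellation does in fact hold (the only affected component is $R_{1212}$ and the four second partials cancel term by term), but as written it is an unverified claim; the cleaner route is to observe $\beta\equiv 0$ and dispense with it entirely. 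Once $\beta=0$, the only cross term is $2\alpha\,dt\,d\phi$, and your observation that $\crcl$-invariance plus the Fermi vanishings force $\alpha=O(r^4)$ yields $g-g'=O(r^3)$ in Cartesian, so all second derivatives of the difference vanish along $\Gamma$ and the Riemann tensors agree pointwise with no cancellation needed. This is a slightly more conceptual route to the same estimate than the paper's, which invokes the isometric reflection $\tau(\exp_p(v))=\exp_p(-v)$ of \eqref{reflection} to show that the second $x,y$-derivatives of $g_{xt}$, $g_{yt}$ vanish on $\Gamma$; both arguments exploit the rotational symmetry, yours the full circle, the paper's only the $\ZZ_2$-subgroup. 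Finally, the parenthetical ``intrinsic'' description of $g'$ as $g|_{X^\perp}+g(X,\cdot)\otimes g(X,\cdot)/g(X,X)$ is not correct: with $X^\perp$ the $g$-orthogonal complement, that expression is just the orthogonal decomposition of $g$ and reconstructs $g$ itself. The model $g'$ does patch consistently between charts, but for the reason the paper gives (the coefficients $\rho,\varphi,g_{t\theta}$ in \eqref{themetric} are independent of the chart choice), not because of this formula.
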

 
\begin{proof}
Let $\epsilon > 0$. $\Gamma$ is a smooth embedded submanifold which may be parametrized by a curve $t \mapsto \gamma(t)$ with $||\dot \gamma || = 1$. Since the action is isometric, $\gamma : \RR \to M$ is a closed geodesic. We may assume that the normal exponential map of $\Gamma$ restricted to the set of normal vectors of length $< \epsilon$ is injective, otherwise we choose a smaller $\epsilon$. Let $(r,\theta)$ denote polar coordinates of $B_{\epsilon}(0) \subset \dot \gamma(0)^\perp$ with radial direction $r$ and $\theta$ corresponding to the action of $\crcl$. Restricting $\gamma$ to some subinterval $I$, such that it is injective we denote by $(t,r,\theta)$ the induced Fermi coordinates. Then $g$ is given as
\begin{align}\label{themetric}
 g(t, r,\theta) = \rho^2(t,r)dt^2 + dr^2 + \varphi^2(t,r)d\theta^2 + g_{t \theta}(t,r)(dt \otimes d\theta + d\theta \otimes dt). 
\end{align}
Note that the coefficient functions $\rho$ and $\varphi$ are independent of $\theta$, since the $\crcl$-action is isometric. Now define
\begin{align*}
h = \rho^2(t,r)dt^2 + dr^2 + \varphi^2(t,r)d\theta^2,
\end{align*}
that is we simply set the $g_{t \theta}$ coefficient to $0$. Clearly  $h$ is defined independently of the choice of $I$. Therefore, $h$ is a smooth metric on $B_\epsilon(\Gamma)$ such that the $\mathsf S^1$-action is isometric with respect to $h$. Choose $u \in \dot \gamma(0)^\perp$ with $||u|| = 1$. It is easy to see that the set $\{(t,su) \in \RR \times B_\epsilon(0) \mid s \in ]-\epsilon,\epsilon[\}$ gives a section, so the $\crcl$-action on $(B_\epsilon(\Gamma),h)$ is polar.

We prove that $g$ and $h$ coincide up to first order along $\Gamma$ and  that $\operatorname{sec}_h \geq c$ along $\Gamma$. Then the existence of our desired metric $\tilde g$ follows from the gluing lemma \ref{gluing}.

In fact, we show that $g$ and $h$ coincide up to second order along $\Gamma$ (so the curvature tensors of $g$ and $h$ coincide at points of $\Gamma$ and the proof is finished): It is convenient to use coordinates $(x,y)$ of $B_\epsilon(0)$ with respect to an orthonormal frame and the induced coordinates $(t,x,y)$ with corresponding coordinate fields $\partial_t, \partial_x$ and $\partial_y$. We write $g(\partial_x,\partial_t) = g_{xt}$, $h(\partial_x,\partial_t) = h_{xt}$ and analogously for the other coefficients of the two metrics. From the definition of $h$ it follows
$$h = g - g_{xt}(dx \otimes dt + dt \otimes dx) - g_{yt}(dy \otimes dt + dt \otimes dy).$$
It is clear that for all $p \in \Gamma$
\begin{align*}
 g_{xt}(p) = g_{yt}(p) = 0
\end{align*}
and therefore
\begin{align*}
 g_p = h_p.
\end{align*}

Since in our Fermi coordinate system all first derivatives of $g_{xt}$ and $g_{yt}$ vanish at points of $\Gamma$ and $h_{xt} = h_{yt} = 0$, all first derivatives of $g$ and $h$ also coincide at points of $\Gamma$.

To see that the second derivatives of $g$ and $h$ coincide at $p \in \Gamma$, again, we need to verify that all second derivatives of $g_{xt}$ and $g_{yt}$ vanish at $p$. Since $g$ coincides with $h$ up to first order along $\gamma$, it is clear that all second derivatives of $g_{xt}$ and $g_{yt}$ involving a differentiation in $t$-direction vanish. So it remains to prove
\begin{align}
 \partial^2_{\alpha \beta }g_{\delta t} = 0 \label{blub}
\end{align}
for any combination
\begin{align}
 (\alpha, \beta, \delta) \in \{x,y\}^3\label{blab}.
\end{align}
For example we have
\begin{align}
 \partial^2_{xy}(p)g_{xt} = \partial_x(p)(g(\nabla_{\partial_y}\partial_x,\partial_t) + g(\partial_x,\nabla_{\partial_y}\partial_t)) = g_p(\nabla_{\partial_x}\nabla_{\partial_y}\partial_x,\partial_t) + g_p(\partial_x,\nabla_{\partial_x}\nabla_{\partial_y}\partial_t),\nonumber
\end{align}
where the second equality holds since $\nabla_{\partial_x}\partial_t(p) = \nabla_{\partial_x} \partial_x (p)= 0$, again due to our choice of coordinates. Since $\mathsf S^1$ acts isometrically, the map $\tau : B_\epsilon(\Gamma) \to B_\epsilon(\Gamma)$ defined via
\begin{align}\label{reflection}
\tau(\exp_p(v)) = \exp_p(-v),
\end{align}
for $v \in N \Gamma$ of length less than $\epsilon$, is an isometry. Also $\tau$ satisfies $d\tau \partial_x = -\partial_x$, $d\tau \partial_y = -\partial_y$ and $d\tau \partial_t = \partial_t$. Thus 
\begin{align}
\partial^2_{xy}(p)g_{xt} = &g_p(\nabla_{\partial_x}\nabla_{\partial_y} \partial_x,\partial_t) + g_p(\partial_x,\nabla_{\partial_x}\nabla_{\partial_y}\partial_t)\\
= &g_p(d\tau \nabla_{\partial_x}\nabla_{\partial_y}\partial_x, d\tau \partial_t) + g_p(d\tau \partial_x, d\tau \nabla_{\partial_x}\nabla_{\partial_y}\partial_t)\nonumber \\
= &g_p(\nabla_{d\tau {\partial_x}}\nabla_{d\tau {\partial_y}}d\tau \partial_x,d\tau\partial_t) + g_p(d\tau \partial_x,\nabla_{d\tau {\partial_x}}\nabla_{d\tau {\partial_y}}d\tau \partial_t)\nonumber \\
= &-(g_p(\nabla_{\partial_x}\nabla_{\partial_y}\partial_x,\partial_t) + g_p(\partial_x,\nabla_{\partial_x}\nabla_{\partial_y}\partial_t)).\nonumber
\end{align}
So both sides are $0$. This argument works as well for other combinations of \eqref{blab} and consequently \eqref{blub} holds.
\end{proof}

We use this lemma to prove a more general result.
% \begin{proof}[Proof of the general case.]
%We give a construction at one choosen fixed point component $F$ of codimension $2$. As above one may then repeat the argument. It is convenient to argue directly on the normal bundle $\nu F$ by considering the pullback metric under the normal exponential map, which we also denote by $g$, where we may assume that this map is a global diffeomorphism as we are only willing to proof a local statement. The projection
%\begin{align}
% \pi : \nu F \rightarrow F
%\end{align}
%is a submerison. For a vectorfield $X$ on $\nu F$ let $\pi_1(X)$ be the orthogonal projection of $X$ to the fiber and $\pi_2(X)$ be the orthogonal projection of $X$ to the normal space of the fiber. Now define 
%\begin{align}
% h_{v_p}(X,Y) = g_{v_p}(\pi_1(X),\pi_1(Y)) + g_{v_p}(\pi_2(X),\pi_2(Y)).
%\end{align}
%Since the projections are smoth $h$ defines a Riemannian metric on $\nu F$, which is still invariant under $\mathrm S^1$. To check that this action is actually polar consider normal $g$-exponential %coordinates at $F$. 

%Let $U \subset F$ be some open set admitting a trivialization of the unit bundle $SF \subset \nu F$, so there is a fiber preserving diffeomorphism
%\begin{align}
% \varphi : U \times \mathrm S^1 \rightarrow SU.
%\end{align}
%Introduce coordinates as follows:
%\begin{align}
% (u,v) = (u_1, \dots , u_{n-2},r\cos \theta,r \sin \theta) \mapsto \exp_{\varphi(u,0,0)}(\theta.r\varphi(u,1,0)).
%\end{align}

% \end{proof}

\begin{lemma}\label{symmetry along geodesic}
 Let $(M,g)$ be a $3$-dimensional Riemannian manifold with $\sec \geq c$. Assume there exists a  closed geodesic $\gamma$ whose image $\Gamma$ is a smooth submanifold and the normal bundle $N\gamma$ is trivial. Then for all $\epsilon > 0$ there exists a Riemannian metric $\tilde g$ on $M$ with $\sec_{\tilde g} \geq c - \epsilon$, which coincides with $g$ on $M\setminus B^g_\epsilon(\Gamma)$ and admits a polar $\mathsf S^1$-action in a neighborhood of $\Gamma$ with $\operatorname{Fix}(\crcl) = \Gamma$.
\end{lemma}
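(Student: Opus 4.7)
The goal is to produce, in a tubular neighborhood of $\Gamma$, an $\crcl$-invariant metric $\bar g$ coinciding with $g$ to first order along $\Gamma$ and admitting a common lower curvature bound $c$ (up to a controlled loss); then the Gluing Lemma~\ref{gluing} and Lemma~\ref{polar} will conclude the proof. Using triviality of $N\gamma$, I first pick a smooth orthonormal global trivialization $(e_1, e_2)$ along $\gamma$ and let $\Phi(t, x, y) := \exp_{\gamma(t)}(x e_1(t) + y e_2(t))$ identify a neighborhood of $\Gamma$ with $\crcl \times D^2_\epsilon$. The candidate circle action $\sigma_\alpha(t, x, y) := (t, R_\alpha(x, y))$ fixes $\Gamma$ pointwise, and I set $\bar g := \tfrac{1}{2\pi}\int_0^{2\pi}\sigma_\alpha^* g\, d\alpha$, which is $\crcl$-invariant by construction.

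The two things to check will be that $\bar g$ agrees with $g$ up to first order along $\Gamma$ and that $\sec_{\bar g}(p, \pi) \geq c$ for every $2$-plane $\pi$ at every $p \in \Gamma$. Both are intrinsic statements, and the natural way to verify them is to pass to locally valid Fermi coordinates (in the sense defined earlier in this section) on each simply connected arc of $\gamma$, built from a Levi-Civita parallel frame. Since any two orthonormal trivializations differ by a $t$-dependent rotation and rotations of $\RR^2$ commute, the action $\sigma_\alpha$ still takes the form $(t, R_\alpha(x', y'))$ in the Fermi chart; hence the averaged tensor $\bar g$ is intrinsically the same. In the Fermi chart all first partials $\partial_k g_{ij}$ vanish along $\Gamma$, and since $\sigma_\alpha$ fixes $\Gamma$ pointwise the same holds for each $\sigma_\alpha^* g$ and therefore for $\bar g$: this yields the first-order agreement.

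For the curvature at $p \in \Gamma$, the vanishing of the Christoffel symbols of $\bar g$ at $p$ (in the Fermi chart) makes the Riemann tensor linear in the second partials, so averaging commutes with $R$: $R^{\bar g}_p = \tfrac{1}{2\pi}\int_0^{2\pi}\sigma_\alpha^* R^g_p\, d\alpha$ as tensors on $\Lambda^2 T_pM$. Since $d\sigma_\alpha|_p$ is a $g_p$-isometry, evaluating on any $2$-plane $\pi$ gives $\sec_{\bar g}(p, \pi) = \tfrac{1}{2\pi}\int_0^{2\pi}\sec_g(p, d\sigma_\alpha \pi)\, d\alpha \geq c$. By continuity of sectional curvature in the $2$-jet of the metric, one can arrange $\sec_{\bar g} \geq c - \epsilon/2$ on a (possibly smaller) tubular neighborhood of $\Gamma$.

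Finally, the Gluing Lemma~\ref{gluing} applied to $(g, \bar g)$ with $N = \Gamma$ produces a smooth metric $h$ on $M$ with $h = g$ outside $B^g_\epsilon(\Gamma)$, $h = \bar g$ on a still smaller neighborhood $V$ of $\Gamma$, and $\sec_h \geq c - 3\epsilon/4$; on $V$ the metric $h$ is $\crcl$-invariant with $\Gamma$ as a $1$-dimensional fixed component. Applying Lemma~\ref{polar} with $\epsilon/4$ in place of $\epsilon$ then gives the desired $\tilde g$: $\sec_{\tilde g} \geq c - \epsilon$, $\tilde g = g$ on $M \setminus B^g_\epsilon(\Gamma)$, and $\tilde g$ admits a polar $\crcl$-action in a neighborhood of $\Gamma$ with $\Fix(\crcl) = \Gamma$. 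The most delicate step will be the verification of first-order agreement and the curvature bound, where the passage to locally valid Fermi coordinates is essential: the global chart $\Phi$ is generically not a Fermi chart because $(e_1, e_2)$ need not be parallel—the normal holonomy of $\gamma$ is a priori nontrivial even though $N\gamma$ is trivial as a vector bundle.
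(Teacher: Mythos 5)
Your proposal follows the same overall strategy as the paper's proof: average $g$ over the candidate $\crcl$-action to get an invariant metric $\bar g$, verify first-order agreement with $g$ along $\Gamma$ in Fermi coordinates, bound the curvature of $\bar g$ at points of $\Gamma$, then conclude via the gluing lemma \ref{gluing} and Lemma \ref{polar}. The arguments for first-order agreement coincide, and your observation that $\sigma_\alpha$ still takes the form $(t,R_\alpha)$ in the Fermi chart (since any two orthonormal trivializations differ by a $t$-dependent rotation, and rotations of the plane commute) correctly handles the point that the global trivialization need not be parallel.

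Where you diverge from the paper is in the curvature bound at $p\in\Gamma$. The paper works with the curvature operator directly: it diagonalizes $v\mapsto R_{v\partial_t}\partial_t$ in an orthonormal Fermi frame, computes the averaged diagonal entries $\nu$ and $\sec^h(\partial_x,\partial_y)$ as $\crcl$-averages which it bounds below by $c$, and uses the isometric reflection $\tau=\exp_p(-\cdot)$ to kill the off-diagonal entries $\langle\mathcal R^h(\partial_x\wedge\partial_y),\partial_t\wedge\partial_{x,y}\rangle$. Your argument is cleaner: since the Christoffel symbols of $\bar g$ and of each $\sigma_\alpha^*g$ vanish at $p$ in Fermi coordinates, the Riemann tensor there is the linear expression $\tfrac12(\partial^2 g)$-symmetrization and hence commutes with the average, giving $R^{\bar g}_p=\tfrac1{2\pi}\int\sigma_\alpha^*R^g_p\,d\alpha$; because $d\sigma_\alpha|_p$ is a $g_p$-isometry (and $\bar g_p=g_p$), evaluating on any $2$-plane gives $\sec_{\bar g}(p,\pi)$ as an average of sectional curvatures of $g$, hence $\geq c$. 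This avoids both the eigendecomposition and the reflection-symmetry lemma. The trade-off is that the paper's computation exhibits the full averaged curvature operator in a form it reuses elsewhere, whereas your argument gives the pointwise sectional bound more directly and in a form that generalizes verbatim to higher codimension. Both are correct; your version is a genuine simplification of this step.
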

\begin{proof} Since $N\gamma$ is trivial, there exists a smooth $\mathsf S^1$-action on the normal bundle $N \Gamma$ obtained by orthogonally rotating the fibers. We may assume that the normal exponential map of $\Gamma$ restricted to the set of normal vectors of length $< \epsilon$ is injective. Via the exponential map we define a smooth $\mathsf S^1$-action on $B^g_\epsilon(\Gamma)$. Let $\crcl = \RR/2\pi\ZZ$ be equipped with the induced volume form $d\theta$ (so that $\crcl$ has length $2\pi$). For $p$ in $B^g_\epsilon(\Gamma)$ set
\begin{align} \label{definition of h}
 h_p := \frac 1 {2\pi} \int_{\crcl}\theta^*g_p\ d\theta.
\end{align}
Clearly $\mathsf{S}^1$ is acting isometrically with respect to $h$. We prove that $g$ and $h$ coincide up to first order along $\gamma$ and that $h$ has curvature bounded below by $c - \epsilon/2$ in some small neighborhood of $\Gamma$. Then the existence of the desired metric $\tilde g$ follows from the gluing lemma \ref{gluing}.
\\
As in the previous proof, let $\Gamma$ be parametrized by an arc length geodesic $\gamma$ and $(t,x,y)$ denote Fermi coordinates adopted to $\gamma$. At points of $\Gamma$ the slice representation of $\mathsf S^1$ is orthogonal with respect to $g$ as well as $h$. Thus $g = h$ at points of $\Gamma$ from \eqref{definition of h}. Let $p \in \Gamma$. To see that the first derivatives of $h$ and $g$ coincide at $p$, recall that
\begin{align*}
 \nabla^g_{\partial_x}\partial_y(p) = \nabla^g_{\partial_x}\partial_x(p) = \nabla^g_{\partial_y} \partial_y(p) = \nabla^g_{\partial_t}\partial_x(p) = \nabla^g_{\partial_t} \partial_y(p) = \nabla^g_{\partial_t}\partial_t(p) = 0
\end{align*}
in our coordinates. Rotating our coordinate system, $\nabla_{d\theta\partial_i}d\theta \partial_j(p) = 0$ analogously follows for all $\theta \in \crcl$ and hence
%Thus all first derivatives of $g$ vanish at $p$ in our coordinate system. To see that the first derivatives of $h$ also vanish at $p$ observe that  for $\theta \in \mathsf S^1$ we have (maybe after changing the direction of rotation)
%First observe the following: $\mathsf{S}^1$ is acting orthogonally on $T_pM = \operatorname{span} \{\partial_t(p), \partial_x(p),\partial_y(p)\}$ for $\gamma(t) = p$ and for $v \in T_pM$ and $\theta \in \mathsf S^1$ we have (maybe after changing the direction of rotation)
%\begin{align}
%d\theta_v\partial_t(p) &= \partial_t(p), \\
%d\theta_v\partial_x(p) &= \cos(\theta)\partial_x(p) + \sin(\theta)\partial_y(p), \\
%d\theta_v\partial_y(p) &= -\sin(\theta)\partial_x(p) + \cos(\theta)\partial_y(p).
%\end{align}
%For $v \in \nu_p \Gamma$ we have $\partial_x(\exp_p(v)) = \frac{d}{dx}_{\psi^{-1}(\exp_p(v))}\psi = d\exp_v(\partial_x(p))$ and analogeous for the other coordinate fields $\partial_t$ and $\partial_y$. Then by equivariance of $\exp_p$ the same equations follow for the coordinate fields $\partial_t = \exp_{p*}\partial_1,\partial_x = \exp_{p*}\partial_2,\partial_y = \exp_{p*}\partial_3$ by lemma \ref{related}, that is 
%\begin{align}
%d\theta \partial_t &= \partial_t, \label{partialt}\\
%d\theta\partial_x &= \cos(\theta)\partial_x + \sin(\theta)\partial_y, \label{partialx}\\
%d\theta\partial_y &= -\sin(\theta)\partial_x + \cos(\theta)\partial_y. \label{partialy}
%\end{align}
%From \ref{definition of h}
\begin{align*}
 \partial_i(p)h_{jk} &= \int_{\crcl}(d\theta_p\partial_i)g(d\theta \partial_j,d\theta \partial_k)d\theta\\
 &= \int_{\crcl}g_p(\nabla_{d\theta\partial_i}d\theta \partial_j,d\theta \partial_k) + g_p(d\theta \partial_j, \nabla_{d\theta \partial_i}d\theta \partial_k)d\theta = 0
\end{align*}
for $\{i,j,k\} \subset \{t,x,y\}$ and $p \in \Gamma$. Thus $g$ and $h$ coincide up to first order along $\Gamma$.

It remains to prove that in some neighborhood of $\Gamma$ the curvatures of $h$ are bounded below by $c - \epsilon/2$. For this it suffices to show that the sectional curvatures of $h$ are bounded below by $c$ at points of $\Gamma$: Consider the curvature tensor $R$ of $g$ and let $p \in \Gamma$. The map
\begin{align*}
 R_{.\ \partial_t}\partial_t :\ &T_pM \rightarrow T_pM\\
&v \mapsto R_{v \partial_t}\partial_t
\end{align*}
is selfadjoint with respect to $g$, and $\partial_t$ is an eigenvector with eigenvalue $0$. There is an orthonormal basis of $T_pM$ consisting of eigenvectors, which we may assume to be $\{\partial_t, \partial_x, \partial_y\}$, after possibly rotating our coordinate system. So at $p$ we have
$$ (g(R_{\partial_i \partial_t}\partial_t,\partial_j))_{ij} =  \left( \begin{array}{ccc} \lambda & 0 & 0 \\ 0 & \mu & 0 \\ 0 & 0 & 0 \end{array} \right), \{i,j\} \subset \{t,x,y\}$$
with $\lambda, \mu \geq c$. Let $R^h$ denote the curvature tensor of $h$. Again $\partial_t$ is a $0$-eigenvector of $v \mapsto R^h_{v \partial_t}\partial_t$, and since $\crcl$ acts isometrically, we see that at $p$
\begin{align}\label{Rhmatrix}
 (h(R^h_{\partial_i \partial_t}\partial_t,\partial_j))_{ij} =  \left( \begin{array}{ccc} \nu & 0 & 0 \\ 0 & \nu & 0 \\ 0 & 0 & 0 \end{array} \right), \{i,j\} \subset \{t,x,y\},
\end{align}
for some $\nu \in \RR$. It is straightforward to calculate that
\begin{align*}
 \nu = h_p(R^h_{\partial_x \partial_t}\partial_t,\partial_x) = \int_{\crcl}g_p(R_{d\theta \partial_x \partial_t}\partial_t,d\theta \partial_x)d\theta \geq c.
\end{align*}
Let us consider the curvature operator $\mathcal R^h_p$ of $h$ with respect to the basis 
\begin{align*}
 \{\partial_t \wedge \partial_x, \partial_t \wedge \partial_y, \partial_x \wedge \partial_y\}
\end{align*}
of $\Lambda^2T_pM$. As in the last section of the proof of Lemma \ref{polar} we obtain an isometric involution fixing $\Gamma$ (see \eqref{reflection}) and deduce
\begin{align*}
 \langle \mathcal R^h(\partial_x \wedge \partial_y),\partial_t \wedge \partial_x\rangle = \langle \mathcal R^h(\partial_x \wedge \partial_y),\partial_t \wedge \partial_y\rangle = 0.
\end{align*}
 From \eqref{Rhmatrix} also 
\begin{align*}
 \langle \mathcal R^h(\partial_t \wedge \partial_y),\partial_t \wedge \partial_x\rangle = 0.
\end{align*}
So in this basis $\mathcal R^h_p$ is given as
\begin{align*}
 \left( \begin{array}{ccc} \nu & 0 & 0 \\ 0 & \nu & 0 \\ 0 & 0 & \sec_p^h(\partial_x,\partial_y) \end{array} \right).
\end{align*}
Finally,
$$\sec^h_p(\partial_x,\partial_y) = h_p(R^h_{\partial_x \partial_y}\partial_y,\partial_x) = \int_{\crcl}g_p(R_{d\theta \partial_x,d\theta \partial_y}d\theta \partial_y,d\theta \partial_x)d\theta \geq c.$$
The proposition now follows from the gluing lemma \ref{gluing} and Lemma
 \ref{polar}.
\end{proof}
%================================================
\begin{remark}
Let $\gamma : \RR \to N$ be an arbitrary geodesic of a Riemannian $3$-manifold $(N,g)$ and $[a,b] \subset \RR$ such that $\gamma_{[a,b]}$ is injective. Choose $\delta > 0$ such that the normal exponential map $\exp : N^{< \delta}{\gamma_{[a,b]}} \to M$ is a diffeomorphism onto its image $M \subset N$. Then Lemma \ref{symmetry along geodesic} holds analogously for $(M,g)$ and the geodesic $\gamma : [a,b] \to M$ (without the condition that $\gamma$ is closed). This can either be seen with the same proof, or alternatively by constructing a smooth Riemannian $3$-manifold $(\hat M, \hat g)$ which contains a closed geodesic $\hat \gamma$ in a way that there exists an isometric embedding $(M,g) \to (\hat M, \hat g)$ that injects $\gamma$ into $\hat \gamma$. Then the statement follows by applying Lemma \ref{symmetry along geodesic} to $(\hat M, \hat g)$ and $\hat \gamma$.
\end{remark}

Now we are able to give the main technical tool for our arguments:
\begin{proposition}\label{symmetry}
 Let $(M,g)$ be a closed, simply connected Riemannian $4$-manifold admitting an isometric $\crcl$-action with only isolated fixed points and quotient space $(M^*,d)$. Let $F$ denote the set of fixed points, $E$ denote the set of exceptional orbits and let $c \in \RR$ with $\sec_M \geq c$ (or more generally $\curv M^* \geq c$). Then there exists a sequence of Alexandrov metrics $d_n$ on $M^*$ satisfying the following properties:
 \begin{enumerate}
  \item $d_{GH}((M^*,d_n),(M^*,d)) < \frac {1}{n}$.\label{symmetry 1}
  \item $d_n$ is smooth on $M\setminus (F \cup E^*)$ and coincides with $d$ on $M^*\setminus B_{1/n}^d(F^* \cup E^*)$.\label{symmetry 2}
  \item $\curv (M^*,d_n) > c - \frac {1}{n}$.\label{symmetry 3}
  %\item there exists an open neighborhood $U_n$ of $S$ that admitts an effective isometric $\crcl$ action with $S \subset \operatorname{Fix}(\crcl)$
  \item There exists $\rho > 0$ such that for every fixed point $p \in M^*$ the open ball $B_{\rho}(p)$ is isometric to the corresponding ball at a tip of $S(\Sigma_pM^*)$, the spherical suspension of $\Sigma_pM^* \cong \sphere^3/\crcl$, where $\crcl$ acts fixed point free on the round $\sphere^3$.\label{symmetry 4}
  \item For an arc $\gamma$ of $E^*$ with isotropy group $\ZZ_m$ there exists an open neighborhood $U$ of $\gamma$ which is a good Riemannian orbifold $\hat  U \to U = \hat U /\ZZ_m$ with $\gamma = \operatorname{Fix}(\ZZ_m)$. Moreover, the $\ZZ_m$-action on $\hat U$ is induced by a polar $\crcl$-action.\label{symmetry 5}
  \item An open neighborhood of the nonregular part of $M^*$ admits an isometric $\crcl$-action that fixes the nonregular part. This action is polar in a neighborhood of every regular point.\label{symmetry 6}
  %isometric to the quotient of a Riemannian manifold $\hat U$ by a $\ZZ_k$ action which is induced by a polar $\crcl$ action on $\hat U$. with $\hat \gamma = \operatorname{Fix}(\ZZ_k)$ (in particular $U$ is a good Riemannian orbifold).\label{test5}
 \end{enumerate}
\end{proposition}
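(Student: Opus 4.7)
The plan is to perform two independent local deformations — one around each isolated fixed point $p \in F$ and one around each singular arc of $E^*$ — and then paste them into $M^*$ by means of Lemma~\ref{gluing}, which also provides the quantitative curvature control. Doing every deformation inside a $1/n$-tube around $F \cup E^*$ gives properties~\ref{symmetry 1} and \ref{symmetry 2} automatically, and property~\ref{symmetry 3} falls out of the gluing lemma's $\epsilon$-loss bookkeeping.

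To achieve property~\ref{symmetry 4} I would argue upstairs on the Riemannian $4$-manifold $(M,g)$. For each $p \in F$ apply Corollary~\ref{constant curvature at fixed point} with, say, $\kappa = 1$ (or any $\kappa \geq c$) on a small neighborhood of $p$. This replaces $g$ there by a metric of constant sectional curvature $\kappa$ while preserving $\Iso_p(g) \supseteq \crcl$; hence $\crcl$ still acts isometrically with $p$ as an isolated fixed point, so the quotient of a small ball around $p$ is isometric to the corresponding ball around a tip of the round spherical suspension $S(\sphere^3/\crcl) = S(\Sigma_p M^*)$ as described in Proposition~\ref{directions}. Finiteness of $F$ (Lemma~\ref{fintushel}) allows the radius $\rho$ to be taken uniform over all fixed points.

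For property~\ref{symmetry 5}, let $\gamma$ be an arc of $E^*$ with isotropy $\ZZ_m$. By Fintushel's lemma its interior has a tubular orbifold neighborhood $U = \hat U / \ZZ_m$ with $\hat U$ a smooth Riemannian $3$-manifold and $\hat \gamma := \Fix(\ZZ_m) \subset \hat U$ a geodesic along which $\ZZ_m$ acts fiberwise by rotations on the (trivial) normal bundle. I would run the averaging construction from the proof of Lemma~\ref{symmetry along geodesic} on $\hat U$, starting not from an arbitrary putative $\crcl$-rotation of $N\hat \gamma$ but from one that \emph{contains} the given $\ZZ_m$-rotation. The averaged metric is then simultaneously $\crcl$- and $\ZZ_m$-invariant, so it descends to $U$ and provides a polar $\crcl/\ZZ_m \cong \crcl$-action on $U$ fixing $\gamma$ pointwise. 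The Remark following Lemma~\ref{symmetry along geodesic} is what makes this work even though $\gamma$ is generally not closed.

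The main obstacle is property~\ref{symmetry 6}, which demands that the local $\crcl$-actions coming from the different arcs and the residual $\crcl$-action visible on the spherical-suspension model around each fixed point piece together into a single isometric $\crcl$-action on a neighborhood of the whole nonregular set. At an endpoint $p \in F$ of $\gamma$, the axis $\hat \gamma$ sits in the constant-curvature ball along the subspace fixed by one of the $\ZZ_m$-summands of the original slice representation on $T_pM$, and the same slice representation supplies a natural $\crcl$-action on the quotient ball around $p$. The required matching is therefore enforced by choosing, in the previous paragraph, the putative $\crcl$ on $N\hat\gamma$ to be exactly the lift of this slice $\crcl$. With that choice, the equivariance clause of the gluing lemma guarantees that the modified metric carries a single $\crcl$-action on a neighborhood of $F \cup E^*$, polar away from the nonregular set, completing property~\ref{symmetry 6}.
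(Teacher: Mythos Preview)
Your strategy is the paper's: first apply Corollary~\ref{constant curvature at fixed point} upstairs on $M$ to force constant curvature $1$ near each fixed point (property~\ref{symmetry 4}), then lift a tube around each arc of $E^*$ to its orbifold cover $\hat U$ and run the averaging of Lemma~\ref{symmetry along geodesic} there (property~\ref{symmetry 5}). Two points where the paper's argument is tighter than your sketch:

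\emph{Compatibility on the overlap.} You propose an extra gluing and a careful choice of the rotation action to make the arc construction match the suspension model on $U\cap B_\rho(p)$. The paper does something simpler: after the first step the lifted metric on $\hat U$ over $B_\rho(p)$ is a piece of the round quotient, hence \emph{already} invariant under rotation about $\hat\gamma$; the averaging in~\eqref{definition of h} is therefore the identity there, so the deformed metric literally coincides with the suspension metric on the overlap and property~\ref{symmetry 4} survives intact. Your ``choose the putative $\crcl$ to be the lift of the slice $\crcl$'' becomes this once you notice the no-op, but you should state it rather than invoke a further gluing, which would otherwise cost you property~\ref{symmetry 4} for the final metric.

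\emph{Global consistency for property~\ref{symmetry 6}.} Each arc's $\crcl$-action has two orientations; matching it to the suspension action at one endpoint $p$ does not by itself guarantee a match at the other endpoint $q$, nor that the choices made on different arcs meeting at the same fixed point are compatible. The paper closes this by observing that $M^*$ is simply connected, hence orientable, so a globally consistent orientation of all the local $\crcl$-actions exists. You do not mention this, and without it property~\ref{symmetry 6} is not established.
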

\begin{proof}
 Using Corollary \ref{constant curvature at fixed point}, we obtain a sequence of $\crcl$-invariant Riemannian metrics $\tilde g_n$ on $M$ with $\sec_{\tilde g_n} > c -1/2n$ such that $\tilde g_n$ coincides with $g$ outside an $1/n$-ball of the fixed point set and $\tilde g_n$ has constant curvature $1$ in some neighborhood of every fixed point. A neighborhood of a fixed point is therefore locally isometric to $\sphere^4$. Let $\tilde d_n$ denote the quotient metric of $\tilde g_n$. Since $\sphere^4$ is isometric to the spherical suspension of $\sphere^3$, it follows that a neighborhood $B_{\rho}(p)$ of a fixed point $p \in (M^*,\tilde d_n)$ is isometric to the corresponding ball at a tip of the spherical suspension of $\Sigma_p(M^*,\tilde d_n) = \sphere^3/\crcl_p$, where $\crcl_p = \crcl$ is acting orthogonally and fixed point free. Clearly $(M^*,\tilde d_n)$ converges in Gromov-Hausdorff sense to $(M^*,d)$ for $n \to \infty$. We thus constructed a sequence of metrics satisfying  properties \textit{1 -- 4}. Recall from Proposition \ref{directions} that for a fixed point $p \in M^*$ its space of directions $\Sigma_pM^*$ admits an isometric $\crcl$-action that fixes its singularities. Suspending this action we can extend it to an isometric $\crcl$-action on $B_{\rho}(p)$ that fixes all singularities of $B_{\rho}(p)$. This action is easily seen to be polar restricted to the regular part.
 
 Now let $\overline{\gamma} : [0,1] \to M^*$ be a nonregular arc in $M^*$ with constant isotropy group $\ZZ_k$ along its interior such that $p = \overline \gamma(0)$ and $q = \overline \gamma(1)$ are two different fixed points in $M^*$. Choose $s > 0$ such that $\gamma(s) \in B_{\rho}(p)$ and $\gamma(1 - s) \in B_{\rho}(q)$. Define the normal bundle 
 $$N^{< \delta}\gamma_{[s,1-s]} := \{v \in \Sigma_{\gamma(t)}M^* \mid s \leq t \leq 1 -s,\ \measuredangle (v,\dot \gamma(t)) = \pi/2\}.$$ 
 It follows from the slice theorem (or alternatively Theorem \ref{lytchak-thorbergson}) that for all sufficiently small $\delta > 0$ the exponential map $\exp : N^{< \delta}\gamma_{[s,1-s]} \to M^*$ is injective (in fact it can be shown that $\exp : N^{< \delta}\gamma_{]0,l[} \to M^*$ is injective for some $\delta > 0$, compare Lemma \ref{injective normal}). Let $U_\delta := \exp (N^{< \delta}\gamma_{[s,1-s]})$. By Theorem \ref{lytchak-thorbergson}, $(U_{\delta},\tilde d_n)$ is a good Riemannian orbifold;
$$(\hat U_\delta, \tilde h_n) \to (\hat U_\delta/\ZZ_k, \tilde h^*_n) = (U_{\delta},\tilde d_n).$$ The fixed point set of the $\ZZ_k$-action on $\hat U_\delta$ can be parametrized by a geodesic $\hat \gamma$ that projects to $\gamma$. Now we apply Lemma \ref{symmetry along geodesic} and its following remark to $(\hat U_\delta, \tilde h_n)$ and $\hat \gamma$ to obtain a smooth metric $h_n$ on $\hat U_\delta$ that coincides with $\tilde h_n$ up to an arbitrary small open neighborhood $W$ of $\hat \gamma$, admits a polar $\crcl$-action on $W$ that fixes $\hat \gamma$ and has curvature bounded below by $c - 1/n$. Let $d_n$ denote the metric on $U_\delta$ induced by $h_n$. From the construction of $h_n$ (see  \eqref{definition of h}) it is clear, possibly after change of orientation of the action, that the $\ZZ_k$-action on $\hat U_\delta$ is induced by the action of $\crcl$. Moreover $h_n$ and $\tilde h_n$ coincide at points that project to $B_{\rho}(p)$ or $B_{\rho}(q)$ since $\tilde h_n$ is already invariant under rotations fixing $\hat \gamma$ at these points. Therefore, the $\crcl$-action on $W$ descends to an isometric $\crcl$-action on an open neighborhood of $\gamma_{[s,1 - s]}$ in $(U_\delta,d_n)$ and, again up to orientation, both $\crcl$-actions on $B_{\rho}(p) \cap U_\delta$ and $B_{\rho}(q) \cap U_\delta$ coincide as well as the metrics $\tilde d_n$ and $d_n$. Choosing $W$ small enough it follows that $\tilde d_n$ and $d_n$ coincide along an open neighborhood of the boundary of $U_\delta$. Thus we can extend $d_n$ via $d$ to all of $M^*$ to a metric called $d_n$ as well. We can perform this construction along every nonregular edge of $E^*$ and obtain properties \textit{1 -- 5} of the proposition.
 
 To obtain an $\crcl$-action in a neighborhood of the nonregular part of $M^*$, we need to verify that all these actions defined along nonregular edges can be oriented in a consistent way. This is possible since $M^*$ is simply connected and hence orientable.
\end{proof}

%Possible generalizations of this result are higher codimension, i.e.\ imploying an $SO(n-1)$ symmetry near a geodesic. This might be doable. Another possibility is to also 'increase the dimension of the geodesic'. I guess this would be hard, if possible at all.

%----------------------------------------------------------------------------------------------
\section{Resolution of the singularities}\label{resolution}
In this section we prove the following theorem.
\begin{theorem}\label{thm2.3}
Let $M$ be a closed, simply connected Riemannian $4$-manifold admitting an isometric $\crcl$-action with isolated fixed points only and quotient space $(M^*,d)$. Let $L < \infty$ denote the maximal order of a finite isotropy group of the action and assume that $\curv M^* \geq c$.
Then there exists a sequence of smooth Riemannian metrics $g_n$ on $M^*$ with $\sec_{g_n} \geq Lc - 1/n$ if $c \leq 0$ ($\sec_{g_n} > 0$ if $c > 0$) such that $(M^*,d_n) \to (M^*,d)$ in Gromov-Hausdorff sense, where $d_n$ denotes the metric induced by $g_n$.
\end{theorem}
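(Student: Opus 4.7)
The strategy is to combine Proposition \ref{symmetry} (which puts the metric into a standard form near the singular set) with two separate resolution constructions carried out in turn: one for the singularities along arcs in $E^*$, and one for the remaining isolated fixed points. Throughout, the gluing lemma \ref{gluing} is the workhorse for assembling local modifications into a global metric without significant loss in the lower curvature bound.

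First I would apply Proposition \ref{symmetry} to replace $(M^*,d)$ by a sequence of Alexandrov metrics $d_n^0$ that are $\frac{1}{n}$-close to $d$ in Gromov--Hausdorff distance, satisfy $\curv \geq c - \frac{1}{n}$, and enjoy the special structural properties \textit{4--6}: around each fixed point $p$ the ball $B_\rho(p)$ is isometric to a tip-ball of $S(\Sigma_pM^*)$, and around each arc $\gamma \subset E^*$ with isotropy $\ZZ_k$ there is an orbifold cover $\hat U \to U = \hat U/\ZZ_k$ on which a polar $\crcl$-action fixes a lift $\hat\gamma$ of $\gamma$. All subsequent constructions are done on $d_n^0$ and produce a smooth metric $g_n$ that is $\frac{2}{n}$-close to $d$.

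Second I would resolve the singularities along each arc $\gamma \subset E^*$ of constant isotropy $\ZZ_k$. Working in the orbifold cover $\hat U$ and using the polar $\crcl$-symmetry, a two-dimensional section $\Sigma$ through $\hat\gamma$ carries a metric of the form $dt^2 + dr^2 + \varphi^2(t,r)d\theta^2$, where $r$ is the distance from $\hat\gamma$; by Lemma \ref{Hessian} together with Lemma \ref{curvature} and Corollary \ref{curvature and Hessian}, the curvatures are controlled by $-\varphi_{rr}/\varphi$ and by the section's own curvature. The singularity in $M^*$ corresponds to the cone angle $2\pi/k$ at $r=0$, encoded by $\varphi(t,r) = k^{-1}r + O(r^3)$. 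I would modify $\varphi$ on a small radial neighborhood of $\hat\gamma$ to a smooth function $\tilde\varphi$ with $\tilde\varphi_r(t,0) = 1$ (unfolding the cone), matching $\varphi$ outside, and arrange that $-\tilde\varphi_{rr}/\tilde\varphi \geq kc$ via a careful choice of interpolation. The factor $L$ enters here: the reparametrization that opens a $\ZZ_k$ cone to full angle effectively multiplies the radial second derivative by $k$ relative to $\tilde\varphi$, so negative curvature contributions are amplified by at most $k$, and taking the maximum over all isotropy orders gives the bound $Lc$. This local resolution is done separately in the three neighborhoods $B_\rho(p)$, $B_\rho(q)$, and a tubular neighborhood of the interior of $\gamma$, and the pieces are glued via Lemma \ref{gluing}; the common $\crcl$-symmetry from property \textit{6} of Proposition \ref{symmetry} guarantees the pieces agree to first order at the overlaps and that the isotropy is preserved.

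Third I would resolve the remaining isolated singularities at $p \in F$. Because $B_\rho(p) \subset (M^*, d_n^0)$ is isometric to a tip-ball in $S(\sphere^3/\crcl) \cong \sphere^4/\crcl$, I would use stereographic projection (or the exponential map of the suspension) to view the resolved metric from the previous step locally as a metric on an open subset of $\RR^4/\crcl$, lift to an $\crcl$-invariant metric on $\RR^4$ with the same lower curvature bound, and then smooth via convolution with a standard mollifier. Because the convolution is $\crcl$-equivariant it descends to the quotient, and because it is performed on a flat Euclidean background it loses arbitrarily little in the lower curvature bound. One final application of Lemma \ref{gluing} pastes this smoothing back into $M^*$, producing the desired smooth metric $g_n$ with $\sec_{g_n} \geq Lc - \frac{1}{n}$ (or $\sec_{g_n} > 0$ when $c > 0$, since in that regime the section curvature remains strictly positive throughout).

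The main obstacle is the bookkeeping in the second step: one must show that the explicit choice of radial smoothing $\tilde\varphi$ and the patching through the gluing lemma yield exactly the factor $L$ multiplying $c$ (rather than something like $L^2$), and that the various local resolutions, performed arc by arc and matched across $B_\rho(p)\cap B_\rho(q)$-type overlaps, assemble into a single globally smooth metric without degrading the curvature estimate on repeated application of Lemma \ref{gluing}.
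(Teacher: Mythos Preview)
Your overall architecture matches the paper's (apply Proposition~\ref{symmetry}, resolve the arcs of $E^*$, then resolve the isolated fixed points), but two of the key mechanisms you invoke do not work as stated.

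For the arc step, the gluing lemma~\ref{gluing} is not the right tool for joining the three local resolutions. That lemma requires two metrics that agree \emph{to first order along a closed submanifold}, and the common $\crcl$-symmetry from property~\textit{6} does not by itself produce such agreement between an independently chosen resolution on $B_\rho(p)$ and one on the tubular neighbourhood $U$. In the paper the three modifications are not glued metrically at all: one works entirely on a section $\Sigma$, defines $\overline h_{\tau,\delta}(s,t):=h_{\tau,\delta}(\rho/2,t)$ so that the piecewise function $\psi_{\tau,\delta}$ is the \emph{minimum} of the two strictly concave functions $\varphi+h_{\tau,\delta}$ and $\varphi+\overline h_{\tau,\delta}$ on $B_\rho(p)\cap\Sigma$ (and similarly near $q$), hence itself strictly concave, and then uses the Greene--Wu smoothing (Lemma~\ref{riemannian convolution}) to replace $\psi_{\tau,\delta}$ by a smooth strictly concave function. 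Concavity of the warping function, via Corollary~\ref{curvature and Hessian}, is exactly what controls the curvature; the gluing lemma would give no such control across the overlap.

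For the fixed-point step, after the arc resolution the ball $B_{\tilde\rho}(p)$ is the spherical suspension of a \emph{smooth} $2$-sphere $\Sigma_p(M^*,d^\epsilon)$ with $\sec\geq 1$, and this is no longer a quotient $\sphere^4/\crcl$ or $\RR^4/\crcl$ of anything smooth, so there is no ``$\crcl$-invariant lift to $\RR^4$'' to mollify. The paper instead proves (Lemma~\ref{embedding}) that any such $(\sphere^2,g)$ embeds isometrically in the round $\sphere^3$, suspends to get a convex hypersurface in $\sphere^4$, and then uses the Beltrami central projection to a hyperplane: this sends the hypersurface to the graph of a convex function $f:\RR^3\to\RR$, which can be mollified by ordinary convolution while preserving convexity, and convexity in $\RR^4$ corresponds under the Beltrami map to $\sec\geq 1$ in $\sphere^4$. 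This embedding/Beltrami step is the missing idea in your third paragraph.
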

This theorem clearly implies Theorem \ref{umformulierung}. Given $M$ with the conditions of Theorem \ref{thm2.3} it follows from Proposition \ref{symmetry} that there exists a sequence of metrics $d_n$ on $M^*$ satisfying the properties \textit{1 -- 6} of Proposition \ref{symmetry}. From the properties \textit{1} and \textit{3} it follows that in order to prove Theorem \ref{thm2.3} it is enough to prove the following proposition.
\begin{proposition}\label{30}
Let $M$ be a closed, simply connected Riemannian $4$-manifold admitting an isometric $\crcl$-action with isolated fixed points only, and quotient space $M^*$. Denote by $L < \infty$ the maximal order of a finite isotropy group of the action. Let $d$ be a metric on $M^*$ (possibly different from the quotient metric) which is induced by a smooth Riemannian metric $g$ on $M \setminus (F \cup E^*)$ with $\curv (M^*,d) \geq c$ and satisfies properties \textit{4 -- 6} of Proposition \ref{symmetry}. Then there exists a sequence of smooth Riemannian metrics $g_n$ on $M^*$ satisfying $\sec_{g_n} \geq Lc - 1/n$ if $c \leq 0$ ($\sec_{g_n} > 0$ if $c > 0$) such that $(M^*,d_n) \to (M^*,d)$ in Gromov-Hausdorff sense for the induced metrics $d_n$.
\end{proposition}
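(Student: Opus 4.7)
The overall strategy is to resolve the singularities in two stages: first along each exceptional arc in $E^*$, and then at each isolated fixed point in $F$. We exploit properties (4)--(6) of Proposition \ref{symmetry}, which supply a local polar model near the singular set together with a global $\crcl$-action under which all local constructions can be made equivariant. The equivariance is what allows independently constructed resolutions to glue consistently via the gluing lemma \ref{gluing}.

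For the first stage, fix a component arc $\gamma$ of $E^*$ with isotropy $\ZZ_k$ joining two fixed points $p$ and $q$. By (5), there is a tube $U = \hat U / \ZZ_k$ around the interior of $\gamma$, where $\hat U$ is a smooth Riemannian manifold on which the $\ZZ_k$-action is induced by a polar $\crcl$-action whose fixed set is the geodesic $\hat \gamma$ lifting $\gamma$. Working on $\hat U$ in Fermi coordinates adopted to $\hat \gamma$, the polar structure of Lemma \ref{idontknow} and Corollary \ref{curvature and Hessian} imply that the metric near $\hat \gamma$ is determined on each $2$-dimensional section by a warping function $f$ vanishing on $\hat \gamma$, with sectional curvature $-f''/f$ (Lemma \ref{curvature}). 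I would replace $f$ near $\hat \gamma$ with a smooth interpolation whose $\infty$-jet matches the constant-curvature model warping from (4) on the overlap with $B_\rho(p) \cup B_\rho(q)$. The resulting metric on $\hat U$ is $\ZZ_k$-equivariant by construction (or after averaging), so it descends to $U$; the gluing lemma \ref{gluing} then patches it to the old metric outside a small neighborhood of $\gamma$ with arbitrarily small loss of the lower curvature bound.

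For the second stage, by (4) each $B_\rho(p)$ is isometric to the corresponding ball at a tip of $S(\Sigma_p M^*) = S(\sphere^3/\crcl)$, that is, to the $\crcl$-quotient of a ball in the round $\sphere^4$. I would pull the problem back to $\sphere^4$ and, via the exponential chart at the fixed point, transfer it to $\RR^4$. On $\RR^4$ I would smooth the round metric by convolution with a $\crcl$-invariant, compactly supported mollifier; since convolution preserves a lower sectional-curvature bound and the mollifier is $\crcl$-invariant, the smoothed metric descends to a smooth $\crcl$-invariant metric on a neighborhood of the fixed point in $\sphere^4$, and hence to a smooth metric on $B_\rho(p)$ with $\sec > 0$ when $c > 0$ and $\sec \geq -\epsilon$ when $c = 0$. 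Consistency with the first-stage resolution on the overlap is automatic since both constructions agree with the polar model of (4) outside a tiny ball around $p$.

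The main obstacle will be coherent globalization: the local modifications performed arc-by-arc and vertex-by-vertex must fit together into a single smooth metric on $M^*$ with a uniform lower curvature bound. To avoid cumulative loss over the finitely many gluings, I would set all constructions up $\crcl$-equivariantly using property (6), fix once and for all a single polar model on each $B_\rho(p)$, and normalize the cutoff scales so that the hypotheses of the gluing lemma are satisfied in each annular transition region between $B_\rho(p)$ and $U$. The factor $L$ enters because in the worst case the curvature loss must be estimated on the $k$-fold orbifold cover for $k \leq L$; letting the modification scales tend to zero yields the Gromov-Hausdorff convergence $(M^*, d_n) \to (M^*, d)$, and the curvature bound $Lc - 1/n$ (respectively $\sec > 0$ when $c>0$) results from accumulating the finitely many local losses provided by the gluing lemma.
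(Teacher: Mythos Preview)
Your two-stage plan has genuine gaps in both stages.

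First stage: a $\ZZ_k$-equivariant modification of the smooth metric on $\hat U$ cannot resolve the cone singularity on $U=\hat U/\ZZ_k$; the cone angle $2\pi/k$ along $\gamma$ is produced by the quotient itself, and any $\ZZ_k$-invariant metric upstairs descends with the same angle. One must work on $M^*$ directly and alter the warping function $\varphi=\|X\|$ of the polar $\crcl$-action on $M^*$ (property~6) so that its normal derivative at $\gamma$ changes from $1/m$ to $1$. The substantive point you omit is curvature control: by Corollary~\ref{curvature and Hessian} the new metric has the required lower bound only if the modified warping stays strictly concave on the section $\Sigma$, and this concavity must hold with compatible boundary values across $B_\rho(p)$, $U$, and $B_\rho(q)$ simultaneously. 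The paper achieves this via an explicit family $\eta_{\tau,\delta}$, takes the \emph{minimum} of the resulting concave candidates on the overlap regions (min of concave is concave), and then smooths that minimum by the Greene--Wu technique; a generic ``smooth interpolation whose $\infty$-jet matches'' will not preserve concavity, and the gluing lemma alone does not control curvature of the interpolant itself. The factor $m$ (hence $L$) enters through the Hessian estimate $\lim_{t\to 0}(-f^{-1}\partial_t f/\varphi)=m\,\sec_\Sigma$ on $\Sigma$, not merely from passing to a cover.

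Second stage: the claim that ``convolution preserves a lower sectional-curvature bound'' is false for mollification of metric coefficients, and in any case the round metric pulled back to $\RR^4$ is already smooth---the singularity at $p$ lives in the \emph{quotient}, not in the $\crcl$-cover, so lifting to $\sphere^4$ exposes nothing for a mollifier to fix. The paper's mechanism is different in kind. After the first stage, a small ball around $p$ is the spherical suspension of a \emph{smooth} $(\sphere^2,\sec\ge 1)$ (no longer $\sphere^3/\crcl$). A nontrivial lemma embeds this $\sphere^2$ isometrically into round $\sphere^3$; its suspension then sits in $\sphere^4$ as the boundary of a convex body contained in an open hemisphere. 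The Beltrami map carries this to the graph of a convex function $f:\RR^3\to\RR$, smooth away from $0$, and one convolves $f$. Convexity of \emph{functions} is preserved by convolution, and that is what yields the curvature bound after pulling back via $\beta^{-1}$ and regluing.
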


Therefore, for the rest of section $2.3$ let $M$ be a closed, simply connected, $4$-dimensional Riemannian manifold admitting an isometric $\crcl$-action with isolated fixed points only. We fix a metric $d$ on $M^*$ (possibly different from the quotient metric) which is induced by a smooth Riemannian metric $g$ on $M^* \setminus (F \cup E^*)$ and has curvature bounded below by $c$, for some $c \in \RR$. We assume that the properties \textit{4}, \textit{5} and \textit{6} of Proposition \ref{symmetry} are satisfied for $d$ (and refer to them by these numbers). We also fix $\rho > 0$ according to property \textit{4}.
\\ \\
The strategy to prove Proposition \ref{30} is the following: For simplicity let us assume that $c > 0$, so $M^*$ has positive curvature (the case $c \leq 0$ is very similar). Let $p$ and $q$ be two fixed points which are joined by a singular arc $\gamma$ of constant type $\ZZ_m$.  We cover $\gamma$ by coordinate neighborhoods $B_\rho(p)$, $B_\rho(q)$ and $U$, where $U$ is a tubular neighborhood of the interior points of $\gamma$ and a good orbifold. Also we choose $U$ small, such that we have a polar $\crcl$-action on $B_\rho(p) \cup B_\rho(q) \cup U$. Let $\Sigma$ denote a section of the action on the regular part. Then $g$ is determined by its restriction to the tangent bundle of $\Sigma$  and the function $\varphi = ||X||_g$ restricted to $\Sigma$, where $X$ denotes the Killing field of the action. On the other hand, given any positive smooth function $\psi : \Sigma \to \RR$, there exists a unique, smooth metric $g_\psi$ defined on the regular part of $B_\rho(p) \cup B_\rho(q) \cup U$ that admits a polar $\crcl$-action with section $(\Sigma,g)$ and whose Killing field at $s \in \Sigma$ has length $\psi(s)$. From Corollary \ref{curvature and Hessian} we observe that $g_{\psi}$ has positive curvature if and only if $\psi : \Sigma \to \RR$ is strictly concave.

Having this in mind we carefully deform the function $\varphi$ to obtain a smooth metric at interior points of $\gamma$. Since $\gamma$ has constant type at its interior points, it follows that the non-smoothness of $g$ at interior points of $\gamma$ is due only to the radial derivatives of $\varphi$ at $\gamma$ (the restriction of $g$ to the closure of $\Sigma$ is essentially smooth). Therefore, adding a small, nonnegative, smooth function $h : \Sigma \to \RR$ to $\varphi$, which satisfies some regularity assumptions near $\gamma$, we can extend $g_{\varphi + h}$ smoothly to interior points of $\gamma$. Geometrically speaking, we make the Killing field $X$ a bit longer near $\gamma$ so that the conical singularity at interior points of $\gamma$ is resolved. Moreover, if $\varphi + h : \Sigma \to \RR$ is strictly concave, we see that $g_{\varphi + h}$ has positive curvature.

The first aim is to resolve the singularities of $E^*$ via this above approach: In section \ref{resolving brho} we resolve the singularities of $\gamma$ lying inside $B_\rho(p)$ and $B_\rho(q)$. This is not hard, using that $B_\rho(p)$ and $B_\rho(q)$ are isometric to the corresponding balls at the tips of the spherical suspensions of their respective spaces of directions. We construct these resolutions in a way that they are in fact induced by families of functions $h_n : B_\rho(p) \cap \Sigma \to \RR$ and $\tilde h_n : B_\rho(q) \cap \Sigma \to \RR$ in the above sense such that $\varphi + h_n$ and $\varphi + \tilde h_n$ are strictly concave functions on $B_\rho(p)$ and $B_\rho(q)$, respectively. 

Then, in section \ref{resolving U}, we construct a resolution of $U$ via functions $\overline h_n : \Sigma \to \RR$ such that $\varphi + \overline h_n : \Sigma \to \RR$ is strictly concave at points of sufficiently large distance (say bigger than $\rho/4$) to the fixed points $p$ and $q$. The function $\overline h$ is constructed in a way that it satisfies the boundary value problem imposed by $h$ and $\tilde h$ on $\Sigma \cap V(p)$ and $\Sigma \cap V(q)$,  respectively, where $V(p)$ and $V(q)$ are appropriate neighborhoods of $p$ and $q$.

Therefore, putting the functions $\varphi + h_n$, $\varphi + \overline h_n$ and $\varphi + \tilde h_n$ together we obtain a continuous, but  non-smooth, function $\psi_n : \Sigma \to \RR$.  Moreover, it follows from our construction that $\psi_n$ is strictly convex (in fact, $\psi_n$ is given near the boundary of $V(p)$ by the minimum of the strictly convex functions $\varphi + \overline h$ and $\varphi + h$, and analogously for $V(q)$). Using a smoothing technique of Greene-Wu \cite{greene-wu76}, in section \ref{resolving E} we smooth the functions $\psi_n$. Via the resulting family of functions we obtain a resolution of the singularities of $g$ at interior points of $\gamma$ with controlled lower curvature bounds. This resolution can be performed independently along every singular arc of $E^*$ and leads to a resolution of $E^*$.

The resulting metrics are smooth at all points not lying in $F$. Moreover, in a small neighborhood of a fixed point $p$ each of this metrics is still locally isometric to the spherical suspension of $\Sigma_pM^*$ (with respect to the new metric), which is smooth and has lower curvature bound $1$. In section \ref{smooth fp} we show that such a neighborhood of $p$ can be isometrically embedded into $\sphere^4$. Using a Beltrami map we transfer the problem from $\sphere^4$ to $\RR^4$, where we can smooth the singularity at $p$ using a convolution. Finally we glue back the smoothed metric in a neighborhood of $p$, while almost preserving  lower curvature bounds. Then Proposition \ref{30} follows.
\\ \\
Let us assume that $E^*$ is nonempty (otherwise we can proceed with section \ref{smooth fp}). Then there exist two fixed points $p, q \in F$, which are joined by an arc in $E^*$. Let this arc be parametrized by a geodesic $\gamma : ]0,l[ \to M^*$ with $\vert \dot \gamma(t) \vert = 1$, $\lim_{t \to 0}\gamma(t) = p$ and $\lim_{t \to l}\gamma(t) = q$. The isotropy group at $\gamma(t)$ is independent of $t$  and isomorphic to $\ZZ_m$ for some $m \geq 2$. Let this $p,q,\gamma$ and $m$ be fixed from now on.
%It follows that the metric of $\Sigma_{\gamma(t)}M^*$ is given by
%\begin{align}
%g(\alpha,\theta) = d\theta^2 + m^{-2}\sin^2(\theta)d\alpha^2, 
%\end{align}
%where $(\theta,\alpha)$ are polar coordinates on $D^2_{\pi}$, the $2$-dimensional disk of radius $\pi$, with $\theta$ denoting radial and $\alpha$ angular direction.
%=============================================================00
\subsection{\texorpdfstring{Resolution of $B_{\rho}(p)$ and $B_\rho(q)$}{Resolution of B(p) and B(q)}}\label{resolving brho}
In this section we construct resolutions of the singularities of $\gamma$ lying inside $B_\rho(p)$ and $B_\rho(q)$. This is achieved by first resolving the singularities of $\Sigma_pM^*$ and $\Sigma_qM^*$ corresponding to the respective directions of $\gamma$. Since $B_\rho(p)$ and $B_\rho(q)$ are modeled on the spherical suspensions of $\Sigma_pM^*$ and $\Sigma_qM^*$, we obtain the desired resolutions by suspending the resolutions of $\Sigma_pM^*$ and $\Sigma_qM^*$. Then at the end of this section we show that these resolutions are induced by a perturbation of the Killing field $X$ of the polar $\crcl$-action on $B_\rho(p) \cup B_\rho(q)$, which can be described by functions $h$ and $\tilde h$ defined on a section of $B_\rho(p)$ and $B_\rho(q)$, respectively. 
\\ \\
Let 
$$(\theta,\alpha)$$ be coordinates of $\Sigma_pM^* = \sphere^3/\crcl \cong \sphere^2 \cong D_{\pi/2}/\partial D_{\pi/2}$ as in section \ref{sectionS^3/crcl} ($\theta$ denotes radial direction on $D_{\pi/2}$ and $\alpha$ denotes angular direction) such that $\dot \gamma(0)$ corresponds to $\theta = 0$. Then the metric $d\sigma^2$ of $\Sigma_pM^*$ is given by
\begin{align}\label{smuth1}
d\sigma^2 = d\theta^2 + R^2(\theta)d\alpha^2,
\end{align}
for a smooth function $R : [0,\pi/2] \to \RR$ as described in Proposition \ref{directions}. By property \textit{5} of our assumptions the metric $g$ restricted to $B_{\rho}(p)$ is given by
\begin{align}\label{smuth69}
g = dr^2 + \sin^2(r)d\sigma^2,
\end{align}
for $0 \leq r \leq \rho$. We denote by
\begin{align*}
 (r,\theta,\alpha)
\end{align*}
the corresponding coordinates of $B_{\rho}(p)$. %Then
%\begin{align}
%g(r,\theta,\alpha) &= (dr^2 + \sin^2(r)d\sigma^2)(r,\theta,\alpha)\label{smuth2}%\\
  %&= (dr^2 + \sin^2(r)d\theta^2 +  (\sin(r)R(\theta))^2d\alpha^2)(r,\theta,\alpha).\nonumber
%\end{align}
From \eqref{smuth1} and \eqref{smuth69} it is clear that the singularity of $g$ at the points of $\gamma$ that lie in $B_{\rho}(p)$ arises from the singularity of $\Sigma_pM^*$ at $\dot \gamma(0)$, i.e. the behavior of $R$ at $0$. In fact  we have (compare section 1.4 of \cite{petersen98}):
\begin{lemma}\label{differenti}
 Let $\omega : [0, \pi/2] \to \RR$ be a smooth function with $\omega(0) = 0$. Then
 $$d\theta^2 + \omega^2(\theta)d\alpha^2$$
 defines a smooth metric in a neighborhood of $\dot \gamma(0)$ if and only if $\omega'(0) = 1$ and all even derivatives of $\omega$ vanish at $0$.
\end{lemma}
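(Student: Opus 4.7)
The plan is to pass to Cartesian coordinates centered at $\dot\gamma(0)$, treating $(\theta,\alpha)$ as polar coordinates: set $x = \theta \cos\alpha$, $y = \theta \sin\alpha$. A direct computation using $dx = \cos\alpha\,d\theta - \theta\sin\alpha\,d\alpha$ and $dy = \sin\alpha\,d\theta + \theta\cos\alpha\,d\alpha$ gives $dx^2 + dy^2 = d\theta^2 + \theta^2 d\alpha^2$ and $d\alpha = (x\,dy - y\,dx)/\theta^2$. The metric therefore rewrites as
\begin{equation*}
d\theta^2 + \omega^2(\theta)\,d\alpha^2 \;=\; dx^2 + dy^2 \;+\; \frac{\omega^2(\theta) - \theta^2}{\theta^4}\,(x\,dy - y\,dx)^2.
\end{equation*}
Since $(x\,dy - y\,dx)^2$ has smooth polynomial coefficients in $(x,y)$, smoothness of the metric at the origin is equivalent to smoothness of the scalar function $F(\theta) := (\omega^2(\theta) - \theta^2)/\theta^4$ as a function of $(x,y)$ near $0$.

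Next I would invoke the following classical fact: a smooth function $F$ on $[0,\varepsilon)$ extends to a smooth function of the Euclidean variable $(x,y)$ in a neighborhood of $0$ (depending only on $\theta = \sqrt{x^2+y^2}$) if and only if $F$ admits an even smooth extension through $0$, equivalently, $F^{(2k+1)}(0) = 0$ for all $k \geq 0$. This is a standard consequence of Taylor's theorem with integral remainder (or Hadamard's lemma applied iteratively). So the lemma reduces to translating the condition ``$F$ is smooth at $\theta = 0$ and only has even Taylor coefficients there'' into a condition on $\omega$.

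Writing the Taylor expansion $\omega(\theta) \sim \sum_{k\geq 1} a_k \theta^k$ (using $\omega(0) = 0$), the series for $\omega^2$ starts with $a_1^2\theta^2 + 2a_1 a_2\theta^3 + \cdots$. For $F$ to be bounded near $0$ one needs $a_1^2 = 1$ (hence $a_1 = 1$, taking the orientation that corresponds to $\omega'(0) > 0$ as required by the geometry) and $a_1 a_2 = 0$, i.e.\ $a_2 = 0$. Once $a_1 = 1$ and $a_2 = 0$, the odd Taylor coefficients of $F$ at $0$ are, up to nonzero multiples, $a_4, a_6, a_8, \dots$ modulo terms built from previous even $a_{2j}$. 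An induction then shows that the condition ``all odd Taylor coefficients of $F$ vanish'' is equivalent to ``$a_{2k} = 0$ for every $k \geq 1$'', which in terms of derivatives reads $\omega^{(2k)}(0) = 0$ for $k\geq 1$. For the converse direction, smoothness of $\omega$ and the vanishing conditions imply by Taylor's theorem with remainder that $F = (\omega^2 - \theta^2)/\theta^4$ is smooth on $[0, \varepsilon)$ with vanishing odd derivatives, hence is a smooth function of $\theta^2 = x^2 + y^2$, making the displayed form of the metric manifestly smooth.

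The bookkeeping step I would expect to be the main technical point is the inductive identification of the odd Taylor coefficients of $F$ with the even Taylor coefficients of $\omega$: one has to check that at each stage the only new unknown $a_{2k}$ appears with a nonzero coefficient (namely $2a_1 = 2$), while all other contributions involve $a_{2j}$ with $j<k$ that vanish by the inductive hypothesis, so that $a_{2k} = 0$ is forced. Everything else is bookkeeping in Taylor series and an invocation of the standard smoothness-of-even-functions criterion.
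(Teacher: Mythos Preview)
The paper does not actually give its own proof of this lemma; it simply states the result and refers to \cite{petersen98}, section~1.4. Your approach---passing to Cartesian coordinates $x=\theta\cos\alpha$, $y=\theta\sin\alpha$, rewriting the metric as $dx^2+dy^2 + F(\theta)(x\,dy-y\,dx)^2$, and reducing the question to whether $F$ extends to a smooth even function of $\theta$---is exactly the standard argument from Petersen, and it is also the computation the paper carries out explicitly later in the proof of Proposition~\ref{resolution of U}. So in spirit you are doing precisely what the paper has in mind, and your Taylor-series bookkeeping in step~5 is correct.

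One point deserves more care: your step~3 asserts that smoothness of the metric is \emph{equivalent} to smoothness of $F$ as a function of $(x,y)$, justified by ``$(x\,dy-y\,dx)^2$ has smooth polynomial coefficients.'' The implication ``$F$ smooth $\Rightarrow$ metric smooth'' is immediate, but the converse is not, because those polynomial coefficients $x^2,xy,y^2$ all vanish at the origin, so one cannot simply divide them out. To close this, note that if the metric is smooth then $g_{xx}+g_{yy}-2 = F\theta^2$ is a smooth radial function, hence equals $G(x^2+y^2)$ for some smooth $G$; and continuity of $g_{xy}=-Fxy = -G(\theta^2)xy/\theta^2$ at the origin forces $G(0)=0$ (compare its limits along $x=y$ and $x=0$), whence $F(\theta)=G(\theta^2)/\theta^2$ extends smoothly. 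With that small addition your argument is complete.
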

Recall from Proposition \ref{directions} that $R(0) = 0$, $R'(0) = m^{-1}$ and $R^{(even)}(0) = 0$. Therefore, for every smooth function $\eta$ with $\eta(0) = 0$, $\eta'(0) = 1 - m^{-1}$ and $\eta^{(even)}(0) = 0$ the metric 
\begin{align}\label{mann mann}
d\theta^2 + (R + \eta)^2(\theta)d\alpha^2
\end{align}
is smooth in a neighborhood of $\dot \gamma(0).$ In the following Lemma we construct a family of such functions satisfying particular properties in order to be able to control additional data of a metric as in \eqref{mann mann}, for example its lower curvature bound.
\begin{lemma}\label{eta}
For all $0 < \tau < \pi/4$ and $\delta > 0$ there exists $0 < \tau(\delta)$ and a smooth function $\eta_{\tau,\delta} : [0,\pi/2] \rightarrow \RR$ satisfying the following properties:
\begin{itemize}
\item[(i)] $\eta_{\tau,\delta}(\theta) = (1 - m^{-1})\sin(\theta) \text{ for } 0 \leq \theta \leq \tau(\delta)$, \label{eta1}
\item[(ii)] $\eta_{\tau,\delta}'(\theta) > 0 \text{ for } 0 < \theta < \tau/2$, \label{eta2}
\item[(iii)] $\eta_{\tau,\delta}'(\theta) \leq 0 \text{ for } \theta \geq \tau/2$,\label{eta3}
\item[(iv)] $\eta_{\tau,\delta}(\theta) = 0 \text{ for } \theta \geq \tau$, \label{eta4}
\item[(v)] $\eta_{\tau,\delta}''(\theta)/\eta(\theta) \leq -1 \text{ for } 0 \leq \theta \leq \tau/2,$\label{eta5}
\item[(vi)] $|\eta_{\tau,\delta}(\theta)| + |\eta_{\tau,\delta}'(\theta)| + |\eta_{\tau,\delta}''(\theta)| < \delta \text{ for } \tau/3 \leq \theta \leq \pi/2.$\label{eta6}
\end{itemize} 
\end{lemma}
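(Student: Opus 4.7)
The plan is to define $\eta_{\tau,\delta}$ piecewise on $[0,\pi/2]$ using five sub-intervals. On $[0,\tau(\delta)]$ I would simply set $\eta = (1-m^{-1})\sin\theta$; this gives (i) by definition and produces (v) with equality since $(\sin\theta)''/\sin\theta = -1$. On $[\tau,\pi/2]$ I would set $\eta \equiv 0$, giving (iv); smoothness at $\theta=\tau$ will be arranged by requiring the piece on $[\tau/2,\tau]$ to vanish to all orders at $\tau$.

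The three intermediate sub-intervals interpolate smoothly. On a ``derivative-dump'' interval $[\tau(\delta),\alpha]$ the derivative $\eta'$ must be brought from its large starting value $(1-m^{-1})\cos(\tau(\delta)) \approx 1-m^{-1}$ down to a small value, while keeping $\eta$ small and $\eta'' \leq -\eta$. The key observation is that (v) is only a one-sided bound: $|\eta''|$ may be arbitrarily large without violating it, provided only $|\eta''|\geq \eta$. I would therefore make this interval short, of length on the order of $\delta/(1-m^{-1})$, and choose $\tau(\delta) \ll \alpha < \tau/3$, so that the large values of $|\eta''|$ required here lie outside the region $[\tau/3,\pi/2]$ of (vi). A concrete realization is a rescaled cosine arc $\eta(\theta) = A\cos(B(\theta-C))$ whose parameters match the value and first derivative of the sine piece at $\tau(\delta)$ and terminate at $\alpha$ with small value and small derivative.

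On $[\alpha,\tau/2]$ I would take $\eta(\theta) = a\cos(\omega(\theta - \tau/2))$ with $\omega \geq 1$, which automatically satisfies (v) (since $\eta''/\eta = -\omega^2 \leq -1$) and arranges $\eta'(\tau/2) = 0$. The constants $a,\omega$ are determined by matching value and derivative at $\alpha$ with the output of the dump piece; because those end values are small, the peak amplitude $a$ is forced to be small, securing (vi) at the maximum. On $[\tau/2,\tau]$ I would decrease $\eta$ from $a$ to $0$ with all derivatives vanishing at $\tau$, using a standard $C^\infty$ cutoff; since (v) is not required on this interval, the decay shape is essentially free (giving (iii) at the same time). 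To fuse the three pieces into a genuinely $C^\infty$ function at the junctions $\tau(\delta),\alpha,\tau/2$, I would mollify with a kernel of width much smaller than $\alpha - \tau(\delta)$. Mollification preserves the distributional inequality $\eta''+\eta \leq 0$, because it amounts to convolving a nonpositive distribution with a nonnegative kernel, so (v) is maintained after smoothing; and it preserves the monotonicity and sign properties in (ii)--(iv) to arbitrary precision.

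The main obstacle I anticipate is precisely the tension between (v) and (vi) when $\delta$ is small. At $\theta = \tau(\delta)$ the derivative is pinned at roughly $1-m^{-1}$, which is not small, yet (vi) forbids large $|\eta''|$ on $[\tau/3,\pi/2]$, so the derivative reduction from $1-m^{-1}$ to $0$ cannot take place there. The resolution is exactly to perform the bulk of this reduction inside $(0,\tau/3)$, where (vi) does not apply, by taking $\tau(\delta) \to 0$ appropriately as $\delta \to 0$; concretely $\tau(\delta) \sim \delta^2$ with $\alpha \sim \delta/(1-m^{-1})$ is more than enough. Once the scheduling of $\tau(\delta)$ and $\alpha$ in terms of $\delta$ and $\tau$ is fixed, the verification of (i)--(vi) reduces to routine estimates on the explicit cosine pieces and standard bounds on the mollifier.
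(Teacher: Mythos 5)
Your piecewise plan --- the exact sine near $0$, a fast cosine arc inside $(0,\tau/3)$ to reduce the derivative, a slow cosine arc ending flat at $\tau/2$, and a cutoff to $0$ before $\tau$ --- correctly identifies the tension between (v) and (vi) and resolves it the right way, by letting $\tau(\delta)\to 0$ and scheduling the steep part before $\tau/3$; in this respect you are in agreement with the paper. The difficulty is the last step: a global convolution with an even mollifier $\sigma_\mu$ of width $\mu$ is incompatible with the two exact pointwise identities in the statement. Near $0$ it sends $(1-m^{-1})\sin\theta$ to $c_\mu(1-m^{-1})\sin\theta$ with $c_\mu=\int\sigma_\mu(t)\cos t\,dt<1$, so (i) fails; past $\tau$ it spreads the support onto $(\tau,\tau+\mu)$, so (iv) fails. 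Both are repairable (rescale by $1/c_\mu$, make the cutoff vanish by $\tau-\mu$ instead of $\tau$), but you do not say so. More substantively, your claim that mollification automatically preserves $\eta''+\eta\le 0$ on $[0,\tau/2]$ breaks down near the endpoint $\tau/2$: the convolution there averages in values from $(\tau/2,\tau/2+\mu)$, where your cutoff piece has uncontrolled sign for $\eta''+\eta$ --- indeed it must turn positive somewhere, since $\eta$ must descend to $0$ after $\tau/2$. Unless you additionally design the cutoff so that $\eta''+\eta\le 0$ persists on $[\tau/2,\tau/2+\mu]$, (v) can fail on $[\tau/2-\mu,\tau/2]$, and no such condition appears in your plan.

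The paper avoids all three problems by smoothing at the level of the derivative rather than the function. It constructs a single smooth bump $H$ by prescribing its derivative $h$ on $[-\epsilon,\pi/2]$ (so that $H(\theta)=\sin(\theta+\epsilon)$ near $0$, $H'(\tau/2)=0$, $H''\le -H$ on $[0,\tau/2]$, and $H\equiv 0$ past $\tau$), scales to $H_n=H/n$, and takes $\eta_n=\min\{(1-m^{-1})\sin,\,H_n\}$. This has exactly one corner, the crossing point $s\in(0,\tau/3)$, and the corner is smoothed by replacing $\nu_n=\eta_n'$ on $[s-\mu,s+\mu]$ by a smooth $\nu$ with $\nu'\le\nu_n'$ away from $s$ and $\int\nu=\int\nu_n$; the derivative inequality is precisely what preserves (v), the fixed integral preserves (iv), and everything outside the small interval --- in particular near $0$ and past $\tau$ --- is untouched, so (i) and (iv) hold exactly. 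If you keep your multi-piece construction, you should replace the global mollification by the same kind of local, derivative-level modification.
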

 Note that $0 \leq \eta_{\tau,\delta}(\theta) \leq \delta$ for all $\theta \in [0,\pi/2]$.
\begin{proof}
Let $0 < \tau < \pi/4$ and $\delta > 0$ be given. The idea for the construction of $\eta = \eta_{\tau,\delta}$ is illustrated in Figure \ref{figure:eta}. A more precise definition of $H$ follows.
\begin{figure}[h!]
\centering
\def\svgwidth{0.5\textwidth}
\subimport{}{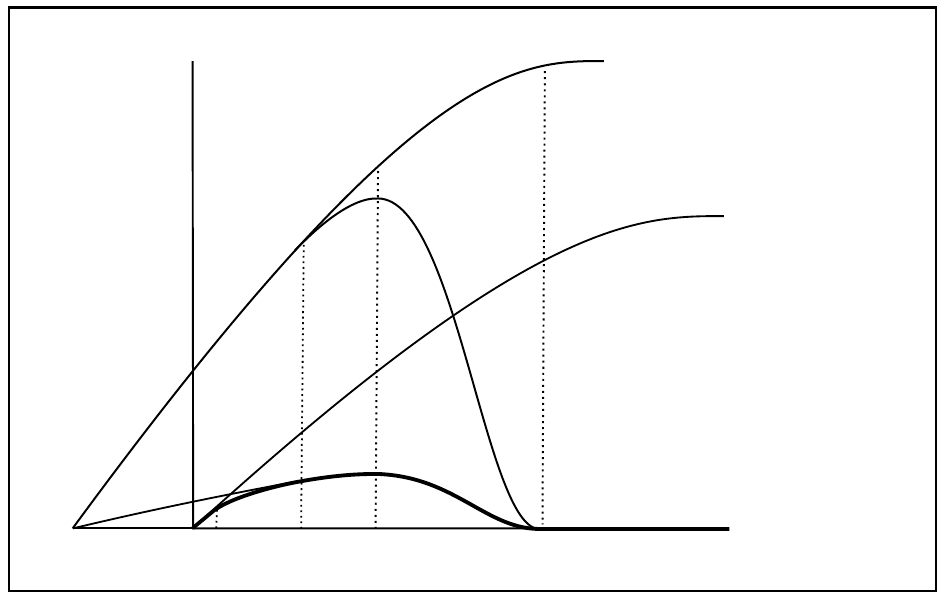_tex}
\caption{\small{$\eta$ is constructed via smoothing the function $\min\{(1 - m^{-1})\sin,n^{-1}H\}$.}}
\label{figure:eta}
\end{figure}

Fix $0 < \epsilon < \pi/8$. There exists a smooth function $h : [-\epsilon,\pi/2] \to \RR$ satisfying 
\begin{align*}
 &h(x) = \cos(x + \epsilon)\text{ for } x \in [-\epsilon,\tau/3],\\
 &h(x) > 0 \text{ and } h'(x) \leq -\sin(x + \epsilon) \text{ for } x \in [0,\tau/2[,\\
 &h(\tau/2) = 0,\\
 &h(x) < 0 \text{ for } x \in \ ]\tau/2,\tau[,\\
 &h(x) = 0 \text{ for } x \in [\tau,\pi/2],\\
 &\int_{-\epsilon}^\tau h(x)dx = 0.
 \end{align*}
 Such a function $h$ is easy to construct, compare Figure \ref{figure h}, and we define $$H(\theta) := \int_{-\epsilon}^{\theta}h(t)dt.$$
\begin{figure}%[h!]
\centering
\def\svgwidth{0.5\textwidth}
\subimport{}{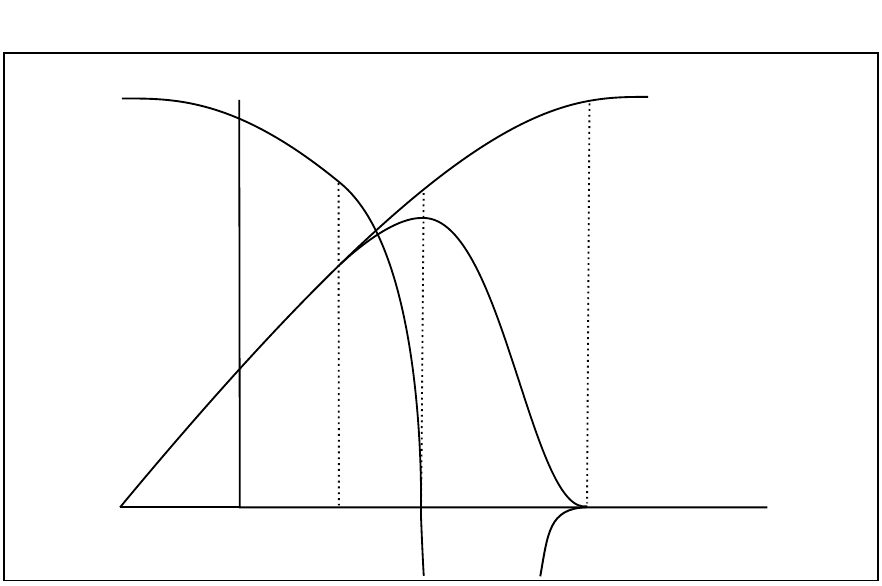_tex}
\caption{\small{$H$ is constructed by defining its derivative $h$ first, and then integrating it. $\int_{\tau/2}^{\tau} h(x)dx$ can be chosen as an arbitrary negative number. Therefore, it is easy to satisfy the condition $\int_{-\epsilon}^\tau h(x)dx = 0$.}}
\label{figure h}
\end{figure}

Then the properties $(ii), (iii)$ and $(iv)$ hold for the function $H$. Further, by construction, $0 \leq H(x) \leq \sin(x + \epsilon)$ and $H(x)'' = h'(x) \leq -\sin(x + \epsilon) < 0$ on $[0,\tau/2]$. Consequently
$$H''(x)/H(x) \leq H''(x)/\sin(x + \epsilon) \leq -1$$
for all $x \in [0,\tau/2].$
Thus $(v)$ holds for $H$ as well.
%----------------------------------------end figure-----------------------------------------
Set $H_n := n^{-1}H$ for $n \geq 1$. Then $H_n$ satisfies the conditions $(ii) - (vi)$ for all sufficiently large $n$. Now define 
\begin{align*}
\eta_n : [0,\pi/2] &\to \RR\\
 \theta &\mapsto \min\{(1 - m^{-1})\sin(x),H_n(x)\}.
\end{align*}
For all $n \in \NN$ sufficiently large the graphs of $\eta_n$ and $(1 - m^{-1})\sin$ intersect exactly once at some $s \in ]0,\tau/3[$. Fix such an $n$. Then $\eta_n$ satisfies the conditions $(i)$ to $(vi)$ at all points but $s$, where it fails to be differentiable. To obtain the desired function $\eta$ it remains to smooth $\eta_n$ at $s$ while keeping the conditions $(i)$ to $(vi)$ satisfied, compare Figure \ref{figure:eta}. For example, this can be done as follows: Let 
\begin{align*}
\nu_n : [0,\pi/2] &\to \RR\\
x &\mapsto
\begin{cases} 
(1 - m^{-1})\cos(x), &\text{for } 0 \leq x < s,\\
H_n'(x), &\text{for } s \leq x \leq \pi/2.
\end{cases}
\end{align*}
Then $\eta_n(t) = \int_0^t \nu_n(x) dx$. Let $\mu > 0$ be small and $\nu : [0,\pi/2] \to \RR$ a smooth function satisfying the following conditions:
\begin{align*}
\nu(x) = \nu_n(x) &\text{ for } x \in [0,\pi/2] \setminus [s-\mu,s + \mu],\\
\nu'(x) \leq \nu_n'(x) &\text{ for } x \in [0,\pi/2] \setminus \{s\},\\
\int_0^{\pi/2} \nu(x)dx &= \int_0^{\pi/2}\nu_n(x)dx.
\end{align*}
The existence of such a function is clear since $(1 - m^{-1})\cos(s) > H_n'(s)$, compare Figure \ref{figure nu}.
\begin{figure}
\centering	
\def\svgwidth{0.5\textwidth}
\subimport{}{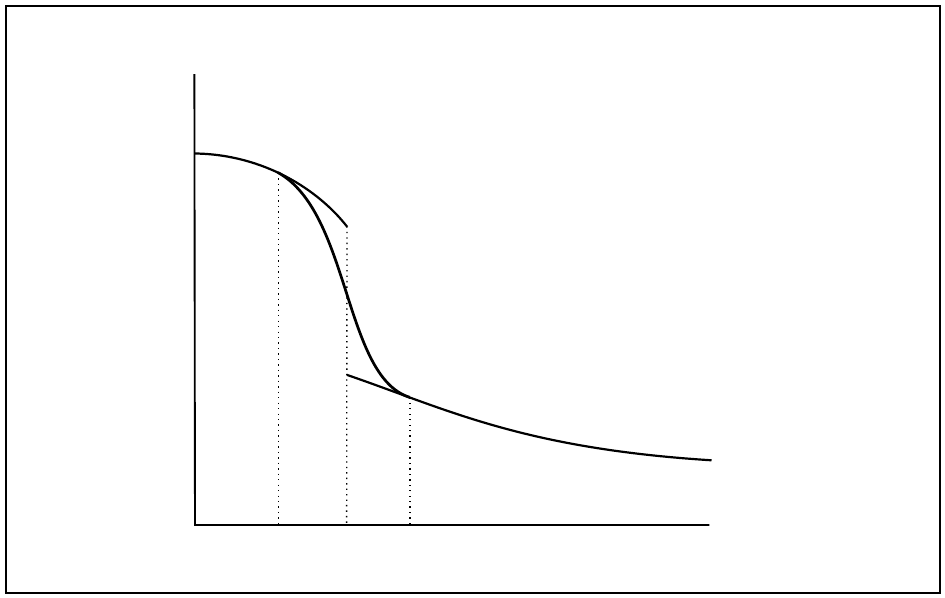_tex}
\caption{\small{The function $\nu$}}
\label{figure nu}
\end{figure}
Set 
$$\eta(x) := \int_0^x \nu(t)dt.$$
It is easily checked that $\eta$ is smooth and satisfies conditions $(i)$ - $(vi)$ for $\mu > 0$ sufficiently small.
\end{proof}
%---------------------------------------begin figure----------------------------------------
%\begin{figure}[h]%[h] means 'here', i.e. the figure will be forced to be placed near this position
%\centering
%\subimport{Figures/}{ConstructingEta.eps_tex}
%\caption{$\eta$ is construct via smoothing the function $\min\{(1 - 1/m)\sin,H_n\}$.}
%\label{figure:eta}
%\end{figure}
\begin{definition}
 For all $0 < \tau < \pi/4$ and $\delta > 0$ we fix a function $$\eta_{\tau,\delta} : [0,\pi/2] \to [0,\delta]$$
satisfying properties \textit{(i)} to \textit{(vi)} of Lemma \ref{eta}.
\end{definition}
 We can use this family of functions $\{\eta_{\tau,\delta}\}$ to resolve the singularity of $\Sigma_pM^*$ at $\dot \gamma(0)$ while keeping the curvature bounded below by $1$: %------------------begin prop and proof-----------------------------------
\begin{proposition}\label{resolution of dotgamma}
For all $0 < \tau < \pi/4$ and $0 < \delta$ the metric
\begin{align}
d \sigma^2_{\tau,\delta} := d\theta^2 + (R + \eta_{\tau,\delta})^2(\theta)d\alpha^2\label{smoothSoD}
\end{align}
is smooth on $\Sigma_pM^* \setminus \{\theta = \pi/2\}$. There exists a constant $a > 0$ such that for all $0 < \tau < \pi/4$ and for all sufficiently small $\delta > 0$ (depending on $\tau$) we have
$$\curv (\Sigma_pM^*,d \sigma^2_{\tau,\delta}) \geq 1 + a.$$
\end{proposition}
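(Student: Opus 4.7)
The proof will verify smoothness at $\theta = 0$ via Lemma \ref{differenti} and establish the curvature bound via Lemma \ref{curvature} by treating three regions $[0, \tau/2]$, $[\tau/2, \tau]$, and $[\tau, \pi/2)$ determined by the support structure of $\eta_{\tau,\delta}$.

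\textbf{Smoothness.} Away from the poles $\theta = 0, \pi/2$ the metric has smooth positive coefficient $R + \eta_{\tau,\delta}$, so nothing has to be checked. At $\theta = 0$, property (i) of Lemma \ref{eta} gives $\eta_{\tau,\delta}(\theta) = (1-m^{-1})\sin\theta$ on $[0, \tau(\delta)]$, where $m = m_-$ is the isotropy order in the direction $\theta = 0$. Combined with $R(0) = 0$, $R'(0) = m^{-1}$ and $R^{(2k)}(0) = 0$ from Proposition \ref{directions}, this gives $(R+\eta)(0) = 0$, $(R+\eta)'(0) = 1$, and vanishing of all even derivatives at $0$; Lemma \ref{differenti} then yields smoothness at $\theta = 0$.

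\textbf{Curvature bound.} By Lemma \ref{curvature}, $\operatorname{sec} = -(R+\eta)''/(R+\eta)$. From (\ref{K,jo}), $-R''/R = 1 + 3m_-^2m_+^2/(m_-^2\cos^2\theta + m_+^2\sin^2\theta)^2 \geq 1 + 2a_0$ for a constant $a_0 = a_0(m_-, m_+) > 0$. On $[0, \tau/2]$ property (v) gives $\eta'' \leq -\eta$, whence
$$\operatorname{sec} \;\geq\; \frac{(1+2a_0)R + \eta}{R+\eta} \;=\; 1 + \frac{2a_0 R}{R+\eta}.$$
The key step will be a uniform bound $R/(R+\eta) \geq C > 0$ on $[0, \tau/2]$ with $C$ depending only on $m_\pm$. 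For this I use $R(\theta) \geq C_1 \sin\theta$ on $[0, \pi/4]$ (immediate from (\ref{Rrrrr}), since $\cos\theta$ is bounded below there), together with the pointwise bound $\eta_{\tau,\delta}(\theta) \leq (1-m^{-1})\sin\theta$. The latter is clear for $\eta_n = \min\{(1-m^{-1})\sin, H_n\}$ by construction; propagating it through the smoothing near the kink $s$ follows from the condition $\nu' \leq \nu_n'$ imposed in the proof of Lemma \ref{eta}, which forces the difference $\eta - (1-m^{-1})\sin$ to have nonpositive second derivative throughout the smoothing interval and hence to remain $\leq 0$ past it.

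On $[\tau/2, \tau]$ property (vi) gives $|\eta|, |\eta'|, |\eta''| < \delta$, while $R$ is bounded below by the positive number $R(\tau/2)$, which depends on $\tau$ but is independent of $\delta$. Hence for $\delta$ sufficiently small (depending on $\tau$), $\operatorname{sec}_{R+\eta}$ is uniformly within $a_0$ of $-R''/R$, giving $\operatorname{sec} \geq 1 + a_0$ on this interval. On $[\tau, \pi/2)$ property (iv) gives $\eta \equiv 0$, so $\operatorname{sec} = -R''/R \geq 1 + 2a_0$. Setting $a := \min(2a_0 C, a_0) > 0$ yields the desired uniform bound, depending only on $m_-$ and $m_+$. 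The main technical point I anticipate is the estimate $\eta_{\tau,\delta} \leq (1-m^{-1})\sin\theta$ after smoothing; everything else is a routine combination of the properties of Lemma \ref{eta} with the closed-form data for $R$.
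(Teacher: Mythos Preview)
Your approach is essentially the same as the paper's: smoothness via Lemma~\ref{differenti}, curvature via $-(R+\eta)''/(R+\eta)$, splitting into the regions $[0,\tau/2]$ and $[\tau/2,\pi/2)$ according to properties (v) and (vi) of $\eta_{\tau,\delta}$, and on the first region reducing the estimate to a uniform lower bound for $R/(R+\eta)$.

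There is one small gap. Your argument for $\eta_{\tau,\delta}(\theta)\le (1-m^{-1})\sin\theta$ via the smoothing construction does not go through on $(s,s+\mu]$: there $\nu_n'=H_n''=-n^{-1}\sin(x+\epsilon)$, which for large $n$ is \emph{not} $\le -(1-m^{-1})\sin x$, so you cannot conclude that $(\eta-(1-m^{-1})\sin)''\le 0$ on the whole smoothing interval. The inequality you want is nevertheless true, and there are two clean ways to get it. First, property~(v) alone gives $\eta''+\eta\le 0$ on $[0,\tau/2]$ with $\eta(0)=0$, $\eta'(0)=1-m^{-1}$; a Sturm-type comparison with $\sin$ (look at $(\eta'\sin-\eta\cos)'\le 0$) yields $\eta\le(1-m^{-1})\sin$ directly. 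Second, and this is what the paper does, property~(v) already implies concavity of $\eta$ on $[0,\tau/2]$, hence the weaker linear bound $\eta(\theta)\le(1-m^{-1})\theta$; since $R'(0)=m^{-1}>0$ and $R>0$ on $(0,\pi/2)$, one has $R(\theta)\ge c\,\theta$ on $[0,\pi/4]$ for some $c=c(m_-,m_+)>0$, and this suffices for the uniform bound on $R/(R+\eta)$. Either route repairs the argument; the rest of your proof is correct and matches the paper.
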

\begin{proof}
That $d\sigma^2_{\tau,\delta}$ is smooth on $\Sigma_p \setminus \{\theta = \pi/2\}$ for all $0 < \tau < \pi/4$ and $\delta > 0$ follows from Proposition \ref{directions} and Lemma \ref{differenti}. Fix $0 < \tau < \pi/4$ and let $0 < \delta < 1$ be arbitrary. To simplify notation, let $\eta = \eta_{\tau,\delta}$ and $\tilde R = (R + \eta)$. The sectional curvature at $(\theta,\alpha)$ for $\theta \in \ ]0,\pi/2[$ is given by (see Lemma \ref{curvature})
$$\sec (\theta, \alpha) = -\tilde R''(\theta)/\tilde R(\theta) = (R'' + \eta'')(\theta)/(R + \eta)(\theta).$$
By Proposition \ref{directions} there exists a constant $\tilde a > 0$ with $-R''/R > 1 + \tilde a$, or equivalently
$$-R'' > R + \tilde a R.$$
From the construction of $\eta$ it follows that 
$$-\eta''(\theta) \geq \eta(\theta)$$
for all $\theta \in [0,\tau/2]$.

We claim that there exists a constant $0 < b < 1$, independent of $\tau$ and $\delta$, such that $$b \eta \leq R$$ for all $\tau,\delta$: In fact observe that $\eta$ attains its maximum at $\theta = \tau/2$, is concave restricted to $[0,\tau/2]$ and vanishes on $[\tau,\pi/2]$. Since $\eta'(0) = (1 - m^{-1})$ and $\tau \leq \pi/4$, it follows that 
$$\eta(\theta) \leq \mu(\theta) = \begin{cases}
(1 - m^{-1})\theta, &\text{ for } 0 \leq \theta \leq \pi/4,\\
0, &\text{ for } \pi/4 \leq \theta \leq \pi/2.
\end{cases}$$
Since $R$ is positive on $]0,\pi/2[$ and has positive derivative at $0$ it is clear that there exists some constant $0 < b < 1$ such that $b\mu \leq R$ and the claim follows.

Consequently for all $\theta \in ]0,\tau/2]$
\begin{align*}
 -(R'' + \eta'') &> R + \eta + \tilde a R\\
 &\geq R + \eta + \frac{\tilde a}{2} R + \frac{b\tilde a}{2}\eta \geq R + \eta + \frac{b\tilde a}{2}(R + \eta),
\end{align*}
so
\begin{align}\label{dima}
-\tilde R''(\theta)/\tilde R (\theta) > 1 + \frac{b\tilde a}{2}
\end{align}
for all $\theta \in \ ]0,\tau/2]$ (independently of $\delta > 0$). On the other hand on $[\tau/2,\pi/2]$ the function $-(R'' + \eta'')/(R + \eta)$ converges uniformly to $-R''/R$ for $\delta \to 0$ by property \textit{(vi)} of $\eta$. Since $-R''/R > 1 + \tilde a$, we find that \eqref{dima} holds for all sufficiently small $\delta > 0$, for all $\theta \in ]0,\pi/2[$.
\end{proof}
%-------------------------------------end prop and proof--------------------------------------
Via the spherical suspension of the metric \eqref{smoothSoD} we can define a singular Riemannian metric on $B_\rho(p)$ as
\begin{align}\label{smoothDoS}
g_{\tau,\delta} := dr^2 + \sin^2(r)d \sigma^2_{\tau,\delta}.
\end{align}
The induced metric on $B_{\rho}(p)$ is denoted by $d_{\tau,\delta}$.

\begin{definition}\label{suspended}
For a subset $W \subset \Sigma_pM^*$ we denote by $s_p(W) \subset B_\rho(p)$ the subset obtained by suspending $W$. In more detail, given  $W \subset \Sigma_pM^*$ consider $S(W) = (W \times [0,\pi])/\sim$ $\subseteq (\Sigma_pM^* \times [0,\pi])/\sim$. Then $s_p(W) := S(W) \cap B_{\rho}(p)$, considering $B_\rho(p)$ as a subset of $S(\Sigma_pM^*)$. Analogously we define $s_q(W) \subset B_\rho(q)$.
\end{definition}
Now we can state the main result of this section:
\begin{proposition}\label{resolution of Brhop}
Let $0 < r < \pi/4$ and $W = B_r(\dot \gamma(0)) \subset \Sigma_pM^*$. Then for all $0 < \tau < r$ the metric $g_{\tau,\delta}$ coincides with $g$ on $B_{\rho}(p) \setminus s_p(W)$ and is smooth on $s_p(W) \setminus \{p\}$. Further
$$(B_{\rho}(p),d_{\tau,\delta}) \xrightarrow{\tau \to 0} (B_{\rho}(p),d)$$
in Gromov-Hausdorff sense (independently of $\delta$). Moreover, for fixed $0 < \tau < \pi/4$ and for all sufficiently small $\delta > 0$ we have $\curv d_{\tau,\delta} \geq 1$.
\end{proposition}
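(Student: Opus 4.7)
The statement has four pieces, which I would handle in sequence. For the agreement of $g_{\tau,\delta}$ with $g$ on $B_\rho(p) \setminus s_p(W)$, I would use property \textit{(iv)} of Lemma \ref{eta}: since $\eta_{\tau,\delta}$ vanishes on $[\tau, \pi/2]$ and $\tau < r$, the difference $(R + \eta_{\tau,\delta})^2 - R^2$ is supported in $B_\tau(\dot\gamma(0)) \subset W$, so suspending gives agreement on the complement of $s_p(W)$. For smoothness on $s_p(W) \setminus \{p\}$, the formula $g_{\tau,\delta} = dr^2 + \sin^2(r)\, d\sigma^2_{\tau,\delta}$ together with the fact that $B_\rho(p)$ does not contain the antipodal suspension tip reduces the problem to showing that $d\sigma^2_{\tau,\delta}$ is smooth on $W$. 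Away from $\theta = 0$ this is immediate from smoothness of $R$ and of $\eta_{\tau,\delta}$, noting that $\theta = \pi/2 \notin W$ because $r < \pi/4$. At $\theta = 0$ I would invoke Lemma \ref{differenti} applied to $\omega := R + \eta_{\tau,\delta}$: combining $R(0) = 0$, $R'(0) = m^{-1}$, $R^{(2k)}(0) = 0$ from Proposition \ref{directions} with property \textit{(i)} of Lemma \ref{eta} (which makes $\eta_{\tau,\delta}(\theta) = (1 - m^{-1}) \sin \theta$ near $0$) yields $\omega(0) = 0$, $\omega'(0) = 1$, and $\omega^{(2k)}(0) = 0$ for all $k$.

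For the Gromov-Hausdorff convergence $(B_\rho(p), d_{\tau,\delta}) \to (B_\rho(p), d)$ as $\tau \to 0$, I would estimate the distortion of the identity map. The pointwise comparison $\eta_{\tau,\delta} \geq 0$ gives $g_{\tau,\delta} \geq g$, hence $d_{\tau,\delta} \geq d$. For the reverse bound, any $d$-shortest path between $x, y \in B_\rho(p)$ can be replaced by a path avoiding the wedge $s_p(W_\tau)$, where $W_\tau := B_\tau(\dot\gamma(0))$; the detour costs $O(\tau)$ in $d$-length since at each radial level $r$ the wedge has transverse circumference $O(\tau)$, and the detoured path has equal $d$- and $d_{\tau,\delta}$-lengths as it lies outside $s_p(W_\tau)$. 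This yields $d \leq d_{\tau,\delta} \leq d + O(\tau)$ uniformly, independently of $\delta$.

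For the curvature bound, I would apply Proposition \ref{resolution of dotgamma}: for fixed $\tau$ and sufficiently small $\delta$ the smooth part of $(\Sigma_pM^*, d\sigma^2_{\tau,\delta})$ has $\sec \geq 1 + a$, while the untouched orbifold cone point at $\theta = \pi/2$ is locally modeled on $\sphere^2/\ZZ_{m_+}$ and hence retains $\curv \geq 1$. Thus $\curv(\Sigma_pM^*, d\sigma^2_{\tau,\delta}) \geq 1$, and Lemma \ref{cone and suspension} gives $\curv S(\Sigma_pM^*, d\sigma^2_{\tau,\delta}) \geq 1$; since $(B_\rho(p), d_{\tau,\delta})$ is by property \textit{4} isometric to the corresponding ball at a tip of this suspension, the conclusion $\curv d_{\tau,\delta} \geq 1$ follows. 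The most delicate step I anticipate is the Gromov-Hausdorff estimate, where the length bound on detours must be $\delta$-uniform; the key observation there is that the angular width of $W_\tau$ in $\Sigma_pM^*$ is controlled by $\tau$ alone.
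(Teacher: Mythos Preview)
Your proposal is correct and follows the same route as the paper: smoothness and the curvature bound on $\Sigma_pM^*$ come from Proposition \ref{resolution of dotgamma}, the suspension step is Lemma \ref{cone and suspension}, and the agreement on $B_\rho(p)\setminus s_p(W)$ is immediate from the support of $\eta_{\tau,\delta}$. The only minor difference is the Gromov--Hausdorff step: the paper simply notes that $\eta_{\tau,\delta}\to 0$ uniformly as $\tau\to 0$ (independently of $\delta$), which forces $g_{\tau,\delta}\to g$ and hence $d_{\tau,\delta}\to d$ uniformly, whereas you give an explicit detour estimate---both are valid, the paper's being the quicker.
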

\begin{proof}
 This follows from Proposition \ref{resolution of dotgamma} together with Lemma  \ref{cone and suspension}, and noting that $\eta_{\tau,\delta} \to 0$ uniformly for $\tau \to 0$, independently of $\delta$.
\end{proof}
%============================================0====
The same way we obtain a resolution of the singularities along $\gamma$ lying inside $B_\rho(q)$. The arguments are completely analogous to the ones for $B_\rho(p)$, so we just state the needed terminology and the result. Let $\Sigma_qM^* = D_{\pi/2}/\partial D_{\pi/2}$ be equipped with coordinates
$$(\tilde \theta, \tilde \alpha),$$
where $\tilde \theta$ denotes radial and $\tilde \alpha$ angular direction. In this coordinates the metric of $\Sigma_qM^*$ is given by 
$$d\tilde \sigma^2 = d\tilde \theta^2 + \tilde R^2(\tilde \theta) d\tilde \alpha^2$$
(see Proposition \ref{directions}). We assume that $\frac d {ds}_{|s = 0}\gamma(l - s)$ corresponds to $\tilde \theta = 0$. Then $\tilde R'(0) = m^{-1}$, since the isotropy group is constant along $\gamma$. Also $g$ restricted to $B_\rho(q)$ is given by
$$g = d\tilde r^2 + \sin^2(\tilde r)d\tilde \sigma^2$$
for $0 < \tilde r < \rho$, with induced coordinates
$$(\tilde r,\tilde \theta, \tilde \alpha).$$
Set 
$$d\tilde \sigma^2_{\tau,\delta} := d\tilde \theta^2 + (\tilde R + \eta_{\tau,\delta})^2(\tilde \theta)d\tilde \alpha^2$$
and
$$\tilde g_{\tau,\delta} := d\tilde r^2 + \sin^2(\tilde r)d\tilde \sigma_{\tau,\delta}^2,$$
with induced metric $\tilde d_{\tau,\delta}$ on $B_\rho(q)$.
\begin{proposition}\label{resolution of Brhoq}
Let $0 < r < \pi/4$ and $W = B_r(\frac d {ds}_{|s = 0}\gamma(l - s)) \subset \Sigma_qM^*$. Then for all $0 < \tau < r$ the metric $\tilde g_{\tau,\delta}$ coincides with $g$ on $B_{\rho}(q) \setminus s_q(W)$ and is smooth on $s_q(W) \setminus \{q\}$. Further
$$(B_{\rho}(q),\tilde d_{\tau,\delta}) \xrightarrow{\tau \to 0} (B_{\rho}(q),d)$$
in Gromov-Hausdorff sense (independently of $\delta$). Moreover, for fixed $0 < \tau < \pi/4$ and for all sufficiently small $\delta > 0$ we have $\curv \tilde d_{\tau,\delta} \geq 1$.
\end{proposition}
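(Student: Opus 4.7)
The statement is the exact analog of Proposition \ref{resolution of Brhop} with the roles of $p$ and $q$ interchanged, and the proof strategy is essentially identical. The key observation that makes the transplant immediate is that the isotropy group is constant along $\gamma$, so the slope $\tilde R'(0) = m^{-1}$ matches the slope $R'(0) = m^{-1}$ at $\dot \gamma(0) \in \Sigma_p M^*$; moreover, the neighborhood $B_\rho(q)$ has the suspension form $d\tilde r^2 + \sin^2(\tilde r) d\tilde\sigma^2$ by property \textit{4} of Proposition \ref{symmetry}, just as $B_\rho(p)$ does.

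The plan is to carry out exactly the three steps that were used in the proof of Proposition \ref{resolution of Brhop}. First, I would apply Proposition \ref{resolution of dotgamma} (with $R$ replaced by $\tilde R$, which is possible since only the properties $\tilde R(0)=0$, $\tilde R'(0) = m^{-1}$, and $\tilde R^{(even)}(0) = 0$ coming from Proposition \ref{directions} are used) to conclude that $d\tilde \sigma^2_{\tau,\delta}$ is smooth on $\Sigma_qM^* \setminus \{\tilde \theta = \pi/2\}$ for all admissible $\tau, \delta$, and that there exists $a>0$ such that $\curv(\Sigma_qM^*, d\tilde \sigma^2_{\tau,\delta}) \geq 1 + a$ for fixed $0 < \tau < \pi/4$ and all sufficiently small $\delta > 0$.

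Next, I would translate this into the desired statements on $B_\rho(q)$. Smoothness of $\tilde g_{\tau,\delta}$ on $s_q(W)\setminus\{q\}$ follows from Lemma \ref{differenti} applied to $\tilde R + \eta_{\tau,\delta}$ together with the spherical suspension formula; the coincidence with $g$ outside $s_q(W)$ follows from property \textit{(iv)} of $\eta_{\tau,\delta}$, which makes $\eta_{\tau,\delta}$ vanish for $\tilde \theta \geq \tau$, combined with the fact that $\tau < r$. The curvature bound $\curv \tilde d_{\tau,\delta} \geq 1$ is then immediate from Lemma \ref{cone and suspension}, which gives the equivalence $\curv X \geq 1 \Longleftrightarrow \curv S(X) \geq 1$ (for $\dim X \geq 2$), applied to $(\Sigma_qM^*, d\tilde\sigma^2_{\tau,\delta})$.

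Finally, for the Gromov-Hausdorff convergence, I would observe that by property \textit{(vi)} of Lemma \ref{eta}, together with the bound $0 \leq \eta_{\tau,\delta}(\tilde \theta) \leq \delta$ valid on $[0, \tau]$, the function $\eta_{\tau,\delta}$ converges uniformly to $0$ as $\tau \to 0$ independently of $\delta$; consequently $\tilde g_{\tau,\delta} \to g$ uniformly on $B_\rho(q)$ in the $C^0$-sense, which implies GH convergence of the induced metrics. There is no real obstacle here since every piece of the argument is already encapsulated in results already established for $B_\rho(p)$; the only thing to check carefully is that the suspension form \eqref{smoothDoS} in the $\tilde r$-variable over $\Sigma_qM^*$ truly reproduces $g$ outside $s_q(W)$ and a smooth metric inside, but this is a direct transcription from the $p$-case.
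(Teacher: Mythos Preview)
Your proposal is correct and follows exactly the paper's approach: the paper does not give a separate proof of this proposition but simply states that the arguments are completely analogous to those for $B_\rho(p)$, invoking Proposition~\ref{resolution of dotgamma} and Lemma~\ref{cone and suspension} together with the uniform convergence $\eta_{\tau,\delta} \to 0$ as $\tau \to 0$. One small correction: your justification of that uniform convergence is off, since property~\textit{(vi)} and the bound $\eta_{\tau,\delta} \leq \delta$ only control $\delta$-dependence; the $\tau$-independent bound comes instead from properties~\textit{(i)}--\textit{(v)}, which give $\eta_{\tau,\delta}(\theta) \leq (1-m^{-1})\theta$ on $[0,\tau/2]$ (by concavity and $\eta'(0)=1-m^{-1}$) and monotone decay on $[\tau/2,\tau]$, so $\sup \eta_{\tau,\delta} \leq (1-m^{-1})\tau/2$.
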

Now we show that these resolutions are induced by perturbations of the Killing field $X$ of the $\crcl$-action on $B_\rho(p) \cup B_\rho(q)$. 

It follows from \eqref{smoothSoD} that the natural $\crcl$-action on $\Sigma_pM^*$ is isometric with respect to the metrics $d\sigma^2_{\tau,\delta}$ as well as $d\sigma^2$. The Killing field of this action is given by the coordinate field $\partial_\alpha$ and therefore has length $R(\theta)$ and $R(\theta) + \eta_{\tau,\delta}(\theta)$ at $(\theta,\alpha)$ measured with respect to $d\sigma^2$ and $d\sigma^2_{\tau,\delta}$, respectively. 
 Set
\begin{align*}
 h_{\tau,\delta} : B_{\rho}(p) &\to \RR\\
 (r,\theta,\alpha) &\mapsto \sin(r)\eta_{\tau,\delta}(\theta).\nonumber
\end{align*}
%and
%\begin{align*}
% \tilde h_{\tau,\delta} : B_{\rho}(q) &\to \RR\\
% (\tilde r,\tilde \theta,\tilde \alpha) &\mapsto \sin(\tilde r)\eta_{\tau,\delta}(\tilde \theta).\nonumber
%\end{align*}
Hence, by \eqref{smoothDoS}, the Killing field $X$ of the $\crcl$-action on $B_\rho(p)$  in this metrics has lengths
$$||X||_g(r,\theta,\alpha) = \sin(r)R(\theta)$$
and 
\begin{align}\label{killingtaudelta}
||X||_{g_{\tau,\delta}}(r,\theta,\alpha) = \sin(r)(R(\theta) + \eta_{\tau,\delta}(\theta)) = \sin(r)R(\theta) + h_{\tau,\delta}(r,\theta,\alpha). 
\end{align}

Moreover, the action of $\crcl$ on $B_\rho(p)$ is polar with respect to the metrics $g_{\tau,\delta}$ as well as $g$. From \eqref{smoothDoS} and \eqref{smuth69} it follows that the sections of $g_{\tau,\delta}$ and $g$ further coincide and the restriction of $g_{\tau,\delta}$ to a section equals the restriction of $g$. In fact it is easy to prove the following lemma:
\begin{lemma}\label{curvature section}
Equip $B_\rho(p)$ with any of the metrics $g_{\tau,\delta}$ or $g$. Then $B_\rho(p)/\crcl$ is isometric to the ball of radius $\rho$ at a tip of the spherical suspension of the interval $[0,\pi/2]$. In particular, $B_\rho(p)/\crcl$ has constant curvature $1$ and the same holds for a section of the action.
\end{lemma}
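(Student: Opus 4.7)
The plan is to identify both the quotient $B_\rho(p)/\crcl$ and a section of the action explicitly in the coordinates $(r,\theta,\alpha)$ introduced above. First I would observe that with respect to either $g = dr^2 + \sin^2(r)(d\theta^2 + R^2(\theta)d\alpha^2)$ or $g_{\tau,\delta} = dr^2 + \sin^2(r)(d\theta^2 + (R + \eta_{\tau,\delta})^2(\theta)d\alpha^2)$, the Killing field of the $\crcl$-action is $\partial_\alpha$, and its orthogonal complement is spanned by $\partial_r$ and $\partial_\theta$. Hence the slice $\{\alpha = \alpha_0\}$ is totally geodesic and provides a section of the polar action; the induced metric on this section is simply $dr^2 + \sin^2(r)\,d\theta^2$ for both choices of ambient metric (the coefficient $R + \eta_{\tau,\delta}$, respectively $R$, does not appear because we have dropped the $d\alpha$ direction).

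Next I would identify the quotient metric. Since $\pi\colon B_\rho(p)_{\mathrm{reg}} \to B_\rho(p)_{\mathrm{reg}}/\crcl$ is a Riemannian submersion and the section meets each principal orbit orthogonally, the restriction of $\pi$ to the section is a local isometry onto $B_\rho(p)/\crcl$. In particular the quotient metric in coordinates $(r,\theta)$ is again $dr^2 + \sin^2(r)\,d\theta^2$ for both $g$ and $g_{\tau,\delta}$, independently of $\eta_{\tau,\delta}$ (note that the deformation only affects orbit lengths, not the section).

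Then I would compare with the ball of radius $\rho$ at a tip of $S([0,\pi/2])$. Using the definition of the spherical suspension from section \ref{Alexandrov}, points of $S([0,\pi/2])$ can be written as $[\theta,r]$ with $\theta \in [0,\pi/2]$ and $r \in [0,\pi]$, the distance to the tip $r = 0$ equals $r$, and the induced smooth metric on the regular part reads $dr^2 + \sin^2(r)\,d\theta^2$. Restricting to $r \leq \rho$ gives the ball of radius $\rho$ at this tip, and the map $(r,\theta) \mapsto [\theta,r]$ is the desired isometry.

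Finally, applying Lemma \ref{curvature} to $dr^2 + \sin^2(r)\,d\theta^2$ gives $\sec = -\partial_r^2\sin(r)/\sin(r) = 1$, so both the quotient and the section have constant curvature $1$. I expect no real obstacle here; the key point to check carefully is just that the warped-product form $dr^2 + \sin^2(r)\,d\sigma^2$, together with the fact that $\partial_\alpha$ acts only on the $\sigma$-factor, makes the identifications of the section and of the quotient immediate for both $g$ and $g_{\tau,\delta}$.
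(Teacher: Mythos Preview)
Your proof is correct and is exactly the elementary verification the paper has in mind; the paper itself omits the argument, stating only that the lemma ``is easy to prove'', and your explicit identification of the section $\{\alpha=\alpha_0\}$ with induced metric $dr^2+\sin^2(r)\,d\theta^2$, together with the computation via Lemma~\ref{curvature}, fills in precisely the expected details.
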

%and have constant curvature $1$ (this follows easily since $(\Sigma_pM^*,d\sigma^2)/\crcl = (\Sigma_pM^*,d\sigma^2_{\tau,\delta})/\crcl = [0,\pi/2]$, so $(B_\rho(p),g)/\crcl = (B_\rho(p),g_{\tau,\delta})/\crcl$ is isometric to the ball of radius $\rho$ at a tip of the spherical suspension of the closed interval $[0,\pi/2]$). 
Setting
\begin{align*}
 \tilde h_{\tau,\delta} : B_{\rho}(q) &\to \RR\\
 (\tilde r,\tilde \theta,\tilde \alpha) &\mapsto \sin(\tilde r)\eta_{\tau,\delta}(\tilde \theta),\nonumber
\end{align*}
this observations hold analogously on $B_\rho(q)$.

Since the metrics $g_{\tau,\delta}$, $\tilde g_{\tau,\delta}$ and $g$ are completely determined by its restrictions to a section of the action and the norm of its Killing fields along a section we extend the functions $h_{\tau,\delta}$ and $\tilde h_{\tau,\delta}$ along a neighborhood of $\gamma$ in order to extend the resolutions of $B_\rho(p)$ and $B_\rho(q)$. This is the subject of the following sections.
%============================================================
\subsection{\texorpdfstring{Resolution of $U$}{Resolution of U}}\label{resolving U}
In this section we construct an open neighborhood $U$ of $\gamma$ which is a good Riemannian orbifold and admits a polar $\crcl$-action with section $\Sigma$. Then we construct a family of functions $\overline h_{\tau,\delta} : \Sigma \to \RR$ which is used as in the previous section to define a resolution of $g$ on $U$ via smooth metrics $\overline g_{\tau,\delta}$ on $U$ induced by a small perturbation of the Killing field $X$. We are able to control the curvature of this resolution only at a certain distance from the fixed points $p$ and $q$. But $\overline h_{\tau,\delta}$ is constructed in a way that it satisfies the boundary value problems imposed by $h$ and $\tilde h$ for appropriate neighborhoods of $p$ and $q$, respectively. This is used afterwards to construct a smooth resolution of $\gamma$ with controlled lower curvature bound that extends to $M^*$.
\\ \

We first construct a the desired neighborhood $U$ of $\gamma$ and derive basic formulas for $g$ in this coordinates. For that recall that the normal bundle $N \gamma$ and parallel translation along $\gamma$ are naturally defined along the interior of $\gamma$ via the orbifold structure near $\gamma$ (property \textit{5} of our assumptions on the metric $d$ of $M^*$). We have the following lemma:
\begin{lemma}\label{injective normal}
 There exists $T > 0$ such that the normal exponential map
 $$\exp : N^{< T} \gamma \to (M^*,d)$$
 is injective ($N^{< T}\gamma$ denotes the set of normal vectors to $\gamma$ of length less than $T$). 
\end{lemma}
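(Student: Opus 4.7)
My plan is to split $\gamma$ into three overlapping pieces and handle each by a different argument. Fix $0 < s < \rho/4$ so small that $\gamma_{(0,2s]} \subset B_\rho(p)$ and $\gamma_{[l-2s,l)} \subset B_\rho(q)$, and put
\[
\gamma^{\operatorname{mid}} := \gamma_{[s,l-s]},\qquad \gamma^p := \gamma_{(0,2s]},\qquad \gamma^q := \gamma_{[l-2s,l)}.
\]
I will produce a positive $T$ that makes the normal exponential injective on each of the three corresponding tubes separately, and then argue that for $T$ small enough the tubes do not interfere with one another.

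For the compact middle piece I would invoke property \textit{5}, which supplies a good orbifold chart $\hat U \to U = \hat U/\ZZ_m$ covering an open neighborhood of $\gamma^{\operatorname{mid}}$. Lifting $\gamma^{\operatorname{mid}}$ to a compact geodesic segment $\hat\gamma^{\operatorname{mid}}$ in the smooth Riemannian manifold $\hat U$ and applying the standard Riemannian tubular neighborhood theorem gives a $T_{\operatorname{mid}} > 0$ on which the upstairs normal exponential is a diffeomorphism onto its image. Pushing down by the finite $\ZZ_m$-action (noting that $\ZZ_m$ fixes $\hat\gamma^{\operatorname{mid}}$ pointwise and acts orthogonally on its normal bundle) yields injectivity of the downstairs normal exponential on $N^{<T_{\operatorname{mid}}}\gamma^{\operatorname{mid}}$.

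For the end near $p$ I use property \textit{4}: $B_\rho(p)$ is isometric to the ball of radius $\rho$ about the tip of the spherical suspension $S(\Sigma_pM^*)$, and $\gamma^p$ corresponds to the radial arc $\{(r,\dot\gamma(0)): 0 < r \leq 2s\}$. My plan is to prove that any point $x = (r_0,y_0) \in B_\rho(p)$ sufficiently close to $\gamma^p$ admits a unique foot on the radial arc. Using the suspension distance formula,
\[
\cos d((t,\dot\gamma(0)),(r_0,y_0)) \;=\; \cos t\cos r_0 + \sin t \sin r_0 \cos\bigl(d_{\Sigma_pM^*}(\dot\gamma(0),y_0)\bigr),
\]
the function $t \mapsto d((t,\dot\gamma(0)),x)$ is strictly convex on $(0,\pi)$ with a unique minimum $t^*(x)$, and the minimizing segment from $(t^*,\dot\gamma(0))$ to $x$ hits the arc orthogonally. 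For $T_p$ small enough every point in the $T_p$-tube around $\gamma^p$ therefore arises as $\exp_{\gamma(t^*)}(v)$ for exactly one $v \in N_{\gamma(t^*)}\gamma$, giving injectivity. The potential singularity of $\Sigma_pM^*$ at $\dot\gamma(0)$ (when $m > 1$) does not obstruct this, since near $\gamma^p$ the suspension $S(\Sigma_pM^*)$ is locally modelled on $\RR \times C(\sphere^1(1/m))$ and the distance formula above remains valid. The same argument applied at $q$ produces $T_q > 0$.

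Finally I would patch with $T := \min(T_{\operatorname{mid}}, T_p, T_q)$, shrunk further (say below $s/4$) so that normal geodesics of length less than $T$ emanating from $\gamma^{\operatorname{mid}}$ cannot leave the orbifold chart, and normal geodesics from $\gamma^p$ respectively $\gamma^q$ remain inside $B_\rho(p)$ respectively $B_\rho(q)$. On the overlap $\gamma_{[s,2s]}$ both the orbifold tubular neighborhood and the suspension foot-point argument apply; uniqueness of the foot, proved in each setting, forces the two parametrizations to agree, so there is no ambiguity. The main technical point I expect to need care with is verifying the strict convexity of the distance-to-$\gamma^p$ function in the suspension coordinates uniformly for $y_0$ in a neighborhood of $\dot\gamma(0)$ (including when $\dot\gamma(0)$ is a conical singular point of $\Sigma_pM^*$), since the convexity needs to survive the quotient by the $\ZZ_m$ isotropy describing the local cone structure there.
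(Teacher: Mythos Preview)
Your route is workable but differs from the paper's, and two of your steps need more care.

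The paper argues by contradiction. If no uniform $T$ exists one picks $v_n\neq w_n$ in $N\gamma$ with $|v_n|,|w_n|<1/n$ and $\exp(v_n)=\exp(w_n)$; a subsequential limit of the common image lies on $\overline\gamma$, and the orbifold tubular neighbourhood along the interior (essentially your middle-piece argument) forces this limit to be $p$ or $q$. Near $p$ the paper does \emph{not} compute in the $3$-dimensional suspension: it passes to the further quotient $B_\rho(p)/\crcl$, which by Lemma~\ref{curvature section} is a convex region in the round $\sphere^2$, and obtains a contradiction from a spherical triangle with two right angles at $\gamma$ and two arbitrarily short legs. This $2$-dimensional reduction via property~\textit{6} is what replaces your suspension analysis and is the main reason the paper's proof is short.

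On your argument, two points. First, the function $t\mapsto d((t,\dot\gamma(0)),x)$ is \emph{not} strictly convex on all of $(0,\pi)$: writing $\cos d(t)=A\cos t+B\sin t$ one computes that $d''(t)$ has the sign of $\cos d(t)$, which becomes negative once $d>\pi/2$. What is true---and enough---is that $d(t)$ is unimodal with a single critical point. More seriously, the foot-point construction only produces a \emph{right} inverse to $\exp$: knowing that $x$ has a unique foot $t^*$ does not yet exclude $\exp_{\gamma(t_1)}(v_1)=x$ for some $t_1\neq t^*$. You must argue that every normal geodesic of length $<T$ from $\gamma(t_1)$ is actually minimizing in the suspension (so that $t_1$ is a critical point of $t\mapsto d(\gamma(t),x)$ and hence equals $t^*$); this is true and checkable in the explicit suspension model, but it is an additional step that your outline omits. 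Finally, your patching does not cover the case $t_1\in(0,s)$, $t_2\in(2s,l-2s)$: rule it out by using that $\overline\gamma:[0,l]\to M^*$ is an embedded arc (Lemma~\ref{fintushel}), so $d(\gamma(t_1),\gamma(t_2))$ has a positive lower bound whenever $|t_1-t_2|\geq s$, and shrink $T$ below half of that bound.
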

\begin{proof}
Assume such a $T > 0$ does not exist. Then there exist sequences $v_n$ and $w_n$ in $N \gamma$ with $v_n \neq w_n$, $|v_n| < 1/n$, $|w_n| < 1/n$ and $\exp(v_n) = \exp(w_n)$. After taking a subsequence we assume that the sequence $\exp(v_n)$ converges to a point $x \in M^*$. From the orbifold structure of a neighborhood of $\gamma$ it follows that either $x = p$ or $x = q$. We may assume that $x = p$: Then for all $n$ sufficiently large the geodesics $t \mapsto \exp(tv_n) = c_{v_n}(t)$ and $t \mapsto \exp(tw_n) = c_{w_n}(t)$ are contained in $B_{\rho}(p)$ for $0 \leq t \leq1$, have the same value at $t = 1$ and are orthogonal to $\gamma$. Let $B_{\rho}(p)^* = B_{\rho}(p)/\crcl$. Since $c_{v_n}$ and $c_{w_n}$ are orthogonal to $\gamma$, it follows that they project to geodesics $c_{v_n}^*$ and $c_{w_n}^*$ of $B_{\rho}(p)^*$. From \eqref{smuth69} it follows that $B_{\rho}(p)^*$ is isometric to a piece of the spherical suspension of the interval $[0,\pi/2]$, that is, to a convex subset of the round $\sphere^2$. Assume $c_{v_n}^* \neq c_{w_n}^*$. Since the triangle formed by $c_{v_n}^*$, $c_{w_n}^*$ and $\gamma^*$ has two right angles at $\gamma$, but two arbitrary short side lengths $|v_n|$ and $|w_n|$, we obtain a contradiction. Thus $c_{v_n}^* = c_{w_n}^*$. Moreover there exists some $t_0 > 0$ such that $c_{v}^*(t)$ is regular for all unit normal vectors of $\gamma$ close to $p$. In particular, $c_{v_n}$ and $c_{w_n}$ are minimal geodesics between its orbits on the interval $[0,t_0]$. Since $c_{v_n}(0) = c_{w_n}(0)$ it follows that $v_n = w_n$, for large $n$, a contradiction.
\end{proof}
We fix such a $0 < T < \rho$ in a way that the $\crcl$-action (property \textit{6}) is defined on $\exp(N^{< T} \gamma).$
Observe that $\exp(N^{< T}\gamma,g)$ is a good Riemannian orbifold. Let
$$U := \exp(N^{< T}\gamma),$$
and it follows that property \textit{5} holds with respect to $U$.
Then $U  \cong \ ]0,l[ \times B^2_T(0)$ and analogous to the construction of the Fermi coordinates \eqref{fermi} we obtain coordinates
\begin{align}
 (s,t,\phi) 
\end{align}
on $U$ with $(s,t,\phi) \in \ ]0,l[ \times [0,T[ \times [0,2\pi]$ ($s$ corresponds to the parameter of $\gamma$, $t$ corresponds to radial direction with respect to $\gamma$ and $\phi$ corresponds to the action of $\crcl$). Since the action of $\crcl$ on $U$ is polar, in these coordinates $g$ is given by
\begin{align}\label{smuth3}
 g = f^2(s,t)ds^2 + dt^2 + \varphi^2(s,t)d\phi^2
\end{align}
for smooth functions $f :\ ]0,l[ \times [0,T] \to \RR$ and $\varphi :\ ]0,l[ \times [0,T] \to \RR$. Note that 
\begin{align}\label{kill}
 \varphi(s,t) = ||X(s,t,\phi)||_g,
\end{align}
for the Killing field $X$ of the $\crcl$-action on $U$. Note that, resulting from the orbifold structure of $U$,
\begin{align}\label{hat g}
\hat g = f^2(s,t)ds^2 + dt^2 + m^2\varphi^2(s,t)d\phi^2
\end{align}
defines a smooth metric on $U$. 
%Consequently the following equations hold (compare \cite{petersen98}, Lemma 4.1) 
%\begin{align*}
 %f(s,0) &= 1,\\
 %\partial_tf(s,0) &= 0\\
%\end{align*}
% and
%\begin{align}\label{dtvarphi}
 %\varphi(s,0) &= 0,\\
 %\partial_t\varphi(s,0) &= m^{-1}\\
%\partial_t^{(even)}\varphi(s,0) &= 0
%\end{align}
%for all $0 < s < l$.
A section of the polar action of $\crcl$ on $U\setminus \gamma$ is parametrized by
$$\{(s,t,0) \mid 0 < s < l, 0 < t < T\}.$$
Let $\Sigma$ denote its closure in $U$. Then the metric restricted to $\Sigma$ is given by
$$g_\Sigma = f^2(s,t)ds^2 + dt^2.$$
Observe that $(\Sigma,g)$ is isometric to $(U,g)/\crcl$. Since $B_{\rho}(p)/\crcl$ as well as $B_{\rho}(q)/\crcl$ have constant curvature $1$ (Lemma \ref{curvature section}) it follows that
$$f(s,t) = \cos(t)$$
for $(s,t,\phi) \in (B_{\rho}(p) \cup B_{\rho}(q)) \cap U$.
\\ \\
Now we describe how the metrics $g_{\tau,\delta}$ and $\tilde g_{\tau,\delta}$ yielding the resolutions of $B_\rho(p)$ and $B_\rho(q)$ along $\gamma$ are given in the coordinates $(s,t,\phi)$: The $\crcl$-action on $B_\rho(p) \cap U$ is polar with respect to the metrics $g_{\tau,\delta}$ as well as $g$ and the sections are the same for these metrics and have constant curvature $1$. Further, by \eqref{killingtaudelta}, for all $(s,t,\phi) \in U \cap B_\rho(p)$ we have
$$||X(s,t,\phi)||_{g_{\tau,\delta}} = ||X(s,t)||_g + h_{\tau,\delta}(s,t) = \varphi(s,t) + h_{\tau,\delta}(s,t),$$
and analogously
$$||X(s,t,\phi)||_{\tilde g_{\tau,\delta}} = \varphi(s,t) + \tilde h_{\tau,\delta}(s,t),$$ for $(s,t,\phi) \in B_\rho(q) \cap \Sigma$. Since $\varphi$, $h_{\tau,\delta}$ and $\tilde h_{\tau,\delta}$ are independent of the $\phi$ coordinate, which corresponds to the action of $\crcl$, we can view them as functions on $\Sigma$;
$$\varphi : \Sigma \to \RR,$$
$$h_{\tau,\delta} : \Sigma \cap B_{\rho}(p) \to \RR,$$
$$\tilde h_{\tau,\delta} : \Sigma \cap B_{\rho}(q) \to \RR.$$
Consequently at $(s,t,\phi) \in U \cap B_\rho(p)$ and $(s,t,\phi) \in U \cap B_\rho(q)$ we respectively have
$$g_{\tau,\delta} = f^2(s,t)ds^2 + dt^2 + (\varphi(s,t) + h_{\tau,\delta}(s,t))^2d\phi^2$$
and
$$\tilde g_{\tau,\delta} = f^2(s,t)ds^2 + dt^2 + (\varphi(s,t) + \tilde h_{\tau,\delta}(s,t))^2d\phi^2.$$

The aim is now to construct a family of functions $\overline h_{\tau,\delta} : \Sigma \to \RR$ in a way that the corresponding metric
$$\overline g_{\tau,\delta} = f^2(s,t)ds^2 + dt^2 + (\varphi(s,t) + \overline h_{\tau,\delta}(s,t))^2d\phi^2$$
on $U$ is smooth and we can control the lower curvature bound of $\overline g_{\tau,\delta}$ at a certain distance from the fixed points $p$ and $q$. Moreover, we want $\overline h_{\tau,\delta}$ to satisfy the boundary value conditions imposed by $h_{\tau,\delta}$ and $\tilde h_{\tau,\delta}$ on some neighborhoods $V(p) \subset B_\rho(p)$ and $V(q) \subset B_\rho(q)$ of $p$ and $q$, which are specified during the construction.
%in order to extend the metrics $g_{\tau,\delta}$ and $\tilde g_{\tau,\delta}$ along $\Sigma$. 
\\ \\
Restricting the coordinates $(s,t,\phi)$ of $U$ to $\Sigma$ we obtain coordinates 
$$(s,t)$$
of $\Sigma$ with $0 < s < l$ and $0 \leq t < T$. Via these coordinates we often implicitly identify $\Sigma$ with $]0,l[ \times [0,T[$. Restricting the coordinates $(r,\theta,\alpha)$ of $B_{\rho}(p)$ to $\Sigma$ we obtain coordinates 
$$(r,\theta)$$
of $\Sigma \cap B_{\rho}(p)$ with $0 \leq r < \rho$ and $0 \leq \theta \leq \pi/2$. Recall that in this coordinates we have $h_{\tau,\delta}(r,\theta) = \sin(r)\eta_{\tau,\delta}(\theta)$. To extend the function $h_{\tau,\delta}$ to $\Sigma$ we thus need to determine the relations between the coordinates $(s,t)$ and $(r,\theta)$. Since $B_\rho(p) \cap \Sigma$ has constant curvature $1$, these relations follow from the laws of spherical geometry (compare Figure \ref{figure:coordinates}):
\begin{figure}[h!]
\centering
\def\svgwidth{0.3\textwidth}
\subimport{}{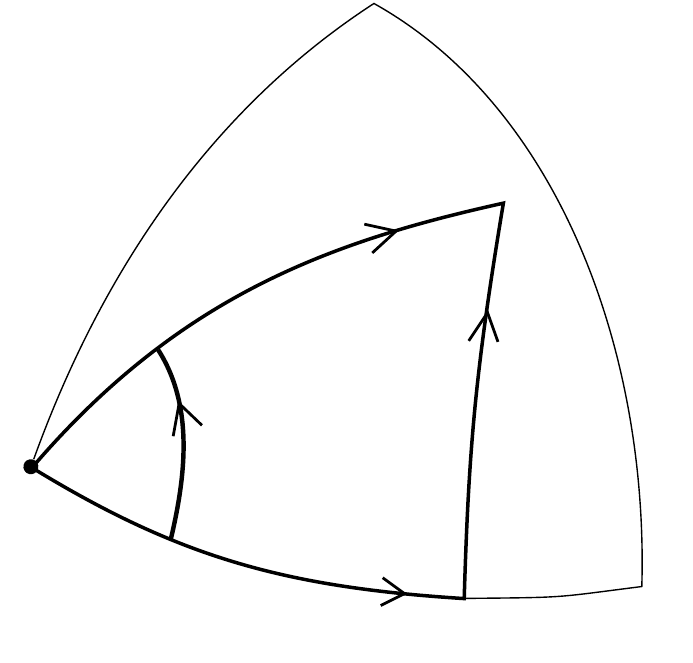_tex}
\caption{\small{Since $B_{\rho}(p) \cap \Sigma$ has constant curvature $1$, the relations between the coordinates are determined by the laws of spherical geometry.}}
\label{figure:coordinates}
\end{figure}
\begin{align}
 \cos(r) &= \cos(s)\cos(t), \label{cos(r)}\\ 
 \sin(t) &= \sin(r)\sin(\theta),\label{sin(t)}\\
 \cos(\theta) &= \frac{\cos(t) - \cos(r)\cos(s)}{\sin(r)\sin(s)} = \frac{\cos(t)\sin(s)}{\sqrt{1 - \cos^2(s)\cos^2(t)}}.\label{cos(theta)}
\end{align}

We may assume that $\rho < \pi/2$. Then the following lemma holds: 
\begin{lemma}\label{monotonicity}
 The function
\begin{align*}
 (s,t) &\mapsto \sin(r(s,t))
 \end{align*}
 is strictly increasing as a function in $s$ for a fixed $t \in [0,T[$ as well as a function in $t$ for a fixed $s \in [0,\rho[$. The function
 \begin{align*}
 (s,t) &\mapsto \theta(s,t)
  \end{align*}
 is strictly increasing as a function in $t$ for a fixed $s \in [0,\rho[$ and it is strictly decreasing as a function in $s$ for a fixed $t \in [0,T[.$
 \end{lemma}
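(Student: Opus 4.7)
The plan is to reduce both monotonicity statements to two clean identities derived from \eqref{cos(r)}--\eqref{cos(theta)}:
\begin{align*}
\sin^2(r(s,t)) = 1 - \cos^2(s)\cos^2(t), \qquad \tan(\theta(s,t)) = \frac{\tan(t)}{\sin(s)},
\end{align*}
after which the claims follow by elementary calculus on $[0,\pi/2)$. Throughout we use the standing assumption $\rho < \pi/2$; without loss of generality we also shrink $T$ so that $T < \pi/2$, which ensures every angle appearing below lies in $[0,\pi/2)$, the interval on which $\cos$ is strictly positive and strictly decreasing, $\sin$ is strictly increasing, and $\tan$ is a strictly increasing bijection onto $[0,\infty)$.

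The first identity is obtained by squaring \eqref{cos(r)} and applying $\sin^2 + \cos^2 = 1$. Since $\cos(s), \cos(t) \in (0,1]$ on the relevant range and both are strictly decreasing in their argument on $[0,\pi/2)$, the product $\cos^2(s)\cos^2(t)$ is strictly decreasing in each of $s$ and $t$ when the other is held fixed. Consequently $\sin^2(r)$ is strictly increasing in each variable, and because $r \in [0,\pi/2)$ we have $\sin(r) \ge 0$, so strict monotonicity passes to $\sin(r(s,t))$ itself. This proves the first part of the lemma.

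For the second identity, divide \eqref{sin(t)} by \eqref{cos(theta)} (after multiplying the latter by $\sin(r)$):
\begin{align*}
\tan(\theta) \;=\; \frac{\sin(\theta)}{\cos(\theta)} \;=\; \frac{\sin(t)/\sin(r)}{\cos(t)\sin(s)/\sin(r)} \;=\; \frac{\tan(t)}{\sin(s)}.
\end{align*}
For fixed $s \in (0,\rho)$, as $t$ increases in $[0,T[$, $\tan(t)$ strictly increases, whence $\tan(\theta)$ strictly increases, whence $\theta$ strictly increases. For fixed $t \in\;]0,T[$, as $s$ increases in $[0,\rho[$, $\sin(s)$ strictly increases, so $\tan(\theta)$ and hence $\theta$ strictly decreases. (On the degenerate edge $t=0$ one has $\theta \equiv 0$; the statement is to be read with the strict monotonicity in $s$ holding on the open set $t > 0$, which is what the subsequent sections use.)

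There is no serious obstacle: the whole lemma is a direct consequence of spherical trigonometry in the constant-curvature-one region $B_\rho(p) \cap \Sigma$ guaranteed by Lemma \ref{curvature section}. The only care needed is in confirming that all arguments of $\sin, \cos, \tan$ stay in the first quadrant, which is built into the assumption $\rho < \pi/2$ together with the (harmless) reduction $T < \pi/2$.
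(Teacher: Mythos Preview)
Your argument is correct. The two identities
\[
\sin^2 r = 1 - \cos^2 s\,\cos^2 t,\qquad \tan\theta = \frac{\tan t}{\sin s}
\]
are exactly the right way to read off the monotonicity, and your handling of the quadrant issues is clean. One small remark: you do not need to further shrink $T$, since the paper already fixes $0 < T < \rho < \pi/2$, so $T < \pi/2$ is automatic.

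As for comparison with the paper: the paper states this lemma without proof, treating it as an immediate consequence of the spherical trigonometry relations \eqref{cos(r)}--\eqref{cos(theta)} in the constant-curvature-one region. Your write-up is therefore a complete elaboration of what the paper leaves implicit, and your reduction to the two compact identities above is arguably the tidiest way to carry it out.
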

%------------------------------end prop and proof-----------------------------------------
To extend $h_{\tau,\delta}$ in a controlled way we determine the derivatives of $h_{\tau,\delta}$ with respect to the coordinates $(s,t)$.
%----------------------------------------begin prop----------------------------------------------------
\begin{lemma}\label{derivatives of h} Fix $0 < \tau < \pi/4$. Then 
\begin{itemize}
\item[(i)] $\partial_sh_{\tau,\delta} \geq 0$ for all $\delta > 0$,
\item[(ii)] $\partial_th_{\tau,\delta}(r,0) = (1 - m^{-1})$ and $0 < \partial_th_{\tau,\delta}(r,\theta) < (1 - m^{-1})$ for all $0 < \theta < \tau/2$, $0 < r < \rho$ and $\delta >0$,
\item[(iii)] $\partial^2_{t}h_{\tau,\delta}(r,\theta) < 0$ for all $0 < \theta < \tau/2$ for all $\delta > 0,$
\item [(iv)] for all $0 < r_0 < \rho$, for all $\delta > 0$ there exists $\epsilon(r_0,\delta) > 0$ with $\epsilon(r_0,\delta) \xrightarrow{\delta \to 0} 0$ and
$$|\partial_th_{\tau,\delta}(r,\theta)| + |\partial^2_{t}h_{\tau,\delta}(r,\theta)| \leq \epsilon(r_0,\delta)$$
for  all $r_0 \leq r \leq \rho$, for all $\tau/2 \leq \theta \leq \pi/2$.
\end{itemize}
\end{lemma}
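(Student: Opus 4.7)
The plan is to compute all needed derivatives directly from $h_{\tau,\delta}(r,\theta) = \sin(r)\eta_{\tau,\delta}(\theta)$ via the chain rule, with $(r,\theta)$ and $(s,t)$ linked by the spherical identities \eqref{cos(r)}--\eqref{cos(theta)}. Differentiating those relations yields the Jacobian entries
\[ r_s = \frac{\sin s\,\cos t}{\sin r}, \quad r_t = \frac{\cos s\,\sin t}{\sin r}, \quad \theta_s = -\frac{\cos r\,\sin t}{\sin^2 r}, \quad \theta_t = \frac{\sin s}{\sin^2 r}, \]
and the crucial identity $\sin r\,\sin\theta = \sin t$ lets me rewrite $h(s,t) = \sin(t)\,\xi(\theta)$ with $\xi(\theta) := \eta_{\tau,\delta}(\theta)/\sin(\theta)$, smoothly extended to $\theta = 0$ via $\xi(0) = \eta'(0) = 1 - m^{-1}$.

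For (i), after simplification using $\sin(s)\cos(t)/\sin r = \cos\theta$ and $\sin t/\sin r = \sin\theta$, I obtain
\[ \partial_s h = \cos(r)\bigl(\eta\cos\theta - \eta'\sin\theta\bigr), \]
so it suffices to show $F(\theta) := \eta\cos\theta - \eta'\sin\theta \geq 0$. Since $F(0) = 0$ and $F'(\theta) = -\sin(\theta)(\eta + \eta'')(\theta) \geq 0$ on $[0, \tau/2]$ by property (v) of Lemma~\ref{eta}, $F \geq 0$ there; on $[\tau/2, \tau]$ this is automatic from $\eta \geq 0$ and $\eta' \leq 0$; on $[\tau, \pi/2]$ both $\eta$ and $\eta'$ vanish.

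For (ii), $\partial_t h(r, 0) = \cos(0)\xi(0) = 1 - m^{-1}$ is immediate. For the sharp bounds on $(0, \tau/2)$, I rewrite
\[ \partial_t h = A(s,t)\,\eta + B(s,t)\,\eta',\qquad A = \cos^2(s)\cos(t)\sin\theta,\quad B = \cos\theta/\cos(t), \]
and observe the algebraic identity $A\sin\theta + B\cos\theta = \cos(t)$. Then
\[ (1-m^{-1})\cos(t) - \partial_t h = A\bigl[(1-m^{-1})\sin\theta - \eta\bigr] + B\bigl[(1-m^{-1})\cos\theta - \eta'\bigr], \]
and both brackets are $\geq 0$: the first from the construction in Lemma~\ref{eta}, the second because $F \geq 0$ forces $\eta' \leq \eta\cot\theta \leq (1-m^{-1})\cos\theta$. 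This gives $\partial_t h \leq (1-m^{-1})\cos(t) < 1 - m^{-1}$ for $t > 0$, and strict positivity follows from $A, B > 0$ together with $\eta, \eta' > 0$ on $(0, \tau/2)$ by Lemma~\ref{eta}(ii).

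For (iii), on $[0, \tau(\delta)]$ the identity $\eta = (1 - m^{-1})\sin$ combined with $\sin r\,\sin\theta = \sin t$ yields $h \equiv (1-m^{-1})\sin(t)$, so $\partial_t^2 h = -(1-m^{-1})\sin(t) < 0$ for $t > 0$. On $[\tau(\delta), \tau/2]$ a direct second-order chain-rule computation is required; I expect this to be the principal computational obstacle, but every ingredient is controlled: the $\sin(t)\xi''\theta_t^2$ term is dominated using $\eta + \eta'' \leq 0$ (property (v) of Lemma~\ref{eta}), and the remaining terms inherit sign from $\xi' \leq 0$, which follows from $F \geq 0$ established in step (i). Finally, (iv) follows because on $r_0 \leq r \leq \rho$, $\tau/2 \leq \theta \leq \pi/2$ every coefficient in the formulas for $\partial_t h$ and $\partial_t^2 h$ stays uniformly bounded away from the singularity at $r = 0$, so the bound $|\eta| + |\eta'| + |\eta''| < \delta$ from property (vi) of Lemma~\ref{eta} propagates, yielding a quantity $\epsilon(r_0,\delta) \to 0$ as $\delta \to 0$.
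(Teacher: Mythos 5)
Your parts (i), (ii), and (iv) are essentially sound. For (i) your function $F(\theta)=\eta\cos\theta-\eta'\sin\theta$ is precisely the quantity the paper bounds (it is shown there that $\eta'\sin\leq\eta\cos$ by the same $F(0)=0,\ F'=-\sin\cdot(\eta+\eta'')\geq 0$ argument), so the approaches coincide. For (ii) your algebraic identity $A\sin\theta+B\cos\theta=\cos t$ is a pleasant observation and yields the sharper bound $\partial_t h\leq(1-m^{-1})\cos t$; the paper instead derives (ii) as a corollary of the concavity statement (iii). One small caveat: your bound on the first bracket needs $\eta\leq(1-m^{-1})\sin$, which is not among the listed properties (i)--(vi) of Lemma~\ref{eta}; it does follow from property (v), since $(\eta'\sin-\eta\cos)'=(\eta''+\eta)\sin\leq 0$ on $[0,\tau/2]$ with initial value $0$ gives $\eta/\sin$ nonincreasing, hence $\eta\leq\eta'(0)\sin$, and the extension to $[\tau/2,\pi/2]$ uses that $\eta$ is nonincreasing there while $\sin$ is still increasing. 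You should make this explicit. Part (iv) matches the paper.

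Part (iii) is the genuine gap, and the one place your sign bookkeeping as stated is wrong. Writing $h=\sin t\,\xi(\theta)$, the full second derivative is
\[
\partial_t^2 h=-\sin t\,\xi+2\cos t\,\xi'\theta_t+\sin t\,\xi''\theta_t^2+\sin t\,\xi'\theta_{tt},
\]
and the last term $\sin t\,\xi'\theta_{tt}$ is \emph{nonnegative}: one computes $\theta_{tt}<0$, so together with $\xi'\leq 0$ this term works against you. Likewise $\xi''=(\eta+\eta'')/\sin\theta+2\cos\theta\,F/\sin^3\theta$ has an indeterminate sign. It is therefore not the case that ``every ingredient is controlled'' term by term. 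What actually happens is that the three $F$-bearing contributions cancel exactly, leaving the clean identity
\[
\partial_t^2 h=-\sin r\,\eta+\frac{\cos^2\theta\,(\eta''+\eta)}{\sin r\,\cos^2 t},
\]
which is indeed $<0$ on $\{0<\theta<\tau/2\}$ by property (v); so your route can be saved, but the cancellation is the content of the proof and must be carried out, not anticipated. The paper avoids all of this by a geometric argument that you should be aware of: it observes that $h=\|\partial_\alpha\|_b$ for the auxiliary metric $b=dr^2+\sin^2 r\,(d\theta^2+\eta^2 d\alpha^2)$ on $B_\rho(p)$, which is the spherical suspension of $d\theta^2+\eta^2 d\alpha^2$; property (v) together with Lemma~\ref{curvature} gives $\mathrm{curv}\geq 1$ for the base, Lemma~\ref{cone and suspension} gives positive curvature for $b$ on $\{0<\theta<\tau/2\}$, and Lemma~\ref{Hessian} then gives strict concavity of $h$ on the section; since the $\partial_t$-integral curves are geodesics, $\partial_t^2 h<0$ follows with no coordinate computation at all.
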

%------------------------------end prop.--------------------------
%------------------------begin proof-----------------------------
\begin{proof}
Fix $0 < \tau < \pi/4$ and let $\delta > 0$ be arbitrary. Again we write $h = h_{\tau,\delta}$ and $\eta = \eta_{\tau,\delta}$ to avoid carrying indices. We have 
\begin{align}\label{h}
 h(s,t) = \sin(r(s,t))\eta(\theta(s,t)).
\end{align}
First we prove \textit{(i)}: From \eqref{h} we have for $(s,t) \in B_\rho(p)$
\begin{align}
\partial_sh(s,t) = \partial_s(\sin \circ r)(s,t)\eta(\theta(s,t)) + \sin(r(s,t))\eta'(\theta(s,t))\partial_s\theta(s,t).
\end{align}
By Lemma \ref{monotonicity} we have $\partial_s(\sin \circ r)(s,t) \geq 0$ and $\partial_s\theta(s,t) \leq 0$ for all $(s,t)$. Further $\eta \geq 0$ and $\eta'(\theta(s,t)) \leq 0$ whenever $\theta(s,t) \geq \tau/2$. Consequently, $\partial_sh(s,t) \geq 0$ for all $(s,t)$ with $\theta(s,t) \geq \tau/2$. Now let $(s,t)$ be given with $0 < \theta(s,t) < \tau/2$. Consequently, $\eta(\theta(s,t)) > 0$ and $\eta'(\theta(s,t)) > 0$. So we conclude that $\partial_sh(s,t) \geq 0$ if and only if
\begin{align}\label{estimation}
\left(\partial_s(\sin \circ r) + \frac{\eta' \circ \theta}{\eta \circ \theta}(\sin \circ r)\partial_s\theta\right)(s,t) \geq 0.
\end{align}
By \eqref{sin(t)}
 \begin{align*}
 0 =\partial_s\left(\sin(r)\sin(\theta) \right)
  = \sin(\theta)\partial_s\sin(r) + \cos(\theta)\sin(r)\partial_s\theta,
\end{align*}
so
\begin{align*}
\left(\partial_s(\sin \circ r) + \frac{\cos \circ \theta}{\sin \circ \theta}(\sin \circ r)\partial_s\theta\right) (s,t) = 0
\end{align*}
for all $(s,t)$. Since $\partial_s(\sin \circ r) \geq 0$, and $(\sin \circ r)\partial_s\theta\leq 0$, to prove \eqref{estimation} it is enough to prove that
\begin{align*}
\frac{\eta'(\theta)}{\eta(\theta)} \leq \frac{\cos(\theta)}{\sin(\theta)}
\end{align*}
for all $0 < \theta < \tau/2$, or equivalently
\begin{align}
 (\eta'\sin)(\theta) \leq (\eta \cos)(\theta)\label{tada}
\end{align}
for all $0 < \theta < \tau/2$. To prove \eqref{tada} first note that equality holds for all $0 < \theta < \tau(\delta)$ by definition of $\eta$. Further $\eta''(\theta) \leq -\eta(\theta)$ for all $0 < \theta < \tau/2$ and therefore
\begin{align}
 (\eta'\sin)'(\theta) &= \eta''(\theta)\sin(\theta) + \eta'(\theta)\cos(\theta)\\
 &\leq -\eta(\theta)\sin(\theta) + \eta'(\theta)\cos(\theta) = (\eta\cos)'(\theta)
\end{align}
for all $0 < \theta < \tau/2$. \eqref{tada} and therefore \textit{(i)} follows.

Next we prove \textit{(ii)} and \textit{(iii)}: One way to do this is via calculations similar to those above. However, the following argument is less technical (although a bit artificial): Consider the metric 
$$b(r,\theta,\alpha) = dr^2 + \sin^2(r)d\theta^2 + h^2(r,\theta)d\alpha^2 = dr^2 + \sin^2(r)(d\theta^2 + \eta^2(\theta)d\alpha^2).$$
on $B_{\rho}(p)$. From Lemma \ref{curvature} and property $(v)$ of $\eta = \eta_{\tau,\delta}$ it follows that the metric $d\theta^2 + \eta^2(\theta)d\alpha^2$ on $\Sigma_pM^*$ has curvature $\geq 1$ on the set $\{(\theta,\alpha) \mid 0 < \theta < \tau/2\}$. Since $b$ is the spherical suspension of the metric $d\theta^2 + \eta(\theta)d\alpha^2$, it follows from Lemma \ref{cone and suspension} that $b$ has positive curvature on the set $\{(r,\theta,\alpha) \mid 0 < \theta < \tau/2, 0 < r < \rho \}$. It is clear that $b$ admits a polar $\crcl$-action with section $\Sigma \cap B_{\rho}(p)$ and associated Killing vector field $\partial_{\alpha}$ with $||\partial_{\alpha}||_b = h(r,\theta)$. From Lemma \ref{Hessian} it follows that  $h$ is a strictly concave function on the set $\{(r,\theta) \in B_{\rho(p)} \cap \Sigma \mid 0 < \theta < \tau/2\}$. Since integral curves of the coordinate field $\partial_t$ are geodesics, it follows that $\partial^2_{t}h(r,\theta) < 0$ for all $0 < \theta < \tau/2$ for all $0 < r < \rho$. This proves \textit{(iii)}. Observe that 
$$\partial_th(r,\theta) = \partial_t(\sin \circ r)(r,\theta)\eta(\theta) + \sin(r)\eta'(\theta)\partial_t\theta(r,\theta) > 0$$
for all $0 < \theta < \tau/2$ by definition of $\eta$ and Lemma \ref{monotonicity}, and for all $0 < s < \rho$
$$\partial_th(s,0) = \sin(r(s,0))\eta'(0) \partial_t\theta(s,0) = \eta'(0) = 1 - m^{-1},$$
since 
\begin{align*}
 \partial_t\theta(s,t) = &\left(\sqrt{1 - \frac{\cos^2(t)\sin^2(s)}{1 - \cos^2(s)\cos^2(t)}}\right)^{-1}\frac{\sin(t)\sin(s)}{(1 - \cos^2(s)\cos^2(t))^{3/2}}\\
 %= & \left(\sqrt{1 - \frac{\cos^2(t)\sin^2(s)}{\sin^2(r)}}\right)^{-1}\frac{\sin(t)\sin(s)}{\sin^3(r)}\\
 %= & \left(\sqrt{\sin^2(r) - \cos^2(t)\sin^2(s)}\right)^{-1}\frac{\sin(t)\sin(s)}{\sin^2(r)}\\
 = & \left(\sqrt{1 - \cos^2(s)\cos^2(t) - \cos^2(t)\sin^2(s)}\right)^{-1}\frac{\sin(t)\sin(s)}{1 - \cos^2(s)\cos^2(t)}\\
= & \left(\sqrt{1 - \cos^2(t)}\right)^{-1}\frac{\sin(t)\sin(s)}{1 - \cos^2(r)}\\
 = & \frac{\sin(s)}{\sin^2(r)}
\end{align*}
and $r(s,0) = s$. Now \textit{(ii)} follows from \textit{(iii)}.

To see that \textit{(iv)} holds, observe that for fixed $0 < \tau < \pi/4$ the sum
$|\eta_{\tau,\delta}(\theta)| + |\eta_{\tau,\delta}'(\theta)| + |\eta_{\tau,\delta}''(\theta)|$
converges uniformly to $0$ on the interval $[\tau/2,\pi/2]$ as $\delta$ goes to $0$. Therefore, also $|\partial_th_{\tau,\delta}(r,\theta)| + |\partial^2_{t}h_{\tau,\delta}(r,\theta)|$ converges to uniformly to $0$ on any subset  of $B_\rho(p)$ whose points have radius $r$ bounded below by a positive constant and angle bigger than $\tau/2$.
\end{proof}
%------------------------------------------end prop & proof---------------------------------------------
After possibly decreasing $T$, we may additionally assume that $(\rho/2,t) \in B_\rho(p) \cap \Sigma$ for all $t \in [0,T[$. Then the following definition makes sense.
\begin{definition}
Let
\begin{align}
\overline h_{\tau,\delta} : \Sigma &\to \RR,\\
(s,t) &\mapsto h_{\tau,\delta}(\rho/2,t).
\end{align}
\end{definition}
%Obviously, the boundary values of $\overline h_{\tau,\delta}$ on the set $\{(s,t) \in \Sigma \mid \rho/2 \leq s < l\}$ coincide with the boundary values of $h_{\tau,\delta}$ on  the set $\{(s,t) \in \Sigma \mid 0 < s \leq \rho/2\}$.
Similarly to Proposition \ref{resolution of Brhop} we can use this family of functions to resolve the singularities of $U$, while keeping control of the curvature outside of a small neighborhood of the fixed points:
%\begin{lemma}\label{smooth section}
% Let $\omega_1 = dt^2 + \varphi_1^2(s,t)d\phi^2$ and $\omega_2 = dt^2 + \varphi_2^2(s,t)d\phi^2$ be smooth sections of $Sym^2(TU)$. Then for all $0 \leq \lambda, \mu \leq 1$ with $\lambda + \mu = 1$
% $$\omega = dt^2 + (\lambda \varphi_1 + \mu \varphi_2)^2(s,t)d\phi^2$$
% defines a smooth section of $Sym^2(TU)$.
%\end{lemma}
%\begin{proof}
% \textbf{include proof}  As in \cite{petersen98}, section 3.4, it follows that $\omega$ is smooth if and only if the function
%$$\frac{\frac{(\lambda \varphi_1(s,t) + \mu \varphi_2(s,t))^2}{t^2} - 1}{t^2}$$
%is smooth. Since $1 = (\lambda + \mu)^2$ we have 
%$$\frac{\frac{(\lambda \varphi_1(s,t) + \mu \varphi_2(s,t))^2}{t^2} - 1}{t^2} = \lambda^2 \frac{\frac{\varphi_1^2(s,t)}{t^2} - 1}{t^2} + \mu^2 \frac{\frac{\varphi_2^2(s,t)}{t^2} - 1}{t^2} + 2\lambda \mu \frac{\frac{(\varphi_1\varphi_2)(s,t)}{t^2} - 1}{t^2}$$
%Since $\omega_i$ is smooth for $i = 1,2$, the first two summands on the right hand side are smooth functions on $U$. Therefore, setting $\beta = \varphi_1\varphi_2$, we need to check that
%$$\xi(s,t) = \frac{\frac{\beta(s,t)}{t^2} - 1}{t^2}$$
%is smooth. Again, since $\omega_i$ is smooth for $i = 1,2$ for all $s$: $\varphi_i(s,0) = 0$, $\partial_t\varphi_i(s,0) = 1$ and $\partial^{(2n)}_t\varphi(s,0) = 0$ for all $n \geq 1$. It follows $\beta(s,0) = 0$, $\partial_t\beta(s,0) = 0$, $\partial^2_t\beta(s,0) = 2$ and $\partial_t^{(2n + 1)}\beta(s,0) = 0$ for $n \geq 1$. 
%\end{proof}
\begin{proposition}\label{resolution of U}
 $\overline h_{\tau,\delta} : \Sigma \to \RR$ is smooth for all $0 < \tau < \pi/4$ and $\delta > 0$ and
 \begin{align}
\overline g_{\tau,\delta}(s,t,\phi) := f^2(s,t)ds^2 + dt^2 + (\varphi + \overline h_{\tau,\delta})^2(s,t)d\phi^2
\end{align}
 defines a smooth metric on $U$. Moreover, there exists $0 < \tau_0 < \pi/4$ such that for fixed $0 < \tau < \tau_0$ and $\epsilon > 0$ on the set 
 $$S := \{(s,t, \phi) | \rho/4 \leq s \leq l - \rho/4\}$$
 the metric $\overline g_{\tau,\delta}$ has positive curvature if $c > 0$, and curvature bounded below by $mc - \epsilon$ if $c \leq 0$, for all $\delta > 0$ sufficiently small (depending on $\tau$). Independent of $c$, $\overline g_{\tau,\delta}$ has positive curvature on the set $S \cap (B_\rho(p) \cup B_\rho(q))$, again for all sufficiently small $\delta > 0$ (depending on $\tau$).

%Moreover, $(U,\overline g_{\tau,\delta})\to (U,g)$ in Gromov-Hausdorff sense for $\delta \to 0$ and fixed $0 < \tau <\pi/4$.
\end{proposition}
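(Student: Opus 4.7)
The plan is to first verify smoothness of $\overline h_{\tau,\delta}$ and of the metric $\overline g_{\tau,\delta}$, and then derive the curvature lower bound via Corollary \ref{curvature and Hessian} combined with the properties of $\eta_{\tau,\delta}$ from Lemma \ref{eta} and the derivative estimates of Lemma \ref{derivatives of h}.

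Smoothness of $\overline h_{\tau,\delta}$ is immediate: it is constant in $s$, and $t \mapsto h_{\tau,\delta}(\rho/2,t)$ is smooth on $[0,T[$. Smoothness of $\overline g_{\tau,\delta}$ across the fixed axis $\gamma = \{t=0\}$ reduces to the correct vanishing behavior of $\varphi + \overline h_{\tau,\delta}$ at $t=0$. Using property $(i)$ of Lemma \ref{eta} together with the spherical relation $\sin(t) = \sin(\rho/2)\sin(\theta)$, one computes the exact identity $\overline h_{\tau,\delta}(s,t) = (1 - m^{-1})\sin(t)$ for $t$ small enough. Since $m\varphi$ extends smoothly across $\gamma$ as an odd function of $t$ with $\partial_t(m\varphi)(s,0) = 1$ by the orbifold cover formula \eqref{hat g}, the sum $\varphi + \overline h_{\tau,\delta}$ vanishes on $\gamma$ with $t$-derivative $m^{-1} + (1 - m^{-1}) = 1$ and with vanishing even $t$-derivatives at $t=0$, which is exactly the condition needed for the full $3$-dimensional metric to extend smoothly across $\gamma$.

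For the curvature bound, the key point is that the $\crcl$-action is polar with respect to both $g$ and $\overline g_{\tau,\delta}$ with the common section $(\Sigma, f^2 ds^2 + dt^2)$; only the length of the Killing field along $\Sigma$ has been replaced by $\varphi + \overline h_{\tau,\delta}$. Corollary \ref{curvature and Hessian} then splits the curvature operator at a regular point into two blocks: a lower block equal to the Gaussian curvature of $(\Sigma, g|_\Sigma)$, and an upper block equal to $-(\varphi + \overline h_{\tau,\delta})^{-1}\nabla^2_\Sigma(\varphi + \overline h_{\tau,\delta})$ by Lemma \ref{Hessian}. The lower block is unaffected by the modification: it equals $1$ on $B_\rho(p)\cap \Sigma$ and $B_\rho(q)\cap \Sigma$ by Lemma \ref{curvature section}, and is $\geq c$ elsewhere because $\Sigma$ is totally geodesic in $M^*$ and $\curv M^* \geq c$. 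It remains to bound the upper block from below.

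I split the upper block into a $\varphi$-part and an $\overline h_{\tau,\delta}$-part. From $\sec_g(X,v) \geq c$ and Lemma \ref{Hessian} I obtain $-\nabla^2 \varphi(v,v) \geq c\varphi$; dividing by $\varphi + \overline h_{\tau,\delta} > 0$ yields a bound of $c\varphi/(\varphi + \overline h_{\tau,\delta})$, which lies in $[c,0]$ when $c \leq 0$ (since $\overline h_{\tau,\delta} \geq 0$, so $\varphi/(\varphi+\overline h_{\tau,\delta}) \leq 1$) and is nonnegative when $c > 0$. For the Hessian of $\overline h_{\tau,\delta}$, a direct computation in $(s,t)$-coordinates gives $\nabla^2 \overline h_{\tau,\delta}(v,v) = v_t^2 \overline h_{\tau,\delta}''(t) + v_s^2(\partial_t \log f)\overline h_{\tau,\delta}'(t)$ for any unit $v = v_s \hat e_s + v_t \hat e_t$. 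On the thin strip $\theta(\rho/2,t)\leq \tau/2$, property $(v)$ of Lemma \ref{eta} combined with the chain rule yields $-\overline h_{\tau,\delta}''(t) \geq (1 - o_\delta(1))\overline h_{\tau,\delta}(t)$; together with the limiting ratio $\varphi/(\varphi+\overline h_{\tau,\delta}) \to 1/m$ along $\gamma$, this produces a strictly positive contribution in $S \cap (B_\rho(p)\cup B_\rho(q))$, recovering positive curvature there exactly as in Proposition \ref{resolution of dotgamma}, and gives the bound $mc - \epsilon$ in the worst case in the middle region. Outside the strip, property $(vi)$ of Lemma \ref{eta} and Lemma \ref{derivatives of h}$(iv)$ make $|\overline h_{\tau,\delta}|_{C^2}$ of order $\delta$, so the modification perturbs the curvature bound by at most $\epsilon$. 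The main technical obstacle is controlling the mixed Hessian term $(\partial_t \log f)\overline h_{\tau,\delta}'(t)$ in the middle region $S\setminus(B_\rho(p)\cup B_\rho(q))$ where $f$ is not the explicit $\cos(t)$; this is handled by choosing $\tau_0$ small enough that the support of $\overline h_{\tau,\delta}$ lies in a neighborhood of $\gamma$ on which the orbifold cover formula \eqref{hat g} forces $\partial_t \log f = O(t)$, thereby compensating the blow-up of $1/(\varphi + \overline h_{\tau,\delta})$ near $\gamma$.
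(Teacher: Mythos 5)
Your outline follows the paper's own strategy: verify smoothness by noting that $\varphi + \overline h_{\tau,\delta}$ vanishes along $\gamma$ with $t$-derivative $1$ and vanishing even $t$-derivatives, then control curvature via Corollary \ref{curvature and Hessian}, splitting the upper block into a $\varphi$-part and an $\overline h_{\tau,\delta}$-part. The smoothness step is essentially right (aside from the minor slip of writing $\sin(t) = \sin(\rho/2)\sin(\theta)$; what one actually uses is $\sin(t) = \sin(r(\rho/2,t))\sin(\theta(\rho/2,t))$, i.e.\ formula \eqref{sin(t)} at $s = \rho/2$, to identify $\overline h_{\tau,\delta}(s,t) = (1-m^{-1})\sin(t)$ for small $t$; the paper then still has to check the $3$-dimensional extension carefully in nonsingular Cartesian coordinates).

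The genuine gap is in the curvature bound on the middle region $S \setminus (B_\rho(p) \cup B_\rho(q))$. You assert that the mixed Hessian term $(\partial_t\log f)\overline h'_{\tau,\delta}/(\varphi + \overline h_{\tau,\delta})$ is handled because ``$\partial_t \log f = O(t)$ compensates the blow-up of $1/(\varphi + \overline h_{\tau,\delta})$,'' and that this ``gives the bound $mc - \epsilon$.'' But $O(t)\cdot O(1/t)$ only gives an $O(1)$ quantity, which is far from the quantitative lower bound $(m-1)c - \epsilon$ needed for this term (which, added to the bound $c$ from the $\varphi$-part, yields $mc - \epsilon$). The paper obtains $(m-1)c - \epsilon$ from two facts you do not invoke: first, the sharp limit
\[
\lim_{t\to 0}\, f^{-1}(s,t)\,\frac{\partial_t f}{\varphi}(s,t) = m\,\partial_t^2 f(s,0) = -m\,\sec_\Sigma(s,0) \leq -mc,
\]
in which the crucial factor $m$ comes from $\partial_t\varphi(s,0) = m^{-1}$ (i.e.\ from the orbifold cover \eqref{hat g}), combined with the monotone estimate $\varphi + \overline h_{\tau,\delta} \geq \varphi$ when $\partial_t f > 0$; and second, the bound $0 \leq \partial_t \overline h_{\tau,\delta} \leq 1 - m^{-1}$ from Lemma \ref{derivatives of h}\,(ii), which converts $mc$ into $(m-1)c$. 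Neither ingredient appears in your proposal, so the asserted bound $mc - \epsilon$ is not actually derived. Also, the strengthened concavity $-\overline h_{\tau,\delta}'' \geq (1 - o_\delta(1))\overline h_{\tau,\delta}$ on the inner strip is more than is needed; the paper uses only the sign $\partial_t^2 \overline h_{\tau,\delta} \leq 0$ there, together with Lemma \ref{derivatives of h}\,(iv) on the outer strip, to dispose of the $\partial_t^2\overline h_{\tau,\delta}$-term.
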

\begin{proof}
It is clear that $\overline h_{\tau,\delta} : \Sigma \to \RR$ is smooth for all $0 < \tau < \pi/4$ and $\delta > 0$. Since the metric $\hat g$ on $U$ is smooth (see \eqref{hat g}) it follows that $\overline g_{\tau,\delta}$ is smooth if and only if
 \begin{align}\label{label}
dt^2 + (\varphi + \overline h_{\tau,\delta})^2d\phi^2
 \end{align}
 is a smooth section of $Sym^2(TU)$. This is true for $t > 0$, so we have to show smoothness at $t = 0$, that is at points of $\gamma$. From the definition of $\eta_{\tau,\delta}$ and \eqref{sin(t)} we see that for all sufficiently small $t > 0$ and for all $s \in \ ]0,l[$ we have
 \begin{align}
 \overline h_{\tau,\delta}(s,t) = h(\rho / 2,t) &= \sin(r(\rho / 2,t))\eta_{\tau,\delta}(\theta(\rho / 2,t))\\
 &= (1 - m^{-1})\sin(r(\rho / 2,t))\sin(\theta(\rho / 2,t)) = (1 - m^{-1})\sin(t).\nonumber
 \end{align}
 Set 
 $$\xi(s,t) = (\varphi(s,t) + (1 - m^{-1})\sin(t))^2.$$
 Then we need to prove that 
\begin{align}
\omega(s,t,\phi) = (dt^2 + \xi(s,t)d\phi^2)(s,t,\phi)
\end{align}
is smooth. For the following arguments compare \cite{petersen98}, sections 1.3.4 and 1.4.1: Let $x = t \cos \phi$ and $y = t \sin \phi$. Then the coordinates $(s,x,y)$ are nonsingular and we have, for example,
$$\omega_{xx}(s,x,y) := \omega_{(s,x,y)}(\partial_x,\partial_x) = 1 + \frac{\frac{\xi(s,t)}{t^2} - 1}{t^2}y^2,$$
for $t = \sqrt{x^2 + y^2}$. Therefore, we have to check that the function 
\begin{align*}%\label{this!}
 (s,x,y) \mapsto \frac{\frac{\xi(s,t)}{t^2} - 1}{t^2} =: \zeta(s,t)
\end{align*}
is smooth. Again from the smoothness of $\hat g$, it follows that $dt^2 + m^2\varphi^2(s,t)d\phi^2$ is smooth and therefore the following equations hold: 
$$\varphi(s,0) = 0,~\partial_t\varphi(s,t) = m^{-1},~\partial^{(even)}_t\varphi(s,0) = 0.$$
Consequently,
\begin{align*}
 \xi(s,0) = 0, \partial^{(odd)}_t\xi(s,0) = 0 \text{ and } \partial^2_t\xi(s,0) = 2.
\end{align*}
Since $\xi$ is a smooth function on $\Sigma$, considering its Taylor approximation with respect to $t$, it follows that $\zeta$ is smooth, considered as a function on $\Sigma$, and
\begin{align*}
\partial_t^{(odd)}\zeta(s,0) = 0.
\end{align*}
Standard calculus methods show that these conditions are sufficient (and necessary) for $(s,x,y) \mapsto \zeta(s,\sqrt{x^2 + y^2})$ to be smooth at $(s,0,0)$. Therefore, $\omega_{xx}$ is smooth. The smoothness of the remaining coefficients of $\omega$ follows analogously.
%\begin{align*}
% \mathbb R^2 & \to \RR\\
% (x,y) &\mapsto \zeta(s,\sqrt{x^2 + y^2})
%\end{align*}
%to be smooth in a neighborhood of $0$. Therefore
%$$(s,x,y) \mapsto \zeta(s,t)$$
%is smooth when restricted to a submanifold of $U$ given by $\{s = const\}$, in particular all derivatives of $\zeta$ with respect to $x$ and $y$ exists everywhere on $U$. Considering the Taylor approximation of $\xi$ of order $5$, we obtain from \ref{xi}
%$$\zeta(s,x,y) = \frac{\partial^4_t\xi(s,0)}{4!} + \int_0^t\frac{\partial^6_t\xi(s,\tau)}{6!}\frac{(t - \tau)^5}{\tau^4}d\tau.$$
%Therefore
%$$\partial_s \zeta = \frac{\partial^4_t\partial_s\xi(s,0)}{4!} + \int_0^t\frac{\partial^6_t\partial_s\xi(s,\tau)}{6!}\frac{(t - \tau)^5}{\tau^4}d\tau.$$
%Thus $\partial_s\zeta$ exists and is continuous everywhere on $U$. To show that $\zeta$ is $\mathcal C^1$ it remains to show that $\partial_x \zeta$ and $\partial_y \zeta$ are continuous: Since $\partial_x \zeta$ is smooth restricted to $\{s = const\}$ it follows that \ref{zeta} holds also for $\partial_x \zeta$.
\\ \\
Now we calculate the curvature operator of $\overline g_{\tau,\delta}$. We write $\overline g= \overline g_{\tau,\delta}$ and $\overline h = \overline h_{\tau,\delta}$ for convenience. First, we calculate the Hessian $\nabla^2\overline h$ of $\overline h : (\Sigma,g) \to \RR$: Since the integral curves of the coordinate field $\partial_t$ are geodesics we have 
\begin{align*}
 \nabla^2\overline h(\partial_t,\partial_t) = \partial^2_{t}\overline h.
\end{align*}
Since $\overline h$ is constant in $s$-direction and $g_{\Sigma} = f^2ds^2 + dt^2$, it is clear that $\nabla \overline h = \partial_t\overline h\partial_t$. Hence
\begin{align*}
 \nabla^2\overline h(\partial_t,\partial_s) = \nabla^2\overline h(\partial_s,\partial_t) = g(\partial_s,\nabla_{\partial_t}(\partial_t\overline h\partial_t)) =g( \partial_s,\partial^2_{tt}\overline h\partial_t) = 0
\end{align*}
and
\begin{align*}
 \nabla^2\overline h(\partial_s,\partial_s) &= g( \partial_s,\nabla_{\partial_s}(\partial_t\overline h)\partial_t )\\
 &= \partial_t\overline hg( \partial_s,\nabla_{\partial_s}\partial_t)\\
 &= \partial_t\overline hg( \partial_s,\nabla_{\partial_t}\partial_s)\\
 &= \frac 1 2 \partial_t\overline h \partial_tg( \partial_s,\partial_s)  = f \partial_t\overline h \partial_t f.
\end{align*}
Altogether with respect to the basis $\{\partial_t,\partial_s\}$ of $T_{(s,t)}\Sigma$ with $t > 0$ we have
\begin{align}
            \nabla^2\overline h = 
            \begin{pmatrix}
             \partial^2_{t}\overline h & 0\\
             0 & f \partial_t\overline h \partial_t f
            \end{pmatrix}.\label{hessian overline h}
\end{align}

Let $(s,t,\phi) \in U$ with $t > 0$ and denote the ordered basis 
$$\{(\varphi + \overline h)^{-1}(s,t)\partial \phi, \partial_t, f^{-1}(s,t)\partial_s\}$$
of $T_{(s,t,\phi)}U$ by $\{b_1,b_2,b_3\}$. Then this basis is orthonormal with respect to $\overline g$ and $b_2$ and $b_3$ are tangent to $\Sigma$. By Corollary \ref{curvature and Hessian} the curvature operator $\overline{\mathcal R}$ of $\overline g$ at $(s,t,\phi)$ with respect to this basis is given by
\begin{align*} 
&(\langle \overline{\mathcal R}(b_i \wedge b_j),b_k \wedge b_l \rangle)_{(1 \leq i<j \leq 3,\ 1 \leq k<l \leq 3)}\nonumber \\
=& \begin{pmatrix}
   -(\varphi + \overline h)^{-1}(s,t)\nabla^2(\varphi + \overline h)(b_j,b_l)_{(j,l \in \{2,3\})} & 0 \\
   0  & \langle \overline R_{\Sigma}(b_2 \wedge b_3),b_2 \wedge b_3 \rangle\nonumber
   \end{pmatrix}.
\end{align*}
Since $\overline g_{\Sigma} = g_{\Sigma}$, it follows that 
$$\langle \overline R_{\Sigma}(b_2 \wedge b_3),b_2 \wedge b_3 \rangle \geq c$$ and 
$$\langle \overline R_{\Sigma}(b_2 \wedge b_3),b_2 \wedge b_3 \rangle = 1$$ given that $(s,t,\phi) \in (B_\rho(p) \cup B_\rho(q))$. Note that for the same reason $\nabla^2 = \nabla^2_g = \nabla^2_{\overline g}$ on $\Sigma$.

Let us first assume that $c > 0$. We fix $0 < \tau < \tau_0 < \pi/4$, where $\tau_0$ is specified later. As usual we write $\nabla^2 \overline h \geq d$ if $\nabla^2 \overline h (v,v) \geq d$ for every unit vector $v$. Then we have to show that there exists a constant $\tilde c > 0$ such that for all $\delta > 0$ sufficiently small we have 
$$-(\varphi + \overline h)^{-1}(s,t)\nabla^2(\varphi + \overline h) \geq \tilde c$$
for all $(s,t) \in W := [\rho/4,l - \rho/4] \times ]0,T[$. We have
\begin{align*}
&-(\varphi + \overline h)^{-1}(s,t)\nabla^2(\varphi + \overline h)\\
 =\ &\frac{\varphi(s,t)}{(\varphi + \overline h)(s,t)}\left(- \varphi^{-1}(s,t)\nabla^2\varphi  - \varphi^{-1}(s,t)\nabla^2\overline h \right)\\
\geq \ &\frac{\varphi(s,t)}{(\varphi + \overline h)(s,t)}\left(c - \varphi^{-1}(s,t)\nabla^2\overline h \right).
\end{align*}
 By Lemma \ref{derivatives of h} there exists a constant $c_1 > 0$, independent of $\delta$, such that $0 \leq \overline h(s,t) \leq c_1t$ for all $(s,t) \in \Sigma$. Therefore, there exists a constant $c_2 > 0$ such that
$$\frac{\varphi(s,t)}{(\varphi + \overline h)(s,t)} \geq \frac{\varphi(s,t)}{\varphi(s,t) + c_1t} \geq c_2$$
for all $(s,t) \in W$ (here it is important that $\rho/4 \leq s \leq l - \rho/4$ for $(s,t) \in W$, since $\varphi(s,t) \to 0$ for $s \to 0$ or $s \to l$ and any fixed $t$). Hence it suffices to show that 
$$ \varphi^{-1}(s,t)\nabla^2\overline h \leq c/2$$
for all $(s,t) \in W$. By \eqref{hessian overline h}, and since $\partial_t$ and $f^{-1}\partial_s$ are orthonormal, it is enough to prove that for all sufficiently small $\delta > 0$, for all such $(s,t) \in W$ we have
\begin{align}\label{url1}
(\varphi^{-1}\partial^2_{t}\overline h)(s,t) \leq c/2
\end{align}
and
\begin{align}\label{url2}
(\varphi^{-1}f^{-1} \partial_t\overline h \partial_t f)(s,t) \leq c/2.
\end{align}
First note that there exists a unique $0 < t_0 < T$ with $\theta(\rho/2,t_0) = \tau/2$. 

To prove \eqref{url1}, observe from Lemma \ref{derivatives of h} that $\partial_t^2\overline h (s,t) \leq 0$ for $(s,t) \in \ ]0,l[ \times [0,t_0]$, so \eqref{url1} holds for all $(s,t) \in W$ with $t \leq t_0$. Again from Lemma \ref{derivatives of h}, it follows that $\partial^2_t \overline h(s,t)$ converges uniformly to $0$ for $\delta \to 0$ on the set $]0,l[ \times [t_0,T[$. Since $\varphi^{-1}$ is uniformly bounded on $]0,l[ \times [t_0,T[$, it follows that \eqref{url1} holds on $[\rho/4,l-\rho/4] \times [t_0,T[$ for all sufficiently small $\delta > 0$. So it holds on $W$ for all $\delta > 0$ sufficiently small.

To prove \eqref{url2} we first claim that there exists a $0 < T_0 < T$ such that 
 \begin{align}
  f^{-1}(s,t)\frac{\partial_tf}{\varphi}(s,t) < 0
 \end{align}
for all $(s,t) \in [\rho/4,l - \rho/4] \times [0, T_0]$:  Since $f(s,0) = 1$, $\partial_t f (s,0) = 0$ and $\partial_t \varphi(s,0) = m^{-1}$ for all $s \in ]0,l[$, we see from Lemma \ref{curvature}:
 \begin{align}
  \lim_{t \to 0} f^{-1}(s,t)\frac{\partial_tf}{\varphi}(s,t) = m\partial^2_{t}f(s,0) = -m\sec_{\Sigma}(s,0)  \leq -mc < 0.
 \end{align}
From continuity and since $[\rho/4,l - \rho/4]$ is compact, the claim follows.

Now we choose $0 < \tau_0$ small so that we may assume that $\overline h$ is supported within $]0,l[ \times [0,T_0]$. Then $0 < t_0 < T_0$. Once more by Lemma \ref{derivatives of h}, it follows that $\partial_t\overline h (s,t) > 0$ for $(s,t) \in ]0,l[ \times [0,t_0[$, so
$$(\varphi^{-1}f^{-1} \partial_t\overline h \partial_t f)(s,t) < 0$$
for $(s,t) \in [\rho/4,l - \rho/4] \times [0,t_0]$, and $\partial_t \overline h$ converges uniformly to $0$ for $\delta \to 0$ on the set $]0,l[ \times [t_0,T_0]$, so 
$$(\varphi^{-1}f^{-1} \partial_t\overline h \partial_t f)(s,t) \leq c/2$$
for all $(s,t) \in [\rho/4,l - \rho/4] \times [t_0,T_0]$ for all sufficiently small $\delta > 0$, since $\varphi^{-1}f \partial_t f$ is bounded on $[\rho/4,l-\rho/4] \times [t_0,T_0]$. This finishes the proof in the case $c > 0$.
\\ \

Now we assume that $c \leq 0$.  Again we fix $0 < \tau < \tau_0 < \pi/4$. We have to show that 
$$-(\varphi + \overline h)^{-1}(s,t)\nabla^2(\varphi + \overline h) \geq mc - \epsilon$$
for all sufficiently small $\delta > 0$. We have by Lemma \ref{Hessian}, together with $c \leq 0$,
\begin{align*}
&-(\varphi + \overline h)^{-1}\nabla^2(\varphi + \overline h)\\
%=\ &\frac{\varphi}{\varphi + \overline h}\left(- \varphi^{-1}\nabla^2\varphi(v,v) - \varphi^{-1}\nabla^2\overline h(v,v) \right)\\
\geq \ & c - (\varphi + \overline h)^{-1}\nabla^2\overline h.
\end{align*}
Therefore, it suffices to show that 
\begin{align}\label{hadoken}
             -(\varphi + \overline h)^{-1}\partial^2_{t}\overline h \geq (m - 1)c - \epsilon
\end{align}
and
\begin{align}\label{shuryoken}
 -f^{-1}  \partial_t f (\varphi + \overline h)^{-1} \partial_t\overline h \geq (m - 1)c - \epsilon
\end{align}
for all $\delta > 0$ sufficiently small.

First it follows as in the case of positive curvature that in fact
$$ -(\varphi + \overline h)^{-1}\partial^2_{t}\overline h(s,t) > -\epsilon$$ for all $\delta > 0$ sufficiently small, for all $(s,t) \in [\rho/4,l - \rho/4] \times [0,T]$. So \eqref{hadoken} holds.

 To prove \eqref{shuryoken} we first claim that  there exists $0 < T_0 < T$  such that
 \begin{align}\label{dtsigmavarphi}
  -f^{-1}(s,t) \frac{\partial_tf}{(\varphi + \overline h)}(s,t) > mc - \frac{\epsilon}{(1 - m^{-1})}\end{align}
for all $(s,t) \in [\rho/4,l - \rho/4] \times [0, T_0]$: At points satisfying $\partial_t f \leq 0$ this is immediate, since $c \leq 0$. If $\partial_t f > 0$ we estimate 
$$-f^{-1}(s,t)\frac{\partial_tf}{\varphi + \overline h}(s,t) \geq -f^{-1}(s,t)\frac{\partial_tf}{\varphi}(s,t)$$
and the claim follows as in the case of positive curvature using
$$\lim_{t \to 0} -f^{-1}(s,t)\frac{\partial_tf}{\varphi}(s,t) = -m\partial^2_tf(s,0) \geq mc.$$ 

Again we choose $\tau_0$ small so that $\overline h$ is supported in $]0,l[ \times [0,T_0]$. By Lemma \ref{derivatives of h} we have $0 \leq \partial_t\overline h(s,t) \leq (1 - m^{-1}) < 1$ for $(s,t) \in  ]0,l[ \times [0,t_0]$ (with $t_0$ defined as in the case of positive curvature). Therefore,
$$-f(s,t)\frac{\partial_t f}{\varphi + \overline h}(s,t)\partial_t\overline h(s,t) > \left(mc - \frac{\epsilon}{1 - m^{-1}}\right)(1 - m^{-1}) = (m - 1)c - \epsilon$$
for $(s,t) \in [\rho/4,l - \rho/4] \times [0,t_0]$. Since $\partial_t \overline h$ converges uniformly to $0$ on the set $[\rho/4,l - \rho/4] \times [t_0,T_0]$, the estimate holds for all $(s,t) \in [\rho/4,l - \rho/4] \times [0,T[$ for all $\delta > 0$ sufficiently small. 

It remains to show that $\overline g$ has positive curvature on the set $\{(s,t,\phi) \mid \rho/4 \leq s \leq l - \rho/4\} \cap (B_\rho(p) \cup B_\rho(q))$ for all sufficiently small $\delta$. This is of course clear from the positively curved case since $\Sigma \cap (B_\rho(p) \cup B_\rho(q))$ has constant curvature $1$.
\end{proof}
%=========================================================
\subsection{\texorpdfstring{Resolution of $E^*$}{Resolution of E*}}\label{resolving E}
In this section we glue the resolutions of $B_\rho(p)$, $B_\rho(q)$ and $U$ together. This is achieved by first fitting the functions $\varphi + h, \varphi + \tilde h$ and $\varphi + \overline h$ together (in a non-smooth way). For that we simply set
\begin{align}
 \psi_{\tau,\delta}(s,t) := \begin{cases}
                \varphi(s,t) + h_{\tau,\delta}(s,t) \text{ for } 0 < s \leq \frac \rho 2,\\
                \varphi(s,t) + \overline h_{\tau,\delta}(s,t)\text{ for } \frac \rho 2 \leq s \leq l - \frac \rho 2,\\
                \varphi(s,t) + \tilde h_{\tau,\delta}(s,t) \text{ for } l - \frac \rho 2 \leq s < l.
               \end{cases}
\end{align}
Recall that $(\tilde r, \tilde \theta)$ denote coordinates of $B_\rho(q) \cap \Sigma$, with radial direction $\tilde r$ and $\tilde \theta = 0$ corresponding $d/ds_{|s = 0}\gamma(l - s)$. Since $B_\rho(q) \cap \Sigma$ has constant curvature $1$ as well as $B_\rho(p) \cap \Sigma$, it follows $\theta(s,t) = \tilde \theta(l - s,t)$ and $r(s,t) = \tilde r(l - s,t)$ for all $(s,t) \in B_\rho(p) \cap \Sigma$. Thus 
\begin{align*}\overline h_{\tau,\delta}(\rho / 2,t) &= \overline h_{\tau,\delta}(l - \rho / 2,t)\\ &= h(\rho / 2,t)\\ &= \sin(r(\rho / 2,t))\eta(\theta(\rho / 2,t))\\
&= \sin(\tilde r(l - \rho / 2,t))\eta(\tilde \theta(l -\rho / 2,t))\\ &= \tilde h ( l - \rho / 2,t).
\end{align*}
Hence $\psi_{\tau,\delta} : \Sigma \to \RR$ is continuous for all $\tau, \delta$.
\begin{lemma}\label{psi}
For all sufficiently small $0 < \tau < \pi/4$ and for all sufficiently small $\delta > 0$ (depending on $\tau$) the following is true:
 \begin{itemize}
  \item[(i)] There exists an open neighborhood $W$ of $\gamma$ in $\Sigma$ such that $\psi_{\tau,\delta}(s,t) = \varphi(s,t) + (1 - m^{-1})\sin(t) = (\varphi + \overline h_{\tau,\delta})(s,t)$ for all $(s,t) \in W$. 
  \item[(ii)] The set of points at which $\psi_{\tau,\delta}$ is not smooth is contained in a compact set $K \subset (B_{\rho}(p) \cup B_{\rho}(q)) \cap \Sigma$ disjoint from $\gamma$. 
  \item[(iii)] ${\psi_{\tau,\delta}}_{|B_{\rho}(p) \cap \Sigma}$ and ${\psi_{\tau,\delta}}_{|B_{\rho}(q) \cap \Sigma}$ are strictly concave.
 \end{itemize}
 \end{lemma}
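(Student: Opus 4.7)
The plan is to handle the three parts in order, working locally in $B_\rho(p)\cap\Sigma$ (the case of $B_\rho(q)\cap\Sigma$ being symmetric) and exploiting $h_{\tau,\delta}(s,t) = \sin(r(s,t))\eta_{\tau,\delta}(\theta(s,t))$ together with properties (i) and (iv) of $\eta_{\tau,\delta}$ from Lemma \ref{eta}.

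For (i), $\gamma \cap B_\rho(p) = \{(s,0): 0 < s < \rho\}$ corresponds to $\theta = 0$; by continuity of $\theta$ on $\Sigma\setminus\{p\}$ and compactness along sub-arcs $[s_1,s_2]\subset\, ]0,\rho[$, there exist thickness parameters $t(s_1,s_2) > 0$ such that $\theta(s,t)<\tau(\delta)$ for $s\in[s_1,s_2]$, $t\in[0,t(s_1,s_2))$. Choosing in addition $t^* > 0$ so small that $\theta(\rho/2,t)<\tau(\delta)$ for $t<t^*$ and performing the symmetric construction near $q$, one obtains an open tubular neighborhood $W$ of $\gamma$. On this neighborhood property (i) of $\eta_{\tau,\delta}$ gives $\eta_{\tau,\delta}(\theta) = (1-m^{-1})\sin(\theta)$, and the spherical identity \eqref{sin(t)} yields
$$h_{\tau,\delta}(s,t) = \sin(r)(1-m^{-1})\sin(\theta) = (1-m^{-1})\sin(t)$$
on $W \cap B_\rho(p)$; the same holds for $\tilde h_{\tau,\delta}$ on $W \cap B_\rho(q)$ and, by its definition, for $\overline h_{\tau,\delta}(s,t) = h_{\tau,\delta}(\rho/2,t)$ on all of $W$. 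Hence the three defining expressions for $\psi_{\tau,\delta}$ all reduce to $\varphi(s,t)+(1-m^{-1})\sin(t)$ on $W$.

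For (ii), each of the three pieces composing $\psi_{\tau,\delta}$ is smooth on its own domain, so failure of smoothness can occur only at the seams $\{s=\rho/2\}$ and $\{s=l-\rho/2\}$. Part (i) already rules this out inside $W$. Moreover, wherever $\theta(s,t) \ge \tau$ (and likewise $\theta(\rho/2,t)\ge\tau$), property (iv) of $\eta_{\tau,\delta}$ forces $h_{\tau,\delta}$ and $\overline h_{\tau,\delta}$ to vanish, so the seam disappears. Consequently the non-smooth set lies inside the compact strip $\{(\rho/2,t):\, t^*\le t,\ \theta(\rho/2,t)\le\tau\}$ together with its counterpart near $q$; both sit in $(B_\rho(p)\cup B_\rho(q))\cap\Sigma$ and are disjoint from $\gamma$.

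For (iii), I will first observe that $\partial_s h_{\tau,\delta}\ge 0$ from Lemma \ref{derivatives of h}(i) gives $h_{\tau,\delta}(s,t)\le h_{\tau,\delta}(\rho/2,t) = \overline h_{\tau,\delta}(s,t)$ for $s \le \rho/2$ and the reverse inequality for $s\ge\rho/2$, whence
$$\psi_{\tau,\delta} = \min(\varphi+h_{\tau,\delta},\ \varphi+\overline h_{\tau,\delta}) \quad\text{on } B_\rho(p)\cap\Sigma.$$
By Lemma \ref{Hessian} applied to the polar positively curved manifold $(B_\rho(p),g_{\tau,\delta})$ (Proposition \ref{resolution of Brhop}), $\varphi+h_{\tau,\delta}$ is smooth strictly concave on all of $B_\rho(p)\cap\Sigma$; applied to $(U,\overline g_{\tau,\delta})$ (Proposition \ref{resolution of U}), $\varphi+\overline h_{\tau,\delta}$ is smooth strictly concave on $B_\rho(p)\cap S\cap\Sigma$ where $S=\{\rho/4\le s\le l-\rho/4\}$. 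Strict concavity of $\psi_{\tau,\delta}$ is then verified locally: for $s<\rho/4$ one has $\psi_{\tau,\delta}\equiv\varphi+h_{\tau,\delta}$ in a neighborhood, which is strictly concave; for $s\ge\rho/4$ both arguments of the $\min$ are smooth and strictly concave in a neighborhood, so the lemma from section \ref{Dfacf} on minima of strictly concave functions applies. The analogous argument on $B_\rho(q)\cap\Sigma$ completes (iii). The main obstacle is exactly the domain mismatch here: Proposition \ref{resolution of U} gives strict concavity of $\varphi+\overline h_{\tau,\delta}$ only on $S$, so one must verify separately that on $\{s<\rho/4\}$ the minimum coincides locally with $\varphi+h_{\tau,\delta}$, rendering the uncontrolled behavior of $\overline h_{\tau,\delta}$ near the fixed points irrelevant.
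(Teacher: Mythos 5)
Your proof is correct and follows essentially the same route as the paper: the same computation via property (i) of $\eta_{\tau,\delta}$ and identity \eqref{sin(t)} for part (i), the same support-bound argument for part (ii), and for part (iii) the same representation of ${\psi_{\tau,\delta}}_{|B_\rho(p)\cap\Sigma}$ as $\min\{\varphi + h_{\tau,\delta},\ \varphi + \overline h_{\tau,\delta}\}$ combined with the lemma on minima of strictly concave functions. You additionally observe, correctly, that Proposition~\ref{resolution of U} only gives strict concavity of $\varphi + \overline h_{\tau,\delta}$ on the strip $S = \{\rho/4 \le s \le l - \rho/4\}$, and you patch this by noting that for $s < \rho/2$ the function $\psi_{\tau,\delta}$ locally reduces to $\varphi + h_{\tau,\delta}$ alone, so the Greene--Wu local notion of strict concavity is verified everywhere; the paper asserts strict concavity of both pieces on all of $B_\rho(p)\cap\Sigma$ without making this distinction, so your treatment tightens that step.
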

\begin{proof}
By definition of $h_{\tau,\delta}$ and $\eta_{\tau,\delta}$, together with \eqref{sin(t)}, we have
$$h_{\tau,\delta}(s,t) = \sin(r(s,t))\eta_{\tau,\delta}(\theta(s,t)) = (1 - m^{-1})\sin(r(s,t))\sin(\theta(s,t)) = (1 - m^{-1})\sin(t)$$
for all $(s,t) \in B_{\rho}(p) \cap \Sigma$ with $\theta(s,t) < \tau(\delta)$. Analogously it follows that $\tilde h_{\tau,\delta}(s,t) = (1 - m^{-1})\sin(t)$ for all $(s,t) \in B_{\rho}(q) \cap \Sigma$ with $\tilde \theta (s,t) < \tau(\delta)$, and also $\overline h_{\tau,\delta}(s,t) = h_{\tau,\delta}(\rho / 2,t) = (1 -m^{-1})\sin(t)$ for sufficiently small $t > 0$. Thus $(i)$ follows.

It is clear by definition of $\overline h_{\tau,\delta}$, that there exists $0 < t_1 < T$ such that $\overline h_{\tau,\delta}$ is supported in $]0,l[ \times [0,t_1]$ for all sufficiently small $0 < \tau < \pi/4$. It follows from \textit{(i)} that for all such $0 < \tau < \pi/4$ there exist $0 < t_2(\tau) < t_1$ such that the set of points at which $\psi$ is not smooth is contained in the set 
$$\{(s,t) \in \Sigma \mid (s,t) \in (\{\rho / 2\} \times [t_2(\tau),t_1]) \cup (\{l - \rho / 2\} \times [t_2(\tau),t_1])\}.$$
This proves $(ii)$.

By Lemma \ref{derivatives of h} we have $\partial_s h_{\tau,\delta} \geq 0$ for all $(s,t) \in B_{\rho}(p) \cap \Sigma$ and clearly $\partial_s \overline h_{\tau,\delta}(s,t) = 0$ for all $(s,t)$. Since $h_{\tau,\delta}(\rho / 2,t) = \overline h_{\tau,\delta}(\rho / 2,t)$, it follows that 
 \begin{align*}
 	 {\psi_{\tau,\delta}}_{|B_{\rho}(p) \cap \Sigma} = \min\{(\varphi + h_{\tau,\delta})_{|B_{\rho}(p) \cap \Sigma},(\varphi+ \overline h_{\tau,\delta})_{B_{\rho}(p) \cap \Sigma}\}.
\end{align*}
From Lemma \ref{Hessian} and Propositions \ref{resolution of Brhop}  and \ref{resolution of U} for fixed $0 < \tau < \pi/4$ and all sufficiently small $\delta > 0$  the functions $(\varphi + h_{\tau,\delta})_{|\Sigma \cap B_{\rho}(p)}$ and $(\varphi+ \overline h_{\tau,\delta})_{|\Sigma \cap B_{\rho}(p)}$ are strictly concave. Therefore, also their minimum is strictly concave. An analogous argument applies for ${\psi_{\tau,\delta}}_{B_\rho(q) \cap \Sigma}$.
\end{proof}

Now we smooth the function $\psi_{\tau,\delta}$ in a way that its concavity  properties are preserved. For this we use the following result by Greene-Wu \cite{greene-wu76}:
\begin{lemma}\label{riemannian convolution}
 Let $f : N \to \RR$ be a strictly convex (strictly concave) function on a Riemannian manifold $N$ and $L_1$, $L_2$ be compact subsets of $N$ with $L_1 \subset L_2$. Suppose that $f$ is smooth in a neighborhood of $L_1$. Then there exists a family $\{f_\epsilon : N \to \RR\}$ of strictly convex (strictly concave) functions such that
 \begin{itemize}
  \item[(i)] $f_\epsilon$ is smooth in a neighborhood of $L_2$,
  \item[(ii)] $f_\epsilon$ and any derivative of $f_\epsilon$ of order $k$ converge uniformly on $L_1$ to $f$ and the corresponding derivative of order $k$, respectively.
 \end{itemize}
 \end{lemma}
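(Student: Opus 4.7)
The plan is to combine a localized convolution-type smoothing with the characterization of strict convexity (strict concavity) given in the preceding definition, and then patch things together with a cut-off. Throughout I treat the convex case; the concave case follows by replacing $f$ with $-f$.

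First I would set up the convolution. Cover a neighborhood of $L_2$ by finitely many normal coordinate charts $(U_i, \exp_{p_i}^{-1})$ of radius much smaller than the injectivity radius, and choose a subordinate partition of unity $\{\chi_i\}$. On each $U_i$, transport the mollifier $\rho_\epsilon$ from $T_{p_i}N$ and define a local smoothing
\[
  (S_\epsilon^i f)(q) \;=\; \int_{T_{p_i}N} f\bigl(\exp_{p_i}(v)\bigr)\,\rho_\epsilon\bigl(\exp_{p_i}^{-1}(q)-v\bigr)\,dv,
\]
which is a standard Euclidean convolution in the chart. The key classical fact is that Euclidean convolution preserves convexity, so $S_\epsilon^i f$ is Euclidean-convex in the chart. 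However, the Riemannian Hessian differs from the Euclidean one by terms involving the Christoffel symbols and first derivatives of $f$; as $\epsilon \to 0$, $S_\epsilon^i f \to f$ uniformly together with derivatives wherever $f$ is smooth, and $S_\epsilon^i f$ is $C^\infty$ everywhere.

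Next I would use strict convexity to absorb the Christoffel error. By the definition of strict convexity from the paper, for every $p$ there exist a smooth strictly convex function $\varphi_p$ near $p$ and $\eta_p>0$ such that $f-\eta_p\varphi_p$ is convex near $p$. By compactness of $L_2$ and a finite cover, I can arrange a single smooth strictly convex function $\Phi$ defined on a neighborhood of $L_2$ and a uniform $\eta_0>0$ such that $f-\eta_0\Phi$ is convex on a neighborhood of $L_2$. Smoothing $f-\eta_0\Phi$ by the local convolutions above produces functions that are Euclidean-convex in each chart. The Riemannian Hessian of $\Phi$ is uniformly bounded below by some $\kappa>0$ on $L_2$, while the difference $\nabla^2 (S_\epsilon^i g) - \text{Hess}^{\text{eucl}}(S_\epsilon^i g)$ is controlled by $\epsilon$-small quantities (together with the uniform bounds on the Christoffel symbols in the chart and the derivatives of $g=f-\eta_0\Phi$). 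Choosing $\epsilon$ small enough, this error is absorbed by $\tfrac{\eta_0}{2}\nabla^2\Phi$, so $S_\epsilon^i(f-\eta_0\Phi)+\eta_0\Phi$ is strictly convex (with respect to the Riemannian Hessian) in each chart.

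Finally I would glue. Using a cut-off function $\lambda$ that equals $1$ on a neighborhood of $L_1$ and is supported where $f$ is already smooth, I set
\[
  f_\epsilon \;=\; \lambda f + (1-\lambda)\Bigl(\sum_i \chi_i\bigl[S_\epsilon^i(f-\eta_0\Phi)+\eta_0\Phi\bigr]\Bigr).
\]
Where $\lambda \equiv 1$ this equals $f$, hence is smooth and strictly convex. On the region where the formula is nontrivial, the summands all converge uniformly with all derivatives to $f$ as $\epsilon \to 0$ (since $f$ is smooth there), so for small $\epsilon$ the Hessian of $f_\epsilon$ is still bounded below by a positive multiple of $\nabla^2\Phi$. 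Property $(ii)$ on uniform convergence of derivatives on $L_1$ follows because on $L_1$ the cut-off $\lambda$ forces $f_\epsilon = f$ (or $f_\epsilon$ is a convex combination already converging to $f$ in $C^k$). The main obstacle is the second step: controlling the Riemannian Hessian of the Euclidean convolution. The cleanest way around it is the convex-minus-linear trick I used above, which bypasses having to bound higher derivatives of $f$ itself on the non-smooth region and only uses the non-strict convexity of $f-\eta_0\Phi$ together with the strict convexity of $\Phi$.
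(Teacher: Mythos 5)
The paper cites this lemma to Greene--Wu \cite{greene-wu76} and gives no proof, so there is no in-paper argument to compare against; what follows is a review of your sketch on its own merits.

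Your overall plan --- use the definition of strict convexity and compactness of $L_2$ to extract a single smooth strictly convex $\Phi$ and $\eta_0>0$ with $f-\eta_0\Phi$ convex near $L_2$, smooth the merely convex part, absorb errors against $\eta_0\nabla^2\Phi$, and glue near $L_1$ with a cut-off --- is the right structure and is in the spirit of Greene--Wu. But the central smoothing step has a genuine gap. The claim that ``Euclidean convolution preserves convexity, so $S_\epsilon^i f$ is Euclidean-convex in the chart'' (and the same for $g=f-\eta_0\Phi$) is unjustified: Riemannian convexity of $g$ means $g\circ\gamma$ is convex along \emph{geodesics}, and away from the chart center geodesics are not Euclidean line segments, so $g$ need not be Euclidean-convex on the chart, and hence the Euclidean mollification $S_\epsilon^i g$ need not have nonnegative Euclidean Hessian. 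What $\nabla^2 g\geq 0$ actually gives is $\operatorname{Hess}^{\mathrm{eucl}}g \geq \Gamma^k_{ij}\partial_k g$, a lower bound of fixed size depending on the chart, not an $\epsilon$-small one. For the same reason, your statement that $\nabla^2(S_\epsilon^i g)-\operatorname{Hess}^{\mathrm{eucl}}(S_\epsilon^i g)$ is ``controlled by $\epsilon$-small quantities'' is incorrect: that difference is $-\Gamma^k_{ij}\,\partial_k(S_\epsilon^i g)$, which is bounded by the size of the Christoffel symbols times $\operatorname{Lip}(g)$ and does not shrink with $\epsilon$.

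One could try to make the Christoffel symbols small by shrinking the chart radius $r$, but then the partition-of-unity terms blow up: the cross terms involving $\nabla\chi_i$ and $\nabla(S_\epsilon^i g - S_\epsilon^j g)$ have size of order $(\max_i|\nabla\chi_i|)\cdot\operatorname{Lip}(g)\sim \operatorname{Lip}(g)/r$, and they do \emph{not} tend to zero as $\epsilon\to 0$, because $S_\epsilon^i g\to g$ only in $C^0$, not in $C^1$, at non-differentiable points of $g$. So the two errors cannot be made small simultaneously with your chart-fixed convolution plus partition of unity. Greene--Wu sidestep both difficulties by using a single intrinsic ``moving'' convolution, $p\mapsto\int_{T_pN}\kappa(|v|/\epsilon)\,g(\exp_p v)\,dv$, which is defined without any choice of chart and whose failure to preserve convexity along geodesics is a curvature effect of order $\epsilon^2$, hence genuinely absorbable by $\eta_0\nabla^2\Phi$ for small $\epsilon$. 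If you replace your chart-fixed $S_\epsilon^i$ and the partition of unity by this intrinsic convolution, the rest of your argument --- the $f-\eta_0\Phi$ reduction and the cut-off gluing near $L_1$, which gives (i) and makes (ii) trivial on $L_1$ where $f_\epsilon=f$ --- goes through and is essentially the original proof.
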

According to Lemma \ref{psi} let $\tau > 0$ and $\delta > 0$ such that $\psi_{\tau,\delta}$ is continuous, strictly concave when restricted to $(B_{\rho}(p) \cup B_{\rho}(q)) \cap \Sigma$ and there exists a compact subset $K \subset (B_{\rho}(p) \cup B_{\rho}(q)) \cap \Sigma$ which is disjoint from $\gamma$ and contains all points at which $\psi_{\tau,\delta}$ is not smooth. 

Let $L_2 = \overline{B_\epsilon(K)}$ denote a closed $\epsilon$-neighborhood of $K$ which is disjoint from $\gamma$ and contained in $(B_{\rho}(p) \cup B_{\rho}(q)) \cap \Sigma$. Set $L_1 := L_2 \setminus B_{\epsilon/2}(K)$. By Lemma \ref{riemannian convolution} there exists a family of strictly concave functions 
$$\{\psi^n_{\tau,\delta} : (B_{\rho}(p) \cup B_{\rho}(q)) \cap \Sigma \to \RR\}$$
that are smooth restricted to $L_2$ and $\psi^n_{\tau,\delta}$ as well as all its derivatives converge uniformly on $L_1$ to $\psi_{\tau,\delta}$ and the corresponding derivative, respectively. We glue the functions $\psi^n_{\tau,\delta}$ and $\psi_{\tau,\delta}$ along $L_1$:

For that let $j : (B_{\rho}(p) \cup B_{\rho}(q)) \cap \Sigma \to [0,1]$ be a smooth function such that $j \equiv 1$ on $L_2 \setminus L_1 = B_{\epsilon/2}(K)$ and $j \equiv 0$ on $((B_{\rho}(p) \cup B_{\rho}(q)) \cap \Sigma)\setminus L_2$. Set
\begin{align}\label{tilde varphi}
  \varphi^n_{\tau,\delta} = j \psi_{\tau,\delta}^n + (1 - j)\psi_{\tau,\delta}.
\end{align}
Then $\varphi^n_{\tau,\delta} : B_{\rho}(p) \cap \Sigma \to \RR$ is smooth. Further for all $p \in L_1$ and for all $v \in T_p\Sigma$ with $||v|| = 1$ we have (writing $\varphi^n = \varphi^n_{\tau,\delta}$)
\begin{align}
 \nabla^2 \varphi^n(v,v) &= j(p) \nabla^2\psi^n(v,v) + (1- j)(p)\nabla^2\psi(v,v)\\
 &+ 2v(j)v(\psi^n - \psi)\\ 
 &+ \nabla^2j(v,v)(\psi^n - \psi)(p).
\end{align}
Since $\nabla j$ and $\nabla^2j$ are continuous and supported in $L_1$, and $\psi^n$ and its derivatives converge uniformly on $L_1$ to $\psi$ and its corresponding derivatives, respectively, it follows that 
$$\nabla^2 \varphi^n_{\tau,\delta} < 0,$$
i.e. $\varphi^n_{\tau,\delta}$ is strictly concave, on $(B_{\rho}(p) \cup B_{\rho}(q)) \cap \Sigma$ for all sufficiently large $n$. Also $\varphi^n_{\tau,\delta}$ coincides with $\psi_{\tau,\delta}$ on $\Sigma \setminus B_\epsilon(K)$.  Using the functions $\varphi^n_{\tau,\delta}$ we can prove the following lemma.
\begin{lemma}\label{44}
 Let $\gamma :\ ]0,l[ \to (M^*,d)$ be an arc length geodesic, parameterizing an arc of $E^*$ and connecting two fixed points $p$ and $q$. Let $\epsilon > 0$ and $\gamma$ have constant type $\mathbb Z_m$. Then there exists a complete Alexandrov metric $d_\gamma^\epsilon$ on $M^*$ satisfying the following properties:
 \begin{itemize}
  \item[(i)] $d_\gamma^\epsilon$ is induced by a smooth Riemannian metric $g_\gamma^\epsilon$ on $(M^* \setminus (F \cup E^*)) \cup \gamma(]0,l[)$.
  \item[(ii)] $\curv d_\gamma^\epsilon > mc - \epsilon$ if $c \leq 0$  ($\curv d_\gamma^\epsilon > 0$ if $c > 0$).
  \item[(iii)] $d_{GH}((M^*,d^\epsilon_\gamma),(M^*,d)) \leq \epsilon$.
  \item[(iv)] Let $\dot \gamma^-(0) = \frac d {ds}_{|s = 0}\gamma (l - s) \in \Sigma_qM^*$ and 
  $$V_{\epsilon,\gamma} = s_p(B_\epsilon(\dot \gamma(0))) \cup  s_q(B_\epsilon(\dot \gamma^-(0))) \cup B_\epsilon(\gamma([\rho/2,l-\rho/2]))$$
  (compare Definition \ref{suspended}). Then $g_\gamma^\epsilon$ coincides with $g$ on $M^* \setminus V_{\epsilon,\gamma}$.
  \item[(v)] $\Sigma_p(M^*,d_\gamma^\epsilon)$ has curvature bounded below by $1$ and admits an effective isometric $\crcl$-action. There exists $\tilde \rho > 0$ such that $B_{\tilde \rho}(p)$ is isometric to a neighborhood of the tip of the spherical suspension of $\Sigma_p(M^*,d_\gamma^\epsilon)$. The corresponding statement holds for $B_{\tilde \rho}(q)$.
 \end{itemize}
 \end{lemma}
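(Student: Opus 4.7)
The plan is to amalgamate the three local resolutions provided by Propositions \ref{resolution of Brhop}, \ref{resolution of Brhoq} and \ref{resolution of U} into a single smooth metric on a tubular neighborhood of $\gamma$, using the fact that all three are prescribed by perturbing the length of the Killing field $X$ of the polar $\crcl$-action along the section $\Sigma$. Concretely, I will take the continuous, piecewise-smooth function $\psi_{\tau,\delta}$ constructed at the start of section~\ref{resolving E}, smooth it using the Greene--Wu technique (Lemma~\ref{riemannian convolution}) near its non-smooth locus $K$, and then define
\begin{align*}
 g_\gamma^\epsilon(s,t,\phi) = f^2(s,t)\, ds^2 + dt^2 + \Psi_n^2(s,t)\, d\phi^2
\end{align*}
on $U$, where $\Psi_n$ is the smoothing. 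Gluing $g_\gamma^\epsilon$ back to $g$ outside $V_{\epsilon,\gamma}$ yields the desired metric on $M^*$.

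Given $\epsilon > 0$, I first pick $0 < \tau < \min\{\tau_0, \epsilon\}$ small enough (with $\tau_0$ as in Proposition~\ref{resolution of U}) that the supports of $h_{\tau,\delta}$, $\tilde h_{\tau,\delta}$ and $\overline h_{\tau,\delta}$ are contained in $V_{\epsilon,\gamma}$; then I choose $\delta > 0$ small (depending on $\tau$) so that Lemma~\ref{psi} applies and all three curvature bounds of Propositions~\ref{resolution of Brhop}, \ref{resolution of Brhoq}, \ref{resolution of U} hold simultaneously. By Lemma~\ref{psi}, $\psi_{\tau,\delta}$ is continuous on $\Sigma$, strictly concave on $(B_\rho(p) \cup B_\rho(q)) \cap \Sigma$, smooth off a compact set $K \subset (B_\rho(p)\cup B_\rho(q)) \cap \Sigma$ disjoint from $\gamma$, and coincides with $\varphi + \overline h_{\tau,\delta}$ in a neighborhood of $\gamma$. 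Choose $\epsilon' > 0$ so small that $L_2 := \overline{B_{\epsilon'}(K)}$ is disjoint from $\gamma$ and still contained in $(B_\rho(p) \cup B_\rho(q)) \cap \Sigma$, set $L_1 := L_2 \setminus B_{\epsilon'/2}(K)$, and apply Lemma~\ref{riemannian convolution} to obtain strictly concave smooth functions $\psi^n_{\tau,\delta}$ converging with all derivatives to $\psi_{\tau,\delta}$ on $L_1$. Formula \eqref{tilde varphi} now produces smooth, strictly concave $\varphi^n_{\tau,\delta}$ on $(B_\rho(p)\cup B_\rho(q))\cap \Sigma$, equal to $\psi_{\tau,\delta}$ outside $B_{\epsilon'}(K)$; extending by $\psi_{\tau,\delta}$ to the rest of $\Sigma$ gives the desired $\Psi_n$.

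The resulting $g_\gamma^\epsilon$ is smooth on $U \setminus \gamma$ by construction, and smooth across $\gamma$ because near $\gamma$ it coincides with $\overline g_{\tau,\delta}$, smoothness of which was established in Proposition~\ref{resolution of U}; outside $V_{\epsilon,\gamma}$ it equals $g$, so it extends smoothly to $M^*$, giving (i) and (iv). For (ii), Corollary~\ref{curvature and Hessian} says the curvature operator of $g_\gamma^\epsilon$ at a point with $t > 0$ splits into the intrinsic curvature of $(\Sigma, g_\Sigma)$ (which is $\geq c$) and the block $-\Psi_n^{-1}\nabla^2\Psi_n$: outside $B_{\epsilon'}(K)$ this block is controlled by the estimates already established in Propositions~\ref{resolution of Brhop}, \ref{resolution of Brhoq}, \ref{resolution of U}, and on $B_{\epsilon'}(K) \subset (B_\rho(p)\cup B_\rho(q))$ the strict concavity of $\varphi^n_{\tau,\delta}$ combined with Lemma~\ref{Hessian} yields a positive contribution. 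For $c \leq 0$ the factor $m$ in the bound arises precisely on the region where $\Psi_n = \varphi + \overline h_{\tau,\delta}$, as in Proposition~\ref{resolution of U}. Assertion (iii) is immediate from $|\Psi_n - \varphi| = O(\delta)$ and the $O(\epsilon)$ diameter of $V_{\epsilon,\gamma}$. Property (v) follows since $\Psi_n$ agrees with $\varphi + h_{\tau,\delta}$ on a small ball around $p$, so $g_\gamma^\epsilon$ is literally the suspension metric $g_{\tau,\delta}$ of Proposition~\ref{resolution of Brhop} there, whose link $\Sigma_p M^*$ carries the smoothed metric $d\sigma^2_{\tau,\delta}$ of Proposition~\ref{resolution of dotgamma} with its $\crcl$-action and curvature bound~$\geq 1 + a$; analogously at $q$.

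The main obstacle is the curvature bookkeeping inside the smoothing annulus $B_{\epsilon'}(K) \setminus B_{\epsilon'/2}(K)$. Here one must verify that the convex combination \eqref{tilde varphi} keeps $\nabla^2\varphi^n_{\tau,\delta}$ sufficiently negative, despite the error terms $2v(j)v(\psi^n-\psi)$ and $\nabla^2 j \cdot (\psi^n-\psi)$; these vanish in the limit by the uniform $C^2$-convergence $\psi^n_{\tau,\delta} \to \psi_{\tau,\delta}$ on $L_1$, so choosing $n$ large after $\tau$ and $\delta$ have been fixed achieves strict concavity of $\varphi^n_{\tau,\delta}$, hence positivity of the corresponding suspension curvatures, and in particular the claimed lower bound $mc - \epsilon$ (resp.\ $>0$) for $g_\gamma^\epsilon$.
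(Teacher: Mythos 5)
Your proposal follows the paper's proof essentially verbatim: it smooths $\psi_{\tau,\delta}$ via Greene--Wu near $K$, defines $g^\epsilon_\gamma$ by the formula $f^2 ds^2 + dt^2 + (\varphi^n_{\tau,\delta})^2 d\phi^2$ on $U$, verifies smoothness across $\gamma$ via Proposition~\ref{resolution of U}, and obtains the curvature bound by splitting the orbit into $S$ (where Proposition~\ref{resolution of U} applies) and $B_\rho(p)\cup B_\rho(q)$ (where strict concavity of $\varphi^n_{\tau,\delta}$ gives positive curvature). The only imprecision is the aside on (iii) about the ``$O(\epsilon)$ diameter of $V_{\epsilon,\gamma}$'' --- that set is a full tubular neighborhood of $\gamma$, not small --- but the conclusion stands because the perturbation $\eta_{\tau,\delta}$ itself goes to $0$ uniformly with $\tau$ (independently of $\delta$), which is what the paper invokes.
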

\begin{proof}
Consider the metric
\begin{align}\label{tilde g}
 G^n_{\tau,\delta}(s,t,\phi) = f^2(s,t)ds^2 + dt^2 + (\varphi^n_{\tau,\delta})^2(s,t)d\phi^2
\end{align}
defined on $U$. Since $\varphi^n_{\tau,\delta}$ coincides with $\psi_{\tau,\delta} = \varphi + \overline h_{\tau,\delta}$ in a neighborhood of $\gamma$ we see that $G^n_{\tau,\delta}$ is smooth by proposition \ref{resolution of U}. Also by construction $G^n_{\tau,\delta} = g$ on $U \setminus V_{\epsilon,\gamma}$ for all sufficiently small $\tau > 0$, independent of $n$ and $\delta$. Thus we can extend $G^n_{\tau,\delta}$ via $g$ to $M^*$ with induced metric $d^n_{\tau,\delta}$. Then the properties $(i)$ and $(iv)$ are satisfied for $d^n_{\tau,\delta}$. Also by construction $\varphi^n_{\tau,\delta} = (\varphi + \overline h_{\tau,\delta})$ on $U \setminus (B_\rho(p) \cup B_\rho(q))$. Therefore, according to Proposition \ref{resolution of U}, for all $0 < \tau$ sufficiently small there exists $\delta > 0$ such that $G^n_{\tau,\delta}$ restricted to $U \setminus (B_\rho(p) \cup B_\rho(q)$ has positive curvature, if $c > 0$, respectively curvature bounded below by $mc - \epsilon$, if $c \leq 0$. Fixing such $\tau$ and $\delta$ and choosing $n$ large such that $\varphi_{\tau,\delta}^n$ is strictly concave when restricted to $B_\rho(p) \cap \Sigma$ and $B_\rho(q) \cap \Sigma$, it follows that  $G_{\tau,\delta}^n$ has globally positive curvature or curvature bounded below by $mc - \frac 1 n$, if respectively $c > 0$ or $c \leq 0$. It is clear that property $(iii)$ holds for all sufficiently small $\tau > 0$. Moreover, property $(v)$ holds since $\varphi^n_{\tau,\delta} = \varphi + h_{\tau,\delta}$ near $p$ and $\varphi^n_{\tau,\delta} = \varphi + h_{\tau,\delta}$ near $q$.
\end{proof}
%Finally let $\tilde \gamma$ be any other singular arc of $E^*$ connecting two fixed points $\tilde p$ and $\tilde q$. By the above lemma there exists an open neighborhood $W$ of $\tilde \gamma$ such that $d^\epsilon = d$ on $W$ and near $\tilde p$ this neighborhood is the suspension of an open neighborhood of $\dot \gamma(0)$ in $\Sigma_{\tilde p}M^*$, and the same at $\tilde q$. This allows us to construct a resolution of $\tilde \gamma$ on $W$ just as we have done it for $\gamma$. 
%We do this analogously along any singular edge and obtain a new metric, which we also call $\tilde g_n$, which is smooth but for the finite set of points corresponding to fixed points of the action.  Further $\tilde g_n \xrightarrow{GH} g$ for $n \to \infty$ and $\tilde K_n \geq c + 2cm$. Moreover in a neighborhood of a point $p$ corresponding to a fixed point the metric $\tilde g$ is given by the spherical suspension of $\Sigma_p$. So it remains to find a technique to resolve the singularity at the tip of the spherical suspension of a well behaving space, that is, say, small diameter, rotational symmetry and curvature bounded below by $1$. We do this in the following sections.
Now, for each arc $e_i$ of $E^*$ choose a parametrization by an arc length geodesic $\gamma_i$. Since there are only finitely many such arcs, we can find $\epsilon > 0$ such that for $ i \neq j$ the intersection $V_{\epsilon,\gamma_i} \cap V_{\epsilon,\gamma_j}$ is either empty, or equals one or two fixed points, at which the arcs $e_i$ and $e_j$ meet. Via the previous Lemma for each $i$ we can choose a smooth metric $g_i^\epsilon$ on $V_{\epsilon,\gamma_i}$ satisfying the above properties, yielding a smooth Riemannian metric $g^\epsilon$ on $\bigcup_i V_{\epsilon,\gamma_i} \setminus F$. This metric extends smoothly via $g$ to $M^* \setminus F$. Gluing back the points of $F$ (equivalently we perform the metric completion) we obtain a complete metric $d^\epsilon$ on $M^*$. Recall that $L$ denotes the maximal order of a finite isotropy group of the $\crcl$-action on $M$. It is then immediate from Lemma \ref{44} that we have finally found a resolution of $E^*$.
\begin{proposition}\label{resolution of E}
 Let $\epsilon > 0$. Then there exists a complete metric $d^\epsilon$ on $M^*$ satisfying the following properties:
 \begin{itemize}
  \item[(i)] $d^\epsilon$ is induced by a smooth Riemannian metric $g^\epsilon$ on $M^* \setminus F$.
  \item[(ii)] $\curv d^\epsilon > Lc - \epsilon$ if $c \leq 0$ ($\curv d^\epsilon > 0$ if $c > 0$).
  \item[(iii)] $d_{GH}((M^*,d^\epsilon),(M^*,d)) < \epsilon$.
  \item[(iv)] For all $p \in F$ the space $\Sigma_p(M^*,d^\epsilon)$ is smooth, admits an effective isometric $\crcl$-action and has curvature bounded below by $1$. There exists $\tilde \rho > 0$ such that $B_{\tilde \rho}(p)$ is isometric to a neighborhood of a tip of the spherical suspension of $\Sigma_p(M^*,d^\epsilon)$.
 \end{itemize}

\end{proposition}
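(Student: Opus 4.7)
The plan is to apply Lemma~\ref{44} independently to each singular arc of $E^*$ and amalgamate the local modifications. By Lemma~\ref{fintushel}, $E^*$ decomposes into finitely many open arcs $e_1,\dots,e_k$, each of constant isotropy type $\ZZ_{m_i}$ with $m_i\leq L$, joining two distinct fixed points $p_i,q_i\in F$. Parametrize each $e_i$ by an arc-length geodesic $\gamma_i:\ ]0,l_i[\ \to M^*$.

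First I would fix $\epsilon'>0$ small enough so that (a) the neighborhoods $V_{\epsilon',\gamma_i}$ from Lemma~\ref{44}(iv) are pairwise disjoint outside $F$, and (b) $k\epsilon'\leq\epsilon$. Condition (a) is achievable because the arcs are finite in number and meet only at their common fixed-point endpoints, while the suspended balls $s_p(B_{\epsilon'}(\dot\gamma_i(0)))$ shrink to $\{\dot\gamma_i(0)\}$ as $\epsilon'\to 0$. Then, for each $i$, apply Lemma~\ref{44} to $\gamma_i$ with parameter $\epsilon'$ to obtain a smooth Riemannian metric $g_i^{\epsilon'}$ on $(M^*\setminus(F\cup E^*))\cup\gamma_i(]0,l_i[)$ that coincides with $g$ outside $V_{\epsilon',\gamma_i}$ and whose induced Alexandrov metric has curvature $>m_ic-\epsilon'\geq Lc-\epsilon$ when $c\leq 0$, and positive curvature when $c>0$ (using $m_ic\geq Lc$ for $c\leq 0$).

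Next, define $g^\epsilon$ on $M^*\setminus F$ by setting $g^\epsilon:=g_i^{\epsilon'}$ on $V_{\epsilon',\gamma_i}\setminus F$ and $g^\epsilon:=g$ on $M^*\setminus\bigcup_i V_{\epsilon',\gamma_i}$. Disjointness away from $F$ makes this well-defined and smooth, and the induced length distance extends uniquely by metric completion (adjoining the finitely many fixed points) to a complete metric $d^\epsilon$ on $M^*$.

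Verifying the four properties is then essentially bookkeeping. Item~(i) is built in. Item~(iii) follows because each modification alters the length of any path by at most the bound furnished by Lemma~\ref{44}(iii); summing over the disjoint regions yields global Gromov-Hausdorff distortion $\leq k\epsilon'\leq\epsilon$. Item~(ii) is a local Alexandrov statement: on $M^*\setminus F$ the bound is provided by Lemma~\ref{44}(ii), and at $p\in F$ a local bound of $1>Lc-\epsilon$ (or $>0$) follows from item~(iv) together with Lemma~\ref{cone and suspension}. Item~(iv) is a direct restatement of Lemma~\ref{44}(v) arc by arc. The only subtlety, and what I expect to be the main (though minor) obstacle, is compatibility at a fixed point $p$ on which two arcs $\gamma_i,\gamma_j$ meet: two independent modifications act within the suspended balls $s_p(B_{\epsilon'}(\dot\gamma_i(0)))$ and $s_p(B_{\epsilon'}(\dot\gamma_j(0)))$ inside $B_\rho(p)$. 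By the remark following Lemma~\ref{fintushel} one has $\measuredangle(\dot\gamma_i(0),\dot\gamma_j(0))=\pi/2$ in $\Sigma_pM^*$, so taking $\epsilon'<\pi/4$ makes these two balls disjoint in $\Sigma_pM^*$ and hence their suspensions disjoint in $B_\rho(p)$; the two resolutions therefore do not interfere, $B_{\tilde\rho}(p)$ for small $\tilde\rho$ remains isometric to the spherical suspension of the resolved $\Sigma_p(M^*,d^\epsilon)$, and the isometric $\crcl$-action furnished by Proposition~\ref{symmetry} descends to this smoothed space of directions.
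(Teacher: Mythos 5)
Your proposal matches the paper's argument: parametrize the finitely many singular arcs, choose $\epsilon'$ small enough that the modification neighborhoods $V_{\epsilon',\gamma_i}$ from Lemma~\ref{44} meet pairwise only in fixed points, apply Lemma~\ref{44} arc by arc, patch the resulting metrics (they agree with $g$ outside the disjoint regions), and complete by adjoining $F$; the curvature, Gromov--Hausdorff and local-suspension claims then follow directly from the itemized properties of Lemma~\ref{44} together with the observation $m_ic\geq Lc$ for $c\leq 0$. The paper's proof is exactly this, only stated more tersely, and it also handles the overlap at fixed points in the same way (via the $\pi/2$ angle between incident arcs).
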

In the following section we resolve the remaining singularities of $F$.
%==================================================
\subsection{\texorpdfstring{Resolution of $F$}{Resolution of F}}\label{smooth fp}
For this section we fix a metric $d^\epsilon$ on $M^*$ as in Proposition \ref{resolution of E}. To prove Theorem \ref{thm2.3} it remains to prove the following
\begin{proposition}\label{resolution of F}
There exists a smooth metric $\tilde g^\epsilon$ on $M^*$ with $\curv \tilde g^\epsilon > Lc - \epsilon$ if $c \leq 0$ ($\curv \tilde g^\epsilon > 0$ if $c > 0$) and $d_{GH}((M^*,\tilde g^\epsilon),(M^*,g^\epsilon)) < \epsilon$.
\end{proposition}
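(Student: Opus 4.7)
Since $F$ is finite, it suffices to produce at each $p\in F$ a smooth Riemannian metric on a neighborhood of $p$ in $M^*$ that coincides with $g^\epsilon$ outside a prescribed small ball around $p$, up to first order on the boundary of that ball, and whose sectional curvature is bounded below by $Lc-\epsilon$ (respectively stays positive if $c>0$). The gluing lemma (Lemma \ref{gluing}), applied simultaneously at the disjoint balls around the finitely many points of $F$, will then yield the global smooth metric $\tilde g^\epsilon$ with the required lower curvature bound and Gromov--Hausdorff closeness to $(M^*,d^\epsilon)$.

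Fix $p\in F$. By Proposition \ref{resolution of E}(iv), $B_{\tilde\rho}(p)$ is isometric to a neighborhood of a tip of the spherical suspension $S(\Sigma_p)$ of the smooth Riemannian $2$-sphere $\Sigma_p:=\Sigma_p(M^*,d^\epsilon)$, which has Gauss curvature $\geq 1$. By the Pogorelov-type isometric embedding theorem for smooth 2-spheres of curvature $\geq 1$, there is an isometric $C^2$ embedding $\iota:\Sigma_p\hookrightarrow\sphere^3$; suspending $\iota$ gives an isometric embedding of $B_{\tilde\rho}(p)$ into $\sphere^4$ carrying $p$ to a pole $N$. Composing with the Beltrami (central) projection $\beta:\sphere^4\setminus\{-N\}\to\RR^4$, which is a diffeomorphism sending great-circle geodesics to affine lines, we transfer the problem to an open neighborhood $W\subset\RR^4$ of the origin equipped with a metric $\bar g$ smooth on $W\setminus\{0\}$, with $0$ corresponding to $p$.

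Now smooth $\bar g$ by Euclidean convolution. Fix a smooth nonnegative mollifier $\phi_\lambda$ with integral $1$ supported in $B_\lambda(0)\subset\RR^4$ and set
\[
\bar g_\lambda(x):=\int_{\RR^4}\phi_\lambda(y)\,(\tau_y^*\bar g)(x)\,dy,
\]
where $\tau_y(x)=x+y$ is Euclidean translation. For small $\lambda$, $\bar g_\lambda$ is a smooth Riemannian metric on a slightly smaller open set $W'\subset W$, coincides with $\bar g$ outside the $\lambda$-neighborhood of $0$, converges to $\bar g$ in $C^0$ on $W'$, and converges in $C^\infty_{\mathrm{loc}}$ on $W\setminus\{0\}$. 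Pulling back through $\beta$ and the suspended embedding yields a smooth metric on a neighborhood $U\ni p$ in $M^*$; for $\lambda$ sufficiently small this metric agrees with $g^\epsilon$ on an annular region around $p$ up to first order, so Lemma \ref{gluing} splices it into $g^\epsilon$ to produce $\tilde g^\epsilon$ on $M^*$. Gromov--Hausdorff closeness follows from the $C^0$-control by $\lambda$.

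The main obstacle is the curvature estimate, namely that $\sec_{\bar g_\lambda}>Lc-\epsilon$ uniformly for small $\lambda$ on $W'$. Away from the origin, $C^\infty_{\mathrm{loc}}$-convergence gives this immediately, so the heart of the matter is the cone point. There one exploits the spherical-suspension form of $\bar g$ near $0$: the Alexandrov lower curvature bound of $\bar g$, inherited from $\curv\sphere^4\geq 1$ through the isometric embedding, is preserved up to an $o(1)$ error as $\lambda\to 0$ by Dyatlov's convolution argument \cite{dyatlov} applied in dimension $4$. This $o(1)$ error is absorbed into $\epsilon$, completing the proof once the local outputs are glued together via Lemma \ref{gluing}.
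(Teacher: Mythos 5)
The central step in your proposal --- smoothing the singular metric by mollifying the metric tensor directly,
\[
\bar g_\lambda(x):=\int_{\RR^4}\phi_\lambda(y)\,(\tau_y^*\bar g)(x)\,dy,
\]
and asserting that this ``preserves the Alexandrov lower curvature bound up to $o(1)$ by Dyatlov's convolution argument'' --- does not work. Sectional curvature is a nonlinear functional of the metric (through $g^{-1}$ in the Christoffel symbols), so convolving the metric tensor is \emph{not} a curvature-preserving operation, even approximately, near a cone point: the derivatives of $\bar g$ blow up at the singularity and there is no analogue of the first-order matching hypothesis that makes the Dyatlov/Lemma \ref{gluing} error term small. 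Dyatlov's Lemma 4.3 is a \emph{gluing} lemma for two metrics that agree to first order along a submanifold; it says nothing about mollifying a single singular metric. You have also lost track of the dimension: the image of $B_{\tilde\rho}(p)$ under the suspended embedding followed by $\beta^p$ is a $3$-dimensional \emph{hypersurface} in $\RR^4$ (the graph of a convex function $f:\RR^3\to\RR$), not an open neighborhood of $0$ in $\RR^4$, so there is no ambient $\RR^4$-translation to mollify against.

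The object that should be convolved is the convex function $f$, not the metric. This is the actual content of the paper's argument: $B_{\tilde\rho}(p)$ sits in $\sphere^4$ as a piece of the boundary of a convex body; $\beta^p$ takes convex hypersurfaces of curvature $\geq 1$ in $\sphere^4$ to convex hypersurfaces of curvature $\geq 0$ in $\RR^4$ and vice versa (do Carmo--Warner); the image is the graph of a convex function $f$; Euclidean convolution $f_n=f\ast\sigma_n$ preserves \emph{convexity} (a linear condition), hence the smoothed graph is still convex and has nonnegative intrinsic curvature, which transports back to curvature $\geq 1$ under $\beta^p$. The $C^\infty_{\mathrm{loc}}$-control on an annulus is then used only for the transition region where $f$ and $f_n$ are spliced via a cutoff. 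You also invoke a Pogorelov-type isometric embedding to get $\Sigma_p\hookrightarrow\sphere^3$; the paper instead exploits the $\crcl$-symmetry guaranteed by Proposition \ref{resolution of E}(iv) and constructs the embedding explicitly as a surface of revolution (Lemma \ref{embedding}), which also delivers the smoothness needed to feed into the suspension argument.
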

Let $p \in M^*$ be a fixed point. First we show that $\Sigma_p(M^*,d^\epsilon)$ admits an isometric embedding into $\sphere^3$ with its standard metric.

\begin{lemma}\label{embedding}
 Let $g$ be a smooth metric on $\sphere^2$ with $\sec g \geq 1$ that admits an effective and isometric $\crcl$-action. Then $(\sphere^2,g)$ admits an isometric embedding into $\sphere^3$ equipped with the standard metric.
\end{lemma}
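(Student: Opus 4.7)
The plan is to realize $(\sphere^2, g)$ as an $\crcl$-equivariant surface of revolution in $\sphere^3$. Since $\crcl$ acts smoothly, effectively, and isometrically on $\sphere^2$, the action has exactly two fixed points $p_0, p_1$, and writing $g$ in geodesic polar coordinates centered at $p_0$ one obtains $g = dr^2 + f(r)^2\, d\theta^2$ on $[0, L] \times \crcl$, where $L = d(p_0, p_1)$ and $f$ is a smooth nonnegative function with $f(0) = f(L) = 0$, $f'(0) = -f'(L) = 1$, and all even derivatives of $f$ vanishing at $0$ and $L$. By Lemma \ref{curvature}, the assumption $\sec g \geq 1$ is equivalent to $f'' + f \leq 0$ on $(0, L)$.

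I will need two consequences of this differential inequality. Sturm comparison of $f$ with $\sin$ (which satisfies the corresponding equality with matching initial data at $r = 0$) yields $f(r) \leq \sin r$ on $[0, L]$; in particular $f \leq 1$. Moreover, the identity $(f^2 + f'^2)' = 2f'(f + f'')$ shows that $f^2 + f'^2$ is monotone on each side of the maximum of $f$; combined with the boundary values $f^2 + f'^2 = 1$ at both endpoints, this yields $f^2 + f'^2 \leq 1$ throughout $[0, L]$.

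Now embed $\sphere^3 \subset \RR^4$ and let $\crcl$ act by rotating the $(x_3, x_4)$-plane, with fixed set the great circle $\{x_3 = x_4 = 0\}$. I look for a unit-speed profile curve $\alpha : [0, L] \to \{x \in \sphere^3 \mid x_4 = 0,\ x_3 \geq 0\}$ of the form
$$\alpha(r) = \bigl(\sqrt{1 - f(r)^2}\cos \psi(r),\ \sqrt{1 - f(r)^2}\sin \psi(r),\ f(r),\ 0\bigr).$$
A direct computation shows that rotating $\alpha$ by the $\crcl$-action produces a subset of $\sphere^3$ whose induced metric is $dr^2 + f(r)^2\,d\theta^2 = g$, and that the unit-speed condition on $\alpha$ reduces to the ODE
$$\psi'(r)^2 \;=\; \frac{1 - f(r)^2 - f'(r)^2}{\bigl(1 - f(r)^2\bigr)^2}.$$
Since the right-hand side is nonnegative (second consequence) and smooth on $(0, L)$ (first consequence gives $f < 1$ there), setting $\psi(r) := \int_0^r \bigl((1 - f(t)^2 - f'(t)^2)/(1 - f(t)^2)^2\bigr)^{1/2}\,dt$ defines $\alpha$ and thereby a map $\Phi : \sphere^2 \to \sphere^3$ by sending the orbit through $(r, \theta)$ to the rotation of $\alpha(r)$ by angle $\theta$.

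The main technical step is to verify that $\Phi$ is a smooth embedding at the two poles, where the ODE for $\psi$ becomes degenerate. Near $r = 0$, Taylor-expanding $f$ using the smoothness conditions and $f'' = -Kf$ with $K = \sec g$ gives $1 - f(r)^2 - f'(r)^2 = (K(p_0) - 1)r^2 + O(r^4)$, so $\psi'$ extends smoothly across $r = 0$ with the appropriate parity, and passing to Cartesian coordinates $(x, y) = (r\cos\theta, r\sin\theta)$ on $\sphere^2$ realizes $\Phi$ near $p_0$ as a smooth graph over its tangent $2$-plane $\mathrm{span}(\partial_{x_3}, \partial_{x_4})$ at $\Phi(p_0)$; the argument at $p_1$ is symmetric. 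Injectivity is then clear because the $r$-coordinate can be recovered from $\Phi(r, \theta)$ via the norm of the ambient Killing field (equal to $f(r)$) combined with monotonicity of $f$ on each of $[0, r_*]$ and $[r_*, L]$, where $r_*$ is the maximum of $f$.
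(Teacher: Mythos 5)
Your approach is the same as the paper's: you realize $(\sphere^2,g)$ as an $\crcl$-equivariant surface of revolution in $\sphere^3$, arrive at the same profile ODE with right-hand side $(1-f^2-f'^2)/(1-f^2)^2$, and establish $f^2 + f'^2 \leq 1$ by the same monotonicity identity. (Under $t=\arcsin f$, $s=\psi$, your ambient parametrization is exactly the paper's map into $\sphere^1 \times B_{\pi/2}$ with the metric $\cos^2(t)\,ds^2 + dt^2 + \sin^2(t)\,d\theta^2$.)

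There is, however, a gap where you claim that the Sturm estimate $f \leq \sin r$ gives $f < 1$ on $(0,L)$. Since $\sin$ itself attains the value $1$, this only yields $f \leq 1$, and if $f(r_0) = 1$ at some interior $r_0$ the factor $1-f^2$ in your ODE vanishes away from the poles. The equality case is in fact rigid: $f(r_0)=1$ forces $f'(r_0)=0$, so $f^2+f'^2 = 1$ at both endpoints of $[0,r_0]$, and since $f^2+f'^2$ is monotone nonincreasing there (as $f'\geq 0$) it is identically $1$, whence $f''+f \equiv 0$, $f = \sin$ on $[0,r_0]$, $r_0 = \pi/2$; the symmetric argument from the other pole gives $L=\pi$ and $f \equiv \sin$, i.e.\ $g$ is round. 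So $f<1$ on $(0,L)$ does hold whenever $g$ is not round, but this requires the rigidity argument just sketched (or the paper's route, which first treats $d = \pi$ via Cheng's maximal-diameter theorem and then observes $R<1$ strictly when $d<\pi$); your proof as written neither handles the round case separately nor justifies the strict inequality. A secondary point worth noting: your parity argument for smoothness of $\psi'$ at the pole uses that the leading coefficient $K(p_0)-1$ of $1-f^2-f'^2$ is positive. When $K(p_0)=1$ exactly, the leading term is of order $r^4$, $\psi'$ becomes even (so $\psi$ is odd), and the ambient coordinate $\sqrt{1-f^2}\,\sin\psi$ is then not a smooth function of $(x,y)$. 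The paper defers this to ``standard methods'' and a citation to do Carmo--Warner, so the issue is latent there as well, but a complete proof should address it.
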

\begin{proof}
Due to the isometric $\crcl$-action it is easy to see that there exist exactly two fixed points of the action and $g$ is given by the formula
$$g(r,\theta) = dr^2 + R^2(r)d\theta^2,$$
where $(r,\theta) \in [0,d] \times [0,2\pi],$ and $d$ denotes the distance of the two fixed points and $R : [0,d] \to \RR$ is a smooth function with $R(0) = R(d) = 0$. Let $X(r,\theta)$ denote the Killing field of the action. Then $||X(r,\theta)|| = R(r)$ and $||\nabla_{\partial_r}X(r,\theta)|| = R'(r)$. Since $g$ is smooth, we have $R'(0) = 1$ and it follows from the Rauch comparison theorem that 
\begin{align}\label{shmunk}
 R(r) \leq \sin (r).
\end{align}
 and $d \leq \pi$ (as it is also clear from Bonnet-Myers theorem). For $d = \pi$ it was shown in \cite{cheng75} that $g$ is isometric to the standard metric of $\sphere^2$ and thus admits an isometric embedding into $\sphere^3$. Thus we may assume that $d < \pi$ and there exists some $r_0 \in ]0,d[$ with $K^g(r_0,\theta) > 1$. Again from the Rauch comparison theorem, it follows that 
 \begin{align}\label{R < 1}
  R(r) < 1
 \end{align}
 for all $r \in [0,d]$. Now let $U := \sphere^1 \times B_{\pi/2}$, where $B_{\pi/2}$ denotes the open ball of radius $\pi/2$ in $\RR^2$. Equip $\sphere^1$ with the standard metric $ds^2$ (such that it has perimeter $2\pi$) and let $(t,\theta)$ denote polar coordinates on $B_{\pi/2}$ with radial coordinate $t$ and angular coordinate $\theta$. Consider the metric
\begin{align}
 h = \cos(t)^2ds^2 + dt^2 + \sin^2(t)d\theta^2
\end{align}
on $U$. Then $(U,h)$ is isometric to an open subset of $\sphere^3$, so it suffices to embed $g$ isometrically into $(U,h)$.

The idea is to realize $g$ via a surface of revolution.  Given a smooth function $v : [0,d] \to \RR$, let
\begin{align}
 u : [0,d] \times [0,2\pi] & \rightarrow U\nonumber\\
(r,\theta) & \mapsto \begin{pmatrix} v(r) \\ \arcsin(R(r)) \\ \theta \end{pmatrix},
\end{align}
where we choose $\arcsin : [-1,1] \to [-\pi/2,\pi/2]$ and the first, second and third entry of the vector on the right hand side corresponds to the $s,t$ and $\theta$ coordinate respectively. Note that $\arcsin(R(r))$ is well defined by \eqref{shmunk}.
We calculate
\begin{align}
&u^*h\partial_r,\partial_r)(r,\theta)\nonumber \\
= &h(du_{(r,\theta)}\partial_r,du_{(r,\theta)}\partial_r) \nonumber \\
= &h(v'(r)\partial_s + (\arcsin(R(r))'\partial_t,v'(r)\partial_s + (\arcsin(R(r))'\partial_t)(u(r,\theta)) \nonumber \\
= &(v'(r))^2\cos^2(\arcsin(R(r))) + (\arcsin(R(r))')^2\nonumber\\
= &(v'(r))^2(1 - R^2(r)) + \frac{R'(r)^2}{1 - R^2(r)},\nonumber
\end{align}
\begin{align}
u^*h(\partial_\theta,\partial_\theta)(r,\theta)
= h(du_{(r,\theta)}\partial_\theta, du_{(r,\theta)}\partial_\theta)
= h(\partial_\theta,\partial_\theta)(u(r,\theta)) = R^2(r)
\end{align}
and
\begin{align}
u^*h(\partial_r,\partial_\theta) = 0.
\end{align}
It follows that 
$$u^*h(r,\theta) = \left((v'(r))^2(1 - R^2(r)) + \frac{R'(r)^2}{1 - R^2(r)}\right)^2dr^2 + R^2(r)d\theta^2.$$
Thus $u^*h = g$ if and only if
$$1 = v'^2(1 - R^2) + \frac{R'^2}{1 - R^2},$$
or equivalently
\begin{align}
v' = \sqrt{\frac{1 - R^2 - R'^2}{(1- R^2)^2}}.\label{Ode}
\end{align}
We claim that $R'^2 + R^2 \leq 1$: $R$ is concave, since by Lemma \ref{curvature} $$1 \leq K^g(r,\theta) = -R''(r)/R(r).$$
Hence the set $I_+ := \{r \in [0,d] \mid R(r)' \geq 0\}$ is a connected interval containing $\{0\}$. From $R''/R \leq -1$ we see $R'' + R \leq 0$. Thus
$$(R'(r)^2 + R(r)^2)' = 2R'(r)(R''(r) + R(r)) \leq 0$$
for $r \in I_+$. Consequently $R'(r)^2 + R(r)^2 \leq R'(0)^2 + R(0)^2 = 1$ for all $r \in I_+$. Considering the function $r \mapsto R(d - r)$ an analogous argument shows that $R'(r)^2 + R(r)^2 \leq 1$ for $r \in I_- := \{r \in [0,d] \mid R(r)' \leq 0\}$ proving the claim.

Thus there exists a unique solution $v(r)$ to \eqref{Ode} on $[0,d]$ with $v(0) = 0$. Using standard methods it is not hard to show that $u$ is indeed an embedding (it follows from \cite{docarmo-warner70} that it is in fact enough to show that $u$ is an immersion). 
\end{proof}
In particular, there exists an isometric embedding $ \Sigma_p(M^*,d^\epsilon) \subset \sphere^3$. It was shown in \cite{docarmo-warner70} that an isometric embedding $N^n \subset \sphere^{n +1}$ of a Riemannian manifold $N^n$ with $\sec N^n \geq 1$ is either totally geodesic or $N^n$ is contained in an open hemisphere and bounds a convex body in $\sphere^{n + 1}$. Thus an isometric embedding $\Sigma_p(M^*,d^\epsilon) \subset \sphere^3$ bounds a convex body in $\sphere^3$ and is contained in an open hemisphere, since it is clearly not totally geodesic (for example due to $\operatorname{diam}(\Sigma_p(M^*,d^\epsilon)) = \pi/2$).
\begin{lemma}
 Let $\Sigma_p(M^*,d^\epsilon) \subset \sphere^3$ be an isometric embedding. Then $S(\Sigma_p(M^*,d^\epsilon))$ is isometrically embedded into $S(\sphere^3) = \sphere^4$ in the obvious way and $S(\Sigma_p(M^*,d^\epsilon))$ bounds a convex body in $\sphere^4$.
\end{lemma}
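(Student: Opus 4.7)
The plan is to realize both suspensions concretely inside $\RR^5$ and then reduce the convexity statement to the convexity of the positive cone. Using the standard embedding $\sphere^3 \subset \RR^4$, the spherical suspension $S(\sphere^3)$ is naturally identified with the unit sphere $\sphere^4 \subset \RR^4 \times \RR = \RR^5$ via $[x,t] \mapsto (\sin(t)x, \cos(t))$. Composing with the given isometric inclusion $\iota : \Sigma_p(M^*,d^\epsilon) \hookrightarrow \sphere^3$, I obtain a map
\[
S(\iota):\ S(\Sigma_p(M^*,d^\epsilon)) \longrightarrow \sphere^4, \qquad [x,t] \mapsto (\sin(t)\,\iota(x),\cos(t)).
\]
That this is isometric is immediate from the definition of the suspension metric and the spherical law of cosines: for two images in $\sphere^4$, the cosine of the ambient distance is $\cos(t)\cos(s)+\sin(t)\sin(s)\cos(d_{\sphere^3}(\iota(x),\iota(y)))$, which, since $\iota$ is isometric, equals the cosine of the distance prescribed by the defining formula of $S(\Sigma_p(M^*,d^\epsilon))$.

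For the convexity assertion, I use that by the cited do Carmo–Warner result, $\iota(\Sigma_p(M^*,d^\epsilon))$ bounds a (closed) convex body $B \subset \sphere^3$ contained in an open hemisphere. Define $S(B) \subset \sphere^4$ as the image of $(B \times [0,\pi])/{\sim}$ under the same formula $[x,t]\mapsto(\sin(t)x,\cos(t))$. Clearly the boundary of $S(B)$ in $\sphere^4$ is precisely $S(\iota(\Sigma_p(M^*,d^\epsilon)))$, so it remains to prove $S(B)$ is geodesically convex. For this I compute the positive cone of $S(B)$ in $\RR^5$: the points $\lambda(\sin(t)x,\cos(t))$ with $\lambda\geq 0$, $t\in[0,\pi]$, $x\in B$ range exactly over $\hat{B}\times\RR$, where $\hat B = \{rx : r\geq 0,\ x\in B\}\subset\RR^4$ is the Euclidean cone over $B$. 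Since $B$ is convex in $\sphere^3$ (and contained in an open hemisphere), $\hat B$ is a convex cone in $\RR^4$, and hence $\hat B\times\RR$ is a convex cone in $\RR^5$. Consequently, for any two non-antipodal points $p,q \in S(B)$, the Euclidean segment $\{\alpha p+\beta q:\alpha,\beta\geq 0\}$ lies in $\hat B\times\RR$, and radial projection to $\sphere^4$ gives the unique minimizing geodesic from $p$ to $q$ inside $S(B)$.

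The one genuinely subtle point—and what I expect to be the main obstacle—is that $S(B)$ contains the two antipodal poles $(0,\dots,0,\pm 1)$ of $\sphere^4$, so cone-convexity alone does not yet supply a minimizing geodesic between them. I will handle this by first observing that these are the \emph{only} antipodal pair contained in $S(B)$: a non-pole point $(\sin(t)x,\cos(t))$ has antipode $(\sin(\pi-t)(-x),\cos(\pi-t))$, which lies in $S(B)$ only if $-x\in B$, contradicting that $B$ sits in an open hemisphere of $\sphere^3$. For the two poles, pick any $x_0 \in B$; then the meridian $t\mapsto(\sin(t)x_0,\cos(t))$ is a minimizing geodesic in $\sphere^4$ joining them, and it lies in $S(B)$ by construction. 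This establishes that every pair of points in $S(B)$ is joined by a minimizing geodesic inside $S(B)$, so $S(B)$ is a convex body in $\sphere^4$ bounded by $S(\Sigma_p(M^*,d^\epsilon))$, completing the proof.
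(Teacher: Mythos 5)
The paper states this lemma without proof. Your convexity argument is correct and supplies exactly the missing content: the positive cone of $S(B)$ in $\RR^5$ is $\hat B\times\RR$, convex since $B$ lies in an open hemisphere so $\hat B$ is a convex cone in $\RR^4$, and you correctly identify the two poles as the only antipodal pair, for which a meridian through any $x_0\in B$ furnishes the minimizing geodesic. This is clean and complete.

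The isometry part, however, has a genuine flaw. You equate $d_{\sphere^3}(\iota(x),\iota(y))$ with $d_{\Sigma_p(M^*,d^\epsilon)}(x,y)$ ``since $\iota$ is isometric,'' but $\iota$ is an isometric embedding in the Riemannian sense (it pulls back the metric tensor), not a distance-preserving map. A hypersurface preserving ambient distances is totally geodesic, and the paper has just observed that this embedding is \emph{not} totally geodesic (e.g.\ because $\operatorname{diam}\Sigma_p(M^*,d^\epsilon)=\pi/2$); indeed for a convex, non--totally geodesic surface in $\sphere^3$, the chord distance $d_{\sphere^3}(\iota(x),\iota(y))$ is strictly smaller than the intrinsic distance for some pairs. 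Thus your chain of cosine equalities breaks at that step. What the lemma asserts, and what is true, is that $S(\iota)$ is a Riemannian isometric embedding on the smooth part of $S(\Sigma_p(M^*,d^\epsilon))$, and the verification should be carried out infinitesimally: from $S(\iota)([x,t])=(\sin(t)\iota(x),\cos(t))$ one computes
\[
dS(\iota)(\partial_t)=(\cos(t)\iota(x),-\sin(t)),\qquad dS(\iota)(v)=(\sin(t)\,d\iota_x(v),0)\ \text{for } v\in T_x\Sigma_p(M^*,d^\epsilon),
\]
checks that the first has unit length and is orthogonal to the second (as $d\iota_x(v)\perp\iota(x)$), and that $\lvert dS(\iota)(v)\rvert^2=\sin^2(t)\,g_{\Sigma_p}(v,v)$ because $\iota$ preserves the metric tensor; this gives $S(\iota)^*g_{\sphere^4}=dt^2+\sin^2(t)\,g_{\Sigma_p}=g_{S(\Sigma_p)}$. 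In short, your law-of-cosines identity is the right idea but must be differentiated; the finite-distance version you wrote is false.
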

Therefore, we may view $(B_{\tilde\rho}(p),g^\epsilon)$ as a subset $(B_{\tilde\rho}(p),g^\epsilon) \subset \sphere^4$ that is a part of the boundary of a convex body in $\sphere^4$. Inspired by \cite{docarmo-warner70}, to resolve the singularity of $(B_{\tilde\rho}(p),g^\epsilon)$ at $p$ we transfer the problem to $\RR^4$ via a so called Beltrami map. 
\begin{definition}
 Let $v \in \sphere^n \subset \RR^{n + 1}$. Denote by $T_v = T_v\sphere^n + v$ the plane tangent to $\sphere^n$ at $v$ and by $H_v$ the open hemisphere centered at $v$. Then the \textit{Beltrami map} 
 $$\beta^v : H_v \to T_v = \RR^n$$
 is defined via central projection: For $q \in H_v$ let $r_q$ denote the ray passing through $q$ and $0$. Then define $\beta^v(q)$ as the point of intersection of $r_q$ and $T_v$.
\end{definition}
$\beta^v$ is a geodesic map, that is, geodesics of $\sphere^n$ are setwise mapped to geodesics of $\RR^n$. Also (modulo natural identifications)
$$(d\beta^v)_v = \operatorname{Id} : T_v \to T_v.$$ Moreover, it was shown in \cite{docarmo-warner70} that hypersurfaces in $H_v$ of curvature $\geq 1$ are mapped to hypersurfaces of $\RR^n$ with curvature $\geq 0$, and vice versa. 
\\ \\
Now we can give the \textit{proof of Proposition \ref{resolution of F}}. Consider $\beta^p : H_p \subset \sphere^4 \to \RR^4$. Set
$$V := \beta^p(B_{\tilde \rho}(p)) \subset \RR^4.$$
Since geodesics of $H_p$ are mapped to straight lines in $\RR^4$ it follows that $V$ is an open neighborhood of the vertex $0$ of a convex cone $C \subset \RR^4$ (in fact, since $(d\beta^p)_p$ is the identity, it follows that $C = T_pB_{\tilde \rho}(p)$). Since $B_{\tilde \rho}(p)$ is smooth at all points but $p$, it follows that $C$ is smooth at all points but $0$. We may thus assume that $C$ is the graph of a convex function
$$f : \RR^3 \to \RR$$
which is smooth at all points except for $0$. To smooth the singularity of $f$ at $0$ we use a convolution (compare \cite{ghomi02}): For $n \in \NN$ let $\sigma_n : \RR^3 \to \RR^{\geq 0}$ be a smooth function with support in $B_{1/n}(0)$ and $\int_{\RR^3} \sigma_n dx = 1$. Set 
$$f_n(x) = f \ast \sigma_n(x) = \int_{\RR^3} f(x - y)\sigma_n(y)dy.$$
Then $f_n : \RR^3 \to \RR$ is convex for all $n$. Further, $f_n$ and all its derivatives of arbitrary order converge uniformly to $f$ and the corresponding derivative, respectively, on $A_r := \overline{B_r(0)} \setminus B_{r/2}(0)$ for all $r > 0$. We glue $f$ and $f_n$ as in \eqref{tilde varphi}: For $r > 0$ let $j_r : [0,\infty[ \to \RR$ be smooth with $j \equiv 1$ on $[0,r/2]$ and $j \equiv 0$ on $[r,\infty[$ and set
$$f_{r,n}(x) = j_r(|x|)f_n(x) + (1 - j_r(|x|))f(x).$$
Then for fixed $0 < r <1$ it is clear that $f_{r,n}$ is convex on $\RR^4 \setminus A_r$, $f_{r,n} = f$ on $\RR^3 \setminus B_r(0)$ and $f_{r,n} \xrightarrow{n \to \infty} f$ on $A_r$ uniformly, together with all its derivatives.

Without loss of generality we assume that $V = f(B_2(0))$, so $B_{\tilde \rho}(p) = (\beta^p)^{-1}(f(B_2(0)))$. Let 
$$U_n := (\beta^p)^{-1}(f_{1,n}(B_2(0)).$$
Then $U_n$ is a smooth hypersurface of $\sphere^4$ and for all $n$
\begin{align}\label{rand}
 U_n \cap B_{\tilde \rho}(p) \supset (\beta^p)^{-1}(f(B_2(0)\setminus B_1(0))).
\end{align}
Clearly $U_n = (\beta^p)^{-1}(f_{1,n}(B_2(0)\setminus A_1)) \cup (\beta^p)^{-1}(f_{1,n}(A_1))$.
Since $f_{1,n}$ is convex on $B_2(0) \setminus A_1$, it follows that $(\beta^p)^{-1}(f_{1,n}(B_2(0)\setminus A_1))$ equipped with the induced metric has curvature bounded below by $1$ for all $n \geq 1$. On the other hand $(\beta^p)^{-1} \circ f_{1,n}$ converges uniformly to $(\beta^p)^{-1} \circ f$ on $A_1$, as well as all of its derivatives. Since $(\beta^p)^{-1}(f(A_1))$ has curvature bounded below by $1$ (it is an open submanifold of $B_{\tilde \rho}(p)$), it follows that $(U_n,g_{\sphere^4})$ has curvature bounded below by $1/2$ for all sufficiently large $n$. Using \eqref{rand}, we can glue $(U_n,g_{\sphere^4})$ smoothly to $(M^* \setminus B_{\tilde \rho}(p),d^\epsilon)$ to obtain a resolution of the singularity at $p$.

We perform this construction at every point of $F$ and obtain a smooth metric $\tilde g^\epsilon$ as in Proposition \ref{resolution of F}. \hfill$\square$

\begin{remark} The resolution of $p$ in the above argument can be performed while preserving the given isometric action of $\crcl$ on $B_{\tilde \rho}(p)$. Observe that this is satisfied when the maps $f_{r,n}$ share the symmetries of $f$. For that it is enough to choose a function $\sigma$ for the convolution in the above proof which is invariant under $\mathsf O(3)$.

%For that observe that the isometric action of $\crcl$ on $\Sigma_p(M^*,d^\epsilon) \subset \sphere^3$ is induced by a rotation fixing a $1$-dimensional sphere. Hence the action of $\crcl$ on $B_{\tilde \rho}(p) \subset \sphere^4$ is induced by a rotation fixing a $2$-sphere. This action induces an orthogonal action of $\crcl$ on $T_p\sphere^4 = \RR^4$ which fixes a $2$-plane $F^2$. Therefore
\end{remark}
%------------------------------------------------------------------------
%\include{ChapterEmbedding}
%=============================================================================================
\section{\texorpdfstring{A Ricci flow on $M^*$}{A Ricci flow on M*}}\label{ricci}
In this chapter we show how Theorem \ref{resolution thm} follows from Theorem \ref{umformulierung} using results on the Ricci flow by Simon and Hamilton. Let us recall the basic
\begin{definition}
Let $\mathcal I \subseteq \RR$ be connected and $M$ be a smooth manifold together with a smooth $1$-parameter family of Riemannian metrics $g(t)_{t \in \mathcal I}$. Then $g(t)$ is a \textit{solution of the Ricci flow equation} if
$$\frac{\partial}{\partial t}g_{ij} = -2\Ric_{ij}.$$
We also simply say that $g(t)$ is a Ricci flow on $M$.
\end{definition}

The Ricci flow was introduced by Hamilton in \cite{hamilton82}, where he proved that for a closed Riemannian manifold $(M,g)$ there always exists a solution $(M,g(t))$ to the Ricci flow equation on some time interval $[0,T[$, with $T > 0$ and $g(0) = g$.  In \cite{simon09} it was shown that there exists a Ricci flow on a possibly singular Riemannian $3$-manifold given that it may be approximated by smooth metrics in a controlled way. The following Theorem is part of Theorem 7.2 in \cite{simon09}.
\begin{theorem}\label{simon}
Let $(M_n,g^n_0)$ be a sequence of closed $3$-manifolds satisfying 
\begin{align*}
 \operatorname{diam}(M_n,g^n_0) \leq d_0,\\
 \sec_{g^n_0} \geq 1/n,\\
 \operatorname{vol}(M,g^n_0) \geq v_0 > 0.
\end{align*}
Then there exists an $S = S(v_0,d_0) > 0$ such that for all $n$ there exists a solution to the Ricci flow $g_n(t)$ with $t \in [0,S]$ and $g_n(0) = g^n_0$. After taking a subsequence, there exists a Hamilton limit solution $(M,g(t))_{t \in ]0,S[} = \lim_{n \to \infty}(M,g_n(t))_{t \in ]0,S[}$ (see \cite{hamilton95}) satisfying 
\begin{align*}
 \sec_{g(t)} \geq 0
\end{align*}
for all $t \in \ ]0,S[$ and $(M,g(t))$ is closed for all $t \in \ ]0,S[$. If $(M_n,g_0^n) \to (M_\infty,d_\infty)$ in Gromov-Hausdorff sense for $n \to \infty$, then $(M,g(t)) \to (M_{\infty},d_\infty)$ for $t \to 0$. 
\end{theorem}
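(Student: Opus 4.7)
The plan is to establish the three parts of the theorem in sequence: uniform existence time, extraction of a subsequential limit, and the Gromov-Hausdorff convergence at $t = 0$.

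First I would establish the uniform lower bound $S = S(v_0, d_0) > 0$ on the existence time. By Hamilton's short time existence theorem each $(M_n, g_0^n)$ admits a Ricci flow on some maximal interval $[0, T_n[$; the task is to show $T_n$ is bounded below independently of $n$. Because $\sec_{g_0^n} \geq 1/n > 0$, in dimension $3$ the Hamilton-Ivey pinching estimate gives uniform two-sided curvature bounds in terms of the initial scalar curvature, and Bishop-Gromov together with $\operatorname{vol} \geq v_0$ and $\operatorname{diam} \leq d_0$ yields a uniform lower volume bound for balls of a fixed radius. Combined with the curvature lower bound, this gives a uniform positive injectivity radius estimate via a Cheeger-Gromov style argument. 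These bounds then feed into a doubling-time estimate for the curvature along the flow (a maximum-principle argument on $|\mathrm{Rm}|$ using the evolution equation), producing a uniform $S$ on which $|\mathrm{Rm}| \leq C$ for some universal $C = C(v_0, d_0)$.

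Next I would extract the Hamilton limit. Shi's local derivative estimates, applied on intervals $[\epsilon, S]$ for each $\epsilon > 0$, yield uniform $C^k$ bounds for $g_n(t)$ on $(0, S]$, depending only on $\epsilon$, $S$, and the curvature bound from the first step. Together with the injectivity radius lower bound this satisfies the hypotheses of Hamilton's compactness theorem for Ricci flows, and after passing to a subsequence one obtains a smooth limit flow $(M, g(t))_{t \in\ ]0, S[}$. Preservation of nonnegative sectional curvature under the limit follows from the fact that $\sec_{g_n(t)} \geq 0$ is preserved along the flow in dimension $3$ (by the evolution of the curvature operator, applied to the pinched initial data, and passed to the limit), giving $\sec_{g(t)} \geq 0$ for all $t \in\ ]0, S[$. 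Closedness of $M$ in the limit follows from the uniform diameter control $\operatorname{diam}(M, g(t)) \leq d_0 + C\sqrt{t}$ coming from distance distortion.

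Finally I would address the Gromov-Hausdorff convergence $(M, g(t)) \to (M_\infty, d_\infty)$ as $t \to 0$. The key estimate is the distance distortion bound under Ricci flow with $|\mathrm{Rm}| \leq C$ on $[0, S]$: for any two points $x, y$ with $d_{g_n(0)}(x, y) \geq r_0$, one has $|d_{g_n(t)}(x, y) - d_{g_n(0)}(x, y)| \leq C' \sqrt{t}$ uniformly in $n$ (this follows from the standard computation bounding $\partial_t d$ using the Ricci curvature bound along minimizing geodesics, due to Hamilton). Passing to the limit $n \to \infty$ using the assumed $(M_n, g_0^n) \to (M_\infty, d_\infty)$ shows that $(M, g(t))$ is $C'\sqrt{t}$-close in Gromov-Hausdorff distance to $(M_\infty, d_\infty)$.

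The main obstacle is the first step: getting the uniform existence time $S(v_0, d_0)$ without any smoothness assumption on a limit and depending only on the geometric quantities listed. The delicate point is that $\sec_{g_0^n} \geq 1/n$ degenerates to just $\sec \geq 0$, so one cannot use the curvature lower bound as a quantitative input; all quantitative control must come from $v_0$ and $d_0$ together with the $3$-dimensional structure of the Ricci flow. Simon's contribution in \cite{simon09} is precisely to make this argument robust, via a careful analysis of curvature doubling times in the presence of a non-collapsing assumption, and this is the step I would rely on rather than attempt to reprove in detail.
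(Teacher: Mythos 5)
The paper does not prove this statement at all: the text immediately preceding the theorem states that it ``is part of Theorem 7.2 in \cite{simon09}'', and it is invoked as an external citation. So there is no internal proof for your sketch to be measured against; a one-line attribution would have sufficed.

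That said, your sketch contains a substantive error worth flagging. You claim that the first step yields ``a uniform $S$ on which $|\mathrm{Rm}| \leq C$ for some universal $C = C(v_0,d_0)$'' on all of $[0,S]$. This cannot hold: the initial metrics $g^n_0$ carry no uniform upper curvature bound (only $\sec \geq 1/n$), so $|\mathrm{Rm}(g_n(0))|$ can blow up as $n \to \infty$ and no uniform bound at $t=0$ can survive. What Simon's analysis actually produces is a bound of the form $|\mathrm{Rm}(g_n(t))| \leq c_0/t$ for $t \in (0,S]$, degenerating as $t \to 0$. Your subsequent steps quietly rely on this correct form: the Shi-estimate argument requires working on $[\epsilon,S]$ precisely because there is no $t=0$ bound, and the $\sqrt{t}$ distance-distortion estimate arises from integrating $\sqrt{c_0/t}$, not from a uniform $|\mathrm{Rm}| \leq C$ (which would give linear-in-$t$ distortion). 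So the internal logic of your sketch is inconsistent with the uniform bound you assert. You also correctly identify that the uniform existence time is the delicate point and defer it to Simon; once one accepts that, the rest of the sketch (Shi, Hamilton compactness, distance distortion feeding into Gromov--Hausdorff closeness) is the standard assembly and is fine modulo the $c_0/t$ correction.
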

From Theorem \ref{umformulierung} we obtain a sequence $(M^*,g_n^0)$ satisfying the conditions of the above theorem. Thus we obtain a Ricci flow $(M^*,g(t))$ with $t \in \ ]0,S[$ such that $(M^*,g(t))$ has nonnegative curvature for all $t$ and $(M^*,g(t)) \to (M^*,g)$ in Gromov-Hausdorff sense as $t \to 0$. Now Theorem \ref{resolution thm}, part (a), follows from classical results by Hamilton (see \cite{hamilton82}, \cite{hamilton86} or Theorem 6.64 in \cite{chow-peng-li06}).
\begin{theorem}\label{hamilton}
Let $(M^3, g(t))$ with $t \in [0,T[$ and $T > 0$ be a solution to the Ricci flow on a closed, simply connected $3$-manifold. If $(M,g(0))$ has nonnegative curvature, then $(M,g(t))$ has positive curvature for all $0 < t < T$ (and $M^3$ is diffeomorphic to $\sphere^3$).
\end{theorem}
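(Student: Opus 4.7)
The plan is to deduce this from the maximum principle for the Ricci flow in dimension three, together with Hamilton's original sphere theorem. The key dimensional fact is that in dimension three the full Riemann curvature operator is algebraically determined by the Ricci tensor, so the evolution equation $\partial_t g_{ij} = -2\mathrm{Ric}_{ij}$ translates into a reaction–diffusion equation for the curvature operator whose reaction term one can analyze by pointwise ODE methods. First I would verify that the cone of nonnegative curvature operators (equivalently, of nonnegative sectional curvature, which in dimension three coincides with nonnegative curvature operator) is preserved under the flow. This uses Hamilton's maximum principle for tensors: one shows the set is fiberwise convex and $O(n)$-invariant, and that solutions of the pointwise ODE $\frac{d}{dt}\mathcal R = \mathcal R^2 + \mathcal R^\#$ starting in the cone stay in the cone, which in dimension three is a straightforward computation on the eigenvalues $\lambda_1 \le \lambda_2 \le \lambda_3$ of $\mathcal R$.

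Having preserved $\sec_{g(t)}\ge 0$ for all $t\in[0,T[$, I would invoke the strong maximum principle for systems (again due to Hamilton) to obtain the dichotomy: either $\sec_{g(t)}>0$ for every $t\in\,]0,T[$, or there exist a time $t_0>0$, a point $p$ and a two-plane $\sigma\subset T_pM$ with zero curvature, in which case the kernel of $\mathcal R_{g(t_0)}$ is parallel and $\mathrm{Hol}$-invariant. The de Rham decomposition theorem then forces the universal cover $(\tilde M,\tilde g(t_0))$ to split isometrically as a Riemannian product with a flat factor of positive dimension. Since $M$ is closed and simply connected, $\tilde M=M$ is compact, so the only way to split with a flat factor is to have a factor of $S^1$ or $\mathbb R$, neither of which is compatible with $M$ being a compact, simply connected three-manifold. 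Hence the splitting alternative is ruled out and $\sec_{g(t)}>0$ for all $t\in\,]0,T[$.

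Once strict positivity is established at some positive time, $g(t)$ in particular has positive Ricci curvature, so I would invoke Hamilton's 1982 theorem from \cite{hamilton82}: on a closed three-manifold with positive Ricci curvature the normalized Ricci flow exists for all time and converges smoothly to a metric of constant positive sectional curvature. Since simply connected space forms of positive curvature are diffeomorphic to $\sphere^3$, this yields $M^3\cong \sphere^3$. The main obstacle in executing this outline is the strong maximum principle step: the tensorial version and the subsequent identification of the zero locus of $\mathcal R$ with a parallel distribution requires genuine PDE work and is where most of the conceptual difficulty lies; the rest is either routine ODE analysis in dimension three or a direct citation of \cite{hamilton82}, as the statement here indicates.
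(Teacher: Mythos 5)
The paper does not actually prove Theorem~\ref{hamilton}; it only cites the classical references (Hamilton \cite{hamilton82}, \cite{hamilton86}, and Theorem~6.64 in \cite{chow-peng-li06}), so there is no in-paper proof to compare against. Your outline is the standard argument contained in those references, and the logical skeleton is correct: preservation of the nonnegative curvature cone via Hamilton's tensor maximum principle (using that in dimension three nonnegative sectional curvature coincides with nonnegative curvature operator), Hamilton's strong maximum principle for systems to force the dichotomy between strict positivity and a parallel degeneracy, a holonomy/de~Rham argument to exclude the degenerate case on a compact simply connected manifold, and finally Hamilton's 1982 convergence theorem under positive Ricci curvature to conclude $M^3 \cong \sphere^3$.

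Two small points of precision are worth flagging. First, the strong maximum principle (Theorem~8.3 of \cite{hamilton86}) produces, for small positive times, a parallel subbundle that is the \emph{image} of the curvature operator inside $\Lambda^2 TM$, and this image is moreover a Lie subalgebra of $\mathfrak{so}(3)$; since $\mathfrak{so}(3)$ has no two-dimensional Lie subalgebras, the only nontrivial degenerate possibilities in dimension three are rank zero (flat) or rank one. Passing to the kernel, as you do, is equivalent but you should be aware that the reduction uses the Lie-subalgebra structure, not just parallelism, to rule out the rank-two case. Second, after de~Rham the universal cover splits off an \emph{isometric} $\RR^k$-factor (not an $S^1$-factor, since it is simply connected), so the contradiction comes cleanly from the universal cover being noncompact while $\tilde M = M$ is compact; the remark about $S^1$ is harmless but beside the point. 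You are also right that once $\sec > 0$ at a single positive time the 1982 theorem already yields the diffeomorphism type, while persistence of positivity for all $t \in \,]0,T[$ follows because the open cone of positive curvature operators is itself preserved by the flow.
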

In particular we obtain that $M^*$ is diffeomorphic to $\sphere^3$.
\begin{corollary}
$M^*$ is diffeomorphic to $\sphere^3$.
\end{corollary}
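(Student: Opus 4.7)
The plan is to read off the corollary as essentially immediate from the paragraphs directly preceding it, which have already assembled Theorem \ref{umformulierung}, Simon's Theorem \ref{simon}, and Hamilton's Theorem \ref{hamilton} into a Ricci flow on $M^*$. So my proof is only a matter of pointing at the right output of that construction.

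First I would note that by Lemma \ref{fintushel}, $M^*$ is a closed, simply connected topological $3$-manifold; the approximating metrics from Theorem \ref{umformulierung} realize $M^*$ as a smooth manifold (the resolution constructions of sections \ref{resolving brho}–\ref{smooth fp} produce genuine smooth Riemannian metrics on $M^*$, so in particular a smooth structure). Then by the discussion in section \ref{ricci}, Theorem \ref{umformulierung} supplies a sequence $(M^*, g_n^0)$ with $\sec_{g_n^0} \geq 1/n$, uniformly bounded diameter and uniformly bounded below volume, and Gromov-Hausdorff limit $(M^*,d)$. Applying Theorem \ref{simon} yields a Hamilton limit Ricci flow $(M^*, g(t))_{t \in \, ]0,S[}$ with $\sec_{g(t)} \geq 0$ for all $t \in \, ]0,S[$ and $(M^*, g(t)) \to (M^*,d)$ as $t \to 0$.

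Next I would pick any $t_0 \in \, ]0,S[$ and consider the Ricci flow $(M^*, g(t_0 + s))_{s \in [0, S - t_0[}$, which is a Ricci flow on the closed, simply connected, smooth $3$-manifold $M^*$ with initial metric $g(t_0)$ of nonnegative sectional curvature. Theorem \ref{hamilton} then applies directly, and its conclusion asserts precisely that $M^3 \cong \sphere^3$ in the smooth category. Hence $M^*$ is diffeomorphic to $\sphere^3$, as desired.

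There is essentially no obstacle here: all the hard analytic and geometric work has been carried out in the earlier chapters and in Simon's and Hamilton's cited results. The only point requiring a line of care is that $M^*$ carries a smooth structure to which Theorems \ref{simon} and \ref{hamilton} can be applied; this is supplied by the resolution of Theorem \ref{umformulierung}, whose output lives on the already fixed smooth $3$-manifold $M^*$ (its smoothness as a manifold being forced by the existence of the smooth metrics $g_n$ and independently confirmed by Fintushel's description in Lemma \ref{fintushel}).
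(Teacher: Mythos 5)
Your proof is correct and follows the paper's own route exactly: Theorem~\ref{umformulierung} produces the almost nonnegatively curved approximating metrics, Theorem~\ref{simon} gives the Ricci flow $(M^*,g(t))_{t\in\,]0,S[}$ with $\sec_{g(t)}\geq 0$, and Theorem~\ref{hamilton} (Hamilton's classification) upgrades this to positive curvature and hence to a diffeomorphism $M^*\cong\sphere^3$. Your added care---noting that $M^*$ carries the requisite smooth structure and restarting the flow at some $t_0>0$ so that Theorem~\ref{hamilton}, stated for a flow defined at $t=0$, literally applies---is sound and only makes explicit what the paper leaves implicit.
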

%\begin{definition}
% Let $(M_k,G_k)$ be a sequence of compact manifolds $M_k$ together with a solution to the ricci flow on $M_k$, that is $G_k = g(t)_{t \in [0,T]}$ satisfiying ...

% Let $M$ be compact and $G$ a Ricci flow on $M$. Then $(M,G)$ is called a (hamiltonian) limit of $(M_k,G_k)$ if there exists a sequence of diffeomorphisms $F_k : M \to M_k$ such that $F_k^*G$ converges uniformly to $G$ together with all its derivatives.
%\end{definition}

%----------------------------------------------------------------------------------------------
\section{\texorpdfstring{Proof of theorem \ref{resolution thm}, part $(b)$.}{Proof of theorem resolution, part b}}\label{final proof}
Let $M$ be as in Theorem \ref{resolution thm} with quotient space $(M^*,d)$ and $c$ be a closed curve contained in $F \cup E^* \subset M^*$. From part $(a)$ of Theorem $1$ it follows that $M^* \cong \sphere^3$ (compare the previous section). As in \cite{grove-wilking13} let $(M^*_2(c), \hat d)$ denote the canonical two fold branched cover 
$$(M^*_2(c),\hat d) \to (M^*,d)$$
branched along $c$. As discussed in \cite{grove-wilking13}, $M^*_2(c)$ is an Alexandrov space with nonnegative curvature as well. Let $\iota : M^*_2(c) \to M^*_2(c)$ denote the isometric involution inducing $M^*_2(c) \to M^* = M^*_2(c)/\iota$. If we view $c$ as a curve in $M^*_2(c)$ we denote it by $\hat c$. Then $\hat c$ equals the fixed point set of $\iota$. We denote the points of $M^*_2(c)$ projecting to fixed points by $\hat F$. A point $\hat c(t)$ is a nonregular point of $M^*_2(c)$ if and only if its isotropy group has order strictly bigger than $2$. In particular all points of $\hat F$ are nonregular points of $M^*_2(c)$. We denote by $\hat E^*$ the set of nonregular points of $M^*_2(c)$ that are not contained in $\hat F$ (note that this notation might be misleading, since $\hat E^*$ does not project to $E^*$, if $c$ contains points with isotropy group $\mathbb Z_2$).

With some minor modifications and using ideas from \cite{grove-wilking13} the proof of part $(a)$ of Theorem \ref{resolution thm} carries over $\iota$-equivariantly to $M^*_2(c)$ yielding the following theorem.
\begin{theorem}\label{branched cover}
 There exists a sequence of smooth, positively curved metrics $\hat h_n$ on $M^*_2(c)$, invariant under $\iota$, such that 
 $$(M^*_2(c),\hat h_n) \xrightarrow{n \to \infty} (M^*_2(c),\hat d)$$
 in Gromov-Hausdorff sense.
\end{theorem}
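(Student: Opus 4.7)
The plan is to carry out the proof of part $(a)$ of Theorem \ref{resolution thm} $\iota$-equivariantly on $M^*_2(c)$ in place of $M^*$. The space $(M^*_2(c), \hat d)$ is a closed Alexandrov space with $\curv \geq 0$ whose nonregular set is again a graph of isolated singularities $\hat F$ connected by arcs of constant isotropy type, now including arcs of $\hat c$ carrying isotropy $\mathbb Z_2$. Away from $\hat c$ the covering map $M^*_2(c) \to M^*$ is a local isometry, so every local step of the proof of Theorem \ref{umformulierung} can be lifted. At points of $\hat c$, $\iota$ is an isometry fixing $\hat c$ pointwise and acting as a reflection in the normal directions. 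The output of the whole procedure will be a sequence of smooth, $\iota$-invariant metrics on $M^*_2(c)$ with controlled lower curvature bound, to which Simon's and Hamilton's Ricci-flow results are then applied.

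Concretely, I would first establish an $\iota$-equivariant analog of Proposition \ref{symmetry}. The gluing lemma \ref{gluing} and the averaging prescription \eqref{definition of h} both preserve any isometry that leaves the submanifold and the auxiliary data invariant, and Corollary \ref{constant curvature at fixed point} preserves pointwise isotropies; since $\iota$ enjoys all these compatibilities, the same deformations applied upstairs produce metrics $\hat d_n$ satisfying the analogs of properties \textit{1--6} and remaining $\iota$-invariant. Next, following sections \ref{resolving brho}--\ref{resolving E}, I would resolve the arcs of $\hat E^*$: for arcs swapped by $\iota$ one applies the construction on one arc and transports via $\iota$, while for arcs fixed by $\iota$ (those in $\hat c$) the construction is intrinsically invariant because the ingredients $\eta_{\tau,\delta}$, $h_{\tau,\delta}$, $\overline h_{\tau,\delta}$, the cutoff $j$, and the Greene--Wu smoothing only depend on the radial/angular coordinates adapted to the singular geodesic, and $\iota$ acts as a reflection in these coordinates. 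For the points of $\hat F$ lying on $\hat c$, the Beltrami--convolution smoothing of section \ref{smooth fp} can be chosen $\iota$-invariant by taking the convolution kernel $\sigma_n$ invariant under the linearized action of $\iota$, as noted in the remark at the end of that section; this yields the analog of Proposition \ref{resolution of F} in the equivariant category.

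Finally, I would feed the resulting sequence into Theorem \ref{simon}. Because the Ricci flow equation is canonical, each evolved metric $\hat g_n(t)$ remains $\iota$-invariant, has $\sec \geq 0$ for $t > 0$, and converges to $(M^*_2(c), \hat d)$ in Gromov--Hausdorff sense as $t \to 0$. Hamilton's Theorem \ref{hamilton} then upgrades nonnegative to strictly positive curvature for $t > 0$ (using simple connectedness of $M^*_2(c)$, which in this setting is inherited from $M^* \cong \sphere^3$), and a diagonal choice $\hat h_n := \hat g_n(t_n)$ with $t_n \to 0$ produces the desired sequence. The main obstacle will be the equivariant symmetrization along $\hat c$: one must arrange simultaneously the polar $\crcl$-action used by the edge resolution and the reflection $\iota$ fixing $\hat c$ pointwise. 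Conceptually these commute because $\iota$ acts orthogonally on the normal bundle of $\hat c$; nevertheless, verifying that the successive deformations of sections \ref{section symmetry} and \ref{resolution} continue to respect $\iota$ at every stage requires a careful choice of coordinates and of the auxiliary functions, and this is the point at which one has to invoke the ideas of \cite{grove-wilking13} to handle the branched-cover geometry.
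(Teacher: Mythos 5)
Your high-level strategy matches the paper: lift the resolution of Theorem \ref{resolution thm}(a) $\iota$-equivariantly to $M^*_2(c)$, feed the output into Simon's Ricci flow result, and upgrade to positive curvature via Hamilton's theorem. However, there is a genuine gap at the final step. You state that simple connectedness of $M^*_2(c)$ is inherited from $M^* \cong \sphere^3$, but this is false for branched double covers in general: the branched double cover of $\sphere^3$ over a knotted circle typically has nontrivial fundamental group (for the trefoil one gets the lens space $L(3,1)$), and establishing that $c$ is unknotted is precisely the content of the corollary that the paper \emph{deduces} from this theorem, so assuming it here would be circular. The paper's actual argument is non-trivial: the two fixed points $p,q \in c$ give points in $M^*_2(c)$ whose spaces of directions are strictly smaller than $\sphere^2(1/2)$; by a lemma of Grove--Wilking a nonnegatively curved $3$-dimensional Alexandrov space has at most four such points, so after passing to the universal cover one gets $\vert \pi_1(M^*_2(c)) \vert \leq 2$; and the order-$2$ case is then excluded by applying Theorem \ref{hamilton} to the universal cover of $(M^*_2(c),\hat g(t))$, which would force $M^*_2(c) \cong \RR P^3$ with $\iota$ linear and hence a second fixed-point component of $\iota$, a contradiction. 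Without this, your invocation of Theorem \ref{hamilton} is not justified.

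A secondary issue: the constructions of sections \ref{resolving brho}--\ref{resolving E} do not apply verbatim on the branched cover. At a point of $\hat F$ the space of directions is not $\sphere^3/\crcl$ but its two-fold branched cover, so the radial derivative $R'(0)$ of the warping function is $2m^{-1}$ rather than $m^{-1}$; all the auxiliary functions $\eta_{\tau,\delta}$, $h_{\tau,\delta}$, $\overline h_{\tau,\delta}$ must be rebuilt with $m^{-1}$ replaced by $2m^{-1}$, and one must check $0 < 2m^{-1} < 1$, which holds because a nonregular arc $\hat\gamma \subset \hat E^*$ covers an arc of isotropy $\ZZ_m$ with $m \geq 3$. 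Related to this, your description of the singular set is off: points of $c$ with isotropy $\ZZ_2$ become \emph{regular} in $M^*_2(c)$, they do not carry $\ZZ_2$-singularities; and after the paper's first reduction (assume $F \cup E^*$ equals the image of $c$) there are no arcs swapped by $\iota$, so that case need not be considered. Finally, the issue you flag at the end, reconciling the local $\crcl$-action with $\iota$-invariance, dissolves once one observes, as the paper does, that $\iota$ is itself induced by the $\crcl$-action near $\hat c$, so any $\crcl$-invariant construction is automatically $\iota$-invariant.
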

This theorem is equivalent to Theorem \ref{resolution thm}, part $(b)$: On the one hand, a sequence $h_n$ as in Theorem \ref{resolution thm}, $(b)$, is obtained from a sequence $\hat h_n$ as in the above theorem via the quotient metrics $\hat h_n /\iota$. On the other hand given a sequence $h_n$ on $M^*$ as in Theorem \ref{resolution thm}, $(b)$, there exists a unique smooth metric $\hat h_n$ on $M^*_2(c)$ such that $\hat h_n / \iota = h_n$. Then $\hat h_n$ has the desired properties. 
\\ \\
We sketch what modifications are necessary to proof Theorem \ref{branched cover}:

First, using the techniques of section \ref{resolution}, we may assume that $F \cup E^*$ equals the image of $c$ (in fact, this is the case; compare Theorem 2.5 in  \cite{grove-wilking13}, theorem 2.5), since all singularities outside of $c$ can be resolved without changing the metric in a neighborhood of $c$. Now, consider a sequence of metrics $d_n$ on $M^*$ given by Proposition \ref{symmetry}. These metrics induce metrics $\hat d_n$ on $M^*_2(c)$ with properties analogous to the properties $1 - 6$ of Proposition \ref{symmetry}. The notable differences are that for $p \in \hat F$ in this situation $\Sigma_pM^*_2(c)$ is not isometric to $\sphere^3/\crcl$, but to a  two fold branched cover of it, and similarly a neighborhood $U$ of an arc $\hat \gamma \in \hat E^*$ is not a good orbifold, but a two fold branched cover of one with branching locus $\gamma$.

Then we can resolve a singular arc $\hat \gamma \subset \hat E^*$ corresponding to an arc $\gamma$ with isotropy group $\ZZ_m$ analogously to sections \ref{resolving brho} - \ref{resolving E}. Due to the above noted differences we only need to replace the constant $m^{-1}$ used there by $2m^{-1}$ (coming from the angle of the normal cone to $\hat \gamma$, which is $2$ times the normal angle at $\gamma$). Since $\hat \gamma$ is nonregular, it follows that $m \geq 3$ and we have $0 < 2m^{-1} < 1$. Then the arguments go through virtually unchanged. Also note that the involution $\iota$ is induced by the $\crcl$-action near $\hat c$. Therefore we obtain a sequence of $\iota$-invariant metrics $\hat g_n$ that are smooth at all points not contained in $\hat F$ and satisfy properties analogous to Proposition \ref{resolution of E}. The remaining isolated singularities can be resolved as in section \ref{smooth fp} (note that the convolution can be performed invariantly under $\iota$, compare the remark to the proof of Proposition \ref{resolution of F}). Therefore, we obtain an approximation of $\hat d$ as in Proposition \ref{branched cover}, but only via almost nonnegatively curved metrics $\hat h_{n,0}$. Using Theorem \ref{simon}, we obtain a solution to the Ricci flow $\hat g(t)_{t \in ]0,T[}$ on $M^*_2(c)$ such that $\hat g(t)$ has nonnegative curvature and $(M^*_2(c),\hat g(t))$ converges to $(M^*_2(c),\hat d)$ in Gromov-Hausdorff sense for $t \to 0$. Moreover, $\hat g(t)$ is invariant under $\iota$ for all $t$, since the Ricci flow of the metrics $\hat h_{n,0}$ preserves their isometries. To finish the proof it remains to show that $M_2^*(c)$ is simply connected, so that we can apply Theorem \ref{hamilton}.

For that we argue as in \cite{grove-wilking13}: Since $F \cup E^*$ contains a closed curve it follows that there are at least two fixed points $p,q \in M^*$, which are contained in $c$. Then the spaces of directions $\Sigma_{p}M_2^*(c)$ and $\Sigma_{q}M_2^*(c)$ are smaller than $\sphere^2(1/2)$, the $2$-sphere of radius $1/2$ (this means that choosing some isometric $\crcl$-action on $\sphere^2(1/2)$, there exist distance decreasing, $\crcl$-equivariant homeomorphisms $\sphere^2(1/2) \to \Sigma_{\hat p}M_2^*(c)$ and $\sphere^2(1/2) \to \Sigma_{\hat q}M_2^*(c)$). For a general nonnegatively curved, $3$-dimensional Alexandrov space, there are at most $4$ points whose spaces of directions are smaller than $\sphere^2(1/2)$ (see \cite{grove-wilking13}, lemma 2.6). Therefore, considering the universal cover of $M_2^*(c)$ which is an Alexandrov space locally isometric to $M_2^*(c)$, it follows that $\pi_1(M_2^*(c))$ has order at most $2$. Assume it has order $2$. Considering a nonnegatively curved metric $\hat g(t)$ from above on $M^*_2(c)$ and applying Theorem \ref{hamilton} to the universal cover of $M_2^*(c)$ shows that $M^*_2(c)$ is diffeomorphic to $\RR P^3$ and the involution $\iota$ is linear. But then there must be a second fixed point component of $\iota$, a contradiction.  Hence $M_2^*(c)$ is simply connected.
\hfill$\square$
\\ \\
Since the proof also shows that $M_2^*(c)$ is simply connected we deduce from Theorem \ref{hamilton}:
\begin{corollary}
$M_2^*(c)$ is diffeomorphic to $\sphere^3$. 
\end{corollary}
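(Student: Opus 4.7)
The plan is to reduce this corollary to Hamilton's classification (Theorem \ref{hamilton}) in essentially the same way that the analogous corollary for $M^*$ was obtained at the end of Section \ref{ricci}. The three ingredients are already assembled: simple connectivity of $M_2^*(c)$, existence of a Ricci flow on $M_2^*(c)$ starting from the approximating sequence, and the fact that this flow attains nonnegative sectional curvature for positive time.

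First, I would invoke the sketch given in the proof of Theorem \ref{resolution thm}, part $(b)$: there exists a sequence of smooth, $\iota$-invariant Riemannian metrics $\hat h_{n,0}$ on $M_2^*(c)$ with $\operatorname{sec}_{\hat h_{n,0}} \geq -1/n$ that Gromov-Hausdorff converge to $(M_2^*(c),\hat d)$. Since $M_2^*(c)$ is a closed $3$-dimensional Alexandrov space, these metrics automatically satisfy uniform bounds on diameter and a uniform lower bound on volume (both following from GH-convergence to the fixed limit $(M_2^*(c),\hat d)$). After a harmless rescaling to obtain $\operatorname{sec} \geq 1/n$, Theorem \ref{simon} of Simon applies and yields, on a uniform time interval $]0,S[$, a Hamilton limit Ricci flow $\hat g(t)$ on $M_2^*(c)$ with $\operatorname{sec}_{\hat g(t)} \geq 0$ for every $t \in ]0,S[$ and with $(M_2^*(c),\hat g(t)) \to (M_2^*(c),\hat d)$ in the Gromov-Hausdorff sense as $t \to 0$.

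Next I would note, as already established in the proof of part $(b)$, that $M_2^*(c)$ is simply connected. Therefore the closed Riemannian $3$-manifold $(M_2^*(c),\hat g(t_0))$, for any fixed small $t_0 \in ]0,S[$, is simply connected and has nonnegative sectional curvature. Running the Ricci flow further from $\hat g(t_0)$ for a short time and applying Hamilton's Theorem \ref{hamilton} gives that $M_2^*(c)$ is diffeomorphic to $\sphere^3$.

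I do not expect any genuine obstacle here: all technical work has been done in the proof of Theorem \ref{branched cover} and in Sections \ref{ricci} and \ref{final proof}. The only point requiring a line of care is verifying the hypotheses of Theorem \ref{simon} (diameter upper bound and volume lower bound for the approximating sequence), which is immediate from Gromov-Hausdorff convergence to a fixed closed Alexandrov space of positive $3$-dimensional Hausdorff measure.
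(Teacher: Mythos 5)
Your approach is essentially the paper's: after the proof of Theorem~\ref{branched cover} has produced $\iota$-invariant Ricci flow metrics $\hat g(t)$ of nonnegative curvature on $M_2^*(c)$ converging to $(M_2^*(c),\hat d)$ as $t\to 0$, and after $\pi_1(M_2^*(c))=0$ has been established by the counting argument for singular spaces of directions, the paper cites Theorem~\ref{hamilton} and is done. You reconstruct exactly this chain, and the remark that diameter and volume bounds come for free from Gromov--Hausdorff convergence to a fixed compact $3$-dimensional Alexandrov space is correct and is the same implicit justification the paper uses when invoking Theorem~\ref{simon}.

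One small correction: your phrase ``after a harmless rescaling to obtain $\sec\geq 1/n$'' does not make sense, since rescaling multiplies sectional curvature by a positive constant and therefore cannot turn a lower bound $\sec\geq -1/n$ into $\sec\geq 1/n>0$. This is responding to what is evidently a sign typo in the statement of Theorem~\ref{simon}; the intended hypothesis is $\sec_{g^n_0}\geq -1/n$, which the approximating metrics $\hat h_{n,0}$ already satisfy with no rescaling needed (indeed this is exactly what Theorem~\ref{umformulierung} and the construction in Section~\ref{final proof} deliver). Dropping the rescaling sentence leaves a correct proof.
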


\begin{corollary}
$c$ is unknotted.
\end{corollary}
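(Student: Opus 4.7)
The plan is to deduce this from the previous corollary via the classical characterisation of the unknot in terms of its two-fold branched cover. Concretely, what has just been established is that the two-fold branched cover $M^*_2(c)$ of $M^* \cong \sphere^3$ branched along $c$ is itself diffeomorphic to $\sphere^3$, and that the deck transformation $\iota$ is a smooth involution of $M^*_2(c)$: indeed the $\iota$-invariant metrics $\hat h_n$ (and the Ricci-flow metrics $\hat g(t)$) constructed in the proof of Theorem \ref{branched cover} endow $M^*_2(c)$ with a smooth structure with respect to which $\iota$ acts as a smooth isometry, and $\Fix(\iota) = \hat c$ is a tame simple closed curve.

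I would then invoke Waldhausen's theorem on smooth involutions of $\sphere^3$ (the $\ZZ_2$-case of the Smith conjecture): any smooth involution of $\sphere^3$ whose fixed point set is a circle is smoothly conjugate to an orthogonal involution. Applying this to $\iota$ shows that the pair $(M^*_2(c),\hat c)$ is equivalent, after a diffeomorphism of $\sphere^3$, to $(\sphere^3,\sphere^1)$ with $\sphere^1 \subset \sphere^3$ a great circle. Projecting back under $M^*_2(c) \to M^*$ identifies $c$ with a standardly embedded circle in $\sphere^3$, i.e.\ $c$ is unknotted.

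There is essentially no technical obstacle here: the only point to verify is that the smooth structure on $M^*_2(c)$ used above is the same one making $\iota$ smooth, but this is automatic because the approximating metrics $\hat h_n$ and the limiting Ricci flow $\hat g(t)$ are all $\iota$-invariant by construction. The entire content of the corollary is therefore the combination of Theorem \ref{branched cover} (together with the preceding corollary identifying $M^*_2(c)$ with $\sphere^3$) and Waldhausen's theorem.
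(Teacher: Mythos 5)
Your proof is correct, but it takes a different route from the one the paper has implicitly set up, and the difference matters for the stated purpose of the chapter. You invoke Waldhausen's resolution of the $\ZZ_2$ Smith conjecture (any smooth involution of $\sphere^3$ with a circle of fixed points is conjugate to an orthogonal one), which is a deep external input from $3$-manifold topology. The paper instead gets the linearity of $\iota$ directly from the Ricci flow already in play: the metrics $\hat g(t)$ are $\iota$-invariant for all $t$, and Hamilton's convergence theorem for the normalized Ricci flow on a closed, simply connected, nonnegatively curved $3$-manifold gives convergence (modulo diffeomorphism and scaling) to the round $\sphere^3$; since isometries persist under the flow, the limit exhibits $\iota$ as conjugate to an isometry of the round sphere, i.e.\ a linear involution whose fixed set is a great circle. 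Indeed the paper has already used exactly this mechanism a few lines earlier, in ruling out $\pi_1(M_2^*(c)) = \ZZ_2$ by observing that Theorem \ref{hamilton} makes $\iota$ linear on $\RR P^3$.

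The distinction is not cosmetic. The whole point of this chapter (stated in the introduction and again at the end of section \ref{final proof}) is to replace the appeals in \cite{grove-wilking13} to the Poincar\'e conjecture and its equivariant version \cite{dinkelbach-leeb09} by purely geometric arguments built on Theorem \ref{resolution thm} and Hamilton's classical work. Waldhausen's theorem is precisely the kind of hard, external topological result the argument is designed to avoid; citing it here would undercut the claim that the unknottedness of $c$ has been obtained ``without making use of the Poincar\'e conjecture'' and its circle of ideas. Your version is logically sound, but the paper's Ricci flow argument both is shorter (it reuses machinery already established two paragraphs above) and is the one consistent with the paper's methodological point. Your observation about the compatibility of the smooth structure and $\iota$ is correct and worth noting; it applies equally to the Ricci flow approach.
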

To conclude chapter $2$ we emphasize that these corollaries are obtained without making use of the Poincaré conjecture. In particular, this shows that the equivariant classification of closed, nonnegatively curved, simply connected $4$-manifolds with $\crcl$-symmetry in \cite{grove-wilking13} can be obtained independently of it.

%Kapitel: Nonnegatively curved fixed point homogeneous manifolds
\chapter{Nonnegatively curved fixed point homogeneous manifolds}\label{fph}
We begin recalling the basic definition.
\begin{definition}
 A Riemannian manifold $(M,g)$ is called \textit{fixed point homogeneous} if it admits an isometric action by a compact Lie group $\G$ such that the fixed point set $\Fix \G$ is nonempty and the induced action of $\G$ on a normal sphere to some fixed point component is transitive (equivalently $\dim M^* = \dim \Fix \G + 1).$
\end{definition}
Fixed point homogeneous $\crcl$-actions on positively curved manifolds were first considered by Grove-Searle in \cite{grove-searle94}. They give a classification of closed, positively curved manifolds whose isometry group has maximal possible rank and show that such manifolds admit fixed point homogeneous $\crcl$-actions. Later the same authors gave a general classification of closed, positively curved, fixed point homogeneous manifolds in \cite{grove-searle97}. %The simply connected case reads
%\begin{theorem}[Grove-Searle]
%A closed, simply connected, fixed point homogeneous manifold of positive curvature is diffeomorphic to $\sphere^n$, $\CC P^n$, $\mathbb HP^n$ or the Cayley plane $\mathbb OP^2$.
%\end{theorem}
As was noted in the introduction, a classification of closed, nonnegatively curved, fixed point homogeneous manifolds is clearly out of reach; given a closed, nonnegatively curved manifold $N$, the product manifold $N \times \sphere^2$, equipped with the product metric, admits a fixed point homogeneous $\crcl$-action. However, many of the techniques used in \cite{grove-searle94} and \cite{grove-searle97} are applicable to the nonnegatively curved case. We use these ideas to prove our main theorem, which is a generalization of the following structure Theorem used by Grove-Searle in \cite{grove-searle97} to obtain the  mentioned classification.
%===================================
\begin{theorem} Let $M$ be a closed, positively curved, fixed point homogeneous Riemannian $\G$-manifold and $F$ be a fixed point component of maximal dimension. Then there exists a unique orbit $\G(\hat p)$ of maximal distance to $F$ and all orbits in $M \setminus (F \cup \G(\hat p))$ are principal. There is a $\G$-equivariant decomposition 
$$M = DF \cup_E D\G(\hat p),$$
where $DF$ and $D\G(\hat p)$ are the unit disk bundles of $F$ and $\G(\hat p)$, respectively, with common boundary $E$, viewed as tubular neighborhoods of $F$ and $\G(\hat p)$ in $M$. Moreover, the diffeomorphism type of $M$ is determined by the slice representation at $\hat p$.
\end{theorem}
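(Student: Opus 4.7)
The strategy is to extract the decomposition $M \cong DF \cup_E D\G(\hat p)$ from the geometry of the distance function $f := d(F,\cdot) \colon M \to \RR$. Since $F$ is a fixed point component of an isometric action it is closed and totally geodesic, and $f$ is $\G$-invariant. Fixed point homogeneity means $\G$ acts transitively on every unit normal sphere of $F$; hence by the slice theorem the sublevel $f^{-1}([0,\epsilon])$ is equivariantly identified with the normal disk bundle $DF$ for all small $\epsilon > 0$, and the level $E_\epsilon := f^{-1}(\epsilon)$ carries a homogeneous, hence constant, orbit type.

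Next I would apply Grove--Shiohama critical point theory to $f$. Using that minimal geodesics from $p \in M \setminus F$ to $F$ meet $F$ orthogonally, a Toponogov hinge comparison in the model space $\sphere^2_k$ (with $k := \min \operatorname{sec} M > 0$) shows that $f$ has no critical points in the open strip $f^{-1}(\,]0, \max f[\,)$. Consequently a gradient-like vector field for $f$ integrates to an equivariant diffeomorphism
\[
E_\epsilon \times [\epsilon,\ \max f - \epsilon] \longrightarrow f^{-1}\bigl([\epsilon,\ \max f - \epsilon]\bigr),
\]
so all orbits in this strip are principal. The critical set $C := f^{-1}(\max f)$ is $\G$-invariant and, by the slice theorem, a tubular neighborhood of $C$ is its normal disk bundle.

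The crux, and the main obstacle, is to show that $C$ is a single orbit $\G(\hat p)$; positive curvature is essential here, as the nonnegatively curved example $N \times \sphere^2$ from the introduction makes clear. Descending to the Alexandrov quotient $M^*$, which has dimension $\dim F + 1$ and $\on{curv} \geq k$, the image $F^*$ is a codimension-one boundary stratum and $f^*$ attains its maximum on $C^* = C/\G$. If $C^*$ contained two distinct points $\bar p \neq \bar q$, then the hinge in $M^*$ formed by a minimizing segment $\bar p \to \bar q$ together with a minimizing segment $\bar p \to F^*$ --- the latter of length $\max f^*$ and meeting $F^*$ orthogonally --- would, by Toponogov comparison in $\sphere^2_k$, force $d^*(\bar q, F^*) < \max f^*$, a contradiction. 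Hence $C = \G(\hat p)$, and the gradient-like flow of $f$ glues $DF$ to $D\G(\hat p)$ along $E := \partial DF = \partial D\G(\hat p)$, giving $M \cong DF \cup_E D\G(\hat p)$.

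For the final rigidity assertion, the slice representation $\G_{\hat p} \curvearrowright N_{\hat p}\G(\hat p)$ determines $D\G(\hat p) = \G \times_{\G_{\hat p}} D(N_{\hat p}\G(\hat p))$ equivariantly, and hence determines the $\G$-action on $E = \partial D\G(\hat p)$. Since $E \to F$ is a $\G$-equivariant fiber bundle whose fibers are the (transitive) unit normal spheres of $F$, the data of $E$ as a $\G$-space recovers $F$ as the base and $DF$ as the associated normal disk bundle. All of this information is encoded in the slice representation at $\hat p$, so the equivariant diffeomorphism type of $M = DF \cup_E D\G(\hat p)$ is determined by it.
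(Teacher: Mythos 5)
Your overall route mirrors what the paper sketches: descend to the Alexandrov quotient $M^*$ with $\partial M^* = F$, study the distance function $d_{F^*}$, show non-criticality off $F$ and the max set, and integrate a gradient-like field. However, the crucial \emph{uniqueness} step contains a genuine gap. You claim that if $\bar p \neq \bar q$ both realize $a := \max f^*$, a hinge at $\bar p$ consisting of a segment to $\bar q$ and a segment of length $a$ to $F^*$, compared in $\sphere^2_k$, forces $d^*(\bar q, F^*) < a$. This is not what the hinge version of Toponogov gives. Since the max set $C^*$ is totally convex and $d_{F^*}$ is constant on a segment $\gamma$ from $\bar p$ to $\bar q$ inside $C^*$, the first-variation formula at $\bar p$ in \emph{both} directions $\pm\dot\gamma(0)$ yields zero, so the minimal angle between $\dot\gamma(0)$ and the directions to $F^*$ is exactly $\pi/2$, not strictly less. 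For a right hinge in $\sphere^2_k$ with legs $a$ and $b > 0$ the law of cosines gives $\cos(\sqrt{k}\,\tilde c) = \cos(\sqrt{k}\,a)\cos(\sqrt{k}\,b)$, hence $\tilde c > a$ whenever $a < \pi/(2\sqrt{k})$; the Toponogov upper bound $d^*(\bar q, x) \le \tilde c$ therefore does not contradict $d^*(\bar q, F^*) = a$. The paper instead appeals to Perelman's theorem (Lemma \ref{concave distance}, from \cite{perelman91}) that $d_{F^*}$ is \emph{strictly} concave when $\curv M^* \geq k > 0$; a strictly concave function cannot be constant along a nondegenerate geodesic in the max set, so $C^*$ is a single point. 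Your argument cannot substitute for this input.

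Two smaller issues. Your non-criticality step is also not a hinge comparison: the correct tool, as in Lemma \ref{regular points}, is (ordinary) concavity of $d_{F^*}$, which makes $d_{F^*}$ increase at a definite linear rate along a minimal geodesic from a non-maximal point toward $C^*$, and then the first-variation formula yields the obtuse angle with every minimizing direction to $F^*$. And your conclusion ``so all orbits in this strip are principal'' needs one more sentence: the equivariant product structure shows the orbit type is constant across the strip; since the strip is open and dense in $M$ and the principal orbit type set is open and dense, the constant type must be principal. The final rigidity assertion is not proved in the paper either (it is cited from \cite{grove-searle97}); your sketch of it is heuristically reasonable but would need a precise argument that the gluing map is determined, which you do not give.
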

We sketch some ideas of the proof of this Theorem to motivate our arguments: The quotient space $M^*$ is a compact, positively curved Alexandrov space. Moreover, $F \subset M^*$ is clearly contained in the boundary $\partial M^*$, since $\G$ acts transitively on every normal sphere to $F$. In fact $\partial M = F$. In \cite{perelman91} it was shown that the distance function to the boundary $d_{F}$ is strictly concave. Hence there exists a unique point $p \in M^*$ of maximal distance to the boundary and, therefore, a unique orbit $\G (\hat p)$ of maximal distance to $F$. Also from the concavity of $d_F$, it follows that every point $q \in M^* \setminus (F \cup \{p\})$ is a non-critical point for $d_F$, and that all points in $M^* \setminus (F \cup \{p\})$ are principal (for a discussion of critical point theory for distance functions see \cite{grove90}). The regularity of $d_F$ lifts to $M \setminus (F \cup \G(\hat p))$. Therefore, there exists a gradient-like vector field $X$ with respect to $F$ on $M \setminus (F \cup \G(\hat p)$, which is radial near $F$ and $\G(\hat p)$. Via the integral curves of $X$, a diffeomorphism can be constructed mapping the unit normal disk bundle $DF$ of $F$ to $M \setminus B_\epsilon(\G(\hat p))$. This yields the decomposition $M = DF \cup_\partial D\G(\hat p)$.

Our strategy is mostly the one of this argument. A major difference is that the distance function to a boundary component of $M^*$, in case of nonnegative curvature, is only concave, rather than strictly concave. Therefore, the set of maximal distance to it, say $C$, may not be a single point.  $C$ is a totally convex subset of $M^*$ and, as in the case of positive curvature, there exists a gradient-like vector field on $M^* \setminus (F \cup C)$ with respect to $F$, and $C$ contains all nonregular points of $M^* \setminus F$ (see Lemma \ref{regular points}).  Let $\hat C = \pi^{-1}(C)$. This shows, for example, that $(M \setminus \hat C)$ is diffeomorphic to $NF$, the normal bundle of $F$. To obtain stronger results, one needs to obtain information on the regularity of $\hat C$. 

We will show that $\hat C$ is a smooth submanifold of $M$, possibly with nonsmooth boundary, and the boundary of $\hat C$ consists of principal orbits of the action (Proposition \ref{smooth lift}). This enables us to perform an analogue of the soul construction of Cheeger-Gromoll for $\hat C$ and we obtain the main theorem of this chapter.
\begin{theorem}\label{ddb2}
Assume that $\G$ acts fixed point homogeneous on a complete nonnegatively curved Riemannian manifold $M$. Let $F$ be a maximal fixed point component of the action. Then there exists a smooth submanifold $N$ of $M$, without boundary, such that $M$ is diffeomorphic to the normal disk bundles $D(F)$ and $D(N)$ of $F$ and $N$ glued together along their boundaries;
\begin{align}\label{doubled}
 M \cong D(F) \cup_{\partial} D(N).
\end{align}
Further, $N$ is $\G$-invariant and contains all singularities of $M$ up to $F$.
\end{theorem}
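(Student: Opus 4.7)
The plan is to exploit the quotient space $M^{*} = M/\G$, which by the closing lemma of Section \ref{Alexandrov} is a nonnegatively curved Alexandrov space. Fixed point homogeneity means that the slice representation on a normal sphere to $F$ is transitive, so a tubular neighborhood of $F$ in $M^{*}$ is a half-space: $F^{*} := \pi(F)$ is a (top-dimensional) boundary face of $M^{*}$. By Perelman's theorem, the distance function $f := d_{F^{*}} : M^{*} \to \RR$ to this boundary face is concave. Define
\[
 C := \{\, x \in M^{*} \mid f(x) = \max f \,\}, \qquad \hat C := \pi^{-1}(C) \subset M.
\]
Concavity of $f$ makes $C$ a compact, totally convex subset of $M^{*}$, and a first observation is that all singular orbits outside $F$ lie in $\hat C$: any $x \in M^{*} \setminus (F^{*} \cup C)$ admits more than one direction in which $f$ strictly increases, and this rigidity together with the slice theorem forces the isotropy at $\pi^{-1}(x)$ to be principal.

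Next I would run critical point theory for $f$ on the Alexandrov space $M^{*}$. Since $f$ has no critical points on $M^{*} \setminus (F^{*} \cup C)$, standard gradient-push constructions yield a continuous, $\G$-invariant gradient-like vector field $X^{*}$ there, which can be chosen to be radial near $F^{*}$. Lifting $X^{*}$ via the Riemannian submersion on the regular part and smoothing across $F$ by a slice-theorem argument produces a $\G$-invariant vector field $X$ on $M \setminus (F \cup \hat C)$. The flow of $X$ gives an equivariant diffeomorphism between the open normal disk bundle of $F$ in $M$ and $M \setminus \hat C$. This already establishes half of the desired decomposition.

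The main obstacle, and the heart of the argument, is the regularity of $\hat C$. I would prove that $\hat C$ is a smooth, $\G$-invariant submanifold of $M$, possibly with nonsmooth boundary $\partial \hat C = \pi^{-1}(\partial C)$, and that every orbit in $\partial \hat C$ is principal. The proof uses the slice theorem together with the structure of $C$ as a totally convex subset of the Alexandrov space $M^{*}$: at $p \in \partial C$ the tangent cone $T_{p}M^{*}$ splits, and the directions pointing out of $C$ are precisely the strictly $f$-decreasing directions. This forces the isotropy representation at any $\hat p \in \pi^{-1}(\partial C)$ to split off a free circle factor coming from the Killing direction transverse to $C$, which yields simultaneously that $\hat C$ is a smooth submanifold of $M$ at $\hat p$ and that $\G(\hat p)$ is principal.

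With $\hat C$ smooth and $\partial \hat C$ principal, I would perform a Cheeger--Gromoll-style soul construction intrinsic to $\hat C$: iterate the passage to the set of points at maximal distance from the current boundary, producing a descending chain of compact, totally convex, $\G$-invariant subsets of $\hat C$. The chain stabilizes at a smooth closed $\G$-invariant submanifold $N \subset \hat C$ without boundary, and a gradient-flow argument using the concavity of the iterated distance functions identifies $\hat C$ equivariantly with the normal disk bundle $D(N)$. Gluing along $\partial \hat C$ with the flow identification $M \setminus \hat C \cong D(F) \setminus \partial D(F)$ yields
\[
 M \cong D(F) \cup_{\partial} D(N)
\]
equivariantly. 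Since every singular orbit of $M$ outside $F$ lies in $\hat C$ and is absorbed into the soul $N$ by the flow, the action is free on $M \setminus (F \cup N)$, completing the proof.
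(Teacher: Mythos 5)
Your overall strategy is the same as the paper's: work in the quotient $M^*$, use Perelman's concavity of the distance to $F^*$ to obtain a totally convex set $C$ of maximal distance, show $\hat C := \pi^{-1}(C)$ is smooth, and run a Cheeger--Gromoll-type iteration to extract the soul $N$. However, the assertion at the center of your regularity argument, namely that $\partial \hat C = \pi^{-1}(\partial C)$ and that every $\hat p \in \pi^{-1}(\partial C)$ has principal isotropy, is incorrect, and it papers over precisely the hardest part of the proof. The intrinsic boundary $\partial C$ can contain nonregular points of $M^*$; Corollary~\ref{boundary points} shows each component of $\partial C$ is entirely regular or entirely nonregular, and when a component is nonregular its preimage under $\pi$ lies in the \emph{interior} of $\hat C$, not on its boundary, with nonprincipal orbits. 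The statement that actually holds (Corollary~\ref{smooth lift}) is that $\partial \hat C$ is the preimage only of the \emph{regular} boundary components of $C$.

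Establishing that statement is the technical core of the chapter and is absent from your sketch. By Lemma~\ref{submanifold}, smoothness of $\hat C$ near $\hat p$ lying over a nonregular $p \in C$ is equivalent to $\nu_{\hat p}^{-1}(T_p C)$ being a linear subspace of $N_{\hat p}\G(\hat p)$. Linearity of the lifted \emph{normal} cone $\nu_{\hat p}^{-1}(N_p C)$ is comparatively easy (Lemma~\ref{linear normal}, a fixed-point argument in a positively curved space of directions), but passing to the tangent cone, i.e.\ proving $T_p C = N_p C^\perp$, is not automatic in an Alexandrov quotient; the paper notes exactly this and obtains it by constructing a descending family of horizontally convex approximations $\{A_t\}$ inside $T_pM^*$ (Lemma~\ref{convex family}) and applying the dichotomy of Lemma~\ref{all or normal} iteratively (Proposition~\ref{linear tangent}). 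Your heuristic that the isotropy representation ``splits off a free circle factor transverse to $C$'' does not reach this. A secondary, related gap: your soul iteration should take maximal distance to the \emph{regular} boundary components only (the nonregular ones have already closed up inside $\hat C$), and when regular boundary remains you must rerun the smoothness argument for the $\G$-manifold-with-principal-boundary $\hat C$, in the spirit of the paper's case (b), rather than simply iterating the totally convex exhaustion inside $\hat C$ as though it were a boundaryless manifold.
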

A manifold obtained as in \eqref{doubled} via gluing two disk bundles along its boundaries is called a \textit{double disk bundle}.
%================================================
\section{Preliminaries}
By an Alexandrov space we mean a complete length space with curvature bounded below and of finite dimension. If spaces are not complete we mention this explicitly (for example, a locally complete Alexandrov space is not assumed to be complete). We assume a basic background knowledge on Alexandrov spaces, for example as it is discussed in \cite{burago-burago-ivanov01}.

In section \ref{a with b} we recall two results from \cite{perelman91}. In section \ref{convex subsets} we obtain some technical results on convex subsets of Alexandrov spaces, in particular of quotient spaces of Riemannian manifolds.
\subsection{Alexandrov spaces with nonempty boundary}\label{a with b}
Analogously to the case of Riemannian manifolds, a function $f : A \to \RR$ on an Alexandrov space  is said to be concave if $f \circ \gamma$ is concave for every geodesic $\gamma$. A basic tool for our arguments is the following result from \cite{perelman91}.

\begin{lemma}\label{concave distance}
Let $A$ be an Alexandrov space with $\on{curv} \geq 0$ ($\curv \geq k > 0$) and nonempty boundary. Then the distance function to the boundary, or any component of it, is (strictly) concave.
\end{lemma}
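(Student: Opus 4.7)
My plan is to proceed via Perelman's doubling theorem together with a symmetrized Toponogov comparison.

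First, I would form the double $DA := A \cup_{\partial A} A$, obtained by gluing two copies of $A$ along their common boundary, and consider the canonical isometric involution $\iota : DA \to DA$ whose fixed point set is exactly $\partial A$. Perelman's gluing theorem ensures that $DA$ is again an Alexandrov space with the same lower curvature bound $k$. The key observation is the identity
\[
d_A(x,\partial A) \;=\; \tfrac{1}{2}\, d_{DA}(x,\iota(x))
\]
for every $x$ in one copy of $A$: concatenating a minimizer from $x$ to a closest boundary point $q$ with its $\iota$-reflection produces a path in $DA$ from $x$ to $\iota(x)$ of length $2\, d_A(x,\partial A)$, while any shortest path from $x$ to $\iota(x)$ must cross the fixed set $\partial A$, so the minimum length is realized precisely twice the boundary distance. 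The same identity holds for any individual component of $\partial A$ by taking $x$ sufficiently close to that component so that the other components contribute nothing to $d_{DA}(x,\iota(x))$.

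With this in hand, concavity of $d_A(\cdot,\partial A)\circ\gamma$ along a geodesic $\gamma$ in $A$ becomes equivalent to concavity of
\[
h(t) \;:=\; d_{DA}(\gamma(t),\iota(\gamma(t))),
\]
where $\iota\circ\gamma$ is a geodesic lying in the other copy of $A$ inside $DA$. The inequality to establish is
\[
h(t_0+s) + h(t_0-s) \;\leq\; 2h(t_0) - C_k\, s^2 + o(s^2),
\]
with $C_k = 0$ when $k=0$ and $C_k > 0$ when $k > 0$. Fixing $t_0$, I would choose a closest point $q\in\partial A$ to $\gamma(t_0)$ and a minimizing geodesic $\sigma$ from $\gamma(t_0)$ to $q$; the concatenation $\sigma * \iota\sigma$ is then a minimizer in $DA$ from $\gamma(t_0)$ to $\iota(\gamma(t_0))$, and first variation forces $\gamma'(t_0)$ to make an angle of at least $\pi/2$ with its initial direction. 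Applying the hinge version of Toponogov in $DA$ to the two triangles with vertices $\{\gamma(t_0\pm s),\gamma(t_0),\iota(\gamma(t_0))\}$, and combining these via the $\iota$-symmetry (which also ensures the analogous right-angle condition at $\iota(\gamma(t_0))$), produces the required quadratic upper bound for $h(t_0+s)+h(t_0-s)-2h(t_0)$, with the strict sign in the $k>0$ case coming from the $k$-concavity of the distance in the model space $M_k^2$.

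The main technical obstacle is the Toponogov step itself, because one is comparing distances between pairs of moving points in $DA$ rather than between a moving point and a fixed one; a naive one-sided Toponogov comparison does not give concavity. The involution $\iota$ is precisely what allows the standard one-sided estimate to be symmetrized into a two-sided estimate, but executing this requires verifying that $\sigma*\iota\sigma$ is globally minimizing in $DA$ (not merely locally length-minimizing), which is exactly what Perelman's gluing theorem delivers, and it requires carrying the quadratic correction through the comparison in $M_k^2$ rather than in Euclidean geometry in order to obtain the strict inequality when $k>0$.
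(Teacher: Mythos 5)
Your plan to pass to the double $DA = A \cup_{\partial A} A$ with involution $\iota$, and to exploit the identity $d_A(x,\partial A) = \tfrac12 d_{DA}(x,\iota(x))$, is the right starting point: this is indeed the role Perelman's gluing theorem plays, and the paper defers precisely to \cite{perelman91} for the argument. The identity itself, and its analogue for a single boundary component (by doubling along that component only), is fine. The gap is in the Toponogov step.

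First, the assertion that ``first variation forces $\gamma'(t_0)$ to make an angle of at least $\pi/2$'' with the initial direction of $\sigma$ is false. The geodesic $\gamma$ is arbitrary; nothing about $\sigma*\iota\sigma$ being a minimizer between the \emph{fixed} endpoints $\gamma(t_0)$ and $\iota\gamma(t_0)$ puts a stationarity constraint on the angle at $\gamma(t_0)$ when the endpoint slides along $\gamma$. The angle $\alpha=\angle(\gamma'(t_0),\sigma'(0))$ can take any value in $[0,\pi]$, and correspondingly $h(t)=d_{DA}(\gamma(t),\iota\gamma(t))$ has first derivative $-2\cos\alpha$ at $t_0$, which need not vanish. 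The right angle appears only at a maximum of $h\circ\gamma$, not at a general $t_0$.

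Second, and more seriously, the hinge comparison applied to the triangles $(\gamma(t_0\pm s),\gamma(t_0),\iota\gamma(t_0))$ controls $d(\gamma(t_0\pm s),\,\iota\gamma(t_0))$ --- a distance to the \emph{fixed} second vertex --- whereas the quantity you need to estimate is $h(t_0\pm s)=d(\gamma(t_0\pm s),\iota\gamma(t_0\pm s))$, where both endpoints move. The passage from the one-moving-endpoint bound to the two-moving-endpoint bound is the entire content of the proof, and the phrase ``combining these via the $\iota$-symmetry'' does not accomplish it. In fact, if one replaces $h(t_0\pm s)$ by the naive bound $2\,d(\gamma(t_0\pm s),q)$, where $q\in\partial A$ is a nearest point, and expands the hinge comparison carefully keeping the true angles $\alpha$ at $\gamma'(t_0^+)$ and $\beta$ at the backward direction, one finds a first-order term $-s(\cos\alpha+\cos\beta)\ge 0$ (since $\alpha+\beta\ge\pi$) and a \emph{positive} second-order term of order $s^2\sin^2\alpha/d(p,q)$. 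That is, this bound has the wrong sign: the distance to a single point is convex, not concave, and replacing the boundary by a single nearest point throws away exactly the information that makes $f$ concave. A correct proof must exploit that $\partial A$ is a hypersurface (the fixed locus of $\iota$), not merely a point, and the argument in \cite{perelman91} uses the double in a more essential way than a single hinge comparison.
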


In \cite{perelman91} only the distance function to the whole boundary is considered, the case for a component of the boundary can be obtained via the same arguments. Recall that the double $\tilde A$ of an Alexandrov space $A$ with nonempty boundary is the disjoint union of two copies of $A$ with its boundary points identified via the identity map; 
$$\tilde A := A \cup_\partial A.$$
The metric of $\tilde A$ is the unique intrinsic metric such that the two obvious inclusions of $A$ are isometric embeddings. Analogously one can glue single components of the boundary of $A$.
\begin{theorem}
If $A$ is an Alexandrov space with curvature bounded below by $c$, then $\tilde A$ is an Alexandrov space with curvature bounded below by $c$.
\end{theorem}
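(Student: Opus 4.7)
The approach is to verify the four-point (equivalently, triangle) comparison directly on $\tilde A$, working locally around each point; the completeness of $\tilde A$ as a length space follows at once from that of $A$. Away from $\partial A$, the space $\tilde A$ is locally isometric to $A$, so only neighborhoods of points of $\partial A$ require serious attention.

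First I would identify the intrinsic metric on $\tilde A$. Let $\iota_1, \iota_2 : A \to \tilde A$ denote the two canonical inclusions. I would show that each $\iota_i$ is isometric: any path in $\tilde A$ joining two points of $\iota_1(A)$ can be ``folded'' by reflecting every segment in $\iota_2(A)$ back across $\partial A$ to a path of equal length in $A$, which gives $d_{\tilde A}(\iota_1 x, \iota_1 y) = d_A(x, y)$. For $x \in \iota_1(A)$ and $y \in \iota_2(A)$ any connecting path must cross $\partial A$, so $d_{\tilde A}(x,y) = \inf_{q \in \partial A}(d_A(x, q) + d_A(q, y))$; working in a small ball one checks that the infimum is attained and that minimizing paths exist. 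In particular $\iota_1(A)$ and $\iota_2(A)$ are convex in $\tilde A$.

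Next I would analyze the geodesics of $\tilde A$. A minimizing geodesic either stays in a single copy, in which case it is a geodesic of $A$, or consists of two geodesic segments in opposite copies joined at a single point $q \in \partial A$; unfolding by the reflection identification yields a broken geodesic in $A$ meeting $\partial A$ at $q$ with the two subsegments forming supplementary angles with $T_q\partial A$. The concavity of the distance function to $\partial A$ established in Lemma \ref{concave distance} is precisely what rules out a shorter ``grazing'' path along the boundary, and is therefore the key input making the unfolding argument work.

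Finally I would verify the comparison. For a quadruple lying entirely in one copy the inequality is inherited verbatim from $A$. For a mixed quadruple I would reflect every point in $\iota_2(A)$ across $\partial A$ into $\iota_1(A)$ and compare the folded configuration, which now lives in $A$, against the model space of curvature $c$; the concavity of $d_{\partial A}$ translates the reflection into an ``angular excess'' at each boundary crossing in exactly the direction that preserves the lower curvature bound. The main obstacle I anticipate is the bookkeeping in the case when several sides of the quadruple cross $\partial A$: each crossing contributes a reflection, and one must show that the successively folded configuration in $A$ still satisfies an inequality strong enough to yield the desired comparison in $\tilde A$. In practice this reduces to iterated applications of the four-point comparison in $A$ combined with Lemma \ref{concave distance} along each minimizing subgeodesic that meets $\partial A$.
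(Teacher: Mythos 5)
The paper states this theorem without proof, attributing it to Perelman~\cite{perelman91}, so there is no argument of the paper's own to compare against. On its own terms your sketch has the right raw ingredients: the folding map $\tilde A \to A$ identifying both copies is indeed $1$-Lipschitz, a geodesic in $\tilde A$ unfolds to a broken geodesic in $A$ hitting $\partial A$ with equal reflected angles, and the concavity of $d_{\partial A}$ is central to any genuine proof. But the crucial fold-and-compare step does not work as you describe. If $x$ and $y$ lie in opposite copies, then
$$d_{\tilde A}(\iota_1 x, \iota_2 y) = \inf_{q \in \partial A}\bigl(d_A(x,q) + d_A(q,y)\bigr),$$
whereas after folding both onto one copy the distance collapses to $d_A(x,y)$, which is in general strictly smaller. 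The folded quadruple is therefore not isometric to the original one, and verifying the four-point comparison for the folded configuration in $A$ gives no information about the comparison for the original configuration in $\tilde A$. This breaks already for a single mixed quadruple, not merely in the multi-crossing ``bookkeeping'' case that you flag as the anticipated obstacle.

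There is also a scope mismatch: Lemma~\ref{concave distance} as stated in the paper covers only $\curv \geq 0$ (respectively $\curv \geq k > 0$), while the doubling theorem is asserted for arbitrary lower bound $c$; to run any concavity-based argument in general you would need the $\lambda$-concavity version of Perelman's boundary concavity, which the paper never provides. The doubling theorem is one of the genuinely deep structural facts about Alexandrov spaces, and a correct proof along the lines you envisage must control angle comparison at boundary crossings via the tangent-cone structure (for $q \in \partial A$ the cone $T_q\tilde A$ is the double of $T_qA$) rather than by folding finite point configurations — which is precisely why Perelman's argument is subtle and why the paper simply cites it rather than reproving it.
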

%=============================================
\subsection{Convex subsets of Alexandrov spaces and quotient spaces}\label{convex subsets}
The convexity properties of the set of maximal distance to the boundary of a quotient space $M^*$ of a fixed point homogeneous manifold are the basis for our arguments. In this section we collect some technical results on convex subsets of quotient spaces and more generally Alexandrov spaces of nonnegative curvature.
\begin{definition}
 Let $A$ be an Alexandrov space and $C \subset A$. Then $C$ is \textit{locally convex} if for all $p \in C$ there exists an open neighborhood $U \subset A$ of $p$ such that for all $q,r \in U \cap C$ there exists a minimal geodesic of $A$ from $q$ to $r$ which is contained in $C$. $C$ is \textit{convex} if for all $p, q \in C$ there exists a minimal geodesic $\gamma$ of $A$ from $p$ to $q$ that is contained in $C$. $C$ is \textit{totally convex} if for all $p,q \in C$ every geodesic $\gamma$ of $A$ connecting $p$ and $q$ is contained in $C$. 
\end{definition}
For example, every open subset of an Alexandrov space is locally convex. A closed subset $C$ of an Alexandrov space $A$ is convex if and only if $C$ equipped with the induced metric is intrinsic. Of course a totally convex set is convex and a convex set is locally convex.
\begin{lemma}
 A closed and connected locally convex (or convex or totally convex) subset $C$ of an Alexandrov space $A$ equipped with the induced intrinsic metric is an Alexandrov space with the same lower curvature bound. 
\end{lemma}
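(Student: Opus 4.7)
The plan is to verify that $(C, d_C)$ is a complete length space which satisfies the Alexandrov comparison condition locally, and then to invoke the Toponogov-type globalization theorem for length spaces to upgrade this to a global curvature bound. Since totally convex implies convex implies locally convex, it suffices to treat the locally convex case.

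First I would establish that $(C, d_C)$ is a complete length space. Local convexity guarantees that around every point $p \in C$ there is a neighborhood $U_p \subset A$ such that any two points in $C \cap U_p$ are joined by an $A$-minimizing geodesic contained in $C$; in particular $d_C$ and $d_A$ agree on $C \cap U_p$. Connectedness together with local path-connectedness (obtained from the local geodesic connectivity just described) lets one join any two points of $C$ by a curve of finite $d_A$-length lying in $C$, so $d_C$ takes finite values and $(C, d_C)$ is a length space. For completeness, a $d_C$-Cauchy sequence is automatically $d_A$-Cauchy; since $A$ is complete and $C$ is closed, the sequence has a limit in $C$, and once its terms enter a small neighborhood of that limit the two metrics coincide, so it converges in $d_C$ as well.

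Next I would verify the local Alexandrov condition on $(C, d_C)$. Fix $p \in C$ and shrink $U_p$ so that, in addition to the local-convexity property above, $U_p$ is a comparison neighborhood for $(A, d_A)$ in the sense of the definition of an Alexandrov space. For points $q, r \in C \cap U_p$ any $d_C$-shortest path between them is an $A$-geodesic of the same length lying in $C$ (by local convexity and $d_C = d_A$ on $C \cap U_p$), so every geodesic triangle in $C$ with vertices in $C \cap U_p$ is literally a geodesic triangle in $A$ with the same side lengths. The comparison inequality defining the lower curvature bound therefore carries over verbatim from $(A, d_A)$ to $(C, d_C)$ on this neighborhood. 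Thus $(C, d_C)$ satisfies the local Alexandrov condition with the same curvature bound.

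Finally, I would appeal to the Toponogov globalization theorem of Burago-Gromov-Perelman (see \cite{burago-gromov-perelman92} or \cite{burago-burago-ivanov01}): a complete length space satisfying the local Alexandrov comparison condition with curvature bounded below by $k$ satisfies the corresponding global comparison, hence is an Alexandrov space with curvature $\geq k$. This gives the claim. The main subtle point is justifying that shortest paths in $(C, d_C)$ between nearby points really are $A$-minimizing geodesics (so that one may honestly compare triangles in $C$ with triangles in $A$); this is precisely the content of local convexity combined with the equality $d_C = d_A$ locally, and is where the hypothesis is used.
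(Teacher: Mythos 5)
Your proposal is correct and follows the same essential strategy as the paper: show that $(C,d_C)$ is a complete length space, then observe that by local convexity the induced metric $d_C$ agrees with $d_A$ on small pieces of $C$, so that geodesic triangles in $C$ are literally geodesic triangles in $A$ and the comparison inequality transfers verbatim. The only extraneous step is the final appeal to the Toponogov globalization theorem, which is harmless but not needed here: the paper's working definition of an Alexandrov space is the local one (a comparison neighborhood around every point), so the local verification already finishes the proof.
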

\begin{proof} Recall that for the induced intrinsic metric
$d^C(p,q)$ is given by the infimum over the length of all continuous curves $c$ in $C$ connecting $p$ and $q$. Since $C$ is locally convex and connected, it easily follows that $C$ is path connected. Since $C$ is closed, there exists a curve of minimal length between any two points $p, q \in C$ (a geodesic). Therefore, $(C,d^C)$ is a complete length space. Now let $p \in C$. By local convexity there exists an open neighborhood $U$ of $p$ in $C$ such that for all $q,r \in U$ a minimal geodesic between $q$ and $r$ is also a minimal geodesic considered as a curve in $A$. Hence, every geodesic triangle $\Delta \subset U$ of $C$ also defines a geodesic triangle of $A$ and the distances of points on $\Delta$, measured with respect to $A$ or $C$, coincide. Thus, the definition of being an Alexandrov space with curvature bounded below by $k$ carries over from $A$ to $C$.
\end{proof}
For a locally convex (not necessarily closed) subset $C \subset A$ and $p \in C$ the space of directions $\Sigma_pC$ and the tangent cone $T_pC$ are defined analogously to the case of Alexandrov spaces, as the completion of the set of directions of unit speed geodesics emanating from $p$, equipped with the angular metric. Clearly we can view $T_pC$ as a subset of $T_pA$.

For a metric space $(X,d)$, we call a subset $U \subset X$ \textit{locally closed} if for all $x \in U$ there exists $\epsilon > 0$ such that the $\epsilon$-ball of $p$ in $(U,d)$ is closed in the $\epsilon$-ball of $p$ in $X$. Then we have
\begin{lemma}\label{T_pC is convex}
 Let $A$ be an Alexandrov space and $C \subset A$ be locally closed and locally convex. Then $T_pC$ is convex in $T_pA$ for all $p \in C$.
\end{lemma}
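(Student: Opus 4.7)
The plan is to reduce to a local problem and then run a blow-up argument via pointed Gromov--Hausdorff convergence. Since $T_pC$ only depends on an arbitrarily small neighbourhood of $p$ in $C$, I would first choose $\epsilon > 0$ so small that $C \cap \overline{B_\epsilon(p)}$ is closed in $A$ (using local closedness) and that any two points of $C \cap B_\epsilon(p)$ can be joined by a minimal geodesic of $A$ entirely contained in $C$ (using local convexity). After replacing $C$ by $C \cap \overline{B_\epsilon(p)}$, we may thus assume that $C$ is closed in $A$ and that minimal $A$-geodesics between any two points of $C$ stay in $C$.

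Given $v,w \in T_pC$, the easy cases are when $v = 0$, $w = 0$, or $\measuredangle(v,w) = \pi$: in each of these the minimal geodesic from $v$ to $w$ in the cone $T_pA$ decomposes into the two rays from $v$ and $w$ to the apex $0$, which lie in $T_pC$ since $T_pC$ is a subcone. For the main case $\measuredangle(v,w) < \pi$ I would argue as follows. Using the description of $T_pC$ as (the cone over the completion of) the directions of geodesics of $C$ emanating from $p$, I pick sequences $q_n, r_n \in C \setminus \{p\}$ and scalars $\lambda_n \to \infty$ such that under the pointed Gromov--Hausdorff convergence $(A, \lambda_n d, p) \to (T_pA, 0)$ the points $q_n, r_n$ converge to $v$ and $w$. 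For $n$ large these points lie in the chosen neighbourhood, so there is a minimal $A$-geodesic $\sigma_n$ from $q_n$ to $r_n$ whose image lies in $C$. After rescaling by $\lambda_n$, the $\sigma_n$ are minimal geodesics in the rescaled ambient spaces, and their lengths converge to $d_{T_pA}(v,w)$.

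An Arzel\`a--Ascoli argument in the pointed Gromov--Hausdorff setting then produces, on a subsequence, a limit minimal geodesic $\sigma \colon [0, d_{T_pA}(v,w)] \to T_pA$ from $v$ to $w$, and every point of $\sigma$ is a Gromov--Hausdorff limit of rescaled points of $\sigma_n \subseteq C$. Identifying $T_pC$, as a subset of $T_pA$, with the Gromov--Hausdorff limit of the rescaled copies $(C, \lambda_n d, p)$, such limit points lie in $T_pC$, so $\sigma \subseteq T_pC$ and convexity follows. The principal obstacle will be making this identification of $T_pC$ with the blow-up limit of $C$ inside $T_pA$ fully rigorous, so that the final stability statement --- limits of images of geodesics contained in $C$ lie in $T_pC$ --- is legitimate; once this is available, the standard stability of minimality of geodesics under pointed Gromov--Hausdorff convergence completes the argument.
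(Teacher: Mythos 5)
Your proposal is essentially the paper's argument, organized in the opposite direction. The paper starts from a minimal geodesic $\gamma$ of $T_pC$, approximates its endpoints by points $p_\lambda, q^\epsilon_\lambda \in \lambda C$, takes minimal geodesics $\gamma^\epsilon_\lambda$ of $C$ between them (which by convexity may be taken to be minimal in $\lambda A$), and uses uniqueness of $\gamma_{[0,1-\epsilon]}$ in $T_pC$ to pin down the limit of $\gamma^\epsilon_\lambda$ as exactly $\gamma_{[0,1-\epsilon]}$; since each $\gamma^\epsilon_\lambda$ is minimal in $\lambda A$, so is the limit, and one concludes that $\gamma$ itself is a minimal geodesic of $T_pA$. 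You instead fix $v,w \in T_pC$, approximate by $q_n,r_n \in C$, pass to the limit of the minimal $A$-geodesics $\sigma_n \subset C$, and argue the resulting minimal $T_pA$-geodesic lies in $T_pC$. Both routes use the same three ingredients: the blow-up $(\lambda A, p) \to (T_pA, 0)$, convexity of $C$ to produce minimizers of $A$ inside $C$, and stability of minimizers under pointed Gromov--Hausdorff convergence.

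The step you flag as the principal obstacle --- that the pointed GH blow-up of $C$ inside $T_pA$ coincides with the subset $T_pC$ (the cone over the completed space of directions into $C$) --- is in fact implicitly used in the paper as well, when it asserts that the limit of the geodesics $\gamma^\epsilon_\lambda \subset C$ is ``a minimal geodesic of $T_pC \subset T_pA$''. The paper sidesteps one half of the identification by using uniqueness inside $T_pC$ to recover the specific geodesic $\gamma_{[0,1-\epsilon]}$ as the limit, whereas your version needs the containment ``blow-up limit of $C \subseteq T_pC$'' directly; this is the slightly more delicate point, but it is a standard fact for closed convex subsets of Alexandrov spaces (for $x_n \in C$ converging in the blow-up to $v \ne 0$, the minimal geodesics from $p$ to $x_n$ inside $C$ converge to the ray $[0,v]$, and their directions at $p$ converge to $v/|v|$, which is therefore in $\Sigma_pC$). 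One small observation: the paper's formulation yields a marginally stronger conclusion --- \emph{every} minimal geodesic of $T_pC$ is a minimal geodesic of $T_pA$ --- while your argument produces one such geodesic per pair of endpoints; for the definition of convexity used in the paper, both are sufficient. Your reduction to the local case and the treatment of the degenerate endpoints ($v=0$, $w=0$, or $\measuredangle(v,w)=\pi$, through the apex) are both fine.
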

\begin{proof}
Without loss of generality we may assume that $C$ is closed and convex. Recall that 
$$(T_pA,0) = \lim_{\lambda \to \infty}(\lambda A,p),$$
with convergence in the pointed Gromov-Hausdorff sense, and $\lambda A$ denotes the metric space $(A,\lambda d)$. Let $\gamma : [0,1] \to T_pC$ be a minimal geodesic in $T_pC$ with $\gamma(0) = v$. It is enough to show that $\gamma$ is also a minimal geodesic of $T_pA$: For all $1 > \epsilon > 0$ we have that $\gamma_{[0,1-\epsilon]}$ is the unique minimal geodesic of $T_pC$ from $v$ to $\gamma(1 - \epsilon)$. Let $p_\lambda, q_\lambda^\epsilon \in \lambda C \subset \lambda A$ such that $p_\lambda \to v$ and $q_\lambda^\epsilon \to \gamma(1 - \epsilon)$ for $\lambda \to \infty$. Let $\gamma_\lambda^\epsilon$ be a minimal geodesic of $C$ from $p_\lambda$ to $q_\lambda^\epsilon$. Possibly after taking a subsequence, $\gamma_\lambda^\epsilon$ converges to a minimal geodesic of $T_pC \subset T_pA$ from $v$ to $\gamma(1 - \epsilon)$. By uniqueness, $\gamma_\lambda^\epsilon$ converges to $\gamma_{[0,1-\epsilon]}$ for all $\epsilon > 0$. Since $\lambda C$ is convex in $\lambda A$, $\gamma_\lambda^\epsilon$, we can choose $\gamma_\lambda^\epsilon$ such that it is also a minimal geodesic of $\lambda A$. Thus the limit $\gamma_{[0,1-\epsilon]}$ is a minimal geodesic of $T_pA$ for all $\epsilon > 0$. Consequently, $\gamma$ is a minimal geodesic of $T_pA$.
\end{proof}
\begin{remark}
I do not know whether the tangent cone to a totally convex subset of an Alexandrov space is also totally convex. If this is true, some of the upcoming arguments can probably be simplified.
\end{remark}
For the following lemma we need to introduce some notation: Let $C$ be a locally closed and locally convex subset of an Alexandrov space $A$. We denote by $\partial C$ the \textit{intrinsic boundary} of $C$, that is the set of points whose spaces of directions have nonempty boundary (considered as an Alexandrov space). $\partial^t C$ denotes the \textit{topological boundary} of $C$ in $A$. By an \textit{interior point} of $C$ we always mean a point of the topological interior $\mathring C = C \setminus \partial^t C$. If $\dim C < \dim A$ it is not hard to show that $\partial^t C = C$. In the following lemma we consider the case $\dim C = \dim A$.
\begin{lemma}\label{boundary}
 Let $A$ be a locally complete Alexandrov space and $C \subseteq A$ be locally convex and locally closed with $\dim A = \dim C$. Then
$$\partial C = (\partial^t C \cup \partial A) \cap C.$$
Therefore, if $C$ is closed we have
$$\partial C = \partial^tC \cup (\partial A \cap C).$$ 
\end{lemma}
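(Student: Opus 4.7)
The plan is to reduce the statement to a pointwise characterization in terms of the tangent cones $T_pC \subseteq T_pA$, which by Lemma \ref{T_pC is convex} is a convex subcone, and which has full dimension since $\dim C = \dim A$. The two-sided inclusion $\partial C = (\partial^t C \cup \partial A) \cap C$ will follow from the chain of equivalences: $p \in \partial C$ iff $T_pC$ has nonempty intrinsic boundary at the apex $0$ iff $p \in \partial^t C$ or $p \in \partial A$.

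First I would record the tangent-cone reformulation. By definition, $p \in \partial C$ iff $\Sigma_pC$ has nonempty boundary; equivalently, the apex $0$ of the cone $T_pC = C(\Sigma_pC)$ is a boundary point of $T_pC$ (since the link of the vertex of a cone is precisely the space of directions at the vertex). The same holds replacing $C$ by $A$. So the task becomes: $T_pC$ has boundary at $0$ iff $T_pC \neq T_pA$ or $T_pA$ has boundary at $0$.

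Next I would establish the key geometric equivalence $T_pC = T_pA \iff p \in \mathring{C}$. The forward direction "$p \in \mathring C \Rightarrow T_pC = T_pA$" is immediate, since $B_\epsilon(p) \subseteq C$ forces $T_pC \supseteq T_pB_\epsilon(p) = T_pA$. For the reverse, suppose $T_pC = T_pA$. Using local closedness and local convexity, shrink to a neighborhood where $C$ is complete and convex. For any $q$ sufficiently close to $p$ the direction $\uparrow_p^q \in \Sigma_pA = \Sigma_pC$ is realized by a geodesic in $C$; by a rescaling argument (pointed Gromov--Hausdorff convergence of $(\lambda A, p)$ to $T_pA$ as $\lambda \to \infty$), the point $q$ is forced to lie in $C$ for all $q$ sufficiently near $p$, so $p$ is topologically interior. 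This is the delicate step and will be the main obstacle, as it requires passing carefully between $C$ and $T_pC$ using only local convexity and closedness rather than a smooth exponential map.

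With this equivalence in hand, the proof concludes by an induction on $\dim A$. If $T_pC = T_pA$, then $p \in \mathring C$ and $T_pC$ has boundary at $0$ iff $T_pA$ does, iff $p \in \partial A$. If $T_pC \subsetneq T_pA$, then $p \in \partial^t C$, and I claim $T_pC$ necessarily has boundary at $0$: pick any $v \in T_pA \setminus T_pC$; since $T_pC$ is a closed convex subcone, its topological boundary in $T_pA$ is nonempty, and at any such topological boundary point $q$, the inclusion $T_q T_pC \subsetneq T_q T_pA$ holds by the characterization of the topological interior established above, applied now one dimension lower; by the inductive hypothesis, $q \in \partial T_pC$, in particular $T_pC$ has nonempty intrinsic boundary and hence has boundary at the apex as well (since a convex cone in an Alexandrov cone has boundary at $0$ iff it has boundary anywhere). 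The base case of small dimension is handled directly. Combining the two cases yields $p \in \partial C$ iff $p \in (\partial^t C \cup \partial A) \cap C$, which is the lemma.
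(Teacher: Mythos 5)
Your proposed ``key geometric equivalence'' $T_pC = T_pA \iff p \in \mathring C$ is false as stated when $A$ has boundary; the paper itself gives the counterexample after Corollary~\ref{ball}: take $A = \{(x,y) \in \RR^2 \mid y \geq 0\}$ and $C$ a closed disk tangent to the boundary line at $p$. There $T_pC = T_pA$ (both are the closed upper half-plane) yet $p$ is a topological boundary point of $C$ in $A$. So the implication $T_pC = T_pA \Rightarrow p \in \mathring C$ requires $p \notin \partial A$ (or an $\partial A = \emptyset$ hypothesis localized near $p$). Worse for your strategy, the valid version of this equivalence is exactly the content of Corollary~\ref{ball}, and in the paper that corollary is a \emph{consequence} of the present lemma. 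Proving it independently by the rescaling argument you sketch is essentially reproving the lemma, so the step you flag as ``the main obstacle'' is not a step inside the proof: it is the whole proof, and leaving it at the level of ``passing carefully between $C$ and $T_pC$'' leaves the core unaddressed.

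There is also a structural problem with the induction in the case $T_pC \subsetneq T_pA$. You propose to apply the inductive hypothesis to $T_pC \subset T_pA$ ``one dimension lower'', but $\dim T_pA = \dim A = \dim T_pC$, so this is the same dimension as the case you are trying to prove, and the induction is not well-founded. The paper avoids both issues by inducting on $\dim A$ and passing to the \emph{spaces of directions} $\Sigma_xC \subset \Sigma_xA$, which genuinely drop dimension by one. Concretely: for $x \in \partial^t C \cap C$, one takes nearest-point geodesics $\gamma_n$ from $C$ to nearby points outside $C$; at $\gamma_n(0)$ the outward direction makes angle $\geq \pi/2$ with all of $\Sigma_{\gamma_n(0)}C$, forcing $\Sigma_{\gamma_n(0)}C \subsetneq \Sigma_{\gamma_n(0)}A$ and hence (by the inductive hypothesis applied to the spaces of directions) $\gamma_n(0) \in \partial C$; passing to the limit gives $x \in \partial C$. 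For $x \in \partial A \cap C$, if $\partial \Sigma_x C = \emptyset$ then by induction $\Sigma_xC = \Sigma_xA$, contradicting $\partial \Sigma_x A \neq \emptyset$. This sidesteps any tangent-cone equals ambient-cone characterization and never appeals to Corollary~\ref{ball}.
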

\begin{proof}
It is of course enough to prove the first statement. We argue by induction on $n = \dim A \geq 1$. In the case $n = 1$ the claim is easy to prove. So let $n \geq 2$ and the claim hold for smaller dimensions than $n$.

First we show that $\partial C \subseteq (\partial^t C \cup \partial A) \cap C$: Let $x \in \partial C$. If $x \in \partial^t C$ we are done. So assume that $x \notin \partial^t C$. Then $x$ is contained in $\mathring C$, the topological interior of $C$, and it is clear that $\Sigma_x C = \Sigma_xA$. In particular, $\Sigma_xA$ has nonempty boundary. Hence $x \in C \cap \partial A$.

Now we show that $\partial C \supseteq (\partial^t C \cup \partial A) \cap C$: Let $x \in (\partial^t C \cup \partial A) \cap C$. First assume $x \in \partial^t C \cap C$. Consider a sequence of points $x_n \in A \setminus C$ with $x_n \to x$. Since $C$ is locally closed, there exists a minimal geodesic $\gamma_n : [0,1] \to A$ for all sufficiently large $n$, with $\gamma_n(0) \in C$, $\gamma_n(1) = x_n$ and $\mathcal L(\gamma_n) = d(x_n,C)$. Then $\dot \gamma_n(0)$ forms  an angle $\geq \pi/2$ to every point of $\Sigma_{\gamma_n(0)}C \subset \Sigma_{\gamma_n(0)}A$. In particular $\Sigma_{\gamma_n(0)}C \subsetneq \Sigma_{\gamma_n(0)}A$ and it follows that $\Sigma_ {\gamma_n(0)}C$ has nonempty topological boundary in $\Sigma_{\gamma_n(0)}A$. Since $\dim \Sigma_ {\gamma_n(0)}C = \dim \Sigma_{\gamma_n(0)}A$, it follows from the induction hypothesis that $\Sigma_{\gamma_n(0)}C$ has nonempty boundary as an Alexandrov space. Hence $\gamma_n(0) \in \partial C$. Also clearly $\gamma_n(0) \to x$. So it follows that $x \in \partial C$, since $\partial C$ is closed in $C$. Now assume $x \in \partial A \cap C$. We argue by contradiction: Assume that $x \notin \partial C$. Then $\partial \Sigma_xC$ is empty. From the induction hypothesis we find that $\Sigma_xC \subseteq \Sigma_xA$ has empty topological boundary, i.e. $\Sigma_x C = \Sigma_xA$. Since $\partial \Sigma_xC$ is empty, this contradicts the fact that $x \in \partial A$.
\end{proof}
\begin{corollary}\label{ball}
 Let $A$ be a locally complete Alexandrov space with empty boundary and $C \subseteq A$ be locally convex and locally closed.  Then $p \in C$ is an interior point of $C$ in $A$ if and only if $T_pC = T_pA$. %Consequently if $\dim A = \dim X$ and $\partial^i A = \emptyset$ we have $A = X$.
\end{corollary}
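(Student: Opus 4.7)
The plan is to prove both directions by combining the boundary characterization from Lemma~\ref{boundary} with the hypothesis $\partial A = \emptyset$. The forward direction is elementary: if $p$ lies in the topological interior $\mathring C$, choose a metric ball $B_\varepsilon(p) \subseteq C$. Then every unit-speed geodesic in $A$ emanating from $p$ lies in $C$ on a short initial segment, so every direction of $\Sigma_p A$ arises as a direction of a geodesic in $C$ at $p$. Taking the completion gives $\Sigma_p C = \Sigma_p A$, hence $T_p C = T_p A$.

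For the reverse direction, assume $T_p C = T_p A$. First I would record that this forces $\dim C = \dim A$ locally at $p$, since for Alexandrov spaces (and locally convex subsets thereof) the local Hausdorff dimension coincides with the dimension of the tangent cone at every point. Thus we are in the setting $\dim C = \dim A$ in which Lemma~\ref{boundary} applies; replacing $C$ by its intersection with a small open ball around $p$ we may assume $C$ is closed and convex, so
\[
\partial C \;=\; (\partial^t C \cup \partial A)\cap C \;=\; \partial^t C \cap C,
\]
using the hypothesis $\partial A = \emptyset$.

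Now suppose for contradiction that $p \notin \mathring C$. Then $p \in \partial^t C \cap C = \partial C$, so by the very definition of the intrinsic boundary the space of directions $\Sigma_p C$ has nonempty boundary as an Alexandrov space. On the other hand, $T_p C = T_p A$ implies $\Sigma_p C = \Sigma_p A$, and $\partial A = \emptyset$ means precisely that $\Sigma_p A$ has empty boundary for every $p \in A$. This is the desired contradiction, so $p$ must lie in $\mathring C$.

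The only nontrivial point in the plan is the identification $\dim C = \dim A$ at $p$ from $T_p C = T_p A$; everything else is a direct application of Lemma~\ref{boundary} and the definition of $\partial A$. If one prefers to avoid appealing to the dimension-via-tangent-cone fact, an alternative is to argue directly: from $T_p C = T_p A$ one can lift, for each direction $v \in \Sigma_p A$, a short minimal geodesic in $C$ tangent to $v$, and then use local convexity together with local closedness of $C$ to conclude that an entire metric ball around $p$ is swallowed by $C$.
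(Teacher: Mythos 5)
Your argument is correct and is essentially the intended one: the corollary is stated immediately after Lemma~\ref{boundary} precisely so that, with $\partial A = \emptyset$, the intrinsic boundary of $C$ reduces to its topological boundary, and then $T_pC = T_pA$ forces $\Sigma_pC = \Sigma_pA$ to have empty boundary, ruling out $p \in \partial^t C$. The dimension bookkeeping you supply to justify invoking Lemma~\ref{boundary} ($T_pC = T_pA$ forces $\dim C = \dim A$ locally, via $\dim = \dim T_p$ for Alexandrov spaces) is exactly what is needed to cover the case not excluded by the corollary's hypotheses, and the forward direction is the standard ball-in-$C$ observation.
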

If $A$ has boundary the conclusion of this corollary is wrong; consider for example $A = \{(x,y) \in \RR^2 \mid y \geq 0\}$ and its convex subset $C$ a closed disk tangent to the boundary.
\\ \\
Now we discuss some results on convex subsets of quotient spaces.
\begin{lemma}\label{type}
 Let $\G$ be a compact Lie group acting isometrically on a compact Riemannian manifold $M$ with quotient space $M^*$ and let $C \subset M^*$ be convex. Then there exists a maximal orbit type in $C$ and the set of points of maximal type is open and dense in $C$.
\end{lemma}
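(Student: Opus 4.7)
The plan is to argue by induction on $\dim M$. The base case $\dim M = 0$ is trivial, since $M$ and $M^*$ are then finite and any convex subset is a single point. For the inductive step, I fix $p \in C$ together with a lift $\hat p \in \pi^{-1}(p)$, and set $V = N_{\hat p}\G(\hat p)$. The slice theorem identifies a neighborhood of $p$ in $M^*$ isometrically with a neighborhood of $0$ in $V/\G_{\hat p}$, and under this identification the $\G$-orbit type of $[\exp_{\hat p}(v)]$ equals the $\G_{\hat p}$-stabilizer type of $v \in V$, which depends only on $v/|v| \in V^1$. In particular $\Sigma_p M^* = V^1/\G_{\hat p}$ is the quotient of a compact Riemannian manifold of dimension $\dim V - 1 < \dim M$ by the compact Lie group $\G_{\hat p}$, so the induction hypothesis is available on this sphere quotient.

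Next I will apply the induction hypothesis to the convex subset $\Sigma_p C \subset \Sigma_p M^*$. By Lemma \ref{T_pC is convex} the tangent cone $T_p C$ is convex in $T_p M^*$. Since $T_p M^*$ is the Euclidean cone over $\Sigma_p M^*$ (which has diameter $\leq \pi$) and a minimal geodesic between two non-tip points of such a cone projects to a minimal geodesic in the base, convexity of $T_p C$ forces $\Sigma_p C$ to be convex in $\Sigma_p M^*$. The induction hypothesis, applied to the $\G_{\hat p}$-action on $V^1$ with the convex subset $\Sigma_p C$, then produces a unique maximal orbit type $T^*_p$ in $\Sigma_p C$ whose stratum is open and dense. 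Transporting via $\exp_{\hat p}$ using the stabilizer correspondence above, every orbit type occurring in $C$ near $p$ is $\leq T^*_p$, and the $T^*_p$-stratum of $C$ is open and dense in a neighborhood of $p$.

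It then remains to globalize this local statement. If $p, q \in C$ lie in a common slice neighborhood, density of the $T^*_q$-stratum near $q$ places points of type $T^*_q$ in any neighborhood of $p$, so by the local maximality at $p$ we get $T^*_q \leq T^*_p$; by symmetry $T^*_p = T^*_q$. Thus $p \mapsto T^*_p$ is locally constant, and path-connectedness of $C$ (which follows from convexity) yields a single maximal type $T^* := T^*_p$ whose stratum is open and dense in $C$. The only delicate step is the passage from the convexity of $T_p C$ (given by Lemma \ref{T_pC is convex}) to that of $\Sigma_p C$, which uses the cone-over-spherical structure of the tangent cone; the remainder of the argument reduces, via the slice theorem and the stabilizer correspondence, to a direct application of the induction hypothesis.
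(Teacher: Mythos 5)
Your proof takes a genuinely different route from the paper's. The paper's argument is short and elementary: it uses the fact, following from the slice theorem, that the orbit type is constant along the interior of any geodesic in $M^*$. It first locates a single point $p \in C$ whose orbit type is constant on a whole open neighborhood of $p$ in $C$ (by iterating towards bigger types, which terminates because there are only finitely many), and then propagates that type to all of $C$ along geodesics from $p$, which lie in $C$ by convexity. No induction on dimension, no tangent-cone convexity, no spaces of directions are needed.

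Your induction-on-dimension approach has several gaps as written. First, you invoke Lemma \ref{T_pC is convex}, but that lemma requires $C$ to be locally closed in addition to locally convex; Lemma \ref{type} only assumes $C$ is convex, so the hypothesis is not verified, and your argument proves at best a weaker statement. Second, the step from convexity of $T_pC$ to convexity of $\Sigma_pC$ rests on the claim that a minimal geodesic between two non-apex points of a metric cone $C(X)$ projects radially to a geodesic in $X$; this fails when the base distance equals $\pi$, in which case the cone geodesic passes through the apex and the radial projection is not a geodesic. Since $\Sigma_pM^*$ can have diameter exactly $\pi$ (at regular points it is a round sphere), this case cannot be ignored. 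Third, the globalization is stated loosely: ``density of the $T^*_q$-stratum near $q$ places points of type $T^*_q$ in any neighborhood of $p$'' is not literally true unless $q$ is arbitrarily close to $p$; the intended local-constancy-of-maximal-type argument is salvageable but needs to be spelled out (two nearby points' dense locally-maximal strata overlap, forcing equal types). Each of these could likely be repaired with more work, but the paper's one-paragraph proof avoids all of them by exploiting the constancy of orbit type along geodesic interiors.
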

\begin{proof}
 Let $\gamma$ be a geodesic in $C$. Since $\gamma$ is also a geodesic in $M^*$, it follows from the slice theorem that the orbit type is constant along the interior of $\gamma$. Therefore it suffices to show that there exists $p \in C$ such that the orbit type is constant on an open neighborhood of $p$ in $C$, since, by convexity, any point $q \in C$ is connected to $p$ via a geodesic of $M^*$ lying in $C$. To see this, let $q \in C$ be arbitrary. Again by the slice theorem, there exists an open neighborhood $U$ of $q$ in $C$ such that all points in $U$ have type bigger or equal to $q$. In the case there is no point in $U$ of bigger type we are done. If there is a point $q_2 \in U$ of bigger type than $q$ let $U_2$ be a neighborhood of $q_2$ in $C$ such that all points in $U_2$ have type bigger or equal to $q_2$. Iterating this process, after a finite number of steps we obtain a point $q_k$ with an open neighborhood $U_k$ in $C$ such that all points of $U_k$ have constant type, since there exist only finitely many orbit types.
\end{proof}
\begin{definition}
For $p \in M$ set
\begin{align*}
\nu_{p} : N_{p}\G(p) &\to T_{\pi(p)}M^*\\
v &\mapsto  \frac d {dt}_{|t =  0}\pi (c_{v}(t)).
\end{align*}
\end{definition}
Here we denote by $c_{v}$ the geodesic of $M$ with initial conditions $c_{v}(0) = p$ and $\dot c_{v} (0) = v$. Identifying $T_{\pi(p)}M^* = N_{p}\G(p)/\G_{p}$ this map coincides with the quotient map $N_{\hat p}\G(\hat p) \to N_{\hat p}\G(\hat p)/\G_{\hat p}$.
%We call a subset $A$ of $(X,d)$ \textit{locally closed} if for all $p \in  A$ there exists an $\epsilon > 0$ such that $A \cap \overline{B_\epsilon(p)}$ is closed in $X$. A locally closed, convex subset $C$ of an Alexandrov space $A$ is not an Alexandrov space in general (since it may not be complete). However, it is clear that all local concepts, such as tangent cones, geodesics, etc.\ can be defined analogously for $C$. We have the following

\begin{lemma}\label{submanifold}
Let $\G$ be a compact Lie group acting isometrically on a Riemannian manifold $(M,g)$ with quotient space $M^*$. Let $C \subset M^*$ be locally closed and convex. Then $\pi^{-1}(C)$ is a smooth submanifold of $M$ without boundary, if and only if for all $p \in M$ with $\pi(p) \in C$ the space $\nu_p^{-1}(T_{\pi(p)}C) \subset N_p\G(p)$ is a linear subspace.
\end{lemma}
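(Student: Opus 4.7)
The argument is local on $M$, so I plan to fix a point $p \in \hat C := \pi^{-1}(C)$ and analyze a $\G$-invariant neighborhood of $\G(p)$ using the slice theorem. First I would identify this neighborhood $\G$-equivariantly with $\G \times_{\G_p} (N_p\G(p) \cap B_r(0))$ via $[g,v] \mapsto g \cdot \exp_p(v)$, and correspondingly a neighborhood of $\pi(p)$ in $M^*$ with $(N_p\G(p) \cap B_r(0))/\G_p$. Under these identifications, $\hat C$ corresponds to $\G \times_{\G_p} D$ where $D := \exp_p^{-1}(\hat C) \cap B_r(0)$ is a closed, $\G_p$-invariant subset of $N_p\G(p)$ containing $0$, and $C$ corresponds locally to $D/\G_p$. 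Hence $\hat C$ is a smooth, boundaryless $\G$-invariant submanifold of $M$ near $\G(p)$ if and only if $D$ agrees, on a small ball at $0$, with a $\G_p$-invariant linear subspace of $N_p\G(p)$.

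Next I would verify that $T_0 D = V_p$ as subsets of $N_p\G(p)$, where $V_p := \nu_p^{-1}(T_{\pi(p)}C)$. Since the identification of $(N_p\G(p) \cap B_r(0))/\G_p$ with a neighborhood of $\pi(p)$ in $M^*$ intertwines metric rescalings, taking tangent cones at $0$ and $\pi(p)$ yields $T_{\pi(p)}C = T_0(D/\G_p) = \nu_p(T_0 D)$; together with the $\G_p$-invariance of $T_0 D$, this gives $T_0 D = V_p$. The forward direction of the lemma is then immediate: if $\hat C$ is a smooth submanifold without boundary near $p$, so is $D$ at $0$, and its tangent space $T_0 D = V_p$ is a linear subspace of $N_p\G(p)$.

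For the reverse direction, assume $V_p$ is a linear subspace. I would first establish that $D$ is star-shaped from $0$. For $q \in D$ with $|q| < r$ small, the segment $[0,q] \subset N_p\G(p)$ corresponds via $\exp_p$ to the unique horizontal geodesic in $M$ from $p$ to $\exp_p(q)$, which in turn projects to the unique minimal geodesic from $\pi(p)$ to $\pi(\exp_p(q))$ in $M^*$. Both endpoints lie in $C$, so by convexity of $C$ the entire minimal geodesic lies in $C$, i.e., $[0,q] \subset D$. Combining star-shape with $T_0 D = V_p$ now shows that every $q \in D$ lies on a ray from $0$ whose initial direction is in $V_p$, and linearity of $V_p$ then forces $q \in V_p$; thus $D \subseteq V_p \cap B_r(0)$.

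The main remaining step, which I expect to be the principal obstacle, is the reverse inclusion $V_p \cap B_{r'}(0) \subseteq D$ for some $r' > 0$. For $u \in V_p$ with $|u|$ small this amounts to showing that the geodesic of $M^*$ starting at $\pi(p)$ in direction $\nu_p(u)/|\nu_p(u)| \in T_{\pi(p)}C$ stays in $C$ up to time $|u|$. I plan to obtain this from the fact that $C$, being locally closed and convex, is itself a locally compact Alexandrov space with the same lower curvature bound under its intrinsic metric, which coincides with the restriction from $M^*$ by convexity. Standard Alexandrov geometry then yields a uniform $\delta > 0$ such that for every $w \in T_{\pi(p)}C$ with $|w| < \delta$ a geodesic of $C$ from $\pi(p)$ in direction $w/|w|$ of length $|w|$ exists, and such geodesics are automatically minimal geodesics of $M^*$ by convexity of $C$. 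Applying this with $w = \nu_p(u)$ gives $\exp_p(u) \in \hat C$, i.e., $u \in D$, so $V_p \cap B_\delta(0) \subseteq D$. Combined with the previous step, this yields $D = V_p \cap B_{\min(r,\delta)}(0)$ locally, completing the proof.
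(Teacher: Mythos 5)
Your slice-theorem reduction, the star-shapedness of $D$ via uniqueness of minimal geodesics from $\pi(p)$, and the conclusion $D \subseteq V_p$ are all sound, and they match the spirit of the first half of the paper's argument (the paper phrases it as $\pi^{-1}(C) \subseteq U_\epsilon := \G(\exp(B_\epsilon(0_p)\cap N_p))$). The gap is exactly where you flag it: the reverse inclusion $V_p \cap B_{r'}(0) \subseteq D$. Your justification --- that ``standard Alexandrov geometry'' supplies, for every direction $w/|w|\in \Sigma_{\pi(p)}C$, a geodesic of $C$ emanating from $\pi(p)$ in that direction of some uniform positive length --- is not a standard fact and is false in general. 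The space of directions $\Sigma_{\pi(p)}C$ is \emph{defined} as the metric completion of the set of directions actually realized by geodesics from $\pi(p)$; this set is dense but need not be closed, so limit directions need not carry geodesics. (A familiar instance: for $C$ a closed round disk in $\RR^2$ and $p$ a boundary point, $\Sigma_pC$ is a closed arc of length $\pi$, but the two endpoint directions, tangent to $\partial C$, are not directions of geodesics of $C$ from $p$.) So without a genuinely new argument this step does not go through, and it is the crux of the lemma.

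The paper's proof sidesteps this issue entirely via a different mechanism. It first shows, using Lemma \ref{type} and a dimension count through the principal isotropy of $\G$ on $\pi^{-1}(C)$, that $\dim\bigl(T_p\G(p)\oplus\nu_p^{-1}(T_{\pi(p)}C)\bigr)$ is a constant $k$ independent of $p$. It then builds the smooth $\G$-invariant hull $U_\epsilon$ (of dimension $k$) exactly as you do, shows $\pi^{-1}(C)\subseteq U_\epsilon$ by your star-shapedness argument, and finally argues by contradiction: if $\pi(p)$ were not interior in $U_\epsilon^*$, pick $x\in U_\epsilon^*\setminus C$ nearby and a minimizing geodesic $\gamma$ from $C$ to $x$; at $q$ lying over $\gamma(0)\in C$ one gets $T_{\gamma(0)}C\subsetneq T_{\gamma(0)}U_\epsilon^*$, hence a $k$-dimensional \emph{linear} subspace strictly contained in another $k$-dimensional linear subspace --- impossible. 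Note this uses the linearity hypothesis at the nearby foot point $\gamma(0)$, not merely at $p$; your proposal only invokes it at the single point $p$, which is another signal that your local step, if it were to work as stated, would prove something stronger than the lemma actually asserts. To repair your approach you would need to either import the paper's dimension-count/minimizing-geodesic mechanism, or supply a separate proof that a closed convex subset of a quotient Alexandrov space whose tangent cone at $p$ lifts to a linear $\G_p$-invariant subspace contains a full neighborhood of $p$ inside the corresponding hull --- which is essentially the content one is trying to prove.
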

\begin{proof}
 The only if part is easy, so we only prove the if part. For $p \in M$ with $\pi(p) \in C$ let $N_p := \nu_p^{-1}(T_{\pi(p)}C)$ and $V_p := T_p\G(p) \oplus N_p$. First we show that $\dim V_p$ does not depend on $p$: Let $d$ denote the dimension  of a principal isotropy group of the action of $\G$ on $\pi^{-1}(C)$, which is well defined by Lemma \ref{type}. Then
 \begin{align*}
  \dim V_p &= \dim T_p(\G(p)) + \dim N_p\\
  &= (\dim \G - \dim \G_p) + \dim T_{\pi(p)}C + (\dim \G_p - d)\\
  &= \dim \G + \dim C - d.
 \end{align*}
So $k := \dim V_p$ does not depend on $p$. Now fix $p \in \pi^{-1}(C)$ and let $\epsilon > 0$ such that the normal exponential map to $\G(p)$ is injective on vectors of length less than $\epsilon$.  Set $U_\epsilon := \G(\exp(B_\epsilon(0_p) \cap N_p))$. Then $U_\epsilon$ is a smooth submanifold of $M$ without boundary (since $N_p$ is linear and $\G_p$-invariant). Because our arguments are local, we may for simplicity assume that $C \subset B_\epsilon(\pi(p))$. From convexity of $C$ it follows that 
$$C \subseteq \exp (B_\epsilon(0_{\pi(p)}) \cap T_{\pi(p)}C).$$
Consequently, 
$$\pi^{-1}(C) \subseteq \G(\exp(B_\epsilon(0_p) \cap N_p)) = U_\epsilon.$$
Consider the Riemannian manifold $(U_\epsilon,g)$. Then $\dim U_\epsilon = k$ and $U_\epsilon$ is $\G$-invariant with quotient space $U_\epsilon^* \subset M^*$.  Further, $C \subseteq U^*_\epsilon$ is convex and locally closed as well. We claim that $\pi(p)$ is an interior point of $C$ in $ U^*_\epsilon$: Assume the contrary. Since $C$ is locally closed, there exists $x \in U^*_\epsilon \setminus C$ close to $\pi(p)$ and a geodesic $\gamma : [0,1] \to U_\epsilon^*$ with $\gamma(0) \in C$, $\gamma(1) = x$ and $\mathcal L(\gamma) = \min \{d^{U_\epsilon^*}(x,a) \mid a \in C\}$. Clearly $\dot \gamma(0) \notin T_{\gamma(0)}C$ and $\dot \gamma(0) \in T_{\gamma(0)}U_\epsilon^*$, in particular $T_{\gamma(0)}C \subsetneq T_{\gamma(0)}U^*_\epsilon$. Let $q \in U_\epsilon$ with $\pi(q) = \gamma(0)$. Then $V_q \subsetneq T_qU_\epsilon$.  On the other hand $V_q$ is linear and we have shown that $\dim V_q = k = \dim T_qU_\epsilon$, a contradiction.

Therefore, $p$ is an interior point of $\pi^{-1}(C)$ in $U_\epsilon$ and hence a neighborhood of $p$ in $\pi^{-1}(C)$ is a smooth submanifold of $U_\epsilon$, and thus also of $M$.
\end{proof}
%===========================
\begin{definition}Let $\G$ be a compact Lie group acting isometrically on $M$. A \textit{horizontal geodesic} $\gamma$ of $M$ is a geodesic that is perpendicular to all orbits it meets. A \textit{horizontal geodesic} $\gamma$ of $M^*$ is a curve that is the projection of a horizontal geodesic of $\gamma$. A subset $C \subset M^*$ is \textit{horizontally convex} if for all $p, q \in C$ and all horizontal geodesics $\gamma : [0,1] \to M^*$ with $\gamma(0) = p$ and $\gamma(1) = q$ we have $\gamma \subset C$.
\end{definition}
Note that a horizontal geodesic of $M^*$ is not a geodesic of $M^*$ in general. A nice property is that a horizontal geodesic $c : [0,1] \to M^*$ can be extended to a horizontal geodesic $c : [0,\infty[ \to M^*$ if $M$ is complete. This follows, since a horizontal lift of $c$ can be extended infinitely.
\\ \\
For the rest of this section we fix an euclidean vector space $V$ together with a compact Lie Group $\G$ acting orthogonally on it. Let $B$ denote the open unit ball of $V$. Note that for $\lambda \geq 0$ the scaling map $v \mapsto \lambda v$ descends naturally to $V^*$. By $0 \in V^*$ we denote the projection of $0 \in V$.
\begin{lemma}
 Let $\gamma$ be a (horizontal) geodesic of $V^*$. Then $\lambda \gamma$ is a (horizontal) geodesic as well, for all $\lambda \geq 0$  .
\end{lemma}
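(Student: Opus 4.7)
The key point is that on the vector space $V$ the map $h_\lambda : V \to V$, $v \mapsto \lambda v$, is a homothety (it scales all distances by $\lambda$) and commutes with $\G$ because $\G$ acts linearly. Therefore $h_\lambda$ descends to the scaling map $\bar h_\lambda : V^* \to V^*$, $[v] \mapsto [\lambda v]$, which is again a homothety of the quotient metric. Since homotheties of length spaces send geodesics to geodesics (only rescaling the speed by $\lambda$), the statement for ordinary geodesics is immediate: if $\gamma$ is a geodesic of $V^*$, so is $\bar h_\lambda \circ \gamma = \lambda \gamma$. The case $\lambda = 0$ is trivial.

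For horizontal geodesics the plan is to lift. By definition $\gamma = \pi \circ \hat \gamma$ for some horizontal geodesic $\hat\gamma$ of $V$, and then
\[
\lambda \gamma = \pi \circ (h_\lambda \circ \hat\gamma),
\]
so it suffices to check that $h_\lambda \circ \hat\gamma$ is again a horizontal geodesic of $V$. That it is a geodesic is clear since geodesics of $V$ are affine lines and $h_\lambda$ is affine. For horizontality one needs $\lambda \dot{\hat\gamma}(t) \perp T_{\lambda \hat\gamma(t)}\G(\lambda \hat\gamma(t))$.

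Here I would use that $\G$ acts \emph{linearly} on $V$: every Killing field $X$ associated to $\xi \in \mathfrak{g}$ satisfies $X(\lambda p) = \lambda X(p)$, so under the canonical identification $T_pV = V$ one has $T_{\lambda p}\G(\lambda p) = \lambda\, T_p\G(p)$ as subspaces of $V$. Combined with the fact that scaling preserves the Euclidean inner product up to a positive factor, this gives
\[
\lambda \dot{\hat\gamma}(t) \perp \lambda\, T_{\hat\gamma(t)}\G(\hat\gamma(t)) \;\iff\; \dot{\hat\gamma}(t) \perp T_{\hat\gamma(t)}\G(\hat\gamma(t)),
\]
and the right-hand side holds by horizontality of $\hat\gamma$. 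Hence $h_\lambda \circ \hat\gamma$ is horizontal and projects to $\lambda \gamma$, which is therefore a horizontal geodesic.

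There is no real obstacle here; the only point worth being careful about is distinguishing ``geodesic of $V^*$'' from ``horizontal geodesic of $V^*$'' (the latter is not in general a geodesic of the quotient metric), and handling each by the appropriate lift. Both cases reduce to the fact that $h_\lambda$ is a $\G$-equivariant homothety of $V$ commuting with the orbit structure.
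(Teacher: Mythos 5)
Your proof is correct and is essentially a fleshed-out version of the paper's own argument, which simply observes that the statement reduces to the analogous (and easy) statement in $V$, valid because the $\G$-action is linear (orthogonal). Both for the geodesic case (via the scaling homothety commuting with $\G$) and the horizontal case (via lifting and noting $T_{\lambda p}\G(\lambda p)=T_p\G(p)$ under the identification $T_qV\cong V$), your reasoning matches what the paper has in mind.
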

\begin{proof}
This follows from the analogous statement for $V$, which is true, since the action is orthogonal.
\end{proof}
\begin{lemma}\label{unique direction}
 Let $C \subseteq V^*$ be closed and convex with $\partial V^*  \subset C$ and $0 \in \mathring C$. Then for all $v \in \Sigma_0C = \Sigma_0V^*$ there either exists a unique $\lambda_v > 0$ with $\lambda_v v \in \partial^t C$, or $\lambda v \in \mathring C$ for all $\lambda \geq 0$. In the first case $\lambda v \notin C$ for all $\lambda > \lambda_v$.
\end{lemma}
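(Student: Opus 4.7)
The plan is to exploit the metric cone structure of $V^*$. Since the $\G$-action is orthogonal, scalar multiplication on $V$ descends to $V^*$, realizing $V^*$ as the Euclidean cone over $\Sigma_0 V^*$ with apex $0$. Consequently, for every $v\in\Sigma_0 V^*$ the ray $r_v\colon\lambda\mapsto\lambda v$ is a unit-speed geodesic of $V^*$, and between two points on the same ray the radial segment is the \emph{unique} minimal geodesic. The uniqueness is immediate from the lift: picking $\hat v$ in the unit sphere of $V$ projecting to $v$, one has $|\lambda_1\hat v-\lambda_2 g\hat v|^2 = (\lambda_1-\lambda_2)^2 + 2\lambda_1\lambda_2(1-\langle\hat v,g\hat v\rangle) \geq (\lambda_1-\lambda_2)^2$ with equality iff $g\hat v=\hat v$.

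First I would define $A_v:=\{\lambda\geq 0 : \lambda v\in C\}$ and show it is a closed sub-interval of $[0,\infty)$ containing $0$. Closedness of $A_v$ comes from closedness of $C$ together with continuity of $r_v$; $0\in A_v$ because $0\in C$; and $[0,\epsilon)\subset A_v$ for some $\epsilon>0$ because $B_\epsilon(0)\subset\mathring C$. The interval property then follows from combining convexity of $C$ with the uniqueness of minimal radial geodesics just established: any minimal geodesic in $C$ joining $\lambda_1 v,\lambda_2 v\in C$ must be the radial segment, so $[\lambda_1,\lambda_2]\subset A_v$. This establishes $A_v=[0,\lambda_v]$ with $\lambda_v:=\sup A_v\in(0,\infty]$.

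The case analysis is then quick. If $\lambda_v=\infty$, the entire ray is in $C$. If $\lambda_v<\infty$, closedness places $\lambda_v v\in C$, while maximality gives $(\lambda_v+\eta)v\notin C$ for every $\eta>0$. This simultaneously yields $\lambda v\notin C$ for all $\lambda>\lambda_v$ and places $\lambda_v v\in\partial^t C$, since every neighborhood of $\lambda_v v$ meets the complement of $C$.

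The main step, and the principal obstacle, is to show $\mu v\in\mathring C$ for every $\mu\in[0,\lambda_v)$; this simultaneously delivers the ``entire interior ray'' conclusion in the infinite case and the \emph{uniqueness} of $\lambda_v v$ as the boundary intersection in the finite case. For this I would pick $\lambda_0\in(\mu,\lambda_v)$, so that $\lambda_0 v\in C$, and invoke the standard fact for closed convex subsets of an Alexandrov space with nonempty topological interior: the open geodesic segment from any interior point $p\in\mathring C$ to any point $q\in C$ lies in $\mathring C$. Applied with $p=0$, $q=\lambda_0 v$, and the unique minimal geodesic being the radial segment $[0,\lambda_0]\,v$, this places $\mu v = (\mu/\lambda_0)\cdot\lambda_0 v$ in $\mathring C$ because $\mu/\lambda_0\in[0,1)$. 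The hypothesis $\partial V^*\subset C$ enters to guarantee the identification $\Sigma_0 C=\Sigma_0 V^*$ in the statement, so that the quantification over $v$ really ranges over all directions at $0$.
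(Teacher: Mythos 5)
Your argument for the interval structure of $A_v=\{\lambda\geq 0 : \lambda v\in C\}$ is correct and in fact cleaner than the paper's: the lift computation showing the radial segment is the \emph{unique} minimal geodesic between $\lambda_1 v$ and $\lambda_2 v$, combined with convexity, gives $\lambda v\notin C$ for $\lambda>\lambda_v$ directly, whereas the paper obtains this via the concavity of the distance to $\partial C$ along the segment $t\mapsto t\mu v$. The case analysis at $\lambda_v$ is also fine.

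The gap is in the final step. The ``standard fact'' you invoke — that the open geodesic segment from a point of $\mathring C$ to any point of $C$ lies in $\mathring C$ — is exactly as delicate as the rest of the lemma, and the natural way to prove it (apply Perelman's concavity of $d(\cdot,\partial C)$ along the segment, using that $d(0,\partial C)>0$) fails precisely when $\partial V^*\neq\emptyset$. In that case $0$ is a boundary point of $V^*$ (the boundary of a cone, being a closed subcone, contains the apex whenever it is nonempty), and then by Lemma~\ref{boundary} one has $0\in\partial C$ even though $0\in\mathring C$, so $d(0,\partial C)=0$ and the concavity estimate gives nothing. You remark that the hypothesis $\partial V^*\subset C$ only enters to identify $\Sigma_0 C=\Sigma_0 V^*$, but it is also the hypothesis that makes the boundary case tractable: the paper exploits it to pass to the double $\tilde V^*$, where the preimage $\tilde C$ is still convex, $\tilde 0$ lies in $\mathring{\tilde C}$, and $\tilde V^*$ has empty boundary, so the boundaryless argument applies and descends back to $V^*$. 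Your proof should either do the same reduction, or supply a direct proof of the interior-segment fact that is valid when the base point lies on the ambient boundary; as written it silently assumes $\partial V^*=\emptyset$.
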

\begin{proof}
 We may assume that $\partial^t C \neq \emptyset$, since otherwise the statement is trivial. First we assume that $\partial V^* = \emptyset$. Then $\partial^t C = \partial C$. Set $\lambda_{min} := d(0,\partial C)$. Then $0 < \lambda_{min} < \infty$. Let $v \in \Sigma_0V^*$. Since $0 \in \mathring C$, by convexity of $C$ and closedness of $\partial C$ there exists a unique $\lambda_{min} \leq \lambda_v \leq \infty$ such that $\lambda_v v \in \partial C$ and $t v \in \mathring C$ for all $0 \leq t < \lambda_v$. Assume there exists $\mu > \lambda_v$ such that $\mu v \in  C$. Let $\gamma(t) = t\mu v$, so $\gamma : [0,1] \to V^*$ is a unique minimal geodesic with $\gamma(0) \in C$ and $\gamma(1) \in C$ and thus $\gamma$ is contained in $C$ by convexity. Again by convexity of $C$, the distance function to $\partial C$ restricted to $\gamma$ is concave. Further, $d(\partial C,\gamma(t))$ attains its minimum $0$ at $t = \lambda_v/\mu$. Since $0 < \lambda_v/\mu < 1$, and $\gamma(t) \in C$ for $t \in [0,1]$, it follows that $d(\gamma(t),\partial C) \equiv 0$, i.e. $\gamma (t) \in \partial C$ for all $t \in [0,1]$, contradicting the fact that $0 \in \mathring C$.

Now let $\partial V^* \neq \emptyset$. Since $\partial V^* \subset C$, it follows that $\tilde C \subset \tilde V^*$ is convex, where $\tilde V^*$ denotes the double of $V^*$ and $\tilde C$ denotes the preimage of $C$ under the projection $\tilde V^* \to V^*$. Also $\tilde 0$ is contained in the interior of $\tilde C$. Even though $\tilde V^*$ is not a quotient space an argument analogous to the first case still applies, establishing the claim for $\tilde C \subset \tilde V^*$ and hence also for $C \subset V^*$. 
\end{proof}
\begin{remark} The lemma holds without the assumption that $\partial V^* \subset C$. Then the proof is a bit more complicated.
\end{remark}
Of course the analogous lemma holds for $B^*$:
\begin{lemma}\label{unique direction ball}
 Let $C \subset B^*$ be closed and convex with $\partial B^* \subseteq C$ and $0 \in \mathring C$. Then for all $v \in  \Sigma_0C = \Sigma_0B^*$ there either exists a unique $0 < \lambda_v$ such that $\lambda_v v \in \partial^t C$, and $\lambda v \notin C$ for all $\lambda_v < \lambda < 1$, or $\lambda v \in \mathring C$ for all $0 < \lambda < 1$.
\end{lemma}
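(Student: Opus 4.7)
The plan is to mimic the proof of Lemma~\ref{unique direction} with modifications for the bounded parameter range $\lambda \in [0,1)$. Fix $v \in \Sigma_0 B^* = \Sigma_0 V^*$, the last equality holding since $0$ lies in the interior of $B$. I consider
$$T_v := \{\lambda \in [0,1) : \lambda v \in C\}$$
and first show it is a closed initial subinterval of $[0,1)$. Closedness is immediate from closedness of $C$ in $B^*$, and $T_v \supseteq [0,\epsilon]$ for some $\epsilon > 0$ because $0 \in \mathring C$. For the interval property, given $a < b$ in $T_v$ I argue that the segment $\mu \mapsto \mu v$, $\mu \in [a,b]$, is the unique minimal geodesic in $V^*$ (and hence in $B^*$, which is convex in $V^*$) from $av$ to $bv$: any lift to $V$ of such a geodesic is a segment from $a\tilde v$ to $bg\tilde v$ for some $g \in \G$, of length at least $|b-a|$, with equality forcing $g\tilde v = \tilde v$ and yielding the straight segment $[a\tilde v, b\tilde v]$ uniquely. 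Convexity of $C$ in $B^*$ then forces this unique minimizer to lie in $C$, so $\mu v \in C$ for $\mu \in [a,b]$. Consequently $T_v$ equals either $[0,1)$ or $[0,\lambda_v]$ for a unique $\lambda_v \in (0,1)$.

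In the second case, $\lambda v \notin C$ for $\lambda \in (\lambda_v,1)$ directly by definition of $T_v$, and $\lambda_v v$ is a limit of such points, hence $\lambda_v v \in \partial^t C$. The remaining tasks are: showing uniqueness of $\lambda_v$ (equivalently, $\lambda v \notin \partial^t C$ for $\lambda \in (0,\lambda_v)$); and, in the complementary case $T_v = [0,1)$, showing $\lambda v \in \mathring C$ for all $\lambda \in (0,1)$. Both follow from concavity of the distance to the Alexandrov boundary. Since $C$ is a closed convex subset of the nonnegatively curved Alexandrov space $B^*$, it is itself nonnegatively curved under the induced intrinsic metric. By Lemma~\ref{boundary} together with $\partial B^* \subseteq C$, its Alexandrov boundary is
$$\partial C = \partial^t C \cup \partial B^*,$$
and Lemma~\ref{concave distance} yields that $f(\lambda) := d_C(\lambda v, \partial C)$ is concave along the segment $\lambda \mapsto \lambda v$ on the portion contained in $C$.

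One has $f(0) > 0$ since $0$ lies in neither $\partial^t C$ nor $\partial B^*$. Suppose $f(\lambda_0) = 0$ for some $\lambda_0 \in (0,\lambda_v)$ in the first case, or $\lambda_0 \in (0,1)$ in the second. Take $\lambda_1 := \lambda_v$, respectively any $\lambda_1 \in (\lambda_0,1)$; in either case $\lambda_1 \in T_v$, so $f(\lambda_1) \geq 0$. Concavity of $f$ then gives
$$0 = f(\lambda_0) \geq (1 - \lambda_0/\lambda_1)\, f(0) + (\lambda_0/\lambda_1)\, f(\lambda_1) > 0,$$
a contradiction. Hence $f(\lambda) > 0$ at every interior $\lambda$, so $\lambda v \notin \partial C$; combined with $\lambda v \notin \partial B^*$ (as $\lambda < 1$), this gives $\lambda v \notin \partial^t C$, and since $\lambda v \in C$ we conclude $\lambda v \in \mathring C$, as needed in both cases.

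The main obstacle is the interplay between the topological boundary $\partial^t C$ and the Alexandrov boundary $\partial C$. The hypothesis $\partial B^* \subseteq C$ is indispensable: via Lemma~\ref{boundary} it guarantees that $\partial C$ exceeds $\partial^t C$ only along $\partial B^*$, so that $0 \notin \partial C$ and $f(0) > 0$, launching the concavity argument. Without this hypothesis the additional boundary stratum $\partial B^* \cap C$ could contain $0$ or obstruct the concavity argument elsewhere.
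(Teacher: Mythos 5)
Your argument is, in essence, a reorganized rendering of the first case ($\partial V^* = \emptyset$) of the paper's proof of Lemma~\ref{unique direction}; the interval argument for $T_v$ and the concavity step are sound there. The gap is that you never treat the case $\partial B^* \neq \emptyset$, and the concavity step genuinely breaks down then. You assert ``$f(0)>0$ since $0$ lies in neither $\partial^t C$ nor $\partial B^*$,'' and later ``$\lambda v \notin \partial B^*$ (as $\lambda < 1$).'' Both reveal a conflation of the Alexandrov boundary $\partial B^*$ with the metric sphere $\{|v|=1\}$. In fact $\partial B^*$ is scaling-invariant (it is the cone over $\partial\Sigma_0 B^*$), so $\partial B^* \neq \emptyset$ automatically forces $0\in\partial B^*$; for instance if $\G$ acts by a hyperplane reflection, $B^*$ is a half-ball and $\partial B^*$ is the diametral disk through the origin. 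In that case Lemma~\ref{boundary} places $0\in\partial C = \partial^t C\cup\partial B^*$, so $f(0)=0$ and your chain $0=f(\lambda_0)\geq(1-\lambda_0/\lambda_1)f(0)+(\lambda_0/\lambda_1)f(\lambda_1)>0$ produces no contradiction. Worse, when $v\in\partial\Sigma_0 B^*$ the entire ray $\lambda v$ lies in $\partial B^*\subseteq\partial C$, so $f\equiv 0$ and the concavity argument yields nothing at all, even though the lemma's conclusion is still true (and still used in Lemma~\ref{all or normal}).

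This is precisely the case the paper's proof of Lemma~\ref{unique direction} handles with the doubling construction, which your proposal omits. One passes to the double $\tilde B^*$ of $B^*$ along $\partial B^*$; because $\partial B^*\subseteq C$, the preimage $\tilde C$ is again convex, $\partial\tilde B^*=\emptyset$, so $\partial\tilde C=\partial^t\tilde C$ by Lemma~\ref{boundary}, and $\tilde 0\in\mathring{\tilde C}$ then gives $d(\tilde 0,\partial\tilde C)>0$. Your argument goes through verbatim in $\tilde B^*$ and the conclusion descends to $C\subset B^*$. So the hypothesis $\partial B^*\subseteq C$ is indeed what saves the lemma, but it acts through the doubling trick, not by keeping $0$ off of $\partial B^*$. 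As written, your proof is correct only when $\partial B^*=\emptyset$, i.e.\ when that hypothesis is vacuous.
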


\begin{definition}
Let $C$ be a locally convex subset of an Alexandrov space $A$ and $p \in C$. Then the \textit{normal cone} to $C$ at $p$ is defined as
$$N_pC := \{v \in T_pA \vert \measuredangle (v,w) \geq \pi/2 \text{ for all } w \in \Sigma_pC\} \cup \{0_p\}.$$
\end{definition}
The main technical lemma we need is the following:

\begin{lemma}\label{all or normal}
For all $n \geq 1$ let $A_n \subset B^*$ be horizontally convex and closed with $0 \in \mathring A_n$ and $\partial B^* \subset A_n$. Let $A_\infty = \bigcap_{n \geq 1}A_n$ and $A_\infty \neq \{0\}$. Then the following are equivalent:
\begin{itemize}
 \item[(i)] $N_0A_\infty = \{0\}$.
 \item[(ii)] $T_0A_\infty = V^*$.
 \item[(iii)] $0 \in \mathring A_\infty$.
\end{itemize}
\end{lemma}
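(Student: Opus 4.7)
The implications $(iii) \Rightarrow (ii) \Rightarrow (i)$ are immediate: a ball $B_\epsilon(0) \subset A_\infty$ forces $T_0 A_\infty \supset T_0 B_\epsilon(0) = V^*$, and if $T_0 A_\infty = V^*$ then any $w \neq 0$ lies in $T_0 A_\infty$ and satisfies $\measuredangle(w,w) = 0 < \pi/2$, so $w \notin N_0 A_\infty$. The content of the lemma is therefore $(i) \Rightarrow (iii)$. I would begin by recording structural properties of $A_\infty := \bigcap_n A_n$: as an intersection of closed horizontally convex sets, $A_\infty$ is itself closed and horizontally convex, contains $0$, and contains $\partial B^*$. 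Since every minimizing geodesic of $V^* = V/\G$ lifts to a horizontal geodesic of $V$, horizontal convexity is stronger than geodesic convexity, so $A_\infty$ is convex in $V^*$. Because the $\G$-action is orthogonal, every fundamental Killing field $X$ satisfies $X(v) \perp v$, so every ray from $0$ is a horizontal geodesic, and horizontal convexity makes $A_\infty$ star-shaped with respect to $0$. By Lemma \ref{T_pC is convex} and the cone structure, $T_0 A_\infty$ is therefore a closed convex cone inside $T_0 V^* = V^*$; rescaling the inclusion $\partial B^* \subset A_\infty$ by factors tending to infinity yields $\partial V^* \subset T_0 A_\infty$.

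To prove $(i) \Rightarrow (iii)$ I would remove the Alexandrov boundary by passing to the metric double $\tilde V^* := V^* \cup_{\partial V^*} V^*$, which by the doubling theorem from Section \ref{a with b} is again Alexandrov with $\curv \geq 0$ and now has empty Alexandrov boundary. Since $\partial V^* \cap B \subset A_\infty$, doubling $A_\infty$ along $A_\infty \cap \partial V^*$ produces a closed convex subset $\tilde A_\infty \subset \tilde V^*$. The containment $\partial V^* \subset T_0 A_\infty$ ensures that doubling is compatible with taking the tangent cone at $0$: the cone $T_0 \tilde V^* = \tilde V^*$ is the double of $V^*$ along $\partial V^*$, and $T_0 \tilde A_\infty$ is the double of $T_0 A_\infty$ along $\partial V^*$ sitting inside it. Under this correspondence the three conditions translate: $0 \in \mathring A_\infty \Leftrightarrow 0 \in \mathring{\tilde A_\infty}$ and $T_0 A_\infty = V^* \Leftrightarrow T_0 \tilde A_\infty = T_0 \tilde V^*$ follow immediately from the compatibility of doubling with balls and tangent cones, while $N_0 A_\infty = \{0\} \Leftrightarrow N_0 \tilde A_\infty = \{0\}$ uses that $\partial V^* \subset T_0 A_\infty$ forces any nonzero normal vector to lie strictly inside one of the two copies of $V^*$ rather than along $\partial V^*$ (otherwise it would have angle $0$ with itself).

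Inside the boundaryless $\tilde V^*$ I would apply Corollary \ref{ball} to obtain $0 \in \mathring{\tilde A_\infty} \Leftrightarrow T_0 \tilde A_\infty = T_0 \tilde V^*$. The remaining equivalence $N_0 \tilde A_\infty = \{0\} \Leftrightarrow T_0 \tilde A_\infty = T_0 \tilde V^*$ is a standard convex-analytic fact about closed convex subcones of an Alexandrov cone with $\curv \geq 0$: for a proper subcone, the $1$-Lipschitz nearest-point projection together with a first-variation argument produces a nontrivial element of the normal cone at the apex. Composing these two equivalences with the doubling dictionary yields $(i) \Leftrightarrow (ii) \Leftrightarrow (iii)$ in $V^*$. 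The main obstacle will be the normal-cone half of the dictionary: one must compare angles at $0$ in $\tilde V^*$, where the minimizing geodesic between the two copies of $V^*$ passes through $\partial V^*$, with angles in $V^*$, and argue carefully using $\partial V^* \subset T_0 A_\infty$ that doubling neither creates nor destroys normals.
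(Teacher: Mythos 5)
The reduction to $(i)\Rightarrow(ii)$ is sound, and your doubling argument together with Corollary \ref{ball} correctly handles $(ii)\Leftrightarrow(iii)$; this part of your approach matches what the paper itself points out in the remark following the lemma. The gap is in the remaining step. You assert that $N_0\tilde A_\infty = \{0\} \Leftrightarrow T_0\tilde A_\infty = T_0\tilde V^*$ is ``a standard convex-analytic fact about closed convex subcones of an Alexandrov cone,'' to be proved by a nearest-point projection and first variation. This is precisely the step the paper's author flags as nontrivial: the remark immediately after the lemma states that without the approximating sequence the implication ``$N_0A = \{0\} \Rightarrow T_0A = V^*$'' is \emph{not} known to hold, even for a single horizontally convex set $A$. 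In Euclidean space this direction is hyperplane separation, but in a singular Alexandrov cone $V^* = V/\G$ there is no supporting-hyperplane machinery, and your sketched argument doesn't close the gap: a foot-point projection of an exterior point $x$ onto $T_0\tilde A_\infty$ yields, by first variation, a normal direction at the foot point $y$, not at the apex $0$, and in an Alexandrov cone you cannot translate that normal to $0$. You must exhibit a direction that makes angle $\geq \pi/2$ with \emph{every} direction of $\Sigma_0 A_\infty$, and nothing in your outline produces one.

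The paper's proof does something structurally different: it never works with $A_\infty$ alone, but uses the full filtration $\{A_n\}$. Because each $A_n$ contains $0$ in its interior, the distances $d_n = d(0,\partial^t A_n)$ are strictly positive, there are boundary points $d_nv_n \in \partial^t A_n$ with $d_n \to 0$, and (after a subsequence) the directions $v_n$ converge to some $v_0$. The claim $v_0 \in N_0 A_\infty$ is then proved by contradiction via a triangle comparison in $A_n$: if $u_0 \in \Sigma_0 A_\infty$ made an angle $< \pi/2$ with $v_0$, one builds the triangle with vertices $0$, $\epsilon u_0$, $d_n v_n$ inside $A_n$, observes the angle at $d_nv_n$ exceeds $\pi/2$ once $d_n$ is small, extends the opposite side \emph{past} $d_nv_n$ while staying in $\mathring A_n$ (this uses $0 \in \mathring A_n$ again), and derives a contradiction with Lemma \ref{unique direction ball}, which forbids re-entering $A_n$ along a ray once you leave. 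Every step leans on the approximating sets having interior at $0$; the argument does not go through for $A_\infty$ itself, and as the author notes it is open whether the lemma is even true without the approximating hypothesis. Your proposal drops the sequence entirely, so the key implication is left unproved.
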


\begin{proof} We show that the complementary statements are equivalent, that is
$$(1)N_0A_\infty \neq \{0\} \Leftrightarrow (2)\ T_0A_\infty \neq V^* \Leftrightarrow (3)\ 0 \notin \mathring A_\infty.$$
Clearly $(1) \Rightarrow (2)$ and $(1) \Rightarrow (3)$, so it suffices to show $(3) \Rightarrow (1)$ and $(2) \Rightarrow (1)$: Assume that either $T_0A_\infty \neq V^*$ or $0 \notin \mathring A_\infty$. Without loss of generality we may further assume that $B^* \setminus A_n \neq \emptyset$ for all $n$ and $A_{n + 1} \subseteq A_n$ for all $n$, since otherwise we can consider the sequence $\tilde A_n = \bigcap_{k \leq n}A_k$. 

For all $n$ let $d_n := d(0,\partial^tA_n)$. Then $d_n > 0$ for all $n$, since $0 \in \mathring A_n$ and there exists $v_n \in \Sigma_0B^*$ such that $d_nv_n \in \partial^tA_n$. Possibly after taking a subsequence we may assume that $v_n \xrightarrow{n \to \infty} v_0$ for some $v_0 \in \Sigma_0B^*$. We claim that $v_0 \in N_0A_\infty$. For this we argue by contradiction: Assume $v_0 \notin N_0A_\infty$. Then there exist $u_0 \in \Sigma_0A_\infty, \epsilon > 0$ and $\delta > 0$ such that $2\epsilon u_0 \in A_\infty$ and $\measuredangle(u_0,v_0) < \pi/2 - 2\delta$. It follows that $t\epsilon u_0 \in A_n$ for all $n$ and for all $0 \leq t \leq 2$ and for all sufficiently large $n$ we have 
\begin{align}\label{small angle}
 \measuredangle (u_0,v_n) < \pi/2 - \delta.
\end{align}
 Note that each of the assumptions $0 \notin \mathring A_\infty$ and $T_0A_\infty \neq V^*$ implies that $d_n \xrightarrow{n \to \infty} 0$. Let $c_n: [0,1] \to B^*$ be a minimal geodesic with $c_n(0) = \epsilon u_0$ and $c_n(1) = d_nv_n$. Further, let $a$ be the segment from $0$ to $\epsilon u_n$ and $b_n$ be the segment from $0$ to $d_nv_n$. By horizontal convexity the triangle $\Delta_n$ formed by $a, b_n$ and $c_n$ is contained in $A_n$ and, since $V^*$ is a cone, the angles of $\Delta_n$ add up to $\pi$ for all $n$. Moreover, since $d_n \to 0$, the angle of $a$ and $c_n$ goes to $0$. From \eqref{small angle} we conclude that $\alpha_n > \pi/2$ for all $n$ sufficiently large, where $\alpha_n$ denotes the angle of $c_n$ and $b_n$. Consequently
\begin{align}\label{diff c}
 \frac{d}{dt}_{|t = 1} d(0,c_n(t)) < 0
\end{align}
 for all $n$ sufficiently large. We can extend $c_n$ to a horizontal geodesic $c_n : [0,1 + s_n] \to B^*$ for some positive time $s_n$. Since $d(0,c(1)) = d(0,d_nv_n) = d_n = d(0,\partial^tA_n)$, using \eqref{diff c}, we can assume, maybe after choosing a smaller $s_n > 0$, that $c_n(1 + s_n) \in \mathring A_n$. Consequently there exists $\lambda > 1$ such that $\lambda \epsilon u_0 \in A_n$ and $\lambda c_n(1 + s_n) \in A_n$. Since $A_n$ is horizontally convex and $\lambda c_n$ is a horizontal geodesic, it follows that $\lambda c_n \subset A_n$. In particular, $\lambda c_n(1) = \lambda d_nv_n \in A_n$, in contradiction to Proposition \ref{unique direction ball}.
\end{proof}
\begin{remark}Let $A \subset V^*$ be horizontally convex with $0 \in A$ and $\partial V^* \subset A$. Then from Corollary \ref{ball} applied to the double $\tilde V^*$ of $V^*$ it follows that $0$ is an interior point of $A$ if and only if $T_0A = V^*$.  Also clearly $T_0A \neq V^*$ if $N_0A \neq \{0\}$. So the condition in the lemma that $A = A_\infty$ can be approximated by horizontally convex sets of maximal dimensions is only needed to prove that $T_0A = V^*$ if $N_0A = \{0\}$. I do not know whether this condition is necessary.
\end{remark}
%============================proof of theorem=============
\section{The decomposition theorem and further results}
\subsection{The Proof of Theorem \ref{ddb2}}
After this preparations we are in the position to poof Theorem \ref{ddb2}. It will be useful later to state more general conditions on $M$ than the ones of Theorem \ref{ddb2}:

For this section, if not stated otherwise, we fix a compact Riemannian manifold $(M,g)$, possibly with nonsmooth boundary $\partial M$, equipped with an isometric action by a compact Lie group $\G$ such that the quotient space $M^*$  with the induced quotient metric $d$ is an Alexandrov space with $\curv  M^* \geq 0$ and nonempty boundary. Further, one of the following conditions holds:
\begin{itemize}
\item[(a)] $\partial M$ is empty and a component of $\partial M^*$ is a fixed point component of the action.
\item[(b)] $\partial M$ is nonempty, connected and all points in $\partial M$ are principal orbits. \end{itemize}
In case $(a)$ the action is fixed point homogeneous. In case $(b)$ a component of $\partial M^*$ is given by $\pi(\partial M)$, where as always $\pi : M \to M^*$ denotes the projection map. We denote by $F$ a component of $\partial M^*$ that is either given by a maximal fixed point component of the action, which we also denote by $F$, in case $(a)$ or $F := \pi(\partial M)$ in case $(b)$. Going through the arguments, the reader might want to picture the fixed point homogeneous case $(a)$, as case $(b)$ is only referred to twice in the rest of this section.

Let $d_F(p) = d(F,p)$ and
$$C := \{p \in M^* \mid d_F(p) \text{ is maximal}\}.$$
From Lemma \ref{concave distance} it follows that $d_F$ is concave and therefore we obtain the following observation.
\begin{lemma}
 $C$ is totally convex in $M^*$.
\end{lemma}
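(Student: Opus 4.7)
The plan is to deduce total convexity of $C$ directly from the concavity of $d_F$ recorded in Lemma \ref{concave distance}. Since $M^*$ is a nonnegatively curved Alexandrov space and $F$ is (a component of) its nonempty boundary, that lemma applies and tells us that $d_F : M^* \to \RR$ is a concave function on the whole of $M^*$.

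First I would fix the notation $M_0 := \max_{x \in M^*} d_F(x)$, which is finite since $M^*$ is compact, so that $C = d_F^{-1}(M_0)$. Next, let $p, q \in C$ be arbitrary and let $\gamma : [0,L] \to M^*$ be any geodesic of $M^*$ (not necessarily minimal) with $\gamma(0) = p$ and $\gamma(L) = q$. Concavity of $d_F$ along geodesics of $M^*$ means that the one-variable function $\varphi := d_F \circ \gamma : [0,L] \to \RR$ is concave. Its values at the endpoints satisfy $\varphi(0) = \varphi(L) = M_0$.

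The key computation is then a one-line consequence of concavity on an interval: for every $t \in [0,L]$,
\begin{align*}
 \varphi(t) \;\geq\; \tfrac{L-t}{L}\,\varphi(0) + \tfrac{t}{L}\,\varphi(L) \;=\; M_0.
\end{align*}
On the other hand $\varphi(t) = d_F(\gamma(t)) \leq M_0$ by the very definition of $M_0$, so $\varphi \equiv M_0$ on $[0,L]$. Hence $\gamma(t) \in C$ for all $t$, which is exactly the definition of total convexity of $C$ in $M^*$. There is no real obstacle here; the only nontrivial input is Perelman's concavity result from Lemma \ref{concave distance}, and once that is invoked the argument is the standard fact that the set of maxima of a concave function is totally convex.
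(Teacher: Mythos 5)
Your proof is correct and takes essentially the same route as the paper: the paper gives no separate proof beyond the one-sentence remark that $d_F$ is concave by Lemma \ref{concave distance}, and you have simply written out the standard consequence — a concave function along any geodesic with endpoints at the global maximum value must be constant — which is exactly what is implicitly being invoked.
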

Recall that a point $p \in M^*$ is called a \textit{non-critical point} for $d_F$ if there exists a vector $v \in T_pM^*$ such that the angle between $v$ and the initial direction of any minimal geodesic from $p$ to $F$ is strictly bigger than $\pi/2$ (it is common to call such points 'regular', but we want to avoid any confusion with regular points of $M^*$). A discussion of the theory of critical points for distance functions is given in \cite{grove90}. Analogously to the arguments in \cite{grove-searle94} we have 
\begin{lemma}\label{regular points}
 All points $p \in M^*\setminus(F \cup C)$ are regular. The distance functions $d_C$ and $d_F$ are non-critical at every $p \in M^*\setminus(F \cup C)$.
\end{lemma}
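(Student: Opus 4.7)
The strategy is a direct application of the concavity of $d_F$ (Lemma \ref{concave distance}) combined with the first variation formula for distance functions in Alexandrov spaces. Recall that $v \in T_pM^*$ is a non-critical direction for $d_F$ precisely when $(d_F)'_+(p,v) > 0$, and by the first variation formula this equals $-\cos\bigl(\min_{u \in \Uparrow_p^F}\measuredangle(v,u)\bigr)$, where $\Uparrow_p^F$ denotes the (nonempty, since $F$ is compact) set of initial directions of minimal geodesics from $p$ realizing $d(p,F)$.

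I would first establish non-criticality of $d_F$ at any $p \in M^*\setminus(F\cup C)$. Pick any $q \in C$; since $p \notin C$ we have $d_F(q) > d_F(p)$. Let $\gamma:[0,L]\to M^*$ be a minimal geodesic from $p$ to $q$ and set $v=\dot\gamma(0)$. By Lemma \ref{concave distance} the function $t\mapsto (d_F\circ\gamma)(t)$ is concave on $[0,L]$, so
$$(d_F\circ\gamma)'_+(0)\ \geq\ \frac{d_F(q)-d_F(p)}{L}\ >\ 0.$$
Plugging into the first variation formula gives $\min_{u\in\Uparrow_p^F}\measuredangle(v,u) > \pi/2$, which is exactly non-criticality of $d_F$ at $p$ with the witness $v$.

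Next I would handle $d_C$ by using the same concavity in the opposite direction. Let $p' \in F$ be a closest point to $p$ and let $w$ be the initial direction of a minimal geodesic from $p$ to $p'$; my candidate non-critical direction for $d_C$ is $w$. Take any minimal geodesic $\gamma:[0,d_C(p)]\to M^*$ from $p$ to some nearest point $q_0 \in C$. Since $q_0 \in C$ we still have $d_F(q_0) > d_F(p)$, so the same concavity–first variation argument applied to this $\gamma$ yields $\measuredangle(\dot\gamma(0),u)>\pi/2$ for every $u \in \Uparrow_p^F$; in particular $\measuredangle(\dot\gamma(0), w) > \pi/2$. As $\gamma$ was an arbitrary minimal geodesic from $p$ to $C$, this exhibits $w$ as a non-critical direction for $d_C$.

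The only subtle point is the appeal to the first variation formula in the Alexandrov setting, which is where the minimum over all minimal directions to the realizing points in $F$ (respectively $C$) enters; once this tool is in hand, everything else is an immediate consequence of concavity of $d_F$ and the maximality defining $C$. For the assertion that such $p$ are also \emph{regular} in the orbit-type sense (i.e.\ correspond to principal orbits), I would use the slice theorem at a hypothetical non-principal $\hat p \in \pi^{-1}(p)$: every minimal geodesic from $p$ to $F$ lifts to an $H$-invariant horizontal vector in the slice ($H=\G_{\hat p}$), so $\Uparrow_p^F \subset (N_{\hat p}\G(\hat p))^H/H$, and by averaging over $H$ one produces, for any candidate non-critical direction $v$, another direction $v'\in(N_{\hat p}\G(\hat p))^H/H$ with the same angle relations; combined with the previous step this forces $p\in C$, the contradiction giving principality. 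This slice-theoretic argument is where I expect to need to be most careful.
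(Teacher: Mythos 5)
The non-criticality statements are argued essentially as in the paper: concavity of $d_F$ along a minimal geodesic from $p$ into $C$ forces a strictly positive forward derivative, hence (first variation) an obtuse angle with every shortest direction to $F$, and by symmetry any shortest direction to $F$ certifies non-criticality of $d_C$. That part is fine and matches the paper.

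The regularity part has real gaps. First, the claim that every minimal geodesic from $p$ to $F$ lifts to a $\G_{\hat p}$-fixed vector, so that $\Uparrow_p^F$ sits in $(N_{\hat p}\G(\hat p))^{\G_{\hat p}}/\G_{\hat p}$, is not correct: a horizontal lift gives some $\hat u \in N_{\hat p}\G(\hat p)$, and acting by $h \in \G_{\hat p}$ produces a possibly different lift; only the \emph{set} of all such lifts is $\G_{\hat p}$-invariant, not any individual vector. Second, and more seriously, your final step ``this forces $p \in C$'' is not explained and does not follow. Producing a $\G_{\hat p}$-fixed direction $v'$ with $\measuredangle(v',\Uparrow_p^C) > \pi/2$ (which averaging can indeed deliver — one must check $\hat v' \neq 0$, which holds since $\langle \hat v', \hat u\rangle = \int_{\G_{\hat p}} \langle h\hat v, \hat u\rangle\, dh < 0$) only shows there is a whole geodesic of non-principal orbits emanating from $p$ and moving \emph{away} from $C$; that does not contradict $p \notin C$ by itself. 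The paper instead closes the argument with a minimality device: after noting that a punctured neighborhood of $F$ consists of regular points, it takes a nonregular $p \notin F$ of minimal distance to $F$, produces a $\G_{\hat p}$-fixed direction as the unique point of maximal distance to the boundary of the convex $\G_{\hat p}$-invariant cone $\{v : \measuredangle(v,\G_{\hat p}(\hat w)) > \pi/2\}$ (uniqueness of the ``soul'' of a positively curved Alexandrov space), and uses it to exhibit nonregular points strictly closer to $F$, contradicting minimality. Your sketch lacks both this inductive framing and the step showing the new nonregular points are closer to $F$, so the contradiction does not materialize as written.
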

\begin{proof}
Let $p \in M \setminus (F \cup C)$ and $c_1 : [0,1] \to M^*$ be a minimal geodesic from $p$ to $C$ and $c_2 : [0,1]  \to M^*$ be a minimal geodesic from $p$ to $F$. Since $p \notin C$, it follows $d(c_1(0),F) < d(c_1(1),F)$. By concavity of $d_F$ there exists $m > 0$ such that  $d(c_1(t),F) \geq d(c_1(0),F)  +mt$. It follows that $\measuredangle (\dot c_1(0),\dot c_2(0)) > \pi/2$. This shows that both $d_C$ and $d_F$ are non-critical on $M^* \setminus (F \cup C)$.

To see that all points $p \in M \setminus (F \cup C)$ are regular first note that there exists an open neighborhood $U$ of $F$ in $M^*$ such that all points in $U \setminus F$ are regular: In case $(b)$ this is immediate and in case $(a)$ this follows from the isotropy representation at points of $F$. Now let $p \in M \setminus F$ be a nonregular point of minimal distance to $F$. Then $0 < d(p,F)$. Assume that $d(p,F) < d(p,C)$. Again let $c$ be a minimal geodesic from $p$ to $C$ and $\hat p \in M$, $w \in N_{\hat p}\G(\hat p)$ with $\pi(\hat p ) = p$ and $\nu_{\hat p}(w) = \dot c(0)$. Then $V := \{v \in N_{\hat p} \G(\hat p) \mid \measuredangle (v, \G_{\hat p}(w)) > \pi /2\}$ is convex in $N_{\hat p}\G(\hat p)$, invariant under $\G_{\hat p}$ and,   by the regularity of $d_C$ at $p$, nonempty. $\Sigma_0V$ is a positively curved Alexandrov space with nonempty boundary and therefore, there exists a unique point of maximal distance to its boundary, which is fixed by $\G_{\hat p}$, since the boundary is invariant. This implies that there are nonregular points in $M^* \setminus (F \cup C)$ closer to $F$ than $p$, a contradiction.
\end{proof}
As mentioned in the beginning of this chapter, we need to obtain information on the regularity of $\pi^{-1}(C)$.  Referring to Lemma \ref{submanifold}, we have to determine at what points $p = \pi(\hat p) \in C$ the tangent cone $T_pC$ lifts to a linear space under $\nu_{\hat p} : N_{\hat p}\G(\hat p) \to T_pM^*$. The corresponding question for the normal cone $N_pC$ is easy to answer:
\begin{lemma}\label{linear normal}
 Let $p = \pi(\hat p) \in C$. Then $\nu^{-1}_{\hat p}(N_pC) \subset N_{\hat p}\G(\hat p)$ is linear if and only if $p$ is not a regular boundary point of $C$ (here we refer to the intrinsic boundary of $C$).
\end{lemma}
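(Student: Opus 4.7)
My plan is to pass to the slice representation. Set $V := N_{\hat p}\G(\hat p)$, so that $\G_{\hat p}$ acts orthogonally on $V$ and $\nu_{\hat p} : V \to V/\G_{\hat p} = T_pM^*$ is the quotient projection. Write $\tilde K := \nu_{\hat p}^{-1}(T_pC) \subset V$, a closed $\G_{\hat p}$-invariant cone. The quotient-angle formula $\measuredangle([w],[v]) = \min_{g \in \G_{\hat p}} \measuredangle(w, gv)$, together with the $\G_{\hat p}$-invariance of $\tilde K$, shows that $\nu_{\hat p}(w) \in N_pC$ if and only if $\langle w, v\rangle \leq 0$ for every $v \in \tilde K$; hence
\[
\nu_{\hat p}^{-1}(N_pC) = \tilde K^\circ := \{w \in V : \langle w, v\rangle \leq 0 \text{ for all } v \in \tilde K\},
\]
the polar cone of $\tilde K$ in the Euclidean space $V$.

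By the bipolar theorem, $\tilde K^\circ$ is a linear subspace of $V$ if and only if the closed convex cone $\overline{\mathrm{cone}}(\tilde K)$ is a linear subspace, equivalently, iff $-v \in \overline{\mathrm{cone}}(\tilde K)$ for every $v \in \tilde K$. So the task reduces to checking this symmetry condition in each of the three possible regularity statuses of $p$ in $C$.

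If $p$ is an intrinsic interior point of $C$, then $\Sigma_pC$ has empty boundary, which forces $\tilde K$ to be a $\G_{\hat p}$-invariant linear subspace of $V$; then $\tilde K^\circ = \tilde K^\perp$ is linear. If $p$ is a non-regular boundary point of $C$, then $\Sigma_pC$ has nonempty boundary but is not a closed hemisphere, so $T_pC$ admits at least two distinct supporting half-spaces at $0 \in T_pM^*$; combining this with Lemma \ref{all or normal} applied to the isotropy representation of $\G_{\hat p}$ on $V$---using that $C$ is the set of maximum distance to $F$, so $T_pC$ arises as the intersection of a decreasing family of horizontally convex sets of maximal dimension in $T_pM^*$ containing $0$ in their interior---forces $\overline{\mathrm{cone}}(\tilde K)$ to be symmetric, hence a linear subspace, so $\tilde K^\circ$ is linear. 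If instead $p$ is a regular boundary point of $C$, then $\Sigma_pC$ is a closed hemisphere in the unit sphere of $W := \mathrm{span}(T_pC)$, so $T_pC$ is a half-space of $W$ and $N_pC = W^\perp \cup \RR_{\geq 0}n$ for an outward unit normal $n \in W$; the lift $\tilde K^\circ$ then contains the $\G_{\hat p}$-orbit of the ray $\RR_{\geq 0}\tilde n$ but not its opposite, since the uniqueness of the outward normal at a regular boundary point rules out $-\tilde n \in \G_{\hat p}(\tilde n)$. Therefore $\tilde K^\circ$ is not a linear subspace.

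The main obstacle is the non-regular boundary case: translating the geometric non-regularity of $\Sigma_pC$ into the vector-space symmetry $\overline{\mathrm{cone}}(\tilde K) = -\overline{\mathrm{cone}}(\tilde K)$ in $V$. This fails for a generic totally convex subset of a quotient space, so the specific description of $C$ via the concave distance function $d_F$ must be used, which is precisely where Lemma \ref{all or normal} enters: it supplies the dichotomy between ``$T_pC = T_pM^*$'' and ``$p$ carries a well-defined outward normal direction'' in an equivariant form, allowing the desired symmetry conclusion to be lifted to $V$.
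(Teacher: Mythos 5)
Your identification of $\nu_{\hat p}^{-1}(N_pC)$ as the polar cone $\tilde K^\circ$ of $\tilde K := \nu_{\hat p}^{-1}(T_pC)$ is correct, and it recovers the paper's (implicit) observation that $\nu_{\hat p}^{-1}(N_pC)$ is a closed convex cone in the Euclidean space $N_{\hat p}\G(\hat p)$. However, the subsequent ``reduction'' via the bipolar theorem to ``$\overline{\mathrm{cone}}(\tilde K)$ is linear'' is a tautology ($\tilde K^\circ$ linear $\Leftrightarrow$ its polar linear), so it does not simplify anything; you still have to prove linearity of $\nu_{\hat p}^{-1}(N_pC)$ by some means.

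That is where the real gap sits. In your non-regular cases you claim that Lemma~\ref{all or normal}, together with the fact that $T_pC$ is a limit of horizontally convex sets, ``forces $\overline{\mathrm{cone}}(\tilde K)$ to be symmetric.'' But this is essentially the conclusion of Proposition~\ref{linear tangent}, whose proof requires the approximating family of Lemma~\ref{convex family} and a nontrivial recursion through nested linear subspaces $V_1, V_2, \dots$; Lemma~\ref{all or normal} alone gives only a dichotomy at a single stage of that recursion. In the paper Lemma~\ref{linear normal} is proved \emph{before} and \emph{independently of} Lemma~\ref{convex family} and Proposition~\ref{linear tangent}, and is in fact a key input that makes that later machinery run — so invoking it here is both anachronistic and a gap. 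Your ``intrinsic interior'' case has a related problem: $\Sigma_pC$ having empty boundary does not by itself force $\tilde K$ to be linear when $p$ is non-regular (and when $\dim C < \dim M^*$ the claim $T_pC = T_pM^*$ is simply false), and establishing linearity of $\tilde K$ at such points is again exactly Proposition~\ref{linear tangent}.

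The paper's argument for non-regular $p$ is much shorter and never touches $\tilde K$: $\nu_{\hat p}^{-1}(N_pC)$ is a closed, convex, $\G_{\hat p}$-invariant cone containing the direction to $F$; by Lemma~\ref{regular points} the $\G_{\hat p}$-action on its nonzero part has no fixed points (a fixed nonzero normal direction would produce non-regular points of $M^*$ just outside $C$, contradicting Lemma~\ref{regular points}); and a $\G_{\hat p}$-invariant convex cone that is not a linear subspace has a unique point of maximal distance to its boundary in its (positively curved) space of directions, which is necessarily $\G_{\hat p}$-fixed — a contradiction. This single argument handles your intrinsic-interior and non-regular-boundary cases uniformly and is the ingredient your proposal is missing. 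Your treatment of the regular boundary case is fine (and in fact simpler than you make it: for regular $p$ the slice representation is trivial, so $\nu_{\hat p}$ is an isometry and the question reduces to whether $N_pC$ itself is linear).
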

\begin{proof}
 First assume that $p$ is regular. Then an open neighborhood $U$ of $p$ in $M^*$,  equipped with the induced metric, is a smooth Riemannian manifold and $\pi : \pi^{-1}(U) \to U$ is a Riemannian submersion. Since $C$ is convex, it follows that $U \cap C$ is a smooth submanifold of $U$, possibly with nonsmooth boundary (see \cite{cheeger-gromoll72}). Then It is clear that $\nu^{-1}_{\hat p}(N_pC) \subset T_{\hat p}M$ is linear if and only if $N_pC \subset T_pU$ is linear, or equivalently (by convexity) $T_pC \subset T_pU$ is linear, i.e. $p$ is not a boundary point of $C$. 

 Now assume that $p$ is nonregular. Then 
$$\nu^{-1}_{\hat p}(N_pC) = \{v \in N_{\hat p}\G(\hat p) \mid \measuredangle (v,\nu^{-1}_{\hat p}(T_pC)) \geq \pi/2\} \cup  \{0\}.$$
Consequently $\nu^{-1}_{\hat p}(N_{p}C)$ is a closed and convex cone inside $N_{\hat p}\G(\hat p)$. Clearly $\nu^{-1}_{\hat p}(N_pC)$ is invariant under the action of $\G_{\hat p}$ on $N_{\hat p}\G({\hat p})$ and nonempty (since it contains the initial direction of a shortest geodesic to $F$). Further, by Lemma \ref{regular points}, the action of $\G_{\hat p}$ on $\nu^{-1}_{\hat p}(N_pC) \setminus \{0\}$ has no fixed points. As in the proof of Lemma \ref{regular points} it follows that $\nu^{-1}_{\hat p}(N_pC)$ is linear (otherwise, being a convex cone, it would have nonempty boundary and there would exist a nonzero vector fixed by $\G_{\hat p}$). 
\end{proof}

Note that the corresponding statement for the tangent cone $T_pC$ follows if 
$$N_pC^\perp := \{v \in T_pM^* \mid \measuredangle(v,N_pC) \geq \pi/2\} = T_pC.$$
Unfortunately this does not seem to be obvious. However, with some work we are able to show that Lemma \ref{linear tangent} holds analogously for $T_pC$ (cf. Proposition \ref{linear tangent}) implying the above equality. For that we use the approximation of $C$ by the superlevel sets 
$$C^s := \{p \in M^* | d_F(p) \geq s\}$$
to construct a similar approximation of $T_pC$ by convex sets of higher dimensions. Let 
$$a := \max_{p \in M^*} d_F(p).$$
\begin{lemma}\label{convex family}
 Let $p \in C$ and $B_p^*$ denote the open unit ball in $T_pM^*$. Then there exists a family $A_t \subseteq B_p^*, t \in [0,1]$ of closed and convex subsets satisfying the following conditions:
 \begin{itemize}
  \item[(i)] $A_{t_1} \subseteq A_{t_2}$ for $t_1 \leq t_2$,
  \item[(ii)] $A_0 = T_pC \cap B_p^*$,
  \item[(iii)] $A_1 = B_p^*$,
  \item[(iv)] The Hausdorff distance $d_H(A_{t_n},A_t)$ converges to $0$ for $t_n \to t$.
 \end{itemize}
\end{lemma}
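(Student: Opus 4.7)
The plan is to build $A_t$ from the superlevel sets of the first variation of the distance function $d_F$ at its maximum point $p$.  Since by Lemma \ref{concave distance} $d_F$ is concave on $M^*$, its differential at $p$ descends to a well-defined, continuous, $1$-homogeneous and concave function $\psi:T_pM^*\to\RR$ with $\psi\le 0$ and $\psi(0)=0$; by the first variation formula $\psi(v)=-\cos\theta_{\min}(v)$, where $\theta_{\min}(v)$ is the infimum of angles between $v$ and the initial directions of minimizing segments from $p$ to $F$.  In particular $\psi$ vanishes identically on $T_pC$, since along any geodesic staying in $C$ the function $d_F$ stays constantly equal to its maximum.

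Let $M:=\min\{\psi(v):v\in\overline{B_p^*}\}\le 0$.  In the degenerate case $M=0$ one has $T_pC=T_pM^*$ and I set $A_t\equiv\overline{B_p^*}$ for all $t$; otherwise I define
$$A_t\;:=\;\bigl\{v\in\overline{B_p^*}:\psi(v)\ge tM\bigr\}\qquad(t\in[0,1]).$$
Each $A_t$ is then closed and convex: the closed ball $\overline{B_p^*}$ is convex because balls around the apex of the Euclidean cone $T_pM^*$ are convex, and $\{\psi\ge tM\}$ is convex as the superlevel set of a concave function.  Monotonicity (i) is automatic since $tM$ decreases in $t$; the endpoint (iii) holds by the choice of $M$; Hausdorff-continuity (iv) follows from the $1$-Lipschitz continuity of $\psi$, which makes the level $tM$ move Hausdorff-continuously.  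The inclusion $T_pC\cap\overline{B_p^*}\subseteq A_0$ is immediate from $\psi\equiv 0$ on $T_pC$.

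The genuinely delicate point—which I expect to be the main obstacle—is the reverse inclusion $A_0\subseteq T_pC\cap\overline{B_p^*}$, i.e.\ the identification of the zero set of $\psi$ with $T_pC$.  This fails for generic concave functions ($-t^2$ has vanishing first derivative at its maximum but trivial max-set tangent cone), so the rigidity must be extracted from the specific structure of a distance-to-boundary function in $\curv\ge 0$.  Using the first variation formula, $\psi(v)=0$ forces $v$ to make angle $\ge\pi/2$ with the initial direction of every minimizing segment from $p$ to $F$; Alexandrov triangle comparison in $\curv\ge 0$ combined with the total convexity of $C$ should then force $\gamma_v$ to remain at maximum distance from $F$ for positive time, hence $v\in T_pC$.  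If this rigidity argument does not run cleanly for certain singular directions, a safe fallback is to redefine $A_t$ as the closed convex hull in $T_pM^*$ of $(T_pC\cap\overline{B_p^*})\cup\overline{B_p^*(t)}$, which tautologically gives $A_0=T_pC\cap\overline{B_p^*}$; the work is then shifted to verifying Hausdorff-continuity of this hull, which can be done by lifting to the Euclidean vector space $N_{\hat p}\G(\hat p)$ and using classical convex analysis there.
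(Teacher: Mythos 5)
Your primary approach breaks down exactly where you predict it might, and the proposed rigidity argument does not save it: the identity $\psi^{-1}(0) = T_pC$ is false for concave distance-to-boundary functions in curvature $\geq 0$. For a concrete counterexample take $M = M^*$ to be the flat solid ellipse $\{(x,y) : x^2/a^2 + y^2/b^2 \leq 1\} \subset \RR^2$ with $0 < b < a$, $\G$ trivial, $F = \partial M^*$ (case $(b)$ of the standing hypotheses). The set of maximal distance to $F$ is $C = \{(0,0)\}$, because $d_F(x,0) = b\sqrt{1 - x^2/(a^2 - b^2)}$ near $x=0$ has a strict maximum there. At $p = (0,0)$ the directions of minimizing segments to $F$ are only $(0,\pm 1)$, so $\psi((1,0)) = -\cos(\pi/2) = 0$ and the entire horizontal axis lies in $\psi^{-1}(0)$, while $T_pC = \{0\}$. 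No triangle comparison forces $\gamma_{(1,0)}$ to stay in $C$ because $d_F(t,0) = b - \tfrac{b}{2(a^2-b^2)}t^2 + O(t^4)$ really does fall off quadratically, exactly the $-t^2$ behavior you worry about. Structurally, what you would need is both that the minimizing directions to $F$ span $N_pC$ and that $(N_pC)^\perp = T_pC$; the first is simply false here (two antipodal directions in a two-dimensional normal cone), and the second is precisely Corollary \ref{nonreg bdry}, which is proved downstream of this lemma — so even when the conclusion happens to hold the $\psi$-route would be circular.

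Your fallback via convex hulls is a genuinely different route from the paper's. The paper instead produces $A_t$ as blow-up limits of the superlevel sets $C^s = \{d_F \geq s\}$: it chooses matched sequences $\lambda_n \to \infty$, $s_n \to a$ so that the $\lambda_n$-rescaled Hausdorff gap between $C^{s_n}$ and $C$ in the $1/\lambda_n$-ball equals $1/2$, extracts a convex limit $L_1 \supseteq T_pC \cap B_p^*$, and bisects to build a dense chain, completed by intersection. Your $A_t = \operatorname{conv}\bigl((T_pC \cap \overline{B_p^*}) \cup \overline{B_p^*(t)}\bigr)$ gives $(ii)$, $(i)$, $(iii)$ essentially tautologically, which is cleaner; but you would still need to verify that the convex hull inside the possibly singular cone $T_pM^*$ is well defined and closed (intersections of merely convex sets need not be convex when geodesics are non-unique), and that it moves Hausdorff-continuously in $t$. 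Lifting to $N_{\hat p}\G(\hat p)$ does not do this for free, since $\nu_{\hat p}^{-1}$ of a convex subset of $T_pM^*$ need not be convex. If those points are secured, the convex-hull family would serve equally well in the proof of Proposition \ref{linear tangent}, which relies only on properties $(i)$--$(iv)$.
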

\begin{proof} 
%For $\lambda > 0$ and $A \subset M^*$ we denote by $\lambda A$ the set $A$ considered as a subset of $\lambda M^* = (M^*,\lambda d)$.
After possibly rescaling $M^*$ we may assume that $a > 1$. Let $\lambda_n > 1$ and $0 < s_n < a$ be sequences with $\lambda_n \to \infty$, $s_n \to a$ and 
$$d^{\lambda_n}_H(C^{s_n} \cap B^d_{1/\lambda_n}(p),C \cap B^d_{1/\lambda_n}(p)) = 1/2,$$
where $d^{\lambda_n}_H$ denotes the Hausdorff distance of the space $\lambda M^* = (M^*,\lambda d)$ and $B^d_r(p)$ denotes the open ball of radius $r$ in $(M^*,d)$. Such sequences do exist, since for fixed $\lambda_n$ the function $s \mapsto f_n(s) = d^{\lambda_n}_H(C^{s} \cap B^d_{1/\lambda_n}(p),C \cap B^d_{1/\lambda_n}(p))$ is continuous and decreasing with $f_n(0) = d^{\lambda_n}_H(B^d_{1/\lambda_n}(p),C \cap B^d_{1/\lambda_n}(p)) = \lambda_nd^d_H(B^d_{1/\lambda_n}(p),C \cap B^d_{1/\lambda_n}(p)) = \lambda_n \lambda_n^{-1} = 1$ and $f_n(a) = 0$. Possibly after taking a subsequence, we can assume that $(C^{s_n} \cap B^d_{1/\lambda_n}(p),\lambda d)$ converges to a closed set $L_{1} \subset B_p^*$ with $T_pC \cap B_p^* \subset L_{1}$.
%(in fact, using Gromov-Hausdorff approximations $f_\lambda : (B_{1/\lambda}(p),\lambda d) \to B_p^*$ we can view $C^{s_n} \cap B^d_{1/\lambda_n}(p)$ as a sequence in $B_p^*$; since $D_p^*$, the closure of $B_p^*$,  is compact, it follows that the space of compact subsets of $D_p^*$ equipped with the Hausdorff distance is compact)
Analogously to the proof of Proposition \ref{T_pC is convex} it follows that $L_{1}$ is convex in $B_p^*$. Similarly, there exists a sequence $s_{n,2} \to a$ with $s_{n,2} < s_n$ such that 
$$d^{\lambda_n}_H(C^{s_{n,2}} \cap B^d_{1/\lambda_n}(p),C^{s_n} \cap B^d_{1/\lambda_n}(p)) = 1/2.$$
Again after taking a subsequence, we may assume that $C^{s_{n,2}} \cap B^d_{1/\lambda_n}(p)$ converges to a closed convex subset $L_{2} \subset B_p^*$, and it follows $T_pC \cap B_p^* \subset L_{1} \subset L_{2} \subset B_p^*$. Iterating this process we construct a finite chain of closed convex sets
$$T_pC \cap B_p^* =:  L_{0} \subset L_{1} \subset L_{2} \subset \dots \subset L_{k} \subset B_p^* =: L_{k + 1} $$
with Hausdorff distances $1/2$ for all neighboring spaces but the last two, which have Hausdorff distance $\leq 1/2$ (the finiteness of this sequence follows from compactness of $\overline B_p^*$). The same way we construct new closed convex spaces 
$$L_i \subset L_{i,1} \subset \dots \subset L_{i,j(i)} \subset L_{(i + 1)}$$ for all $0 \leq i \leq k$ with neighboring spaces having Hausdorff distance $1/4$, respectively $\leq 1/4$ for the last two. Relabeling the indices, we inductively define a family of closed convex sets $L_i$, $i \in I \subset [0,1]$ with $I$ dense in $[0,1]$, such that 
\begin{itemize}
  \item[] $L_{t_1} \subsetneq L_{t_2}$ for $0 \leq t_1 < t_2 \leq 1$,
  \item[] $L_0 = T_pC \cap B_p^*$,
  \item[] $L_1 = B_p^*$.
 \end{itemize}
Finally, for $t \in [0,1] \setminus I$ let $t_n$ be a sequence in $I$ with $t_n > t$, $t_n \to t$ for $t \to \infty$, and set $L_t = \bigcap_{t_n} L_{t_n}$.
Setting $A_t = L_t$ yields the desired family $\{A_t\}_{t \in [0,1]}$.
\end{proof}
\begin{remark} It follows from the proof of the following proposition that all the sets $A_t$ in fact are horizontally convex.
\end{remark}
\begin{proposition}\label{linear tangent}
Let $p = \pi(\hat p) \in C$. Then $\nu^{-1}_{\hat p}(T_pC) \subset N_{\hat p}\G(\hat p)$ is linear if and only if $p$ is not a regular boundary point of $C$.
\end{proposition}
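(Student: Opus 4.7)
The plan is as follows. For the only-if direction, argue by contraposition: at a regular boundary point $p \in C$, a neighborhood of $p$ in $M^*$ is a smooth Riemannian manifold, $\pi$ is locally a Riemannian submersion onto it, and $C$ is locally a smooth convex subset with boundary at $p$. Hence $T_pC$ is a proper half-space in $T_pM^*$, and $\nu_{\hat p}^{-1}(T_pC)$ is a proper half-space in $V := N_{\hat p}\mathsf G(\hat p)$, not a linear subspace.

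Now assume $p$ is not a regular boundary point of $C$. By Lemma \ref{linear normal}, $W := \nu_{\hat p}^{-1}(N_pC)$ is a $\mathsf G_{\hat p}$-invariant linear subspace of $V$. Set $L := W^\perp \subset V$ and $L^* := \nu_{\hat p}(L) \subset T_pM^*$; the aim becomes $T_pC = L^*$, since this yields $\nu_{\hat p}^{-1}(T_pC) = L$, a linear subspace. First note $T_pC \subset L^*$: if $v \in \nu_{\hat p}^{-1}(T_pC)$ and $u \in W$, then since angles in $T_pM^* = V/\mathsf G_{\hat p}$ are minima over $\mathsf G_{\hat p}$-orbits of their representatives, $\measuredangle(v, g.u) \geq \pi/2$ for every $g \in \mathsf G_{\hat p}$; using that $W$ is a linear subspace and applying the same to $-u \in W$ yields $\measuredangle(v,u) \leq \pi/2$, so $v \perp u$. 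Hence $\nu_{\hat p}^{-1}(T_pC) \subset L$.

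For the reverse inclusion, apply Lemma \ref{all or normal} in the Alexandrov space $L^*$ (with unit ball $L^* \cap B_p^*$) to the sequence $\tilde A_n := A_{1/n} \cap L^*$, where $\{A_t\}_{t \in [0,1]}$ is the family from Lemma \ref{convex family}. The $A_t$ arise as pointed Gromov--Hausdorff limits of rescalings at $p$ of the superlevel sets $C^s = \{d_F \geq s\}$, which are totally convex in $M^*$ by concavity of $d_F$ (Lemma \ref{concave distance}); hence each $A_t$ is horizontally convex in $T_pM^*$, which in passing establishes the remark following Lemma \ref{convex family}. Horizontal convexity descends to the intersections $\tilde A_n$ in $L^*$, the condition $0 \in \mathring{\tilde A_n}$ follows from $0 \in \mathring{A_{1/n}}$ by intersecting an ambient neighborhood with $L^*$, and $\partial(L^* \cap B_p^*) \subset \tilde A_n$ holds because the unit directions absent from $A_t$ lie in $\nu_{\hat p}(W) = N_pC$ (the cone of directions of strict decrease of $d_F$), whereas $L^*$ is orthogonal to this image. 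Computing $N_0\tilde A_\infty$ in $L^*$ with $\tilde A_\infty = T_pC$: any $v^* \in L^*$ making angle $\geq \pi/2$ with $T_pC$ lies in $L^* \cap N_pC$, and lifting to $L \cap W = \{0\}$ gives $v^* = 0$. Hence $N_0 T_pC = \{0\}$ in $L^*$, so by Lemma \ref{all or normal}, $0 \in \mathring{T_pC}$ relative to $L^*$; since $T_pC$ is a closed convex cone in $L^*$ containing a neighborhood of its apex, $T_pC = L^*$.

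The main obstacle is the verification of the hypothesis $\partial(L^* \cap B_p^*) \subset \tilde A_n$. This requires a careful analysis of the construction in Lemma \ref{convex family}, establishing that the unit sphere of $L^*$ is fully retained by every rescaled limit of the superlevel sets $C^s$ near $p$. Intuitively this rests on the fact that $d_F$, by concavity, decreases strictly only in directions with nontrivial $N_pC$-component, so $L^*$-directions are never cut away by the $C^s$ at any scale.
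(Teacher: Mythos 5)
Your reduction of the nonregular case to showing $T_pC = L^*$, where $L := W^\perp$ with $W = \nu_{\hat p}^{-1}(N_pC)$ linear by Lemma \ref{linear normal}, is the right target (the paper's proof implicitly establishes precisely this, cf.\ Corollary \ref{nonreg bdry}); and both your forward inclusion $\nu_{\hat p}^{-1}(T_pC)\subset L$ and the computation $N_0 T_pC \cap L^* = \{0\}$ are correct. The gap is that the hypotheses of Lemma \ref{all or normal} are not secured for your sequence $\tilde A_n := A_{1/n}\cap L^*$, and the failure is structural, not cosmetic. First, the assertion that every $A_t$ is horizontally convex because the $A_t$ arise as blow-up limits of the totally convex superlevel sets $C^s$ is not an argument: total convexity of $C^s$ in $M^*$ does not transfer in any direct way to horizontal convexity of the Gromov--Hausdorff limits $A_t$, and the remark after Lemma \ref{convex family} states explicitly that horizontal convexity of all $A_t$ is a \emph{consequence of the proof of this very proposition} -- assuming it at the outset is circular. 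The route the paper actually takes is: $A_t$ convex, contains all nonregular points, and $0\in\mathring A_t$ together imply totally convex, hence horizontally convex; the third ingredient is exactly where the difficulty lies. Second, you obtain $0 \in \mathring{\tilde A_n}$ from $0 \in \mathring A_{1/n}$ (interiority in $T_pM^*$); but the latter is precisely what may fail -- the paper defines $t_1 := \inf\{t : 0\in\mathring A_t\}$ because this infimum can be strictly positive, so $0\in\mathring A_{1/n}$ is false for $1/n < t_1$. Interiority of $A_{1/n}\cap L^*$ in the smaller space $L^*$ is a genuinely different condition that you would have to establish directly, and you do not. Third, you yourself flag $\partial(L^*\cap B_p^*)\subset\tilde A_n$ as unresolved.

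These gaps cannot be patched in place; the paper's iterated descent is exactly the mechanism that supplies what is missing. It produces a strictly decreasing chain of linear $\G_{\hat p}$-invariant subspaces $V_1 \supsetneq V_2 \supsetneq \dots$, with $V_i := \nu_{\hat p}^{-1}(T_0 A_{t_i})$ for decreasing thresholds $t_i$; at stage $i$ it proves horizontal convexity of $A_t\cap V_{i-1}^*$ in $V_{i-1}^*$ only for $t\geq t_i$ (where $0\in\mathring(A_t\cap V_{i-1}^*)$ holds by construction), applies Lemma \ref{all or normal} inside $V_{i-1}^*$, and then shows the next threshold drops, $t_{i+1}<t_i$. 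Since dimensions strictly decrease, the process terminates with $t_k=0$ and $V_k = \nu_{\hat p}^{-1}(T_pC)$ linear. It is exactly the intermediate levels between $T_pM^*$ and $L^*$ that make the hypotheses of Lemma \ref{all or normal} checkable; jumping directly to $L^*$ discards them.
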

\begin{proof}
Let $p = \pi(\hat p) \in C$. If $p$ is regular, the same argument as in the regular case of the proof of Proposition \ref{linear normal} shows that $\nu^{-1}_{\hat p}(T_pC)$ is linear if and only if $p$ does not lie in the boundary of $C$. 

So assume that $p$ is nonregular. We have to prove that $\nu_{\hat p}^{-1}(T_pC)$ is linear. Let $\{A_t\}_{t \in [0,1]} \subset B_pM^*$ be a family as in Lemma \ref{convex family}. Set 
$$t_1 := \inf \{t \in [0,1] \mid 0 \in \mathring A_t\}.$$
Then $0 \leq t_1 < 1$. By Lemma \ref{regular points}, $T_pC$ contains all nonregular points of $T_pM^*$ and therefore, $A_t$ contains all nonregular points of $B_p^*$ for all $t \in [0,1]$. We claim that $A_t$ is horizontally convex for all $t_1 \leq t \leq 1$: 

Since $A_t$ contains all nonregular points, it is enough to show that $A_t$ is totally convex, because a horizontal geodesic is a piecewise geodesic with nonregular vertices. Also, since the intersection of totally convex sets is totally convex, we only have to prove that $A_t$ is totally convex for $t_1 < t \leq 1$. Let such a $t$ be given. First we show that all nonregular points are contained in the interior $\mathring A_t$ of $A_t$: Assume that a nonregular point $v$ is contained in $\partial^tA$, the topological boundary of $A_t$. Then $v$ is also a point of the boundary of $A_t$ considered as an Alexandrov space. Since $0 \in \mathring A_t$, we have $v \neq 0$. Let $\epsilon > 0$ such that $(1 + \epsilon)v \in B_p^*$. Since $v$ is nonregular, it follows that $(1 + \epsilon)v$ is nonregular as well (the slice representation is linear). Therefore, $(1 + \epsilon)v \in A_t$, in contradiction to Proposition \ref{unique direction ball}. Now let $\gamma : [0,1] \to B_p^*$ be a geodesic with endpoints in $A_t$. We have to show that $\gamma \subset A_t$: Since all nonregular points are contained in $\mathring A_t$, we may assume without loss of generality that all points of $\gamma$ are regular.  Let $\hat \gamma : [0,1] \to N_{\hat p}\G(\hat p)$ be a horizontal lift of $\gamma$. Again, since all points of $\gamma$ are regular, $\hat \gamma$ does not pass through $0$. Consider the triangle $\Delta$ with vertices $0$, $\hat \gamma (0)$ and $\hat \gamma(1)$. Since $\hat \gamma$ is horizontal, it follows that $\nu_{\hat p}$ is an isometric immersion when restricted to $\Delta \setminus \{0\}$. Let $\overline A_t = \nu_{\hat p}^{-1}(A_t) \cap \Delta.$ Since $A_t$ is convex, it follows that $\overline A_t \setminus \{0\}$ is locally convex in $\Delta \setminus \{0\}$.  In fact, since $0 \in \mathring A_t$, $\overline A_t$ is locally convex in $\Delta$ and connected. It is an elementary exercise in euclidean geometry to prove that a closed, connected and locally convex subset of $\Delta$ which contains two of the edges of $\Delta$ in fact equals $\Delta$. Since $\overline A_t$ contains the edges $\overline{0\hat \gamma(0)}$ and $\overline{0\hat \gamma(1)}$, it follows that $\overline A_t = \Delta$. In particular, $\gamma \subset A_t$ and $A_t$ is totally convex, proving the claim.

Clearly $0 \notin \mathring A_{t_1}$. We may assume that $C$ is not a single point, since otherwise the proposition holds trivially. Then $T_pC$ is at least $1$-dimensional and $N_0A_{t_1} \neq \{0\}$ by Lemma \ref{all or normal}. Next we claim that 
$$T_0A_{t_1} = (N_0A_{t_1})^\perp =: V_1^*:$$ 
Since $T_0A_t$ contains all the singularities of $T_pM^*$, it follows precisely as in the proof of Lemma \ref{linear normal} that $\nu^{-1}_{\hat p}(N_0A_{t_1})$ is linear and invariant under $\G_{\hat p}$. Thus $V_1 := (\nu^{-1}_{\hat p}(N_0A_{t_1}))^\perp$ is linear as well and invariant under $\G_{\hat p}$. Therefore, $ V_1/\G_{\hat p} = V_1^*$ is horizontally convex in $T_pM^*$, and so is $A_t \cap V_1^*$ for $t \in [t_1,1]$. It is clear that $T_0A_{t_1} \subseteq V_1^*$.  Assume $T_0A_{t_1} \subsetneq V_1^*$. Then it follows from Lemma \ref{all or normal}, applied to the family $\{A_t \cap V_1^*\}_{t \in [t_1,1]}$ of horizontally convex subsets of $V_1^*$, that $A_{t_1} \cap V_1^*$ has a nonzero normal vector $v$ at $0$ contained in $V_1^*$.  By convexity of $A_{t_1}$ we have $A_{t_1} \subset T_0A_{t_1} \subset (N_0A_{t_1})^\perp = V_1^*$, so
$$A_{t_1} \cap V_1^* = A_{t_1}.$$
Thus $v$ is normal to $A_{t_1}$. But then $v \in N_0A_{t_1}$, so $v$ is normal to $V_1^*$, a contradiction.

It follows that 
$$\nu^{-1}_{\hat p}(T_0A_{t_1}) = V_1 \subset N_{\hat p}\G(\hat p)$$
is a linear subspace. We distinguish two cases:

\textit{case 1.} Let $t_1 = 0$. Then $A_{t_1} = A_0 = T_pC \cap B_p^*$ and we are done.
 
\textit{case 2.} Let $t_1 > 0$. Set
$$t_2 := \inf \{t \in [0,1] \mid 0  \text{ is an interior point of } A_t \cap V_1^* \text{ in } V_1^*\}.$$
We claim that $$t_2 < t_1:$$ In fact, it is clear that $t_2 \leq t_1$ and if we assume that $t_2 = t_1$ it follows as above from Lemma \ref{all or normal} that there exists a nonzero normal vector $v$ to $A_{t_2} \cap V_1^*= A_{t_1}$ in $V_1^* = (N_0A_{t_1})^\perp$, a contradiction.

With the same argument as before applied to the family $\{A_t \cap V_1^*\}_{t \in [0,1]} \subset V_1^*$ it follows that $A_t \cap V_1^*$ is horizontally convex in $V_1^*$ for all $t_2 \leq t \leq 1$ and 
$$\nu^{-1}_{\hat p}(T_pA_{t_2}) =: V_2$$
is linear and invariant under $\G_{\hat p}$. Again, if $t_2 = 0$ we are done. Otherwise consider the family $\{A_t \cap V_2^*\}_{t \in [0,1]} \subset V_2^*$ and so on. After every step we have $\dim V_k < \dim V_{k + 1}$ and therefore, after a finite number of steps $t_k = 0$ holds and we are done.
\end{proof}
\begin{corollary}\label{nonreg bdry}
For all $p \in C$ the tangent cone $T_pC$ is horizontally convex and $T_pC = N_pC^\perp$.
\end{corollary}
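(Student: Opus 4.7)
The plan is to deduce both claims from the linearity result of Proposition~\ref{linear tangent} combined with an elementary polarity argument, splitting into two cases according to whether the preimage of $T_pC$ upstairs is linear or not.

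First I would treat the case where $p$ is either a nonregular point of $M^*$ or a regular interior point of $C$. By Proposition~\ref{linear tangent}, $V := \nu_{\hat p}^{-1}(T_pC) \subseteq N_{\hat p}\G(\hat p)$ is a linear subspace, and it is automatically $\G_{\hat p}$-invariant because $T_pC$ is well-defined on the quotient. Using the standard identity
\[
\measuredangle(\nu_{\hat p}(v),\nu_{\hat p}(u)) = \min_{g \in \G_{\hat p}}\measuredangle(v,g \cdot u)
\]
on the Euclidean space $N_{\hat p}\G(\hat p)$, together with the $\G_{\hat p}$-invariance of $V$, I would show that $\nu_{\hat p}^{-1}(N_pC) = V^{\perp}$ (the Euclidean orthogonal complement in $N_{\hat p}\G(\hat p)$). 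Applying the same reasoning once more yields $\nu_{\hat p}^{-1}(N_pC^{\perp}) = (V^{\perp})^{\perp} = V$, and projecting down gives $N_pC^{\perp} = V/\G_{\hat p} = T_pC$. For horizontal convexity, any horizontal geodesic in $T_pM^*$ lifts to a straight segment in $N_{\hat p}\G(\hat p)$; if its endpoints lie in $T_pC$ their lifts may be chosen in $V$, and linearity together with $\G_{\hat p}$-invariance of $V$ ensure that the whole segment stays in $V$, so its projection stays in $T_pC$.

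For the remaining case, where $p$ is a regular boundary point of $C$, the principal isotropy $\G_{\hat p}$ acts trivially on $N_{\hat p}\G(\hat p)$, so $T_pM^*$ is canonically a Euclidean vector space. Then $T_pC$ is a closed convex cone, $N_pC$ is by definition its polar cone (since angle $\geq \pi/2$ corresponds to $\langle \cdot,\cdot\rangle \leq 0$), and the bipolar theorem for closed convex cones gives $N_pC^{\perp} = T_pC$ immediately. Horizontal convexity collapses to ordinary convexity of the cone $T_pC$, since at a regular point every geodesic of $T_pM^*$ is horizontal.

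The main obstacle was already surmounted in Proposition~\ref{linear tangent}: once linearity of $\nu_{\hat p}^{-1}(T_pC)$ is in hand the corollary is essentially a polarity-plus-invariance postscript. The one subtle point in the write-up is the verification that $\nu_{\hat p}^{-1}(N_pC) = V^{\perp}$, where one must carefully combine the angle formula for the Riemannian submersion with the $\G_{\hat p}$-invariance of $V$ to identify the polar cone with a genuine Euclidean orthogonal complement.
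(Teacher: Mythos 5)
Your proof is correct and is precisely the argument the paper intends: the corollary is stated without a written proof as a direct consequence of Proposition~\ref{linear tangent}, and your case split (linear lift when $p$ is nonregular or a regular interior point, ordinary convex-cone polarity when $p$ is a regular boundary point) together with the angle formula for the orthogonal quotient $N_{\hat p}\G(\hat p)\to N_{\hat p}\G(\hat p)/\G_{\hat p}$ and the bipolar/double-orthogonal-complement identities is exactly what fills in that gap. The only step worth spelling out a bit more carefully is the passage $\nu_{\hat p}^{-1}(N_pC)=V^\perp$: since $V$ is a linear $\G_{\hat p}$-invariant subspace, the condition $\min_{g}\measuredangle(v,gw)\geq\pi/2$ for all $w\in V$ reduces to $\measuredangle(v,w)\geq\pi/2$ for all $w\in V$, and because $V$ is symmetric under $w\mapsto -w$ this forces $v\perp V$, which is the point you flagged.
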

\begin{corollary}\label{boundary points}
Let $p \in \partial C$ and $B$ denote the boundary component of $C$ containing $p$. Then $B$ consists of regular (nonregular) points exclusively if $p$ is regular (nonregular).
\end{corollary}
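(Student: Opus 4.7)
The plan is to show that the two subsets $B\cap M^*_{\mathrm{reg}}$ and $B\setminus M^*_{\mathrm{reg}}$ are both open in $B$; since $B$ is connected and $p$ lies in exactly one of them, this forces $B$ to coincide with whichever contains $p$. Openness of $B\cap M^*_{\mathrm{reg}}$ is immediate from openness of $M^*_{\mathrm{reg}}$ in $M^*$, so the substance lies in verifying openness of the nonregular part of $B$.

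For this, I would fix a nonregular $p\in B$, choose $\hat p\in\pi^{-1}(p)$, and pass to the slice $V=N_{\hat p}\G(\hat p)$. The slice theorem identifies a neighborhood of $p$ in $M^*$ with a neighborhood of the origin in $V/\G_{\hat p}$. Since $p$ is nonregular, Proposition \ref{linear tangent} gives that $W:=\nu_{\hat p}^{-1}(T_pC)$ is a linear, $\G_{\hat p}$-invariant subspace of $V$, and total convexity of $C$ together with the slice theorem then identifies a neighborhood of $p$ in $C$ with a neighborhood of $0$ in $W/\G_{\hat p}$. Under this identification the boundary component $B$ of $\partial C$ near $p$ corresponds locally to the connected component of $\partial(W/\G_{\hat p})$ containing the origin, so it suffices to prove that every $v\in W$ near $0$ with $[v]\in\partial(W/\G_{\hat p})$ represents a nonregular orbit in $M^*$.

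Now the boundary of the quotient of a Euclidean space by an orthogonal compact Lie group action is precisely the image of the union of the fixed hyperplanes of the reflections contained in the group. Consequently any such $v$ lies in such a fixed hyperplane, and its stabilizer $\G_v$ properly contains (in the conjugacy-poset sense) the principal $\G_{\hat p}$-isotropy $K_W$ on $W$. On the other hand, the principal $\G$-isotropy $H$ on $M$ near $\hat p$ is represented by the principal $\G_{\hat p}$-isotropy $K_V$ on the full slice $V$, and since $K_V$ is minimal in the whole $V$-isotropy poset, one has $K_V\leq K_W$ up to conjugacy. Hence $\G_v$ strictly contains (up to conjugacy) a subgroup conjugate to $K_V$, and because proper inclusion of compact Lie subgroups forbids abstract isomorphism (dimensions or component counts of the identity component must differ), $\G_v$ cannot be conjugate to $K_V$ in $\G$. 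Thus $[v]$ is nonregular, as desired.

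The main technical point is the reflection characterization of $\partial(W/\G_{\hat p})$. I would either invoke the standard structure of Alexandrov boundaries for orthogonal quotients or, failing a convenient reference, argue directly by induction on $\dim W$: use the cone structure of $W/\G_{\hat p}$ and the identification $\Sigma_{[v]}(W/\G_{\hat p})\cong S_v/(\G_{\hat p})_v$ of spaces of directions to reduce to the one-dimensional base case where the claim is the elementary fact that $\RR/\ZZ_2$ has its boundary at the fixed point of the reflection. Once this is in hand, the rest is formal bookkeeping with the slice theorem and the isotropy poset.
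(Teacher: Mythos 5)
Your overall strategy matches the paper's: reduce to openness of the nonregular part of $\partial C$, pass to the slice $V = N_{\hat p}\G(\hat p)$, and use Proposition \ref{linear tangent} to obtain the linear $\G_{\hat p}$-invariant subspace $W$ with $T_pC = W/\G_{\hat p}$. The trouble is with the sentence in which you assert that ``total convexity of $C$ together with the slice theorem identifies a neighborhood of $p$ in $C$ with a neighborhood of $0$ in $W/\G_{\hat p}$.'' That identification is precisely the nontrivial content of the paper's proof and does not follow from those two ingredients. The cone $W/\G_{\hat p}$ is an Alexandrov space which in general has nonempty boundary, and for convex subsets of Alexandrov spaces with boundary the implication ``$T_pC = T_pA$ $\Rightarrow$ $p$ is an interior point of $C$'' fails — the paper itself records a counterexample after Corollary \ref{ball} (a closed disk in a half-plane, tangent to the edge). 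The paper circumvents this by forming the $\G$-invariant submanifold $U_\epsilon = \G(\exp_{\hat p}(W\cap B_\epsilon))$, using Lemma \ref{regular points} to see that the nonregular set, hence $\partial U_\epsilon^*$, is contained in $C_\epsilon$, and only then passing to the double $\tilde U_\epsilon^*$ so that Corollary \ref{ball} becomes applicable. Your proposal never invokes that $C$ absorbs the nearby nonregular locus (in particular $\partial(W/\G_{\hat p})$), so the interior-point conclusion is not reached, and the rest of the argument has nothing to stand on.

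A second, smaller problem: the claim that $\partial(V/\G)$, for an orthogonal compact Lie group action, is exactly the image of the fixed hyperplanes of reflections in $\G$, is false once $\G$ is not finite. For $\G = SO(2)$ rotating $\RR^2$ the quotient is $[0,\infty)$ with boundary $\{0\}$, and for $\T^2$ acting coordinatewise on $\CC^2$ the quotient is the quadrant $[0,\infty)^2$, yet neither group contains a reflection. The correct and weaker statement — a boundary point of the quotient of a boundaryless manifold cannot lie over a principal orbit, since a principal point has a slice-chart neighborhood isometric to an open set of Euclidean space and hence empty local boundary — already gives $(\G_{\hat p})_v$ strictly larger than a conjugate of $K_W$, after which your isotropy-poset chain and the fact that a compact Lie group cannot be isomorphic to a proper subgroup of itself do close the argument. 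So this second half is repairable once the false characterization is replaced, but the first gap is the substantive one.
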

\begin{proof} It is enough to show that $B$ consists of nonregular points exclusively if $p = \pi(\hat p)$ is nonregular. Since the set of nonregular boundary points of $C$ is closed in $\partial C$, it suffices to show that is also open.

For that we perform the same construction as in the proof  of Lemma \ref{submanifold}: From Proposition \ref{linear tangent} it follows that there exists a linear $\G_{\hat p}$-invariant subspace $V \subset N_{\hat p}\G(\hat p)$ such that $T_pC = V/\G_{\hat p}$. Let $\epsilon > 0$ such that the normal exponential map to the orbit $\G(\hat p)$ is injective on vectors of length less than $\epsilon$. Then 
$$U_\epsilon := \G(\exp_{\hat p}(V \cap B_\epsilon(0_{\hat p})))$$
 is a smooth submanifold of $M$ and invariant under $\G$. Consider the Riemannian manifold $(U_\epsilon,g)$ equipped with the induced isometric $\G$-action. Since $U_\epsilon$ has curvature bounded from below, possibly after choosing a smaller $\epsilon > 0$, it follows that $(U_\epsilon^*,g^*) \subset (M^*,g^*)$ is a locally complete Alexandrov space with curvature bounded below. By construction 
\begin{align}\label{bald}
T_pU_\epsilon^* = T_pC
\end{align}
and by convexity of $C$ it follows that $C_\epsilon := (C \cap B_\epsilon(p)) \subset U^*_\epsilon$. Moreover, it is easy to see that $C_\epsilon$ is locally convex in $(U_\epsilon^*,g^*)$. 
Also, since $U_\epsilon$ is a smooth manifold without boundary, it follows that all boundary points of $U_\epsilon^*$ are nonregular. In particular,
\begin{align}\label{hab ichs}
\partial U_\epsilon^* \subset C_\epsilon.
\end{align}
%It is convenient to work with a complete space. Therefore, let $U = \overline U_\epsilon \cup_\partial \overline U_\epsilon$, where we glue the boundaries of $\overline U_\epsilon$ via the identity. Then there exists a smooth structure on $U$ which induces the given smooth structure on $U_\epsilon$ and $\G$ acts smoothly on $U$. Further there exists a $\G$ invariant Riemnnian metric on $U$ which coincides with $g$ near $\hat p$.
 Now consider the double $\tilde U_\epsilon^*$ (in fact, we can apply the doubling theorem only to complete spaces. However, it is not hard to construct a complete Riemannian $\G$-manifold $\hat U_\epsilon$, together with an isometric, $\G$-equivariant embedding $U_\epsilon \to \hat U_\epsilon$. Then we can consider $C_\epsilon \subset \hat U^*_\epsilon$). From \eqref{hab ichs} it follows that $\tilde C_\epsilon$ is locally convex in $\tilde U_\epsilon^*$. From \eqref{bald} we see $T_p\tilde U_\epsilon^* = T_p{\tilde C} = T_p\tilde C_\epsilon$. Therefore, it follows from Corollary \ref{ball} that $p$ is an interior point of $\tilde C_\epsilon$ in $\tilde U_\epsilon^*$. But then also $p$ is an interior point of $C_\epsilon$ in $U_\epsilon^*$. Since the boundary of $U_\epsilon^*$ consists of nonregular points exclusively, it follows that there are no regular boundary points of $C$ arbitrary close to $p$.
\end{proof}
Thus it is justified to talk about regular and nonregular boundary components of $C$.
\begin{corollary}\label{smooth lift}
$\pi^{-1}(C) \subset M$ is a smooth submanifold, possibly with nonsmooth boundary. Its boundary components are the preimages of the regular boundary components of $C$.
\end{corollary}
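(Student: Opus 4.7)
The plan is to analyze $\pi^{-1}(C)$ locally near each orbit $\G(\hat p)$ with $p = \pi(\hat p) \in C$, distinguishing three cases: $p$ is an interior point of $C$, $p$ is a nonregular boundary point of $C$, or $p$ is a regular boundary point of $C$. The first two cases will yield a smooth boundaryless submanifold structure locally, while the third produces the manifold-with-boundary contribution. Corollary \ref{boundary points} then guarantees that the two regimes do not mix on any single boundary component, which pins down exactly where $\partial \pi^{-1}(C)$ sits.

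In the first two cases, Proposition \ref{linear tangent} (together with the observation that $T_pC = T_pM^*$ at interior $p$) asserts that $N_{\hat p} := \nu_{\hat p}^{-1}(T_pC) \subseteq N_{\hat p}\G(\hat p)$ is a linear subspace. I would then repeat the local slice-and-tube construction from the proof of Lemma \ref{submanifold}: form the smooth $\G$-tube $U_\epsilon := \G(\exp_{\hat p}(B_\epsilon(0_{\hat p}) \cap N_{\hat p}))$ for $\epsilon$ small, observe that $\dim U_\epsilon = \dim \G + \dim C - d$ (where $d$ is the dimension of a principal isotropy group of the $\G$-action on $\pi^{-1}(C)$), and argue via total convexity of $C$ exactly as in that proof that $\pi^{-1}(C)$ coincides with $U_\epsilon$ on a neighborhood of the orbit $\G(\hat p)$. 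This delivers the local smooth submanifold structure without boundary at $\hat p$.

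At a regular boundary point $p = \pi(\hat p)$, the principal isotropy $\G_{\hat p}$ acts trivially on $N_{\hat p}\G(\hat p)$, so $\nu_{\hat p}$ is a linear isomorphism and $\pi$ is a genuine Riemannian submersion on a saturated neighborhood $\pi^{-1}(U)$ of $\hat p$. In $U$, the totally convex set $C$ is a full-dimensional topological submanifold with $C^1$ boundary by Cheeger-Gromoll \cite{cheeger-gromoll72}; equivalently $\nu_{\hat p}^{-1}(T_pC)$ is a closed half-space of $N_{\hat p}\G(\hat p)$ (this is the regular case handled inside the proofs of Propositions \ref{linear normal} and \ref{linear tangent}). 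Forming the $\G$-tube over this half-space and pulling back the $C^1$ boundary of $C$ through the submersion yields a smooth submanifold with $C^1$ boundary $\pi^{-1}(\partial C) \cap \pi^{-1}(U)$. The merely $C^1$ regularity of $\partial C$ is the source of the ``possibly nonsmooth boundary'' qualifier in the statement.

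Finally, Corollary \ref{boundary points} says that $\partial C$ is partitioned into boundary components that are \emph{either} entirely regular \emph{or} entirely nonregular. Combining this with the two local pictures above, the manifold-with-boundary locus of $\pi^{-1}(C)$ arises only above the regular boundary components of $C$, while preimages of nonregular boundary components sit in the topological interior of $\pi^{-1}(C)$ as smooth boundaryless pieces. The bulk of the work has already been done in Propositions \ref{linear normal}, \ref{linear tangent} and Corollary \ref{boundary points}; the present proof is essentially bookkeeping, the one mildly delicate point being the half-space lift at regular boundary points, which is where the (possibly nonsmooth) boundary of $\pi^{-1}(C)$ is produced.
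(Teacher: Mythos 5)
Your proof is correct and follows essentially the same route as the paper's: use Proposition \ref{linear tangent} together with Lemma \ref{submanifold} to get the smooth boundaryless structure above interior and nonregular boundary points, invoke the Riemannian submersion picture (and Cheeger--Gromoll) at regular boundary points, and use Corollary \ref{boundary points} to confirm that boundary components sort cleanly into all-regular or all-nonregular pieces. You make the regular-boundary case and the $C^1$-boundary source of the ``possibly nonsmooth boundary'' clause explicit, where the paper's one-paragraph proof simply asserts them.
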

\begin{proof}
Let $B$ denote the set of regular boundary points of $C$. From the Corollary \ref{nonreg bdry} it follows that $B$ is closed and there exists an open neighborhood $U$ of $B$ which consists of regular points only. Then, if $B$ is nonempty, $\pi^{-1}(U \cap C)$ is a smooth submanifold whose boundary components are the preimages of $B$. Moreover, for all $p \in C \setminus B$ the tangent cone $T_pC$ lifts under $\nu$ to a linear space by Lemma \ref{linear tangent}.
By Lemma \ref{submanifold} it follows that $\pi^{-1}(C \setminus B)$ is a smooth submanifold without boundary of $M$ as well.
\end{proof}
Now we prove the main theorem of this chapter:
\begin{theorem} \label{ddb}
Assume the action of $\G$ on $M$ is fixed point homogeneous. Then there exists a smooth submanifold $N$ of $M$, without boundary, such that $M$ is diffeomorphic to the normal disk bundles $D(F)$ and $D(N)$ of $F$ and $N$, respectively, glued together along their boundaries;
\begin{align}\label{doubled2}
 M \cong D(F) \cup_{\partial} D(N).
\end{align}
Further, $N$ is horizontally convex, $\G$-invariant and it contains all singularities of $M$ up to $F$.
\end{theorem}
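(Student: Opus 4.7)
The plan is to realise $N$ as a Cheeger--Gromoll type soul inside $\hat C := \pi^{-1}(C)$ and to recover the decomposition using non-critical point theory for $d_F$ outside $\hat C$ combined with a soul retraction inside $\hat C$. By Corollary \ref{smooth lift}, $\hat C$ is already a smooth $\G$-invariant submanifold of $M$ (possibly with nonsmooth boundary), so the core work is to (a) show that $\hat C$ is totally convex in $M$, (b) construct $N$ inside $\hat C$, and (c) glue a tubular neighbourhood of $\hat C$ in $M$ to the normal disk bundle of $F$.

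First I would lift the concavity of $d_F$ from $M^*$ to $M$. Because $F$ is pointwise fixed, the distance function $\hat d_F(x) := d(x,F)$ is $\G$-invariant and satisfies $\hat d_F = d_F \circ \pi$. Along a horizontal geodesic its composition with $\hat d_F$ equals $d_F$ evaluated on the projected geodesic of $M^*$, which is concave by Lemma \ref{concave distance}; more generally one uses that $F$ is a totally geodesic submanifold of the nonnegatively curved manifold $M$, so the distance to $F$ is concave along every geodesic. Hence $\hat C = \{\hat d_F = a\}$ is totally convex in $M$.

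Next I would carry out the soul construction inside $\hat C$. Viewed with the induced metric, $\hat C$ is a complete nonnegatively curved manifold with possibly nonsmooth boundary, and all ingredients of the Cheeger--Gromoll soul construction are available: iteratively taking the maximum sets of the concave distance-to-boundary functions produces a closed totally convex smooth submanifold $N \subset \hat C$ without boundary, together with a strong deformation retraction $r : \hat C \to N$ realising $\hat C$ as the total space of the normal disk bundle of $N$ inside $\hat C$. Since every step is intrinsic to $\hat C$ and $\G$ acts by isometries preserving each iterated distance function, $N$ is $\G$-invariant and $r$ is $\G$-equivariant. All nonregular points of $M^* \setminus F$ lie in $C$ by Lemma \ref{regular points}, and because the soul retraction respects the orbit-type stratification they are sent into $N$. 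Horizontal convexity of $N$ is inherited from that of $C$ together with Corollary \ref{nonreg bdry} and Proposition \ref{linear tangent}, which guarantee that at each stage the iterated maximum sets project to horizontally convex subsets of $M^*$ whose preimages are smooth submanifolds.

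Finally I would produce the decomposition. Using the non-criticality of $\hat d_F$ on $M \setminus (F \cup \hat C)$ provided by Lemma \ref{regular points}, I construct (as in \cite{grove-searle97}) a $\G$-equivariant gradient-like vector field $X$ for $\hat d_F$ that is radial near $F$ and near $\hat C$. Its flow yields a $\G$-equivariant diffeomorphism from the complement of a tubular neighbourhood $U_\epsilon(\hat C)$ onto the normal disk bundle $D(F)$ with a collar removed, and simultaneously identifies $U_\epsilon(\hat C)$ with the normal disk bundle of $\hat C$ in $M$. Composing this identification with the soul bundle $\hat C \to N$ from step two turns $U_\epsilon(\hat C)$ into the normal disk bundle $D(N)$ of $N$ in $M$, and gluing along the common boundary sphere bundle gives $M \cong D(F) \cup_\partial D(N)$. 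The main obstacle is to make the two bundle structures on $U_\epsilon(\hat C)$, coming from the $M$-normal directions of $\hat C$ and from the soul retraction inside $\hat C$, combine $\G$-equivariantly into a single disk bundle over $N$ across the loci where $\hat C$ has nonsmooth boundary, and to ensure the resulting boundary sphere bundle matches the one produced by the gradient flow of $\hat d_F$ from $F$.
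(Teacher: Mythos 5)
Your proposal goes through the lift $\hat C=\pi^{-1}(C)$ and attempts a Cheeger--Gromoll soul construction intrinsically inside $\hat C$; the paper instead performs the whole iteration inside the quotient $M^*$ and only at the very end lifts a horizontally convex subset to $M$. The intrinsic route runs into two genuine gaps.

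First, the claim that $\hat d_F := d_F\circ\pi$ is concave on all of $M$ because $F$ is totally geodesic and $M$ has nonnegative curvature is false. Concavity of the distance to a totally geodesic submanifold in nonnegative curvature only holds for hypersurfaces; in higher codimension it fails --- e.g.\ the distance to a point on the round sphere is \emph{strictly convex} along geodesics orthogonal to the radial direction. In the fixed point homogeneous setting $F$ has codimension at least $2$ in $M$ (the normal sphere has positive dimension), so the argument does not apply. What is true, and what the paper uses, is that $d_F$ is concave on the Alexandrov space $M^*$ by Perelman's result (Lemma \ref{concave distance}); this only lifts to concavity of $\hat d_F$ along \emph{horizontal} geodesics of $M$. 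Consequently one does not get that $\hat C$ is totally convex, totally geodesic, or nonnegatively curved in the induced metric, and the hypotheses of the Cheeger--Gromoll machinery inside $\hat C$ are not available.

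Second, the assertion that ``iteratively taking the maximum sets of the concave distance-to-boundary functions produces a closed totally convex \emph{smooth} submanifold'' is precisely the hard technical content of the chapter, not something one may invoke. The quotient-space sets $C\supset C_2\supset\cdots$ are only convex in $M^*$; the substance of Propositions \ref{linear tangent} and Lemma \ref{submanifold}, Corollary \ref{smooth lift}, and ultimately Lemma \ref{all or normal} is to show that at each stage the preimage in $M$ is a smooth submanifold whose boundary corresponds exactly to the regular boundary components, so that the iteration can continue. You mention these results as if they supply horizontal convexity for free, but they are the results whose proof constitutes the argument, and they require the sequence of sets to be built in $M^*$ (via the superlevel sets $C^s$ and the approximating families $A_t$), not intrinsically in $\hat C$.

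Beyond these, the final gluing step is circumvented in the paper in a cleaner way than you propose: once $N$ is constructed, $d_N$ is shown to be non-critical on $M\setminus(F\cup N)$, giving a single gradient-like vector field from $F$ to $N$ and hence $M\cong D(F)\cup_\partial D(N)$ directly, with $D(N)$ the normal disk bundle in $M$. Your decomposition of $M$ into $D(F)$ and $\hat C$ produces, inside $\hat C$, a disk bundle of $N$ of the wrong codimension (only the $\hat C$-normal directions), and the problem of splicing the two bundle structures across the nonsmooth boundary of $\hat C$, which you flag as the main obstacle, is one the paper never faces. You also omit the preliminary reduction handling the case where $M^*$ has a second boundary component (where $M^*\cong F\times[0,a]$ and $N$ is the other fixed point component).
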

\begin{proof} First, we may assume that $M^*$ has precisely one boundary component: Otherwise the second boundary component corresponds to a second maximal fixed point component $F_2$ and $M^*$ is isometric to $F \times [0,a]$ (compare \cite{searle-yang94}, Theorem 2). In this case, setting $N := F_2$ we are done.

We distinguish several cases to construct a smooth submanifold $N$ of $M$ with empty boundary. In each case $N$ will be obtained as the preimage of a horizontally convex subset $A \subset M^*$ disjoint from $F$, and there exists a gradient-like vector field $X$ with respect to $N$ on $M \setminus(F \cup N)$ which is radial near $F$ and $N$. Then the theorem follows.

\textit{case 1.} Assume $C$ has no regular boundary points. From Corollary \ref{smooth lift} it follows that $N := \pi^{-1}(C)$ is a smooth submanifold without boundary of $M$ and the distance function $d_N$ is non-critical on $M \setminus (F \cup N)$. Thus there exists a gradient-like vector field on $M \setminus (F \cup N)$ which is radial near $F$ and $N$.

\textit{case 2.}  Assume $C$ contains a regular boundary point. By Corollary \ref{smooth lift} there exists a component of $\partial C$ consisting of regular points exclusively. Let $B$ denote the collection of all such regular boundary components of $C$. Set
$$C_2 := \{p \in C \mid d(p,B) \text{ is maximal}\}.$$
Since the distance function $d_B : C \to \RR$ is concave, it follows that $C_2$ is totally convex in $C$ (and $M^*$). We claim that the distance function to $C_2$ is non-critical on $M \setminus (F \cup C_2)$ and $C_2$ contains all nonregular points of $M^*$ up to $F$:

Since $B$ consists of regular points exclusively, it is shown precisely as in Lemma \ref{regular points} that $d_{C_2}$ is non-critical on $C \setminus (B \cup C_2)$ and all points in $C \setminus C_2$ are regular. Now let $p \in B$. Since $p$ is regular and contained in $\partial C$, it follows that $T_pC$ is a convex cone with nonempty boundary inside the linear space $T_pM^*$. Then there exists some vector $v \in T_pM^*$ that has an angle $> \pi/2$ to every vector $w \in T_pC \setminus \partial T_pC$. Let $c :[0,1] \to M^*$ be a minimal geodesic from $p$ to $C_2$. Then $c \subset C$ by convexity. Therefore $\dot c(0) \in T_pC$. In fact, $\dot c(0) \in T_pC \setminus \partial T_pC$, since $d(c(t),B) \geq mt$ for some positive constant $m$. Hence $\measuredangle (v,\dot c(0)) > \pi/2$ and $p$ is a non-critical point for $d_{C_2}$. The same argument applies to every point $p \in M \setminus (F \cup C)$, considering the level set that contains $p$ in its boundary, rather than $C$. This proves the claim.

\textit{case 2.1.} Assume $C$ contains more than two boundary components. Let $B_1$ denote a regular boundary component of $C$. Again it follows that $C$ is isometric to $B_1 \times I$ for a closed interval $I$. So $C$ has precisely two boundary components and let $B_2$ denote the second boundary component. In this case it follows that $C_2 = B_2$. By Corollary \ref{smooth lift} the following two cases can occur:

\textit{case 2.1.1.} All points of $B_2$ are nonregular. Then $C_2 = B_2$ and it follows, again by Corollary \ref{smooth lift}, that $(\pi^{-1}(C),g)$ is a smooth Riemannian $\G$-manifold with (possibly nonsmooth) boundary whose quotient space $C$ is a nonnegatively curved Alexandrov space with nonempty boundary. Further $\pi^{-1}(C)$ has precisely one boundary component, which consists of regular points exclusively. Hence, our assumptions (case (b)) are satisfied, with $\pi^{-1}(C)$ in the role of $M$, and Lemma \ref{smooth lift} holds correspondingly for $C_2$. More precisely, $\pi^{-1}(C_2) \subset \pi^{-1}(C)$ is a smooth submanifold and, since all points of $C_2$ are nonregular, it has empty boundary. Setting $N := \pi^{-1}(C_2)$, we are done.

\textit{case 2.1.2.} All points in $B_2$ are regular. In this case $M^*$ is a smooth Riemannian manifold with boundary $F$ and $C$ is a submanifold of $M^*$ with boundary $B = B_1 \cup B_2$. Also $C_2$ is isometric to $B_1$ and therefore $\partial C_2 = \emptyset$. Since $C_2$ is convex, it is a smooth submanifold of $M^*$ with empty boundary. The same holds for $N := \pi^{-1}(C_2)$, since $\pi : M \setminus F \to M^* \setminus F$ is a smooth submersion.

\textit{case 2.2} Assume $C$ has precisely one boundary component $B$. As in case 2.1.1.\ it follows from Lemma \ref{smooth lift} that $\pi^{-1}(C_2)$ is a smooth submanifold of $M$, but possibly with nonempty boundary. If the boundary is empty, we are done. If the boundary is nonempty, it follows that $C_2$ contains a boundary component consisting of regular points exclusively. In this case we repeat the arguments of case $2$ with $C_2$ in the role of $C$ and $C$ in the role of $M^*$.

Iterating this process proves the theorem, since the dimension of the set $C_i$ drops in every step and therefore after a finite number of steps $\partial C_i = \emptyset$ holds.
\end{proof}
\begin{remark}
This proof differs slightly from the original proof given in \cite{spindeler14}. The proof given there is still correct but as it was noticed by Michael Wiemeler the constructed submanifold $N$ was not in general invariant under $\mathsf H = \operatorname{Iso}_F(M)$, the subgroup of $\operatorname{Iso}(M)$ which leaves $F$ invariant, what is necessary for lemma \ref{equivariant} to hold. With this proof it follows that $N$ is invariant under $\mathsf H$ as can be seen as follows: Keeping the notation of the proof of theorem \ref{ddb}, it is clear that $\pi^{-1}(C)$ is invariant under $\mathsf H$, since $F$ is invariant under $\mathsf H$. If the boundary of $\pi^{-1}(C)$ is empty we have $N = \pi^{-1}(C)$ and the claim follows. Otherwise the boundary of $\pi^{-1}(C)$, which is given by $\pi^{-1}(B)$, is invariant under $\mathsf H$ as well. Therefore, the same holds for the set of points of $\pi^{-1}(C)$ of maximal distance to the boundary, which equals the set $\pi^{-1}(C_2)$ and so on.
\end{remark}
If $M$ is simply connected and $\G$ is connected, we obtain a little information on the dimension of $N$.
\begin{lemma}\label{codim}
Let $M$ and $N$ be as in Theorem \ref{ddb} and assume that $M$ is simply connected and $\G$ is connected. Then $N$ has codimension $\geq 2$ in $M$.
\end{lemma}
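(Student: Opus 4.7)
The plan is to apply van Kampen's theorem to the double disk bundle decomposition $M \cong D(F) \cup_\partial D(N)$. I take open tubular thickenings $U \supset D(F)$ and $V \supset D(N)$ that deformation retract onto $F$ and $N$ respectively, with $U \cap V$ deformation retracting onto the common boundary $\partial D(F) = \partial D(N)$. Once the necessary connectedness is in place, this yields
$$\pi_1(M) = \pi_1(F) *_{\pi_1(\partial D(F))} \pi_1(N).$$

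Before applying this, I would establish that $\on{codim}_M F \geq 2$: by fixed point homogeneity $\G$ acts transitively on the unit normal sphere to $F$, and a \emph{connected} Lie group cannot act transitively on $S^0$. Writing $k := \on{codim}_M F \geq 2$, and using that $F$ is a connected fixed point component, the boundary $\partial D(F)$ is the total space of an $S^{k-1}$-bundle over $F$ with connected fiber and connected base, hence is connected. Consequently $\partial D(N)$ is connected; in particular, if $\on{codim}_M N = 1$, then $N$ itself must be connected, since a disconnection of $N$ would force a corresponding disconnection of $\partial D(N)$.

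Now I assume for contradiction $\on{codim}_M N = 1$. The normal line bundle of $N$ in $M$ is then either trivial (in which case $\partial D(N) \cong N \sqcup N$, contradicting connectedness) or non-orientable, so that $\partial D(N) = \tilde N$ is a \emph{connected} two-fold cover of $N$. The induced map $\pi_1(\tilde N) \to \pi_1(N)$ is then injective with image of index $2$; since any index-two subgroup is automatically normal, the quotient $\pi_1(N)/\pi_1(\tilde N)$ is isomorphic to $\ZZ/2$.

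To produce the contradiction, combine the zero homomorphism $\pi_1(F) \to \ZZ/2$ with the quotient $\pi_1(N) \to \pi_1(N)/\pi_1(\tilde N) \cong \ZZ/2$. These two homomorphisms agree on the image of $\pi_1(\tilde N)$ (both are trivial there), so the universal property of the amalgamated free product yields a homomorphism $\pi_1(M) \to \ZZ/2$ whose restriction to $\pi_1(N)$ is surjective. This contradicts $\pi_1(M) = 1$, and therefore $\on{codim}_M N \geq 2$. The only delicate point is the preliminary step of ensuring all the pieces are path-connected so that van Kampen applies in the stated form; once that is in place, the remainder is a short amalgamated-product computation driven by the index-two double cover.
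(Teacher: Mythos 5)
Your proof is correct and uses essentially the same argument as the paper: the decomposition $M \cong D(F) \cup_\partial D(N)$, connectedness of the common boundary $E$ forcing it to be a connected double cover of $N$, the index-$2$ (hence normal) subgroup $\pi_1(p)(\pi_1(E)) \subset \pi_1(N)$, and van Kampen's theorem together with the universal property of the pushout to produce a surjection $\pi_1(M) \to \ZZ_2$ contradicting simple connectedness. The only cosmetic difference is that you extract the preliminary fact $\on{codim}_M F \geq 2$ explicitly before deducing that $\partial D(F)$ is connected, whereas the paper compresses this into ``since $\G$ and $F$ are connected, $E$ is connected.''
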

\begin{proof}
We consider the double disk bundle decomposition 
$$M = D(F) \cup_E D(N).$$
Assume that $N$ has codimension $1$. Since $\G$ and $F$ are connected, it follows that $E = \partial D(N)$ is connected as well. Therefore, the projection map $p : E \to N$ is a two fold covering map with connected total space. Hence $\pi_1(N)/\pi_1(p)(E) \cong \ZZ_2$. 
 
In contradiction to this we show that $\pi_1(N)/\pi_1(p)(E)$ is trivial using the van Kampen theorem: Let $q : E \to F$ denote the projection map and set $U := \pi_1(p)(\pi_1(E))$. Then the following diagram commutes;
$$\begin{xy}
\xymatrix
{\pi_1(E) \ar^{\pi_1(p)}[r] \ar_{\pi_1(q)}[d] & \pi_1(N) \ar[d]^{[\ ]}\\
\pi_1(F) \ar_{0}[r] & \pi_1(N)/U.}
\end{xy}
$$
Here, $[\ ]$ denotes the quotient map (note that $U$ is normal in $\pi_1(N)$ since it has index $2$). By the van Kampen theorem $\pi_1(M)$ is the pushout of the maps $\pi_1(p)$ and $\pi_1(q)$. Therefore, there exists a morphism $h : \pi_1(M) \to \pi_1(N)/U$ such that the diagram
$$\begin{xy}
  \xymatrix{
      \pi_1(E) \ar[r]^{\pi_1(p)} \ar[d]_{\pi_1(q)}  &  \pi_1(N) \ar[d] \ar@/^/[ddr]^{[\ ]}  &  \\
      \pi_1(F) \ar[r] \ar@/_/[drr]_0  &  \pi_1(M) \ar[dr]^h  &  \\
      &  &  \pi_1(N)/U
  }
\end{xy}$$
commutes. Now, since $\pi_1(M)$ is trivial, it follows that the quotient map $[ \ ]$ is the $0$-map. Hence $\pi_1(N)/U$ is trivial.
\end{proof}
Fixed point homogeneous $\G$-actions frequently occur as subactions of larger isometric actions. Therefore, we conclude this section considering the question under which subgroups of $\operatorname{Iso}(M)$ the decomposition \eqref{doubled2} is invariant. A necessary condition for this is that $F$ is invariant. We can construct the decomposition of $M$ in a way that this is also sufficient. Recall that we denote by $\operatorname{Iso}_F(M)$ the subgroup of $\operatorname{Iso}(M)$ that leaves $F$ invariant. Then clearly $\G \subseteq \operatorname{Iso}_F(M)$. From the construction of $N$ it furher follows that $N$ is invariant under $\operatorname{Iso}_F(M)$ as well, as noted in the remark following the proof of theorem \ref{ddb}.
\begin{lemma}\label{equivariant}
 Let $M$ be a nonnegatively curved, fixed point homogeneous $\G$-manifold, $F,N \subset M$ as in Theorem \ref{ddb} and $\mathsf H := \operatorname{Iso}_F(M)$. Then there exists a $\mathsf H$-equivariant diffeomorphism $b : \partial D(F) \to \partial D(N)$ and $M$ is $\mathsf H$-equivariantly diffeomorphic to
 $$D(F) \cup_b D(N).$$
\end{lemma}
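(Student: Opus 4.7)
The plan is to construct the gluing by running an $\mathsf H$-equivariant flow of a gradient-like vector field for $d_N$, essentially redoing the construction in the proof of Theorem \ref{ddb} while being careful to preserve the action of $\mathsf H$. First I would observe that $\mathsf H = \operatorname{Iso}_F(M)$ is a compact Lie group: since $M$ is compact, $\operatorname{Iso}(M)$ is a compact Lie group by Myers–Steenrod, and $\mathsf H$ is a closed subgroup (the condition $h(F)=F$ is closed). By the remark following the proof of Theorem \ref{ddb}, both $F$ and $N$ are $\mathsf H$-invariant submanifolds of $M$, so the distance functions $d_F$ and $d_N$ are $\mathsf H$-invariant.

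Next I would produce an $\mathsf H$-invariant smooth vector field $X$ on $M\setminus(F\cup N)$ which is gradient-like for $d_N$ (equivalently, makes an angle strictly less than $\pi/2$ with the initial direction of every minimal geodesic to $N$) and which is radial in tubular neighborhoods of $F$ and $N$, meaning it agrees with the outward $\nabla d_F$ near $F$ and with $-\nabla d_N$ on a tubular neighborhood of $N$. The proof of Theorem \ref{ddb} provides a smooth gradient-like vector field $X_0$ with these radial boundary behaviors via the standard Grove-type critical point argument. To $\mathsf H$-symmetrize it, set
\[
X(p) := \int_{\mathsf H} dh^{-1}_{h(p)}\, X_0(h(p))\, d\mu(h),
\]
where $\mu$ is the normalized Haar measure on $\mathsf H$. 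Since being gradient-like is a condition preserved under convex combinations (the set of vectors whose angle with each direction of a minimizing geodesic to $N$ is less than $\pi/2$ is open and convex in $T_pM$), and since the $\mathsf H$-invariant set of such directions at $h(p)$ is mapped to the corresponding set at $p$ by $dh^{-1}$, the averaged $X$ is still gradient-like everywhere. Near $F$ and on the relevant tubular neighborhood of $N$, the vector field $X_0$ is already the (positive or negative) gradient of an $\mathsf H$-invariant function, so averaging leaves it unchanged there; radiality is preserved.

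Then I would use the flow $\Phi_t$ of $X$ (suitably reparametrized so that it is complete on $M\setminus(F\cup N)$ and moves off the radial collars at unit speed) to define a diffeomorphism $b\colon \partial D(F)\to\partial D(N)$: pick $\epsilon>0$ so that the unit normal disk bundles $D(F),D(N)$ are realized as $\{d_F\le\epsilon\}$ and $\{d_N\le\epsilon\}$ respectively; every integral curve of $X$ starting on $\{d_F=\epsilon\}$ reaches $\{d_N=\epsilon\}$ in finite time because $X$ is gradient-like for $d_N$ away from the critical sets $F\cup N$, and this defines $b$. Equivariance of $b$ is automatic from the $\mathsf H$-invariance of $X$, and the fact that $X$ is radial near $\partial D(F)$ and $\partial D(N)$ guarantees that gluing the disk bundles along $b$ recovers $M$ smoothly and $\mathsf H$-equivariantly.

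The main obstacle is the averaging step: one must know that the pointwise convex cone of admissible gradient-like vectors is sufficiently large and varies continuously enough in $p$ that the Haar integral lies in it. This follows from standard Grove-style critical point theory for the distance function $d_N$ on the smooth Riemannian manifold $M$, using that $d_N$ is non-critical on $M\setminus(F\cup N)$ (established in the proof of Theorem \ref{ddb}) and that the set of minimizing directions to $N$ at a non-critical point is contained in an open half space which depends upper semicontinuously on the point. Once this is in place, the rest of the argument is the usual flow construction made equivariant.
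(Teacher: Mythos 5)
Your proof is correct but takes a genuinely different route to the key step, which is producing an $\mathsf H$-invariant gradient-like vector field from the non-invariant one supplied by the proof of Theorem \ref{ddb}. You symmetrize globally by Haar-averaging over all of $\mathsf H$: setting $X(p) = \int_{\mathsf H} dh^{-1}_{h(p)} X_0(h(p))\,d\mu(h)$, and appealing to the fact that the pointwise set of admissible gradient-like vectors (those at angle greater than $\pi/2$ to every minimizing direction to $N$) is an open convex cone carried to itself by the $\mathsf H$-action, so the integral of a continuous map into it stays inside it; invariance of the radial collars is immediate because $\nabla d_F$ and $\nabla d_N$ are already $\mathsf H$-invariant. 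The paper instead proceeds locally: at each $p$ it exhibits a distinguished $\mathsf H_p$-fixed vector in the admissible cone via the ``unique point of maximal distance to the boundary'' argument from Lemma \ref{regular points}, then extends it to an $\mathsf H$-invariant field near the orbit $\mathsf H(p)$ using the slice-theorem-based Proposition \ref{extension}, and finally patches these local fields with an $\mathsf H$-invariant partition of unity. Both arguments hinge on the same convexity and openness of the admissible cone; your Haar-averaging version is shorter and avoids Proposition \ref{extension} entirely, while the paper's approach is more elementary in that it never appeals to integration over the group and stays closer in style to the slice-representation arguments used elsewhere in the chapter. The remaining flow-to-a-diffeomorphism step is the same in both.
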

To prove this lemma we need the following observation.
\begin{proposition}\label{extension}
Let $\G$ act isometrically on a Riemannian manifold $M$. Assume that $v \in T_pM$ is fixed by $\G_p$. Then there exists a smooth, $\G$-invariant vector field $X$ in a neighborhood of $\G(p)$ with $X(p) = v$.
\end{proposition}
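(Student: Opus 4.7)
The plan is to use the slice theorem to reduce the problem to constructing a $\G_p$-invariant extension on a slice, and then to propagate this extension to a $\G$-invariant neighborhood by equivariance. By the slice theorem applied to $\G(p)$, there is a $\G$-invariant open neighborhood $U$ of $\G(p)$ and a $\G$-equivariant diffeomorphism $U \cong \G \times_{\G_p} S$, where $S$ is a slice at $p$ (for instance the image under $\exp_p$ of a small $\G_p$-invariant neighborhood of $0$ in $N_p\G(p)$).

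First I would pick any smooth local extension $Y$ of $v$ to a section of $TM|_S$ with $Y(p) = v$, and then symmetrize over $\G_p$ using the normalized Haar measure $dh$ on $\G_p$:
$$\bar Y(s) \;=\; \int_{\G_p} h_* Y(h^{-1} s)\, dh.$$
Since $\G_p$ is compact this integral is well-defined and $\bar Y$ is smooth. A standard change of variables shows $\bar Y$ is $\G_p$-equivariant, i.e.\ $\bar Y(ks) = k_*\bar Y(s)$ for all $k \in \G_p$ and $s \in S$. Using that $v$ is $\G_p$-fixed,
$$\bar Y(p) \;=\; \int_{\G_p} h_* v\, dh \;=\; v \int_{\G_p} dh \;=\; v.$$

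Next, I would propagate $\bar Y$ to all of $U$ by declaring $X(gs) := g_* \bar Y(s)$ for $g \in \G$, $s \in S$. For well-definedness, suppose $g_1 s_1 = g_2 s_2$ with $s_1, s_2 \in S$; then $(g_2^{-1}g_1) s_1 = s_2$, and the slice property forces $h := g_2^{-1}g_1 \in \G_p$ (after possibly shrinking $S$). The $\G_p$-equivariance of $\bar Y$ now yields $h_* \bar Y(s_1) = \bar Y(s_2)$, which rearranges to $g_{1*} \bar Y(s_1) = g_{2*} \bar Y(s_2)$, so $X$ is well-defined. It is $\G$-invariant by construction, satisfies $X(p) = \bar Y(p) = v$, and its smoothness can be seen by transferring to $\G \times_{\G_p} S$: on $\G \times S$ the lift $\tilde X(g,s) = g_* \bar Y(s)$ is manifestly smooth, and the $\G_p$-invariance of $\bar Y$ ensures it descends to $\G \times_{\G_p} S \cong U$.

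There is no serious obstacle to this construction; the only real subtlety is the well-definedness check at the propagation step, which depends entirely on the slice property that two points of $S$ lying in the same $\G$-orbit must differ by an element of $\G_p$. Everything else reduces to standard averaging and pushforward, so the argument is essentially a packaging of the slice theorem together with a $\G_p$-averaging of any ambient smooth extension of $v$.
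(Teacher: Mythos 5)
Your proof is correct, and it follows the same overall skeleton as the paper's: reduce to a slice $S$ at $p$, build a $\G_p$-equivariant local vector field there, and then propagate it $\G$-equivariantly over $\G(p)$, with well-definedness resting on the slice property that $g_1 s_1 = g_2 s_2$ ($s_1,s_2\in S$) forces $g_2^{-1}g_1 \in \G_p$. The one genuine difference is how the $\G_p$-equivariant field on the slice is produced. The paper first treats the special case $\G_p = \G$ explicitly: it takes the constant vector field $V\equiv v$ on $T_pM$, observes that the linearity of the isotropy representation together with $\G_p$-fixedness of $v$ makes $V$ invariant, and pushes $V$ forward by $\exp_p$, using $\G$-equivariance of $\exp_p$. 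It then applies this construction to the slice. You instead take an arbitrary smooth extension $Y$ along $S$ with $Y(p)=v$ and symmetrize over $\G_p$ by Haar averaging, using $\G_p$-fixedness of $v$ only to see that $\bar Y(p)=v$. Your route is the more generic ``make it invariant by averaging'' argument; it also works verbatim for sections of $TM|_S$ rather than $TS$, so it does not even implicitly require $v$ to lie in the slice direction $N_p\G(p)$, a point the paper's phrase ``vector field on $S$'' is a bit loose about. The paper's construction is in exchange more explicit (no averaging integral) and gives a concrete formula $X = d\exp_p \circ V \circ \exp_p^{-1}$ for the invariant extension in the fixed-point case. Both are fine; the only small thing worth adding to yours is a remark that the slice $S$ is $\G_p$-invariant (so $h^{-1}s \in S$ in the integrand), which is of course part of the slice theorem.
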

\begin{proof}
 Let $v \in T_pM$ be fixed by $\G_p$. First we assume that $\G_p = \G$. We have an orthogonal decomposition $T_pM = \RR v \oplus U$ which is preserved by the action of $\G$. Let $V$ denote the constant vector field $V(u) = v$ on $\RR^n = T_pM$. Since $\G$ acts orthogonally and $v$ is fixed, it follows that $V$ is invariant under $\G$. Let $\delta > 0$ such that ${\exp_p}_{|B_\delta(0_p)}$ is a diffeomorphism onto its image and set $X(\exp_p(u)) = d(\exp_p)_u V$. By equivariance of $\exp_p$ we calculate, for $q = \exp_p(u)$ and $g \in \G$,
\begin{align*}
 dg_qX &= dg_q d(\exp_p)_u V = d(g \circ \exp_p)_uV\\
 &= d(\exp_p \circ dg)_u V = d(\exp_p)_{dg(u)}V\\
 &= X(\exp_p(dg(u)) = X(g(\exp_p(u))\\
 &= X(g(q)).
\end{align*}
Thus $X$ is invariant under $\G$. Now assume $p$ is not fixed by $\G$. Let $S$ be a slice at $p$, so $\G_p$ is acting on $S$ fixing $p$. By the first case (equipping $S$ with the induced metric) there exists a smooth vector field $Y$ on $S$ invariant under $\G_p$. Let $q \in \G(S)$, so $q = g.m$ for some $m \in S$. We set $X(q) = dg_mY$. Then $X$ is well defined and smooth: Let $q = g.m = h.n$ for $g,h \in \G$ and $n,m \in S$. Then $g^{-1}h.n = m$, so $g^{-1}h \in \G_p$. Consequently $dh^{-1}_{hn}dg_mY = d(h^{-1}g)_mY = Y((h^{-1}g)m) = Y(n)$, that is $dh_nY = dg_mY$. Hence $X$ is well defined. Smoothness of $X$ follows easily. Further
\begin{align*}
 dh_qX = dh_{g.m}dg_mY = d(hg)_mY = X((hg).m) = X(h.q),
\end{align*}
so $X$ is $\G$-invariant.
\end{proof}
Now we give the \textit{proof of Lemma \ref{equivariant}}: Let $X$ be a gradient-like vector field on $M \setminus (F \cup N)$ with respect to $N$, which is radial near $F$ and $N$ (recall that the existence of $X$ was shown in the proof of Theorem \ref{ddb}). We claim that there exists such a vector field $\tilde X$ which is additionally invariant under $\mathsf H$.

The argument for this is similar to arguments we used before: Let $p \in M \setminus (F \cup N)$. Then $\measuredangle (X(p),\dot c(0)) > \pi/2$ for all minimal geodesics from $p$ to $N$. Since $\mathsf H$ acts isometrically and leaves $N$ invariant it follows that 
$$\measuredangle (dh_pX,\dot c(0)) > \pi/2$$
for all $h \in \mathsf H_p$, for all minimal geodesics $c$ from $p$ to $N$. Therefore, there exists a vector $v \in T_pM$, fixed by $\mathsf H_p$, such that 
\begin{align}\label{angl}
\measuredangle (v,\dot c(0)) > \pi/2
\end{align}
for all minimal geodesics $c$ from $p$ to $N$ (compare the proof of Lemma \ref{regular points}). According to Proposition \ref{extension}, there exists a smooth, $\mathsf H$-invariant vector field $Y$ in a neighborhood of $\mathsf H(p)$ with $Y(p) = v$. Since $\mathsf H$ acts isometrically and because of \eqref{angl}, it follows that $Y$ is gradient-like with respect to $N$ in a neighborhood of $\mathsf H(p)$. Thus there exists a $\mathsf H$-invariant, gradient-like vector field with respect to $N$ in a neighborhood of every $\mathsf H$-orbit of $M \setminus (F \cup N)$. Since the radial unit vector fields near $F$ and $N$ are clearly $\mathsf H$-invariant, we can construct $\tilde X$ as in the claim using a $\mathsf H$-invariant partition of unity.

Via the normal exponential map of $D(F)$ and the integral curves of $\tilde X$ one constructs an $\mathsf H$-equivariant diffeomorphism 
$$f : D(F) \to M \setminus B_\epsilon(N),$$
for a small $\epsilon > 0$, such that $\overline {B_\epsilon(N)}$ is $\mathsf H$-equivariantly diffeomorphic to $D(N)$ via the normal exponential map. Setting $b = \exp_{D(N)}^{-1} \circ f$, we obtain $\mathsf H$-equivariant diffeomorphisms (compare \cite{krankaanrinta07}, section 10)
$$M \cong f(D(F)) \cup_{id} \exp(D(N)) \cong D(F) \cup_{b} D(N).$$ \hfill$\square$

%==============================================
\subsection{Applications to rational ellipticity}\label{further results}
It is  conjectured, that a complete, simply connected manifold of nonnegative curvature is rationally elliptic. In this section we give two results related to this conjecture for nonnegatively curved fixed point homogeneous manifolds and torus manifolds. First recall
\begin{definition}
A closed manifold $M$ is called \textit{rationally $\Omega$-elliptic} if the total rational homotopy of the loop space
$$\pi_\ast (\Omega X,\ast) \otimes \mathbb Q$$
is finite dimensional. 	$M$ is called \textit{rationally elliptic} if it is rationally $\Omega$-elliptic and simply connected.
% A simply connected manifold $M$ is \textit{rationally elliptic}, if 
 %$$\sum_{n = 2}^\infty \dim(\pi_n(M) \otimes \mathbb Q) < \infty.$$
\end{definition}
Consider a double disk bundle $M = D(F) \cup_\partial D(N)$, where $D(F)$ and $D(N)$ are disk bundles over closed manifolds $F$ and $N$, respectively. Then $F$ is rationally $\Omega$-elliptic if and only $\partial D(F)$ is. Moreover, from \cite{grove-halperin87}, Corollary 6.1 it follows that a simply connected manifold which admits a double disk bundle decomposition is rationally $\Omega$-elliptic if and only if the boundary of one of the two disk bundles is rationally $\Omega$-elliptic. Therefore, from Theorem \ref{ddb} we have
%\begin{theorem}
%Let $D(B_1)$ and $D(B_2)$ be disk bundles over manifolds $B_1$ and $B_2$ with homeomorphic boundaries. Assume that $M = D(B_1) \cup_\partial D(B_2)$is simply connected \textbf{(necessary?)}. Then $M$ is rationally elliptic, if and only $\partial D(B_1)$ (or $\partial D(B_2)$ or $B_1$ or $B_2$) is rationally elliptic.
%\end{theorem}
%From Theorem \ref{ddb} we obtain
\begin{theorem}\label{elliptic}
Let $M$ be closed, simply connected, nonnegatively curved fixed point homogeneous manifold and $F$ be a fixed point component of maximal dimension. Then $M$ is rationally $\Omega$-elliptic if and only if $F$ is rationally $\Omega$-elliptic. 
\end{theorem}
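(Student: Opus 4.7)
The plan is to apply the double disk bundle decomposition given by Theorem \ref{double disk bundle} (equivalently Theorem \ref{ddb2} in this chapter) and then translate the question of rational $\Omega$-ellipticity of $M$ into a question about the boundary of one of the two disk bundles via the result of Grove--Halperin cited from \cite{grove-halperin87}.

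First I would invoke Theorem \ref{ddb2} to write $M \cong D(F) \cup_{\partial} D(N)$, where $F$ and $N$ are closed smooth submanifolds of $M$ without boundary and $D(F), D(N)$ are their normal disk bundles, glued along their common boundary $E = \partial D(F) = \partial D(N)$. Since $M$ is closed and $F$ is a fixed point component of maximal dimension (hence closed), both $F$ and $N$ are closed manifolds, so $D(F) \to F$ and $D(N) \to N$ are genuine disk bundles over closed bases with sphere bundle $E$ as their common boundary.

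Next I would use the standard fact that a disk bundle is homotopy equivalent to its base, and that the associated sphere bundle $E \to F$ is a fibration with fiber a sphere, which is rationally $\Omega$-elliptic. By the behavior of rational $\Omega$-ellipticity under fibrations (if two of base, total space, and fiber are rationally $\Omega$-elliptic, so is the third), it follows that $F$ is rationally $\Omega$-elliptic if and only if $E = \partial D(F)$ is rationally $\Omega$-elliptic. Since $M$ is simply connected and $F$ is simply connected (it is a fixed component of a connected Lie group action on a simply connected manifold -- or more directly, by Lemma \ref{codim} together with standard arguments, the relevant fundamental group conditions hold to apply the Grove--Halperin theorem), we may now invoke \cite{grove-halperin87}, Corollary 6.1: a simply connected double disk bundle is rationally $\Omega$-elliptic if and only if the boundary of one of its two disk bundles is. Applying this to $M = D(F) \cup_{\partial} D(N)$, we conclude that $M$ is rationally $\Omega$-elliptic iff $E = \partial D(F)$ is, which in turn is equivalent to $F$ being rationally $\Omega$-elliptic.

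The main subtlety, and the step I would check most carefully, is verifying that the hypotheses of the Grove--Halperin result are met (in particular the simple-connectedness conditions on the bases and total spaces, and the orientability of the sphere bundles needed for their spectral sequence argument). Since $M$ is simply connected by hypothesis and since the double disk bundle is $\G$-equivariant with $F$ a fixed component of maximal dimension, standard Mayer--Vietoris/van Kampen arguments (compare Lemma \ref{codim}, which shows $N$ has codimension at least $2$ in the connected-group case) guarantee the required connectivity. No additional geometric input is needed beyond the double disk bundle structure from Theorem \ref{ddb2}, which is exactly what makes this corollary essentially formal once that structural result is in hand.
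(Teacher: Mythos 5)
Your proposal is correct and follows essentially the same route as the paper: apply the double disk bundle decomposition from Theorem \ref{ddb2}, use the sphere bundle fibration to relate the $\Omega$-ellipticity of $F$ to that of $\partial D(F)$, and then invoke \cite{grove-halperin87}, Corollary 6.1, exactly as the paper does in the paragraph preceding the theorem statement.
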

We conclude with an application of our results to nonnegatively curved torus manifolds. 
\begin{definition}
A \textit{torus manifold} is a smooth, connected, closed and orientable manifold $M$ of even dimension $2n$ admitting a smooth and effective action by the $n$-dimensional torus $\mathsf T^n$ with nonempty fixed point set. 
\end{definition}

\begin{theorem}
Let $M^{2n}$ be a closed simply connected torus manifold equipped with an invariant metric of nonnegative curvature. Then $M$ is rationally elliptic.
\end{theorem}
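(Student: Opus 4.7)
The plan is to argue by induction on $n$, using Theorem \ref{elliptic} as the inductive engine. The base case $n=1$ is immediate: a closed, simply connected, orientable $2$-manifold admitting an effective circle action with a fixed point is diffeomorphic to $S^2$, which is rationally elliptic.

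For the inductive step, let $M^{2n}$ be as in the statement with $n\geq 2$, and fix a fixed point $p$ of the $\mathsf T^n$-action. The isotropy representation at $p$ is a faithful orthogonal representation of $\mathsf T^n$ on $T_pM\cong\mathbb R^{2n}$ and therefore decomposes into $n$ mutually orthogonal $\mathsf T^n$-invariant $2$-planes, each rotated by a circle factor. I would choose one such circle $\crcl\subset\mathsf T^n$ and let $F$ denote the connected component of $\on{Fix}(\crcl)$ through $p$. By construction $F$ has codimension $2$ in $M$, so that $\dim(M/\crcl)=\dim F+1$; that is, $\crcl$ acts fixed point homogeneously on $M$ with $F$ a fixed point component of maximal dimension. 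Theorem \ref{elliptic} then reduces rational $\Omega$-ellipticity of $M$ to that of $F$.

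To close the induction, I need to identify $F$ as a closed, simply connected, nonnegatively curved torus manifold of dimension $2n-2$. Connectedness and closedness are automatic; totally geodesic fixed point sets of isometries inherit nonnegative curvature; and orientability follows since the rank-$2$ normal bundle of $F$ carries an orientation induced from the $\crcl$-rotation. The quotient torus $\mathsf T^{n-1}:=\mathsf T^n/\crcl$ acts on $F$, and effectiveness is inherited from the decomposition of the isotropy representation at $p$ (the remaining $n-1$ circle factors act effectively on $T_pF$). Since $p\in F$ is $\mathsf T^{n-1}$-fixed, $F$ is indeed a torus manifold in the required sense. Simple connectedness of $F$ follows from the standard fact, applied to the smooth $\crcl$-action on the simply connected manifold $M$, that each connected component of the fixed point set of a torus action on a simply connected compact manifold (with nonempty fixed set) is itself simply connected; equivalently, one may invoke the double disk bundle decomposition $M\cong D(F)\cup_\partial D(N)$ of Theorem \ref{double disk bundle} together with $\on{codim} N\geq 2$ (Lemma \ref{codim}) and a van Kampen argument.

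By the inductive hypothesis $F$ is rationally elliptic, hence rationally $\Omega$-elliptic; applying Theorem \ref{elliptic} yields that $M$ is rationally $\Omega$-elliptic, and simple connectedness of $M$ upgrades this to rational ellipticity. The only genuinely delicate point of the argument is the verification that $F$ is simply connected, which is needed both to apply the inductive hypothesis cleanly and, at the next stage, to satisfy the hypotheses of Theorem \ref{elliptic}; everything else reduces to elementary bookkeeping with isotropy representations of torus actions at a fixed point.
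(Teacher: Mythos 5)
Your overall strategy is the same as the paper's: use the isotropy representation at a fixed point to extract a circle $\crcl\subset\T^n$ whose fixed-point component $F$ through $p$ has codimension $2$, apply the fixed-point-homogeneous double disk bundle decomposition $M\cong D(F)\cup_\partial D(N)$ and the rational-ellipticity transfer of Theorem~\ref{elliptic}, and induct on $n$ once you know $F$ is again a closed, simply connected, nonnegatively curved torus manifold. You correctly flag that the delicate point is $\pi_1(F)=0$ --- but that is exactly where the argument has a real gap.

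The ``standard fact'' you invoke --- that every connected component of the fixed-point set of a (smooth) torus action on a compact simply connected manifold is simply connected --- is not a theorem. Smith theory controls $\mathbb Z$- or $\mathbb Z_p$-cohomology of fixed sets, not fundamental groups, and the literature contains smooth circle actions on simply connected manifolds whose fixed components are integral homology spheres with nontrivial $\pi_1$. The ``equivalently, a van Kampen argument'' fallback is also insufficient as stated. Van Kampen applied to $M=D(F)\cup_E D(N)$ gives that the pushout of $\pi_1(F)\leftarrow\pi_1(E)\rightarrow\pi_1(N)$ is trivial, with both maps surjective (they are sphere-bundle projections, and $\on{codim}N\geq 2$ gives connected fibers). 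But a trivial pushout of two quotient maps does not force the individual quotients to be trivial: already $\mathbb Z\twoheadrightarrow\mathbb Z_2$ and $\mathbb Z\twoheadrightarrow 1$ has trivial pushout while $\pi_1(F)=\mathbb Z_2$. So when $\on{codim}N=2$ (which is allowed by Lemma~\ref{codim}), nothing you have said rules out $\pi_1(F)\neq 0$.

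The paper's proof closes exactly this gap, and it needs a genuinely extra ingredient. When $\on{codim}N\geq 3$ transversality gives $\pi_1(F)=\pi_1(M\setminus N)=\pi_1(M)=0$ and you are done, matching your case. But when $\on{codim}N=2$ the paper constructs a second circle $\T^1_2$ acting by rotation on the normal bundle of $N$, extends it over the whole double disk bundle, and observes (using that $\on{Fix}\T^n$ is discrete and $\T^1_2$-invariant) that the original fixed point $p_0$ is also fixed by $\T^2=\T^1_1\oplus\T^1_2$. Combining van Kampen with the two orbit-map fibrations $E\to F$ and $E\to N$ then shows $\pi_1(E)$ is generated by the two circle orbits through any point, so the orbit map $\T^2\to E\to F$ is $\pi_1$-surjective; but through a point of $E$ over $p_0$ this composite is constant, forcing $\pi_1(F)=0$. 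That auxiliary $\T^1_2$-action and the observation $p_0\in\on{Fix}\T^2$ are the content your sketch is missing; without them the $\on{codim}N=2$ case of the induction does not go through.
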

Before giving the proof we note that, using also the results of this chapter, Wiemeler shows in \cite{wiemeler14} that a compact and simply connected torus manifold admitting an invariant metric of nonnegative curvature is diffeomorphic to a quotient of a free linear torus action on a product of spheres.

\begin{proof}
Let $\T^n$ act effectively and isometrically with nonempty fixed point set on $M$. Let $p_0 \in \operatorname{Fix}(\T^n)$ and consider the orthogonal action of $\T^n$ on $\sphere^{2n - 1} \subset T_{p_0}M$ induced by the slice representation. It was shown in \cite{grove-searle94}, Theorem 2.2,  that there exists a $1$-dimensional torus $\T^1_1 \subset \T^n$ acting fixed point homogeneous on $\sphere^{2n - 1}$. Hence $\T_1^1$ also acts fixed point homogeneous on $M$ and there exists a maximal fixed point component $F$ containing $p_0$. Consequently
\begin{align}\label{decaf}
M \cong D(F) \cup_\partial D(N).
\end{align}
for a smooth submanifold $N$ with $\dim N \leq 2n -2$, by Theorem \ref{ddb} and Lemma \ref{codim}. We claim that $F$ is simply connected:

\textit{case 1:} Assume $\dim N  \leq 2n - 3$. Then, by transversality, $\pi_1(F) = \pi_1 (M \setminus N) = \pi_1(M)$. Hence $\pi_1(F) = 0$.

\textit{case 2}:  Assume $\dim N = 2n - 2$. Let $E := \partial D(F) \cong \partial D(N)$. Then, as shown in the proofs of Propositions 3.5 and 3.6 in \cite{galaz-garcia-spindeler12}, the following is true: $N$ is orientable and that there exists a $\T^1_2$-action on the normal bundle of $N$ obtained by orthogonally rotating the fibers. This action commutes with $\T^n$ and we obtain a smooth $\T^1_2$-action on $D(N) = E \times_{T_1^2} D^2$ which can be extended to $(E \times_{\T_1^1} D^2) \cup_E (E \times_{T_1^2} D^2) = D(F) \cup_E D(N) = M$.

Let $q \in \operatorname{Fix}\T^n$, $t \in \T^n$ and $g \in \T^1_2$. Then $t.(g.q) = g.(t.q) = g.q$. Hence $\operatorname{Fix} \T^n$ is invariant under $\T_2^1$. Since $\T^1_2$ is connected, and $\operatorname{Fix} \T^n$ is discrete, we see that $\operatorname{Fix} \T^n \subset \operatorname{Fix} \T_2^1$. It follows that $\T^2 := \T^1_1 \oplus \T^1_2$ acts on $M$ with $p_0 \in \operatorname{Fix} \T^2$. %Further $F = E/\T^1_1$ and $N = E/\T^1_2$. 

Consider the projections $f_1 : E \to E/\T_1^1 = F$ and $f_2 : E \to E/\T_2^1 = N$. From the homotopy sequences of this fibrations we obtain exact sequences
$$\dots \to \pi_1(\T_1^1) \xrightarrow{\pi_1(i_1)} \pi_1(E) \xrightarrow{\pi_1(f_1)} \pi_1(F) \to 1$$
and
$$\dots \to \pi_1(\T_2^1) \xrightarrow{\pi_1(i_2)} \pi_1(E) \xrightarrow{\pi_1(f_2)} \pi_1(N) \to 1,$$
where the maps $i_1$ and $i_2$ are the inclusions of the fibers over a given basepoint. Set $U_k = \pi_1(i_k)(\pi_1(\T^1_k))$ for $k = 1,2$.
So $\pi_1(F) \cong \pi_1(E)/U_1$, $\pi_1(N) \cong \pi_1(E)/U_2$ and we have a commutative diagram
\begin{align}\label{diagram}
\begin{xy}
\xymatrix
{\pi_1(E) \ar^{\pi_1(f_2)}[r] \ar_{\pi_1(f_1)}[d] & \pi_1(N) \ar[d]\\
\pi_1(F) \ar[r] & \pi_1(E)/U_1U_2.}
\end{xy}
\end{align}
Here the lower map is given via $\pi_1(F) \cong \pi_1(E)/U_1 \to \pi_1(E)/U_1U_2$, and analogously for the map on the right. By \eqref{decaf} and the van Kampen theorem there exists a morphism $h : \pi_1(M) \to \pi_1(E)/U_1U_2$ making the following diagram commute:
$$\begin{xy}
  \xymatrix{
      \pi_1(E) \ar[r]^{\pi_1(f_2)} \ar[d]_{\pi_1(f_1)}  &  \pi_1(N) \ar[d] \ar@/^/[ddr]  &  \\
      \pi_1(F) \ar[r] \ar@/_/[drr]  &  \pi_1(M) \ar[dr]^h  &  \\
      &  &  \pi_1(E)/U_1U_2
  }
\end{xy}$$
Since all the maps in \eqref{diagram} are surjective it follows that $h$ is surjective as well. Since $\pi_1(M) = 0$, it follows that $\pi_1(E) = U_1U_2$. Hence, $\pi_1(E)$ is generated by the orbits $\T^1_1(q)$ and $\T^1_2(q)$ for a given point $q \in E$. Therefore, the map $\tau_q : \T^2 \to E$, $g \mapsto g.q$ induces a surjection $\pi_1(\tau_q) : \pi_1(\T^2) \to \pi_1(E)$ for all $q \in E$. Consequently we obtain a surjection $\pi_1(f_1 \circ \tau_q) : \pi_1(\T^2) \to \pi_1(F).$

Pick $q_0 \in E$ such that $f_1(q_0) = p_0$. Since $p_0 \in \operatorname{Fix} \T^2$, the map $f_1 \circ \tau_{q_0}$ is constant. Thus $\pi_1(f_1 \circ \tau_{q_0}) = 0$ and it follows that $F$ is simply connected.

Because $F$ is totally geodesic, $F$ also has nonnegative curvature. Further $\T^{n - 1} = \T^n/\T^1_1$ acts effectively on $F^{2n - 2}$ with nonempty fixed point set. So $F$ is a nonnegatively curved, simply connected Torus manifold as well.

Now the proof follows by induction on $n$ using Corollary \ref{elliptic}.
\end{proof}

\addcontentsline{toc}{chapter}{Bibliography}

\bibliographystyle{alpha}
\bibliography{thesis_arxiv}

\end{document}